\documentclass{amsart}
\usepackage{amsmath,amsthm,amssymb,latexsym,graphicx,textcomp, hyperref, enumerate,mathtools}
\usepackage[dvipsnames]{xcolor}
\usepackage{circuitikz}
\RequirePackage{fix-cm}
\usepackage[T1]{fontenc}
\usepackage{hyperref}
\usepackage[capitalize]{cleveref}
\usepackage{comment}

\usepackage{tikz}
\usetikzlibrary{decorations.pathreplacing}
\usepackage{float}
\usepackage{standalone}
 \usepackage{graphicx}
  \graphicspath{{pictures/}}
  \usepackage{xspace}
  \usepackage[all]{xy}
  \usepackage{pinlabel}
  \usepackage{enumerate}
  \usepackage[font=small,labelfont=bf]{caption}
  \usepackage{wrapfig}
\usepackage{lipsum}
\usepackage[percent]{overpic}
\usepackage{caption}
\usepackage{subcaption}
\usepackage{stackrel}
\usepackage{accents}
\newcommand*{\dt}[1]{%
  \accentset{\mbox{\large\bfseries .}}{#1}}

\usepackage{multirow}
\usepackage{mathtools}

  \newcommand{\calM}{\mathcal{M}}

  \newcommand{\calT}{\mathcal{T}}

  \renewcommand{\AA}{\mathbb{A}}
  
  \newcommand{\CC}{\mathbb{C}}
  \newcommand{\DD}{\mathbb{D}}

  \newcommand{\HH}{\mathbb{H}}

  \newcommand{\QQ}{\mathbb{Q}}
  \newcommand{\RR}{\mathbb{R}}

  \newcommand{\ZZ}{\mathbb{Z}}
  \newcommand{\clip}{\operatorname{Clip}}

  \newcommand{\figref}[1]{Figure~\ref{#1}}

\theoremstyle{definition}
\newtheorem{proposition}{Proposition}[section]
\newtheorem{corollary}[proposition]{Corollary}
\newtheorem{lemma}[proposition]{Lemma}
\newtheorem{theorem}[proposition]{Theorem}
\newtheorem{definition}[proposition]{Definition}
\newtheorem{notation}[proposition]{Notation}
\newtheorem{example}[proposition]{Example}
\newtheorem{remark}[proposition]{Remark}

\title[Flippered surfaces and renormalized volumes of their moduli spaces]{Flippered hyperbolic surfaces and renormalized volumes of their moduli spaces I}
\author{Yi Huang and Ivan Telpukhovskiy}
\date{\today}

\begin{document}
\begin{abstract}
We introduce a natural class of hyperbolic surfaces called flippered surfaces that generalize crowned hyperbolic surfaces (i.e.: worldsheets for open strings). We develop their Teichm\"uller and moduli-space theory, construct generalized Weil--Petersson volume forms and Chekhov's action, and prove that the resulting generalized Mirzakhani volumes are finite. We establish three geometric recursion formulae---neck chopping, disk excision, and crown extraction---which express these volumes in terms of those of topologically simpler surfaces.
For the fundamental polygonal and annular cases, we derive integral representations involving conical Legendre functions, as well as explicit formulae in terms of elliptic integrals and polylogarithms. We further describe the arithmetic structure of the Taylor coefficients of these volumes, show that suitable specializations are Kontsevich--Zagier periods, and prove identities at the imaginary boundary length $2\pi\sqrt{-1}$ that generalize the Do--Norbury paraphrasing of string and dilaton-type equations.

\end{abstract}
\maketitle

\section{Introduction}

Oriented worldsheets of open strings are Riemann surfaces with boundary. In the puncture description of open--closed string theory, open-string insertions
are represented by boundary punctures, whereas closed-string insertions are represented by interior punctures.
The uniformization (see, e.g., \cite[Pg.~24]{huang_thesis}) of such surfaces are crowned hyperbolic surfaces \cite{cassonbleiler}. The natural generalization of the Weil--Petersson volume form on the moduli spaces of crowned hyperbolic surfaces \cite{chekhov2024,goncharov-sun,HT25} has infinite volume, and Chekhov \cite{chekhov2024} suggested a renormalization via an action functional $S \colon \calM_\Sigma(\vec{b}) \to \RR_+$ on the moduli space of crowned hyperbolic surfaces and defined \emph{Mirzakhani volumes} as the integrals
\[
  V_\Sigma(\vec{b})
  = \int_{\calM_\Sigma(\vec{b})} e^{-S(X)}\,\Omega^{\text{WP}}_\Sigma(\vec{b}).
\]
In \cite{HT25}, we showed that $V_\Sigma(\vec{b})$ lies in $\QQ_{>0}\!\left[b_1^2,\ldots,b_m^2,\,\log 2,\,\zeta(j),\,\beta(k)\right]$,
where the constants are integer-evaluated values of the Riemann zeta function
and Dirichlet beta function.

In this paper, we replace point-like punctures by slits of positive size. We refer to the hyperbolic uniformizations of such Riemann surfaces as \emph{flippered hyperbolic surfaces} (see \cref{{defn:anatomy}}). Roughly speaking, just as uniformization geometrizes interior and boundary punctures respectively as cusps and half-cusps (a.k.a. tines), uniformization geometrizes interior and boundary (removed) intervals as funnels and half-funnels (a.k.a. flippers). See \cref{fig:anatomy} for an example.

Our goal is to define and develop the Weil--Petersson/Mirzakhani volume theory of moduli spaces of flippered hyperbolic surfaces. In particular, we shall see that their moduli spaces naturally arise in various topologically flavored volume recursion formulae in the volumes for crowned hyperbolic surfaces. 

We make special mention of \cite{goncharov-sun}, where Goncharov and Sun study the volumes of certain slices of the decorated moduli space of crowned hyperbolic surfaces and integrate via normalization with respect to a potential based on horocycle lengths (albeit phrased in terms of areas in their paper). There are strong structural similarities between our works which should be the subject of future investigation.

\begin{center}
\begin{figure}[ht!]
    \includegraphics[width=0.8\textwidth]{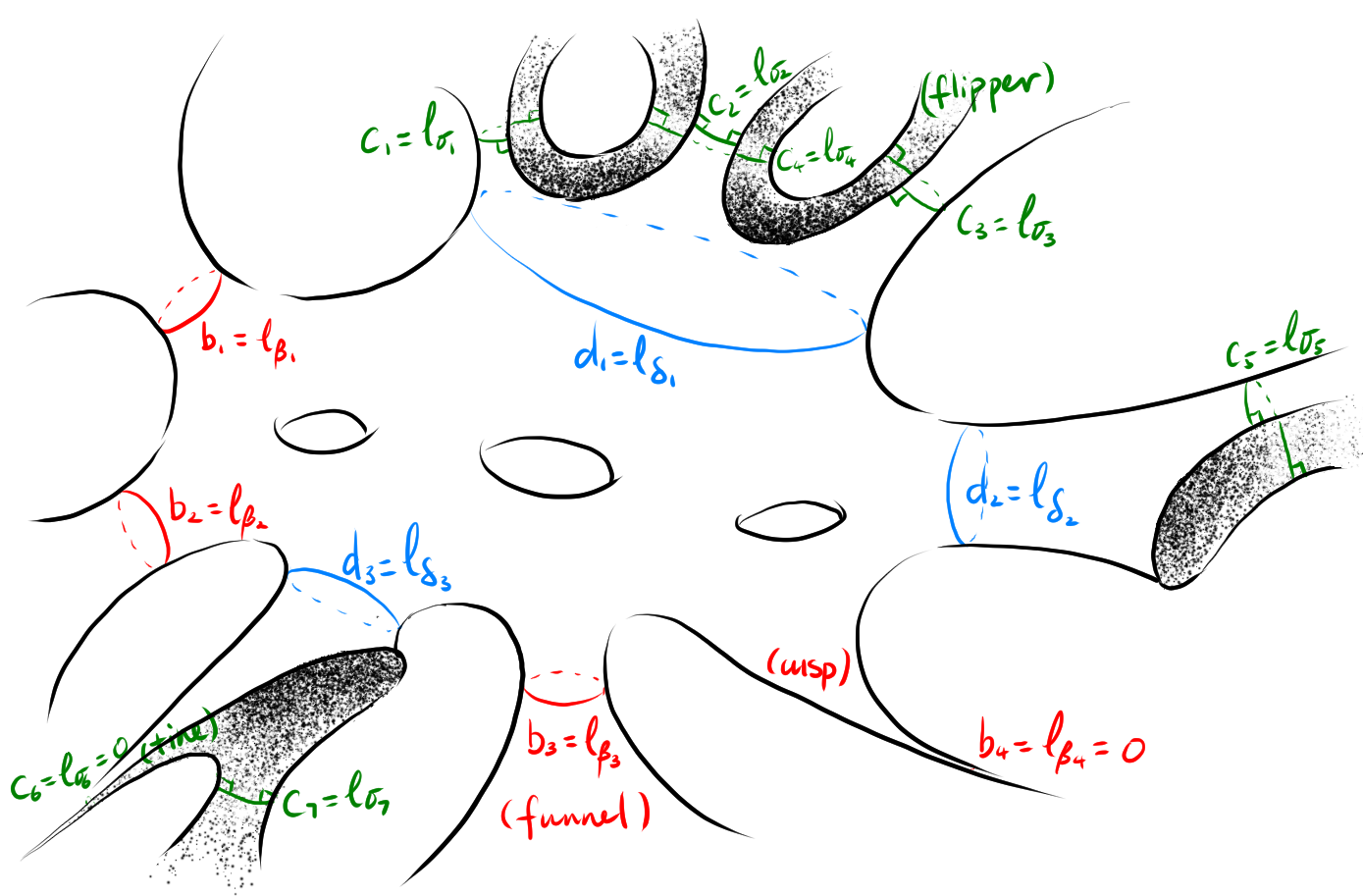}
\caption{A flippered hyperbolic surface with \emph{cuffs} of lengths $\vec{b}=(b_1,b_2,0)$, \emph{straps} of lengths $\vec{c}=(c_1,\ldots,c_5,0,c_7)$ and \emph{necks} of lengths $\vec{d}=(d_1,d_2,d_3)$.}\label{fig:anatomy}
\end{figure}
\end{center}

\subsection{Teichm\"uller and moduli space theory of flippered surfaces}

In Sections~\ref{sec:teich-moduli-spaces-flipsurf}, \ref{sec:gen-chekhov-action} and \ref{sec:WP-volume-form}, we develop the necessary framework for Teichm\"uller and moduli space theory of \emph{flippered} hyperbolic surfaces:
\begin{itemize}
    \item $\Sigma=\Sigma_{g,m,\vec{a}}$, where $\vec{a}=(a_1,\ldots,a_l)$, is an oriented surface with genus $g$, $m$ cuffs, and $l$ crowns respectively with $a_1,\ldots,a_l$ boundary punctures (yielding a total of $n=a_1+\ldots+a_l$ boundary punctures);
    \item the \emph{Teichm\"uller space} $\calT_{g,m,\vec{a}}=\calT_\Sigma$ is the space of isotopy classes of flippered hyperbolic metrics on $\Sigma$;
    \item the \emph{moduli space} $\calM_{g,m,\vec{a}}=\calM_\Sigma$ is $\calT_\Sigma / \text{MCG}_\Sigma$; 
    \item $\calM_\Sigma(\vec{b};\vec{c})$ is the subset of $\calM_\Sigma$ consisting of surfaces with fixed cuff lengths $\vec{b}=(b_1,\ldots,b_m) \in [0,\infty)^m$ and $n$ straps of fixed lengths $\vec{c}=(\vec{c}_1,\ldots,\vec{c}_l)=(c^{(1)}_1,\ldots,c^{(a_1)}_1,\ldots,c^{(1)}_l,\ldots,c^{(a_l)}_l) \in [0,\infty)^n$;
    \item $\calM_\Sigma(\vec{b};\vec{c}\;|\vec{d})$ is the subset of $\calM_\Sigma(\vec{b};\vec{c})$ consisting of surfaces with necks of length $\vec{d}=(d_1,\ldots,d_k)\in(0,\infty)^l$; 
    \item \emph{generalised Weil--Petersson volume forms} $\Omega^{\text{WP}}_\Sigma(\vec{b};\vec{c})$ and $\Omega^{\text{WP}}_\Sigma(\vec{b};\vec{c}\;|\vec{d})$ on the respective moduli spaces $\calM_\Sigma(\vec{b};\vec{c})$ and $\calM_\Sigma(\vec{b};\vec{c}\;|\vec{d})$ (see \cref{eq:vol-form-teich-fixed-hol});
    \item \emph{generalised Chekhov's function} $S \colon \calM_\Sigma(\vec{b};\vec{c}) \to \RR_+$ (see \cref{defn: action for flippered surface}).
\end{itemize}
There are important differences between the Teichm\"uller theory of flippered surfaces and what is available via established cluster algebraic theory: we are unaware of a natural candidate for a ``Weil--Petersson'' Poisson algebraic structure. 

\begin{definition}[Generalised Mirzakhani volume]
\label{def:gen-mirz-vol}
We refer to
\begin{align}
V_\Sigma(\vec{b};\vec{c}\;|\vec{d})
&=
\int_{\mathcal{M}_\Sigma(\vec{b};\vec{c}\;|\vec{d})}
e^{-S(X)}\;\Omega^{\mathrm{WP}}_{\Sigma}(\vec{b};\vec{c}\;|\vec{d})\\
V_\Sigma(\vec{b};\vec{c})
&= 
\int_{\calM_\Sigma(\vec{b};\vec{c})}
e^{-S(X)}\,\Omega^{\text{WP}}_\Sigma(\vec{b};\vec{c}).
\end{align}
respectively as the \emph{generalised Mirzakhani volumes} of $\calM_\Sigma(\vec{b};\vec{c})$ and $\calM_\Sigma(\vec{b};\vec{c}\;|\vec{d})$.
\end{definition}

\begin{theorem}[volume finiteness, \cref{thm:finvolflipper}]
The generalized Mirzakhani volumes of the moduli spaces of flippered surfaces are finite. Moreover, if $c_i\geqslant c_i'$, then
\begin{align}
V_\Sigma(\vec{b};c_1,\dotsc c_i,\dotsc,c_n|\vec{d}) 
& \leqslant 
V_\Sigma(\vec{b};c_1,\dotsc c_i',\dotsc,c_n|\vec{d}) \\
V_\Sigma(\vec{b};c_1,\dotsc c_i,\dotsc,c_n) 
& \leqslant 
V_\Sigma(\vec{b};c_1,\dotsc c_i',\dotsc,c_n).
\end{align}
\end{theorem}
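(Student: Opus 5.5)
The plan is to reduce both claims to a comparison argument on a common parameter space. I will use Fenchel--Nielsen-type coordinates adapted to flippered surfaces: cut $\Sigma$ along a maximal multicurve together with a system of arcs separating the crowns. This decomposes $X$ into pairs of pants and ideal or flippered polygonal pieces. The structure of $\Omega^{\mathrm{WP}}_\Sigma(\vec b;\vec c\,|\vec d)$, defined at \cref{eq:vol-form-teich-fixed-hol}, should then be a Wolpert-type formula $\bigwedge d\ell_j\wedge d\tau_j$ wedged with standard forms in the crown/flipper shear coordinates. Concretely, I will express the form in coordinates in which the strap lengths $c_i$ appear only as parameters, so the coordinate domain does not depend on $\vec c$.

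For finiteness, I first pass to a fundamental domain for the mapping class group via Mirzakhani's unfolding, or more simply via a McShane/Bers-constant decomposition. By a Bers-type lemma, every $X$ admits a pants/polygon decomposition with all interior curve lengths bounded by a constant depending on $\vec b,\vec c,\vec d$ and the topology. On that region the twist parameters range over compact intervals, so the interior Weil--Petersson contribution is finite exactly as in the closed case. The only noncompact directions are the crown/flipper shear coordinates, where tines or flippers can become arbitrarily far apart. There I will use the generalised Chekhov action from \cref{defn: action for flippered surface}. I expect $S$ to be a sum over boundary arcs (strap-to-strap or tine-to-tine orthogeodesics) of terms growing at least linearly in the corresponding lengths. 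I would then show that $e^{-S}$ decays exponentially in each shear coordinate while the form grows at most polynomially, mirroring the crowned case in \cite{HT25}. The case with necks $\vec d$ is handled identically, by fixing the neck lengths and dropping those coordinates.

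For monotonicity in $c_i$, I will construct, for each $c_i\ge c_i'$, a map $\Phi\colon\calM_\Sigma(\dots c_i\dots)\to\calM_\Sigma(\dots c_i'\dots)$. The map keeps all Fenchel--Nielsen and shear coordinates fixed and shrinks the $i$-th strap, geometrically by regluing a thinner half-funnel. I then need two facts:
\begin{itemize}
\item $\Phi$ is injective and pulls back the volume form to itself, since the coordinates are identical;
\item $S(\Phi(X))\le S(X)$.
\end{itemize}
The second fact follows if each orthogeodesic length entering $S$ is nonincreasing when the strap shrinks. To prove that, I would compare the hyperbolic pieces adjacent to strap $i$ using hyperbolic trigonometry for right-angled hexagons and pentagons. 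The length of the arc between adjacent boundary geodesics, for fixed shear, is monotone in the third side length. Integrating then gives $V(\dots c_i\dots)\le V(\dots c_i'\dots)$.

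The main obstacle is this trigonometric monotonicity. It must hold uniformly for every term of $S$, including terms whose defining arcs pass near, but do not terminate on, the $i$-th strap. It also must remain valid in the degenerate limit $c_i'=0$, where a flipper becomes a tine. I would first treat the single-piece computation in a right-angled hexagon, where $\cosh$ of the opposite side is explicitly increasing in the adjacent sides. I would then argue that all remaining terms are unaffected, because the coordinate system was chosen so that only that piece depends on $c_i$.
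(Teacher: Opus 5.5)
Your first bullet is false, and this is where the monotonicity argument breaks. $\Omega^{\mathrm{WP}}_\Sigma(\vec b;\vec c)$ is the pull-back of the clipped shear form divided by $\mu_{\mathcal A}$, so its density depends on $\vec c$. Take the coordinates you propose, namely orthogeodesic lengths of a maximal arc system, in which $\vec c$ is a free parameter by \cref{prop:orthogeod-coord}. By \cref{prop:vol-form-teich-fixedhol-orth-coord} the density there is $\prod_i\Omega^{\mathrm{WP}}_{\DD_3}(\vec c_{(i)})\prod_j\Omega^{\mathrm{WP}}_{\AA_1}(b_j;c'_{(j)})\prod_\eta\coth(\ell_\eta/2)$. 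The factors $\Omega^{\mathrm{WP}}_{\DD_3}=\frac{\cosh c_1+\cosh c_2+\cosh c_3-1+\sqrt D}{2\sqrt D}$ and $\Omega^{\mathrm{WP}}_{\AA_1}$ are non-constant in the straps; they equal $1$ exactly when a strap vanishes. In the zero-dimensional cases $\calM_{\DD_3}(\vec c)$ and $\calM_{\AA_1}(c|d)$ there is no coordinate freedom at all, so no map $\Phi$ can be volume-preserving.

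Nor does the form at least move in your favour. For $c_2,c_3>0$, $\Omega^{\mathrm{WP}}_{\DD_3}$ equals $1$ at $c_1=0$, is $<1$ for $c_1>0$, and tends to $1$ again as $c_1\to\infty$.

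Your mechanism for the second bullet also fails term by term. With $c_2=0$, \cref{eq:ess-int-flip-triang-log} gives $\ell(\mathbb{I}_1)=\log\frac{\cosh c_1+\cosh c_3}{\cosh c_1+1}$, which \emph{increases} as $c_1$ shrinks whenever $c_3>0$.

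The missing idea is that only the product $e^{-S}\,\Omega^{\mathrm{WP}}$ is monotone. By \cref{prop:action-arch-arch} and \cref{prop:top-rec-vol-form-orthogeod}, the factors $\sinh^2(\ell_\eta/2)$ and $\coth(\ell_\eta/2)\,\mathrm{d}\ell_\eta$ combine into the $\vec c$-independent measure $\tfrac12\,\mathrm{d}\cosh\ell_\eta$. The rest of the integrand is a product of $V_{\DD_3}=2/\sqrt{D}$ and $V_{\AA_1}(c|d)=(2\cosh c+2\cosh d)^{-1/2}$. Each is decreasing in every strap length, because $D$ is increasing in each $\cosh c_i$. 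This is what the paper uses: the pointwise-monotone integrand \cref{eq:vol-n-fon-big-int} for $\DD_n$, and the convolution \cref{eq:vol-n-crown-conv} for $\AA_n$.

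For other surfaces your $\Phi$ has a further problem. Freezing coordinates relative to a chosen pants/arc decomposition is not mapping-class-group equivariant, so $\Phi$ neither descends to, nor is known to inject into, $\calM_\Sigma(\vec b;\vec c')$. The paper avoids this by cutting along the necks, which are invariant. The neck recursion gives
$V_\Sigma(\vec b;\vec c)=\int_{(0,\infty)^l}V_{g,m+l}(\vec b,\vec d)\prod_k d_k\,V_{\AA_{a_k}}(\vec c_k|d_k)\,\mathrm{d}\vec d$.
So $\vec c$ enters only through crown factors, on which the mapping class group acts trivially and monotonicity is already known.

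Your finiteness argument has a matching gap. On $\calM_\Sigma(\vec b;\vec c)$ the neck lengths are unbounded, so a Bers constant ``depending on $\vec d$'' gives no uniform bound. The exponential decay of $e^{-S}$ in the non-compact directions is only asserted, not shown. With correct monotonicity none of this is needed: taking $c'_i=0$ gives $V_\Sigma(\vec b;\vec c)\le V_\Sigma(\vec b;\vec 0)$, the crowned volume, which is finite by \cite{HT25}. That is how the paper concludes.
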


\subsection{Main results}

We obtain \emph{neck (chopping)}, \emph{disk (excision)} and \emph{crown (extraction)} recursions for Mirzakhani volumes.

\begin{proposition}[neck recursion, \cref{prop:neck-recursion} (cf. \cite{chekhov2024,goncharov-sun,HT25})]
\label{prop:neck}
Consider $\Sigma=\Sigma'\cup \mathbb{A}_m$ (see \cref{fig:neckrecursion}) with $\vec{c}:=(\vec{c}\,',c_1,\ldots,c_m)$ and $\vec{d}:=(\vec{d}',\ell)$, then 
\begin{align}
V_\Sigma\bigl(\vec{b};\vec{c}\;|\vec{d}\bigr) 
&=
V_{\Sigma'}\bigl(\vec{b},\ell;\vec{c}\,'|\vec{d}\,'\bigr)
\cdot 
V_{\mathbb{A}_m}\bigl(c_1,\ldots,c_m|\ell\bigr) \cdot \ell 
\text{, and}\\
V_\Sigma\bigl(\vec{b};\vec{c}\bigr) 
&= 
\int_0^\infty 
V_{\Sigma'}\bigl(\vec{b},\ell;\vec{c}\,'\bigr)
\cdot
V_{\mathbb{A}_m}\bigl(c_1,\dots,c_m|\ell\bigr) 
\,\cdot \ell\;\mathrm{d}\ell.
\end{align}
\end{proposition}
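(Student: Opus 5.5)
The plan is to cut along the neck geodesic $\gamma$ of length $\ell$ separating $\Sigma'$ from the annular piece $\mathbb{A}_m$ (which carries the crown with straps $c_1,\ldots,c_m$). I will then show three things: the moduli space fibres over the product of the two smaller moduli spaces with twist fibre; the Weil--Petersson form splits with a $\mathrm{d}\ell\wedge\mathrm{d}\tau$ factor; and Chekhov's action is additive. This is the same mechanism as the neck recursion in \cite{chekhov2024,HT25}, transported to flippered surfaces.

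\textbf{Step 1 (gluing map).} Since $\gamma$ is a simple closed geodesic, cutting along it gives a flippered surface of type $\Sigma'$ with an extra cuff of length $\ell$, and a flippered annulus $\mathbb{A}_m$ with cuff length $\ell$ and straps $c_1,\ldots,c_m$. Conversely, gluing with twist $\tau\in\RR$ recovers a point of $\calT_\Sigma(\vec b;\vec c\,|\vec d)$. This gives a diffeomorphism
\[\calT_\Sigma(\vec b;\vec c\,|\vec d)\cong \calT_{\Sigma'}(\vec b,\ell;\vec c\,'|\vec d\,')\times\calT_{\mathbb{A}_m}(c_1,\ldots,c_m|\ell)\times\RR_\tau.\]
Because $\gamma$ is the unique geodesic in its isotopy class and its class is distinguished by separating off the crown, the stabilizer in $\mathrm{MCG}_\Sigma$ is generated by $\mathrm{MCG}_{\Sigma'}\times\mathrm{MCG}_{\mathbb{A}_m}$ and the Dehn twist along $\gamma$. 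In fact the whole mapping class group preserves this cut, so there is no Mirzakhani-type unfolding sum. Passing to the quotient, $\calM_\Sigma(\vec b;\vec c\,|\vec d)$ is a bundle over $\calM_{\Sigma'}\times\calM_{\mathbb{A}_m}$ whose fibre is the circle $\RR/\ell\ZZ$. If the annulus mapping class group contains extra symmetries, such as rotation of the crown, these must be matched against the definition of $V_{\mathbb{A}_m}$; I will check that they are already quotiented there.

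\textbf{Step 2 (volume form).} I will show, via the formula \cref{eq:vol-form-teich-fixed-hol}, that in these coordinates
\[\Omega^{\mathrm{WP}}_\Sigma(\vec b;\vec c)=\Omega^{\mathrm{WP}}_{\Sigma'}(\vec b,\ell;\vec c\,')\wedge\Omega^{\mathrm{WP}}_{\mathbb{A}_m}(c_1,\ldots,c_m|\ell)\wedge \mathrm{d}\ell\wedge\mathrm{d}\tau.\]
For fixed $\ell$ the same identity holds with $\mathrm{d}\ell$ removed. This is the Wolpert-type splitting. It follows by choosing Fenchel--Nielsen-type coordinates (lengths and twists on an internal decomposition, together with strap/crown coordinates) adapted to $\gamma$, so that the form is the standard product form in each piece plus the pair $(\ell,\tau)$.

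\textbf{Step 3 (action).} I will check that the generalised Chekhov function $S$ of \cref{defn: action for flippered surface} is a sum of local contributions attached to crowns, flippers and straps. Then $S(X)=S(X')+S(X_{\mathbb{A}})$, independent of $\tau$. By Fubini, integrating $e^{-S}$ over the fibre gives the factor $\int_0^\ell\mathrm{d}\tau=\ell$, which proves the first identity. The second identity follows by further integrating over $\ell\in(0,\infty)$, since $\calM_\Sigma(\vec b;\vec c)$ is the union of the level sets in $\ell$. Finiteness from \cref{thm:finvolflipper} justifies Fubini/Tonelli; all integrands are positive, so Tonelli suffices anyway.

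\textbf{Main obstacle.} I expect the main obstacle to be Step 2. The generalised Weil--Petersson form is not defined through a Poisson structure (as noted, none is available), so the splitting must be verified directly from the coordinate definition. The delicate points are that the half-funnel/strap coordinates in $\mathbb{A}_m$ may be coupled to the twist along $\gamma$, and that the twist normalisation must give a fibre of length exactly $\ell$. I would first attempt to show that the definition is invariant under change of pants-type decomposition, and then choose a decomposition containing $\gamma$.
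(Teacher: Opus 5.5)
Your proposal is correct and follows the paper's proof essentially step for step. You cut along the neck, split the form as $\Omega^{\mathrm{WP}}_\Sigma=\Omega\wedge\mathrm{d}\ell\wedge\mathrm{d}\tau$, use additivity of $S$ over crowns, and apply Fubini--Tonelli over a twist circle of length $\ell$ and then over $\ell$. Your Step~2 is exactly \cref{prop:WP-vol-form-scc}, which the paper proves beforehand without changing pants decompositions: it pulls back through $\mathrm{Clip}$, picks an ideal triangulation in which only two ideal triangles cross $\gamma$, and so reduces the splitting to the explicit $(1,1)$-annulus identity $\mathrm{d}s_0\wedge\mathrm{d}s_1=\pm\mathrm{d}\ell\wedge\mathrm{d}\tau$ of \cref{lem:conversion}.
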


\begin{center}
\begin{figure}[ht!]
    \includegraphics[width=0.8\textwidth]{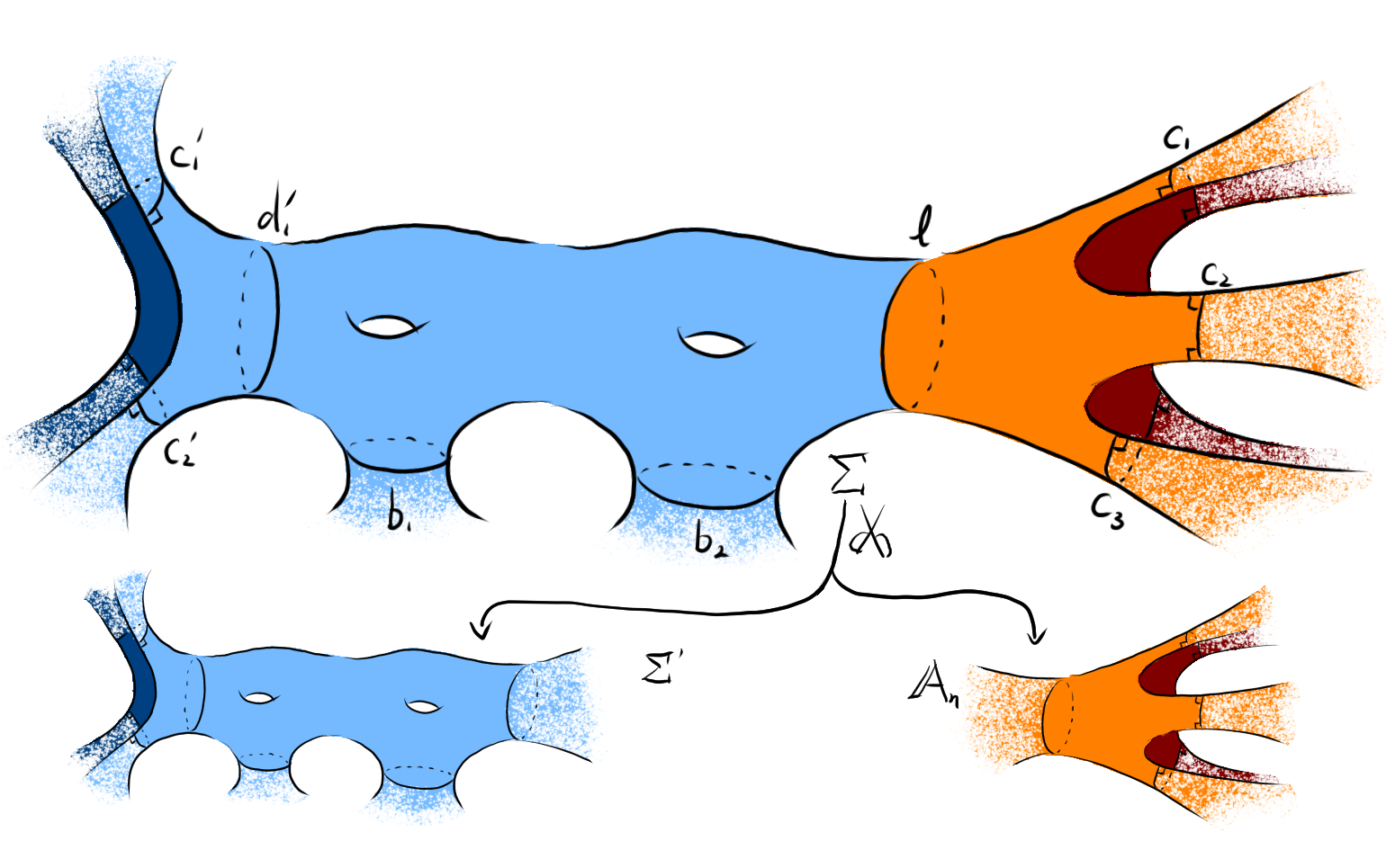}
\caption{A surface $\Sigma$ cut into (Nielsen extensions of) $\Sigma'$ and $\mathbb{A}_n$.}\label{fig:neckrecursion}
\end{figure}
\end{center}

To simplify the statement of the neck recursion formula, we have used a mild form of notation abuse: $\Sigma$ is not the union of $\Sigma'$ and $\mathbb{A}_m$, but rather one has to truncate funnels from $\Sigma'$ and $\mathbb{A}_m$ before gluing them to form $\Sigma$. We will adopt similar notation nomenclature through-out, and a more precise relationship between these surfaces are spelled out in \cref{sec:chekhovactionrecursion}.

\begin{theorem}[disk recursion, \cref{thm:disk-recursion}]
Now consider $\Sigma=\Sigma'\cup \mathbb{D}_n$ (see \cref{fig:diskrecursion}) with $\vec{c}:=(\vec{c}\,',c_1,\ldots,c_n)$, then
\begin{align}
V_{\Sigma}\bigl(\vec{b};\vec{c}\;|\vec{d}\bigr) 
&= \frac{1}{2} \int_0^\infty V_{\Sigma'}\bigl(\vec{b};\vec{c}\,',\ell|\vec{d}\bigr)\cdot
V_{\DD_{n+1}}\bigl(c_1,\ldots,c_n,\ell\bigr) \, \mathrm{d}\cosh\ell\text{, and }\\
V_{\Sigma}\bigl(\vec{b};\vec{c}\bigr) 
&= \frac{1}{2} \int_0^\infty V_{\Sigma'}\bigl(\vec{b};\vec{c}\,',\ell\bigr)\cdot
V_{\DD_{n+1}}\bigl(c_1,\ldots,c_n,\ell\bigr) \, \mathrm{d}\cosh\ell.
\end{align}
\end{theorem}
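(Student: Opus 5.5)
The plan is to cut $\Sigma$ along a distinguished geodesic arc and apply Fubini, as in the neck recursion (\cref{prop:neck-recursion}), with a simple closed geodesic replaced by an orthogeodesic arc. In $\Sigma=\Sigma'\cup\DD_n$ the polygonal piece $\DD_n$ is attached along a \emph{strap-type} arc $\gamma$. This is the unique simple geodesic arc, orthogonal at both ends to a crown boundary or flipper, which separates the $n$ straps $c_1,\dots,c_n$ from the rest of the surface. Cutting $\Sigma$ along $\gamma$ produces two flippered surfaces. The first is $\Sigma'$, carrying a new strap of length $\ell=\ell(\gamma)$ in place of the block $c_1,\dots,c_n$. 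The second is the $(n+1)$-gon $\DD_{n+1}$ with straps $(c_1,\dots,c_n,\ell)$. Conversely, given $\Sigma'$ and $\DD_{n+1}$ sharing a strap of length $\ell$, we reglue them after truncating the half-funnels (flippers) that abut the two copies of $\gamma$.

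\textbf{Step 1 (fibration).} Because $\gamma$ is determined topologically up to isotopy and its mapping class group orbit is a single element (the crown with its straps is fixed), the cutting map
\[
\calM_\Sigma(\vec b;\vec c\,|\vec d)\to \bigsqcup_{\ell>0}\calM_{\Sigma'}(\vec b;\vec c\,',\ell\,|\vec d)\times \calM_{\DD_{n+1}}(c_1,\dots,c_n,\ell)
\]
should be a bijection. It is one-to-one because gluing along an arc between two boundary components carries no twist parameter: the endpoints are pinned by orthogonality. The same reasoning applies in the unfixed-neck setting.

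\textbf{Step 2 (volume form).} In Fenchel--Nielsen/shear-type coordinates adapted to $\gamma$, I would show that $\Omega^{\mathrm{WP}}_\Sigma$ decomposes as $\Omega^{\mathrm{WP}}_{\Sigma'}\wedge\Omega^{\mathrm{WP}}_{\DD_{n+1}}\wedge\tfrac12\,\mathrm d\cosh\ell$. The factor $\sinh\ell\,\mathrm d\ell$ replaces the $\ell\,\mathrm d\ell$ of the neck case. The reason is that the coordinate conjugate to an arc length on a flipper boundary is a hyperbolic-cosine-type quantity: the length of a geodesic arc between two orthogonal boundaries, expressed via the $\lambda$-lengths or the symplectic structure from \cref{eq:vol-form-teich-fixed-hol}, contributes $\sinh\ell$. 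The $\tfrac12$ comes from the convention/normalisation of that form, and possibly from the reflection symmetry of the arc's two ends. I would verify this first on the simplest case, $\DD_n$ glued to $\DD_3$, by a direct computation with the explicit form.

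\textbf{Step 3 (action additivity).} I would show $S(X)=S(X')+S(P)$, where $X'$ and $P$ are the two cut pieces. This should follow from the definition of the generalised Chekhov action (\cref{defn: action for flippered surface}) as a sum of local contributions attached to straps and tines, once we check that the contributions of the two new straps of length $\ell$ cancel against the removed contributions near $\gamma$. This is the analogue of the additivity used in \cref{sec:chekhovactionrecursion}. Integrating $e^{-S}$ via Fubini then yields both displayed formulae, and finiteness (\cref{thm:finvolflipper}) justifies the interchange.

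The main obstacle is Step 2. Establishing the exact Jacobian $\tfrac12\sinh\ell$ needs a Wolpert-type formula for arcs, and we have no Poisson structure available to supply one. I would try first to compute the form in explicit coordinates for the glued surface, and then argue that the contributions from $\Sigma'$ and $\DD_{n+1}$ split off.
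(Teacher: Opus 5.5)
Your overall architecture matches the paper: cut along the mapping-class-group-invariant arch-to-arch orthogeodesic $\eta$ (no twist parameter), then apply Fubini. But Steps 2 and 3 each assert something false, and your argument only lands on the right measure because the two errors cancel.

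\textbf{Step 3 is false: the action is not additive under a Nielsen cut along an arc.} By \cref{prop:action-arch-arch}, $e^{-S(X)}=e^{-S(X\|\eta)}\sinh^2(\ell_\eta/2)$. On each of the two arches met by $\eta$, the essential interval of $X$ is the union of two essential intervals of $X\|\eta$ plus the projection of the opposite arch. By the trirectangle identity that projection has length $2\log\coth(\ell_\eta/2)$. Summed over both arches and halved, this adds $2\log\coth(\ell_\eta/2)$ to $S(X)$. Meanwhile the renormalising term $\sum\log\cosh(c_i/2)$ of $S(X\|\eta)$ gains $2\log\cosh(\ell_\eta/2)$ from the two new straps of length $\ell_\eta$. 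Hence $S(X)-S(X\|\eta)=-2\log\sinh(\ell_\eta/2)\neq 0$.

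\textbf{Step 2 is false as well.} The volume form does not carry $\tfrac12\,\mathrm{d}\cosh\ell$. By \cref{prop:top-rec-vol-form-orthogeod} it carries $\coth(\ell_\eta/2)\,\mathrm{d}\ell_\eta$. Only the product $\sinh^2(\ell/2)\coth(\ell/2)\,\mathrm{d}\ell=\tfrac12\sinh\ell\,\mathrm{d}\ell$ is the measure in the statement. If you tried to prove Step 2 or Step 3 as stated, for instance on your test case $\DD_n\cup\DD_3$, each would fail on its own. So the missing idea is that the $\sinh^2(\ell/2)$ part of the Jacobian comes from $e^{-S}$, not from $\Omega^{\mathrm{WP}}$.

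\textbf{The obstacle you flag in Step 2 disappears once you work with shears.} You do not need a Wolpert-type formula for arcs. By definition, $\Omega^{\mathrm{WP}}_\Sigma(\vec b;\vec c)$ is the shear volume form of the flipper-clipped surface divided by the forms $\mathrm{d}\log\sinh^2(c_j/2)$. By \cref{lem:shear-width-quadrilateral}, $\log\sinh^2(\eta/2)$ for the orthogeodesic $\eta$ across an ideal quadrilateral is exactly the shear of its diagonal.

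So choose an ideal triangulation of the clipped surface containing a diagonal $\gamma$ of the ideal quadrilateral $Q$ spanned by the two arches that $\eta$ meets. After Nielsen cutting and clipping, the two halves of $Q$ carry shears $s_{\gamma_1},s_{\gamma_2}$. These are precisely the coordinates divided out for the two new straps of length $\ell_\eta$. Therefore
\[
\Omega^{\mathrm{WP}}_\Sigma=(\|\eta)^*\Omega^{\mathrm{WP}}_{\Sigma\setminus\eta}\wedge\mathrm{d}s_\gamma,\qquad \mathrm{d}s_\gamma=\coth(\ell_\eta/2)\,\mathrm{d}\ell_\eta.
\]
The case where both endpoints of $\eta$ lie on the same arch is handled by a similar local triangulation. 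Combining this with the action recursion gives the factor $\tfrac12\,\mathrm{d}\cosh\ell$, and Fubini (Tonelli suffices, since the integrand is non-negative) finishes the proof as you describe.
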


\begin{center}
\begin{figure}[ht!]
    \includegraphics[width=0.8\textwidth]{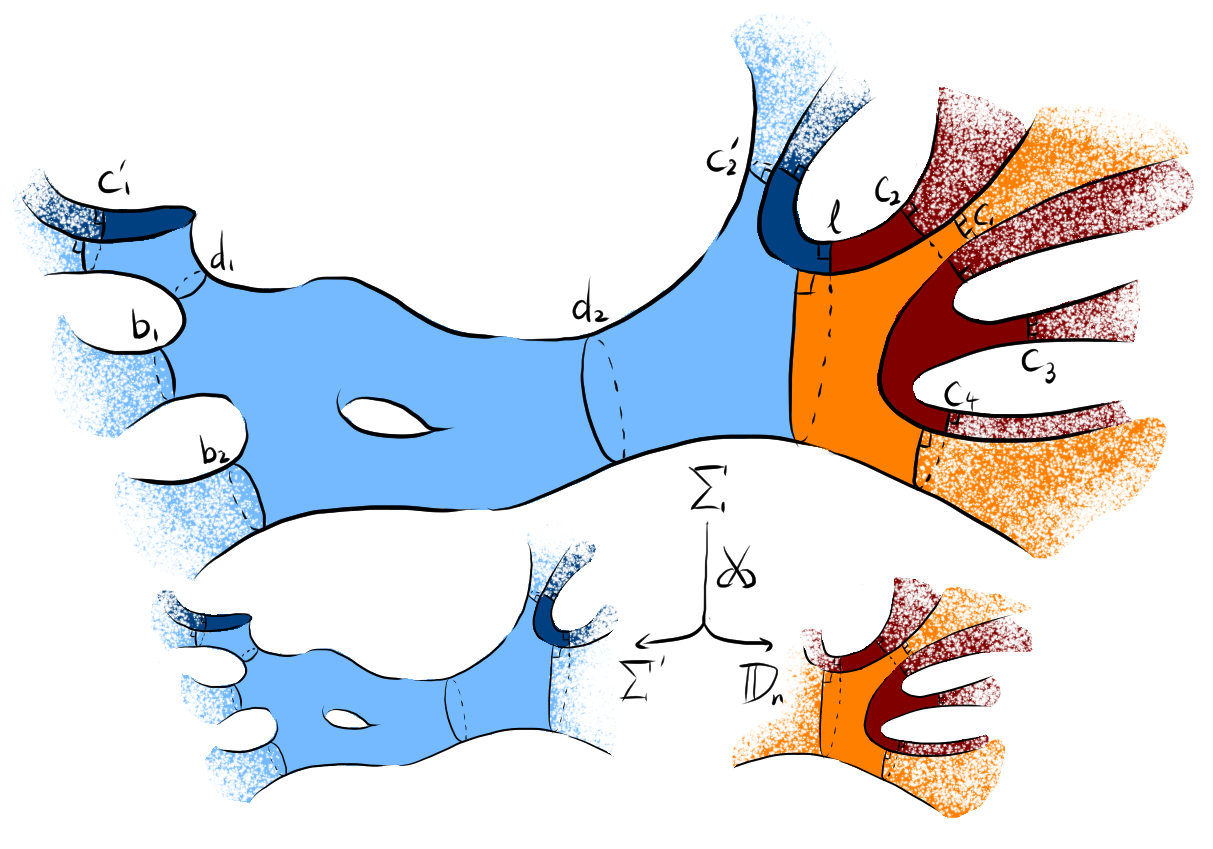}
\caption{A surface $\Sigma$ ``Nielsen cut'' (see \cref{def:cut-orth}) into $\Sigma'$ and $\mathbb{D}_{n+1}$.}\label{fig:diskrecursion}
\end{figure}
\end{center}

\begin{theorem}[crown recursion, \cref{thm:crown-rec}] Consider $\Sigma  = \AA_{n} = \AA_{n-m}^\circ\cup \AA_{m}^\circ$ (see \cref{fig:crownrecursion}) with $\vec{c}:=(\vec{c}\,',c_1,\ldots,c_m)$, then

\begin{align}
V_\Sigma\bigl(\vec{c}\;|d\bigr) 
&=
\int_\RR V_{\AA_{n-m}}\bigl(\vec{c}\,'|\;|d-\ell|\bigr)
\cdot 
V_{\mathbb{A}_m}\bigl(c_1,\ldots,c_m|\;|\ell|\bigr) \;\mathrm{d}\ell
\text{, and}\\
V_\Sigma\bigl(\vec{c}\bigr) 
&= 
2\cdot V_{\AA_{n-m}}\bigl(\vec{c}\,'\bigr) \cdot V_{\AA_{m}}\bigl(c_1,\ldots,c_m\bigr).
\end{align}
\end{theorem}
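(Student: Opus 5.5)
We prove the crown recursion by decomposing the annulus $\AA_n$ along a single geodesic arc running between its two crowns. The first formula is the basic statement; the second follows from it by integrating out the neck length.

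\textbf{Setup.} Let $X\in\calM_{\AA_n}(\vec{c}\;|d)$. Its core closed geodesic $\gamma$ has length $d$. Choose the boundary cusp (tine) on the outer crown lying between the straps $\vec{c}\,'$ and $(c_1,\ldots,c_m)$. Cut $X$ along the bi-infinite geodesic $\eta$ running between the two relevant ideal points, so that the first $n-m$ straps lie on one side and the last $m$ on the other. This produces two annuli $\AA^\circ_{n-m}$ and $\AA^\circ_m$, each with a half-funnel/tine along the cut. Taking Nielsen extensions (truncating and completing as in \cref{sec:chekhovactionrecursion}) turns each piece into a genuine flippered annulus, with necks of lengths $\ell_1$ and $\ell_2$.

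\textbf{Step 1: coordinates and the Weil--Petersson form.} The key geometric identity is that the neck lengths satisfy $\ell_1=|d-\ell|$ and $\ell_2=|\ell|$ for a signed parameter $\ell\in\RR$. Here $\ell$ is the signed displacement along $\gamma$ between the feet of the perpendiculars from the cut ends; it is obtained by comparing holonomies, since the holonomy of $\gamma$ factors as the product of the two pieces' core holonomies in the universal cover. Next we use Fenchel--Nielsen/shear-type coordinates on $\calT_{\AA_n}(\vec{c}\;|d)$ adapted to $\eta$, from \cref{eq:vol-form-teich-fixed-hol}. With these coordinates we show
\[
\Omega^{\mathrm{WP}}_{\AA_n}(\vec{c}\;|d)=\Omega^{\mathrm{WP}}_{\AA_{n-m}}(\vec{c}\,'|\,|d-\ell|)\wedge\Omega^{\mathrm{WP}}_{\AA_m}(c_1,\ldots,c_m|\,|\ell|)\wedge \mathrm{d}\ell .
\]
No twist appears because the gluing is along an arc between ideal points, not along a closed curve.

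\textbf{Step 2: the Chekhov action.} We show that the action is additive under the cut, $S(X)=S(X_1)+S(X_2)$. This should follow from the definition in \cref{defn: action for flippered surface}: the action is a sum of local contributions attached to tines and straps, and cutting along $\eta$ adds matching tine terms that cancel between the two pieces.

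\textbf{Step 3: the mapping class group and integration.} The mapping class group of the annulus is generated by rotations and Dehn twists about $\gamma$, the latter acting trivially in the relative setting. Because $\eta$ is chosen canonically by the distinguished tine, the cutting map is a bijection from the moduli space onto
\[
\{(X_1,X_2,\ell)\}\subset\calM_{\AA_{n-m}}\times\calM_{\AA_m}\times\RR .
\]
Integrating $e^{-S}$ over this product then gives the first formula. For the second formula we integrate the first over $d$. Applying the neck recursion (\cref{prop:neck}) to the annulus and Fubini, we have
\[
\int_0^\infty\!\!\int_\RR f(|d-\ell|)\,g(|\ell|)\,\mathrm{d}\ell\,\mathrm{d}d .
\]
Substituting $u=d-\ell$ makes this a convolution, which splits into products of $\int_0^\infty f$ and $\int_0^\infty g$. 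The factor $2$ comes from the two signs of $\ell$, modulo carefully matching the measure normalization $\mathrm{d}d$ versus $d\,\mathrm{d}d$ in the definition of $V_{\AA}(\vec{c})$; this bookkeeping has to be checked.

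\textbf{Main obstacle.} The hardest step is Step 1, specifically establishing the exact relation $\ell_1=|d-\ell|$, $\ell_2=|\ell|$ together with the unit-Jacobian volume form identity. To do this I would compute both sides explicitly in the universal cover using the strap holonomy matrices. I would use $\mathrm{d}\log$-shear coordinates, in which the WP form has constant coefficients, so that the change of variables is visibly affine.
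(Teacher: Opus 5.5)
\textbf{There is a genuine gap at the very first step: the decomposition.} $\AA_n$ is a disk with one interior puncture and $n$ boundary punctures. It has a single crown; the funnel end is not a second one. A single arc $\eta$ from the crown to the interior puncture does not separate it. Cutting along such an arc leaves one connected disk, so the straps $\vec c\,'$ and $(c_1,\ldots,c_m)$ never end up on different pieces.

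The paper uses \emph{two} semi-ideal arcs. They leave the arches $\alpha_1$ and $\alpha_{m+1}$ orthogonally and spiral into the cuff. Cutting along both gives two sectors. Each sector is then \emph{reglued to itself} by identifying its two copies of the cut rays, matching their base points on the arches, and only then completed and Nielsen-extended. This regluing is what makes everything work:
\begin{itemize}
\item Both rays meet the arches at right angles, so the two halves of each severed arch reassemble into one complete geodesic. No new tines or flippers are created.
\item The pieces are therefore genuinely $\AA_{n-m}$ and $\AA_m$, with strap vectors $\vec c\,'$ and $(c_1,\ldots,c_m)$.
\item The extremal orthojectories bounding every essential interval are unchanged, whence $S(X)=S(X')+S(X'')$.
\end{itemize}
Your Step~2 cannot be repaired within your picture. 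Pieces carrying an extra tine or half-funnel along the cut have the wrong topological type for the formula. Moreover, Nielsen-type cuts are not action-additive (see the correction factors in \cref{prop:action-arch-arch} and \cref{prop:action-arch-cuff}), so there are no ``matching tine terms'' that cancel.

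\textbf{Step~1 inherits the problem.} The neck relation does not come from a displacement between feet of perpendiculars on $\gamma$. The cut rays spiral rather than meeting $\gamma$. If you used orthogeodesics from $\alpha_1$ and $\alpha_{m+1}$ to $\gamma$ instead, they generally have different lengths and could not be reglued. Also, the formula integrates over all $\ell\in\RR$, so a piece can have neck longer than $d$; no sub-arc-of-$\gamma$ picture produces that.

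The actual mechanism is additivity of shears. Triangulate the puncture-double $\dot{\Sigma}(1,\ldots,n)$ by the $2n$ ideal arcs from the interior puncture, so that $(1+e^{s_{2i-1}})(1+e^{-s_{2i}})=\cosh^2(c_i/2)$ and $\sum_j s_j=\pm d$. The cut-and-reglue map is then literally $(s_1,\ldots,s_{2n})\mapsto\bigl((s_1,\ldots,s_{2m}),(s_{2m+1},\ldots,s_{2n})\bigr)$. Hence the signed neck lengths satisfy $d'+d''=d$, and $\bigwedge_j\mathrm{d}s_j$ splits. This gives your factorisation of $\Omega^{\mathrm{WP}}$ with the leftover $\mathrm{d}\ell=\mathrm{d}d''$. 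Your holonomy heuristic is this fact in disguise: all lifted arcs share the fixed point at the puncture, so the multipliers multiply.

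A partial sum of shears can have the opposite sign to the total, so a piece's spiralling direction can flip. This is the source of the absolute values, and it is why the paper works on the enhanced spaces $\calM^{\pm}$. Your proposal does not address it.

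Your route to the second formula is fine once you drop the appeal to the neck recursion and use $V_{\AA_k}(\vec c)=\int_0^\infty V_{\AA_k}(\vec c\,|d)\,\mathrm{d}d$. With $f,g$ the even extensions, $\int_0^\infty (f*g)=\tfrac12\bigl(\int_\RR f\bigr)\bigl(\int_\RR g\bigr)=2\int_0^\infty f\int_0^\infty g$. The paper instead observes that the cut map is a diffeomorphism of the enhanced moduli spaces and counts the double covers.
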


\begin{center}
\begin{figure}[ht!]
    \includegraphics[width=\textwidth]{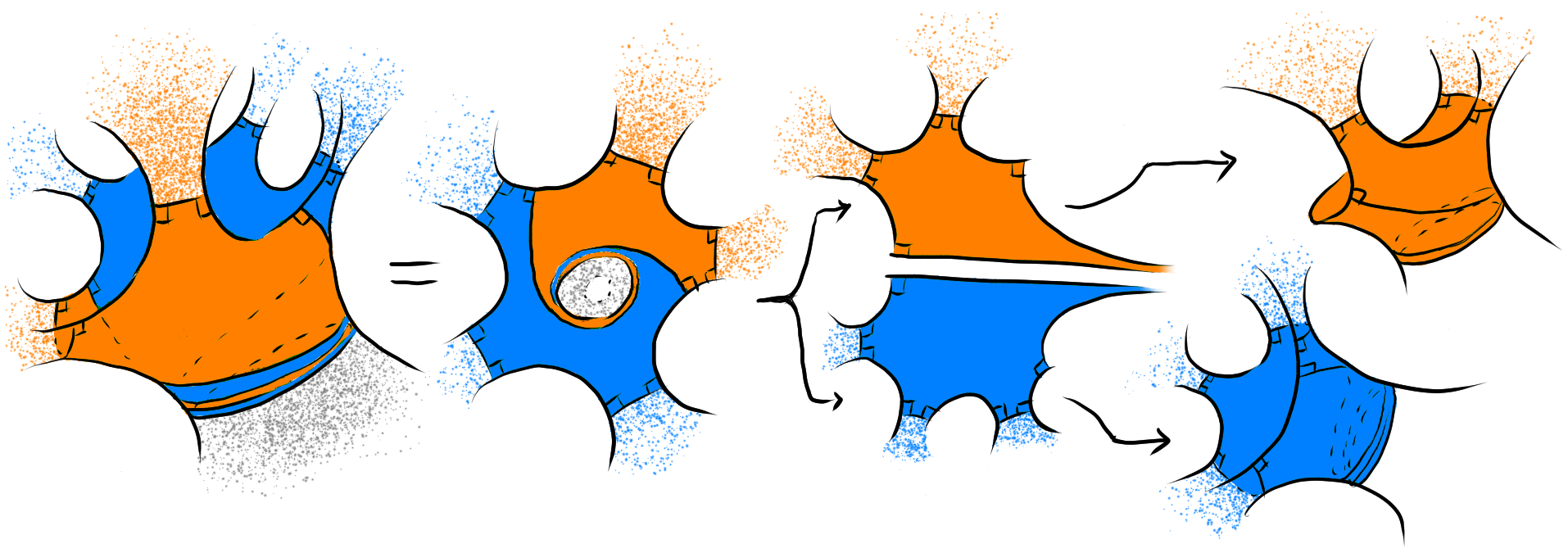}
\caption{An illustration of how one can decompose $\mathbb{A}_5$ into $\mathbb{A}_2$ (red) and $\mathbb{A}_3$ (blue). The initial spiraling direction is chosen arbitrarily, but the final spiraling direction is determined by the geometry of the decomposed pieces. In particular, it is possible to end up with two surfaces with spirals in the opposite direction.}\label{fig:crownrecursion}
\end{figure}
\end{center}

The above recursion formulae assert the importance of the volumes of moduli spaces of topologically simpler surfaces such as $\DD_n$ and $\AA_n$. We show:

\begin{theorem}[\cref{thm:vol-n-gon-1-int}, \cref{thm:vol-n-crown-1-int}]
For $n\geqslant 3$,
\begin{equation}
{V}_{\DD_n}(c_1,\dotsc,c_n) = \pi^{n-2} \int_0^\infty \xi \sinh{2\pi\xi} \cdot \prod_{i=1}^n \frac{P_{-1/2+\sqrt{-1}\xi}(\cosh c_i)}{\cosh{\pi\xi}} \,\mathrm{d}\xi.
\end{equation}    
For $n\geqslant 1$, 
\begin{equation}
V_{\AA_n} (c_1,\dotsc,c_n|d)  =  \pi^{n-1}\int_0^\infty \cos(\omega d) \cdot\prod_{i=1}^n \frac{P_{-1/2+\sqrt{-1}\omega}(\cosh c_i)}{\cosh \pi\omega} \,\mathrm{d}\omega.
\end{equation}
\end{theorem}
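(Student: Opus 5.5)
The plan is to prove both formulae from the recursions above, using the base cases $\DD_3$ and $\AA_1$ as seeds and two integral transforms that diagonalize the gluing kernels.

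\textbf{Base cases.} For the ideal/flippered triangle $\DD_3$ the moduli space is a point, since a hyperbolic right-angled hexagon, with degenerations to ideal vertices, is determined by three alternating side lengths. Hence $V_{\DD_3}(c_1,c_2,c_3)=e^{-S}$, with $S$ given explicitly by the definition of the generalised Chekhov action. I would then check directly that this equals
\[
\pi\int_0^\infty \xi\sinh(2\pi\xi)\prod_{i=1}^3\frac{P_{-1/2+\sqrt{-1}\xi}(\cosh c_i)}{\cosh\pi\xi}\,\mathrm{d}\xi .
\]
The check uses the Mehler--Fock transform. The Mehler--Fock inversion pairs $f(\cosh c)$ with $\hat f(\xi)=\int f(x)P_{-1/2+\sqrt{-1}\xi}(x)\,\mathrm{d}x$, with inverse measure $\xi\tanh(\pi\xi)\,\mathrm{d}\xi$. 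The triple-product integral of conical functions is a known kernel, namely the product formula for Legendre functions, and I expect $e^{-S}$ for the triangle to reduce to it. For $\AA_1$ the moduli space with fixed neck $d$ is again a point, so $V_{\AA_1}(c|d)=e^{-S}$. I would match this against
\[
\int_0^\infty \cos(\omega d)\,\frac{P_{-1/2+\sqrt{-1}\omega}(\cosh c)}{\cosh\pi\omega}\,\mathrm{d}\omega,
\]
using the classical Fourier-cosine representation of conical functions, e.g.
\[
\frac{P_{-1/2+\sqrt{-1}\omega}(\cosh c)}{\cosh\pi\omega}=\frac{1}{\pi}\int\frac{\cos(\omega t)\,\mathrm{d}t}{\sqrt{\cosh t+\cosh c}}\quad\text{(up to normalisation).}
\]

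\textbf{Induction for $\DD_n$.} Take $\Sigma=\DD_{n}=\DD_{n-1}\cup\DD_3$ and apply the disk recursion with $\Sigma'=\DD_{n-1}$:
\[
V_{\DD_n}=\tfrac12\int_0^\infty V_{\DD_{n-1}}(c_1,\dots,c_{n-2},\ell)\,V_{\DD_3}(c_{n-1},c_n,\ell)\,\mathrm{d}\cosh\ell .
\]
Substitute the inductive formula in $\xi$ and the base formula in $\eta$, and integrate over $x=\cosh\ell$. The Mehler--Fock orthogonality relation
\[
\int_1^\infty P_{-1/2+\sqrt{-1}\xi}(x)\,P_{-1/2+\sqrt{-1}\eta}(x)\,\mathrm{d}x=\frac{\delta(\xi-\eta)}{\xi\tanh\pi\xi}
\]
collapses the double integral to a single one. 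The resulting weight is
\[
\tfrac12\,\xi\sinh(2\pi\xi)\cdot\xi\sinh(2\pi\xi)\cdot\frac{1}{\xi\tanh(\pi\xi)\cosh^2(\pi\xi)}=\xi\sinh(2\pi\xi),
\]
using $\sinh 2\pi\xi=2\sinh\pi\xi\cosh\pi\xi$. The factors of $\pi$ combine as $\pi^{n-3}\cdot\pi=\pi^{n-2}$.

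\textbf{Induction for $\AA_n$.} Use the crown recursion with $m=1$:
\[
V_{\AA_n}(\vec c\,|d)=\int_\RR V_{\AA_{n-1}}(\vec c\,'\,|\,|d-\ell|)\,V_{\AA_1}(c_n\,|\,|\ell|)\,\mathrm{d}\ell .
\]
This is a convolution on $\RR$ of even functions. Their Fourier cosine transforms multiply, and $\int_0^\infty\cos\omega d\,(\cdot)\,\mathrm{d}\omega$ inverts with a factor $\pi$, which produces the product formula with $\pi^{n-1}$.

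\textbf{Main obstacle.} The delicate steps are the base-case identifications, in particular showing that the explicit Chekhov weight of the flippered triangle equals the triple conical-function integral. This needs the precise action normalisation and the Mehler--Fock form of the Legendre product formula. Justifying the interchange of integrals is a secondary issue; it follows from the exponential decay of $P_{-1/2+\sqrt{-1}\xi}(\cosh c)/\cosh\pi\xi$ for $c>0$, with a limiting argument when some $c_i=0$.
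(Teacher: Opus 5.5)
There is a genuine gap in your base cases. A point-like moduli space does not have volume $e^{-S}$ in this setting. The Weil--Petersson form on $\mathcal{T}_{\DD_3}(\vec c)$ is the shear volume form of the clipped surface divided by $\mu_{\mathcal A}=\bigwedge_j\mathrm{d}\log\sinh^2(c_j/2)$. This ``volume $0$-form'' is the non-constant function $\Omega^{\mathrm{WP}}_{\DD_3}(\vec c)=\frac{C_1+C_2+C_3-1+\sqrt{D}}{2\sqrt{D}}$, where $C_i=\cosh c_i$ and $D=C_1^2+C_2^2+C_3^2+2C_1C_2C_3-1$. Likewise, after also dividing by $\mathrm{d}d$, one gets $\Omega^{\mathrm{WP}}_{\AA_1}(c|d)=\bigl(\sqrt{(\cosh c+\cosh d)/2}+\cosh(d/2)\bigr)/\sqrt{2\cosh c+2\cosh d}$.

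The correct seeds are therefore $V_{\DD_3}=e^{-S}\,\Omega^{\mathrm{WP}}_{\DD_3}=2/\sqrt{D}$ and $V_{\AA_1}(c|d)=(2\cosh c+2\cosh d)^{-1/2}$. With $e^{-S}$ alone you would get $4/(C_1+C_2+C_3-1+\sqrt{D})$ and $1/\bigl(\sqrt{(\cosh c+\cosh d)/2}+\cosh(d/2)\bigr)$. These agree with the $n=3$ and $n=1$ cases of the claimed formulas only when $(C_1-1)(C_2-1)(C_3-1)=0$, respectively $c=0$. So the ``direct check'' you single out as the main obstacle is not delicate but false whenever the straps are positive, and both of your inductions would propagate wrong values.

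Once the seeds are corrected, the $\AA_1$ case is exactly the Fourier-cosine formula $\int_{\RR}e^{-\sqrt{-1}\xi x}(2\cosh x+2\cosh c)^{-1/2}\,\mathrm{d}x=\pi P_{-1/2+\sqrt{-1}\xi}(\cosh c)/\cosh\pi\xi$. Your $\AA_n$ convolution step is then the paper's argument.

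The $\DD_3$ case is a different matter. It is equivalent to the product formula $\int_1^\infty P_{-1/2+\sqrt{-1}\xi}(z)\,\mathrm{d}z/\sqrt{x^2+y^2+z^2+2xyz-1}=\frac{\pi}{\cosh\pi\xi}P_{-1/2+\sqrt{-1}\xi}(x)P_{-1/2+\sqrt{-1}\xi}(y)$, followed by Mehler--Fock inversion. This is the genuinely new ingredient (the paper could not locate it in the literature), not a kernel you can simply quote. The paper proves it by computing $V_{\AA_2}(c_1,c_2|d)$ twice, once via the disk recursion $\AA_2=\AA_1\cup\DD_3$ and once via the crown recursion, and then Fourier transforming in $d$. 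Alternatively, it follows from Mehler's formula for $\int_1^\infty P_{-1/2+\sqrt{-1}\xi}(z)(z+r)^{-1}\,\mathrm{d}z$ combined with the classical product formula $\frac1\pi\int_0^\pi P_\nu(\cosh\alpha\cosh\beta+\sinh\alpha\sinh\beta\cos\psi)\,\mathrm{d}\psi=P_\nu(\cosh\alpha)P_\nu(\cosh\beta)$.

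Given that lemma, your induction through $\delta$-orthogonality is a reasonable alternative to the paper's route. The paper instead diagonalises the operators $T_a$: it Mehler--Fock transforms the whole iterated integral once and inverts at the end. If you keep your route, note that $\int_1^\infty P_{-1/2+\sqrt{-1}\xi}(x)P_{-1/2+\sqrt{-1}\eta}(x)\,\mathrm{d}x$ does not converge absolutely. The interchange should therefore go through the Mehler--Fock Parseval identity on $L^2(1,\infty)$ rather than Fubini; both factors decay like $1/x$ up to logarithms, so they lie in that space.
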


\begin{corollary}[\cref{cor:vol-n-crown-no-neck}]
For $n\geqslant 1$,
\begin{equation*}
V_{\AA_n} (c_1,\dotsc,c_n)  = 2^{n-1} \prod_{i=1}^n \frac{K\left(\tanh(c_i/2)\right)}{\cosh(c_i/2)},
\end{equation*}
where $K(s):=\int_0^1\frac{\mathrm{d}t}{\sqrt{(1-t^2)(1-s^2t^2)}}$ is the complete elliptic integral of the first kind. 
\end{corollary}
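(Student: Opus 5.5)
We start from the integral representation for $V_{\AA_n}(c_1,\dotsc,c_n|d)$ in the preceding theorem. The volume without fixed neck should be obtained by integrating out the neck length. By analogy with the neck recursion, where the neck contributes the factor $\ell\,\mathrm{d}\ell$, and with the crown recursion, whose first formula is a convolution over $\ell\in\RR$, I expect the normalisation $V_{\AA_n}(\vec c)=\int_{\RR} V_{\AA_n}(\vec c\,|\,|d|)\,\mathrm{d}d = 2\int_0^\infty V_{\AA_n}(\vec c|d)\,\mathrm{d}d$. The first task is to confirm this normalisation from the definitions of $\Omega^{\mathrm{WP}}$ on $\calM_{\AA_n}(\vec c)$ versus the fibres $\calM_{\AA_n}(\vec c|d)$. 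There the twist/spiralling parameter along the neck should have been absorbed (no $\ell$ weight, as the neck is a funnel boundary rather than an interior geodesic), and the factor $2$ comes from the two spiralling directions.

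Granting this, we substitute the $\omega$-integral and exchange orders. Using $\int_{\RR}\cos(\omega d)\,\mathrm{d}d = 2\pi\delta(\omega)$ distributionally, or more rigorously by Fourier inversion, the integral collapses to $\omega=0$. Since $\int_0^\infty \cos(\omega d)\,\mathrm{d}d$ paired with a nice even function $f$ on $[0,\infty)$ gives $\tfrac{\pi}{2}f(0)$, we get
\begin{equation*}
2\int_0^\infty V_{\AA_n}(\vec c|d)\,\mathrm{d}d = 2\cdot \pi^{n-1}\cdot\frac{\pi}{2}\prod_{i=1}^n P_{-1/2}(\cosh c_i) = \pi^n\prod_{i=1}^n P_{-1/2}(\cosh c_i).
\end{equation*}
To justify the exchange we need $f(\omega)=\prod_i P_{-1/2+\sqrt{-1}\omega}(\cosh c_i)/\cosh\pi\omega$ to be even, smooth and integrable. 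Evenness holds since $P_{-1/2+i\omega}=P_{-1/2-i\omega}$. The conical functions are bounded by $P_{-1/2}(\cosh c)$ for real $\omega$, and $1/\cosh\pi\omega$ decays exponentially. Then $V(\vec c|d)$ is the cosine transform of an $L^1\cap C^\infty$ even function, so Fourier inversion at $\omega=0$ applies. Equivalently, the integral $\int_0^\infty V\,\mathrm{d}d$ converges, which is consistent with the finiteness theorem.

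Finally, we invoke the classical identity $P_{-1/2}(\cosh c) = \frac{2}{\pi}\,\frac{K(\tanh(c/2))}{\cosh(c/2)}$. This follows from the Laplace integral $P_{-1/2}(\cosh c)=\frac1\pi\int_0^\pi(\cosh c+\sinh c\cos\theta)^{-1/2}\mathrm{d}\theta$ and the Landen-type substitution bringing it to Legendre normal form with modulus $\tanh(c/2)$. This yields
\begin{equation*}
\pi^n\prod_i \frac{2}{\pi}\frac{K(\tanh(c_i/2))}{\cosh(c_i/2)} = 2^n\prod_i\frac{K(\tanh(c_i/2))}{\cosh(c_i/2)}.
\end{equation*}
This is off by a factor of $2$ from the claim, so the correct normalisation must be $V_{\AA_n}(\vec c)=\int_0^\infty V_{\AA_n}(\vec c|d)\,\mathrm{d}d$, with a single spiralling sign. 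Consistency with the crown recursion supports this. Its convolution $\int_\RR V(\cdot|\,|d-\ell|)V(\cdot|\,|\ell|)\,\mathrm{d}\ell$, integrated over $d\in[0,\infty)$, gives $\tfrac12\bigl(\int_\RR V\bigr)\bigl(\int_\RR V\bigr)=2\,V\cdot V$ exactly when $V(\vec c)=\int_0^\infty V(\vec c|d)\,\mathrm{d}d$. That check fixes the normalisation and gives the stated formula.

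The main obstacle is pinning down this measure normalisation on the neck-length parameter from the paper's definition of $\Omega^{\mathrm{WP}}_{\AA_n}$. The Fourier collapse and the Legendre-to-elliptic identity are routine.
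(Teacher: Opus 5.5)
Your argument is correct, but it takes a different route from the paper. The paper first uses the second crown-recursion formula of \cref{thm:crown-rec}, $V_{\AA_n}(\vec c)=2V_{\AA_{n-m}}(\vec c\,')V_{\AA_m}(c_1,\dots,c_m)$, to factor $V_{\AA_n}(\vec c)=2^{n-1}\prod_i V_{\AA_1}(c_i)$. It then evaluates $V_{\AA_1}(c)=\int_0^\infty(2\cosh c+2\cosh d)^{-1/2}\,\mathrm{d}d$ by the substitution $t=\tanh(d/2)$, which lands directly on Legendre normal form with modulus $\tanh(c/2)$.

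You instead evaluate the Fourier representation of \cref{thm:vol-n-crown-1-int} at zero frequency, getting $V_{\AA_n}(\vec c)=\tfrac{\pi^n}{2}\prod_i P_{-1/2}(\cosh c_i)$. You then convert each factor by Laplace's integral and Landen's transformation, and that computation checks out.

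The paper's route is more elementary: there are no transforms and no special-function identities, and the factorization over tines comes from the geometry. Yours handles all $n$ at once and exhibits the volume as $\tfrac{\pi^n}{2}\prod_i P_{-1/2}(\cosh c_i)$. This is the form the paper itself uses later, via $K(\tanh(c/2))/\cosh(c/2)=\tfrac{\pi}{2}P_{-1/2}(\cosh c)$, in part (ii)(d) of \cref{thm:vol-str}. Your route is not really independent, though. \cref{thm:vol-n-crown-1-int} was derived from the convolution form of the crown recursion, and evaluating at $\omega=0$ is just the convolution theorem recovering the same product.

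There are two points to tighten.

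\textbf{Normalisation.} This should be read off from the definitions rather than reverse-engineered from the target. In $\calM_{\AA_n}$ the spiralling direction is fixed by convention: the shears around the cuff sum to a non-negative number, so $d$ ranges over $[0,\infty)$. Since $\Omega^{\mathrm{WP}}(\vec c\,|d)$ is defined by dividing $\Omega^{\mathrm{WP}}(\vec c)$ by $\mathrm{d}d$, \cref{lem:Fubini} gives $V_{\AA_n}(\vec c)=\int_0^\infty V_{\AA_n}(\vec c\,|d)\,\mathrm{d}d$ directly. The paper uses exactly this for $V_{\AA_1}(c)$. The doubling you first guessed only lives on the enhanced space $\calM^{\pm}$ in the proof of \cref{thm:crown-rec}. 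Your crown-recursion consistency check does pin the constant correctly, so this is presentation rather than a gap.

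\textbf{Integrability.} The cosine transform of an $L^1\cap C^\infty$ function need not be integrable, so you need $V_{\AA_n}(\vec c\,|\cdot)\in L^1$ separately. It follows from the convolution expression \cref{eq:vol-n-crown-conv} by Tonelli, since the factors are nonnegative and in $L^1$. Tonelli then already gives $\int_\RR V_{\AA_n}=\prod_i\int_\RR V_{\AA_1}(c_i|\cdot)$ without any inversion step. Citing \cref{thm:finvolflipper} instead would be circular, because its $\AA_n$ case rests on this corollary.
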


\begin{theorem}[\cref{thm:vol-str}]

For any $\Sigma$, the Taylor germ of $V_{\Sigma}(\vec{b};\vec{c})$ in $c$-variables has coefficients in 
\[
\QQ[b_1^2,\dotsc, b_m^2,\log 2, \zeta(r),\beta(r)],\quad 1\leqslant r \leqslant \dim_\RR\mathcal{M}_\Sigma(\vec{b};\vec{c}).
\]
\end{theorem}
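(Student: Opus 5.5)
We prove the statement by strong induction on the topological complexity of $\Sigma$, using the three recursions to reduce everything to the base cases $\DD_n$ and $\AA_n$. Those base cases we then handle directly through the integral representations of \cref{thm:vol-n-gon-1-int} and \cref{thm:vol-n-crown-1-int}.

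\emph{Base cases.} For $\AA_n$ without neck, the corollary gives $V_{\AA_n}(\vec c)=2^{n-1}\prod_i K(\tanh(c_i/2))/\cosh(c_i/2)$. Since $K(s)=\tfrac{\pi}{2}\,{}_2F_1(\tfrac12,\tfrac12;1;s^2)$ has rational Taylor coefficients times $\pi$, each factor has a Taylor germ in $c_i$ with coefficients in $\pi\QQ$. Since $\pi=4\beta(1)$, we are done here. For $\DD_n$, expand each factor $P_{-1/2+\sqrt{-1}\xi}(\cosh c_i)$ in powers of $c_i$. The coefficient of $c_i^{2k}$ is a polynomial in $\xi^2$ with rational coefficients; this follows from the hypergeometric form $P_\nu(\cosh c)={}_2F_1(-\nu,\nu+1;1;\sinh^2(c/2))$ with $-\nu(\nu+1)=\xi^2+\tfrac14$. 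Each Taylor coefficient of $V_{\DD_n}$ is then a $\QQ$-combination of moments
\[
\pi^{n-2}\int_0^\infty \xi^{2j+1}\,\frac{\sinh 2\pi\xi}{\cosh^n\pi\xi}\,\mathrm{d}\xi .
\]
Justifying the term-by-term integration needs a uniform bound on the Taylor remainder of $P_{-1/2+\sqrt{-1}\xi}$ against the exponential decay $\cosh^{2-n}$. Substituting $t=\pi\xi$, these moments are Mellin-type integrals of $\sinh t\cosh^{1-n}t$. Expanding $\cosh^{-m}$ as a series in $e^{-2t}$ with binomial coefficients, they evaluate to $\QQ$-combinations of $\pi^{-2j-2}\Gamma(2j+2)$ times $\zeta$, $\eta$ or $\beta$ values at integers, plus $\log 2$ from $\eta(1)$. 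The parity of $n$ decides between the alternating zeta series and the Dirichlet beta series. The powers of $\pi$ are absorbed using $\pi^{2k}\in\QQ\zeta(2k)$ and $\pi\in\QQ\beta(1)$; the product $\zeta(2k)\cdot(\cdot)$ stays in the ring. The argument for $\AA_n|d$ is the same with $\cos(\omega d)$ expanded in $d$, should necked base cases be needed.

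\emph{Inductive step.} Choose a decomposition of $\Sigma$. If $\Sigma$ has genus or cuffs together with a crown, use the neck recursion (\cref{prop:neck-recursion}) to split off an $\AA_m$. If instead some crown has boundary punctures that can be separated, use the disk recursion (\cref{thm:disk-recursion}) to excise a $\DD_{n+1}$. The crown recursion (\cref{thm:crown-rec}) splits $\AA_n$ as a product with rational factor $2$, so it trivially preserves the ring. The crownless pieces $V_{\Sigma'}(\vec b,\ell)$ are classical Mirzakhani polynomials in $\QQ[\pi^2][b^2,\ell^2]$.

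In the neck recursion, a Taylor coefficient in $\vec c$ becomes $\int_0^\infty p(\vec b,\ell)\,[c^\alpha]V_{\AA_m}(\vec c|\ell)\,\ell\,\mathrm{d}\ell$. Inserting the Fourier representation and applying $\int_0^\infty \ell^{2k+1}\cos(\omega\ell)\,\mathrm{d}\ell$ in the distributional sense (derivatives of $\delta$ and principal values) converts this into derivatives at $\omega=0$ or moments as above, again in the ring. The cleaner route is to integrate against the Mellin moments directly. Similarly, the disk recursion produces $\int_0^\infty \ell^{2k}$-type moments against $[c^\alpha]V_{\DD_{n+1}}(\vec c,\ell)\,\mathrm{d}\cosh\ell$. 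Here the identity $\int_1^\infty P_{-1/2+\sqrt{-1}\xi}(x)\,x^j\,\mathrm{d}x$ (Mehler--Fock inversion) reduces the computation to $\xi$-moments once more. The degree bound $r\leqslant\dim_\RR$ is tracked by counting powers: each integration raises the weight by at most the real dimension added.

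\textbf{Main obstacle.} The hardest step is the analytic interchange in the inductive step. The $\ell$-integrals of Legendre functions are only conditionally convergent, or distributional, against polynomial growth, so swapping the $\ell$- and $\xi$-integrals needs care. I would first try to regularize by inserting $e^{-\epsilon\ell}$, compute in closed form via Mehler--Fock, and let $\epsilon\to0$. The finiteness theorem (\cref{thm:finvolflipper}) guarantees that the limit exists.
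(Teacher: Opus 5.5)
\textbf{There is a genuine gap: the $\AA_{a_1,a_2}$ case is never handled, and the key evaluation in the neck case is missing.}

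\emph{What matches the paper.} Your treatment of $\AA_n$ is the paper's (ii)(d). Your reduction of $\DD_n$ to the moments $\int_0^\infty\xi^{2j+1}\sinh(2\pi\xi)\cosh^{-n}(\pi\xi)\,\mathrm{d}\xi$ is (ii)(b). Cutting all necks with Abel regularisation is the route of (ii)(a).

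\emph{The induction does not close.} Your neck step needs $V_{\Sigma'}$ to be polynomial in the cuff variable, i.e.\ $\Sigma'$ crownless. After cutting every neck you are therefore left with exactly the surfaces for which $\Sigma_{g,m+l}$ is not hyperbolic: $\DD_n$, $\AA_n$ and $\AA_{a_1,a_2}$. The last family is never treated, and it cannot be pushed through the disk recursion.
\begin{itemize}
\item $\frac12\int_0^\infty V_{\Sigma'}(\vec b;\vec c\,',\ell)\,V_{\DD_{n+1}}(c_1,\dots,c_n,\ell)\,\mathrm{d}\cosh\ell$ integrates $V_{\Sigma'}$ over the whole range of a \emph{strap} length. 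Your inductive hypothesis, a Taylor germ at $\ell=0$, says nothing about that integral.
\item $V_{\Sigma'}$ is not polynomial in strap variables (already $V_{\AA_1}(\ell)=K(\tanh\frac{\ell}{2})/\cosh\frac{\ell}{2}$), so no ``$\ell^{2k}$-type moments'' arise.
\item $\int_1^\infty P_{-1/2+\sqrt{-1}\xi}(x)\,x^j\,\mathrm{d}x$ diverges for $j\geqslant 0$.
\item $\AA_{1,1}$ admits no disk excision at all. Its neck recursion gives $\int_0^\infty d\,V_{\AA_1}(c|d)\,V_{\AA_1}(c'|d)\,\mathrm{d}d$, a product of two non-polynomial factors.
\end{itemize}
This missing case is exactly where $\log 2$ enters the ring: $V_{\AA_{1,1}}(0,0)=\int_0^\infty u\,\mathrm{sech}^2u\,\mathrm{d}u=\log 2$. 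Your $\DD_n$ moments never produce $\eta(1)$; they lie in $\QQ[\pi^2]$. The paper treats $\AA_{a_1,a_2}$ separately in (ii)(c). It writes $V_{\AA_n}(\vec c\,|d)=\sum_{\boldsymbol{\alpha}}(-1)^{|\boldsymbol{\alpha}|}A_{\boldsymbol{\alpha}}(-\partial_d^2)V_{\AA_n}(\vec 0|d)\,\boldsymbol{s}^{\boldsymbol{\alpha}}$. It then inserts the closed form $V_{\AA_n}(\vec 0|d)=Q_n(d)q^{1/2}/(1-(-1)^nq)$ from \cite{HT25}. Finally it reduces everything to $\int_0^1(-\log q)^r(1\mp q)^{-1}\,\mathrm{d}q$, i.e.\ $\zeta$ and $\eta$ values.

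\emph{The neck-case evaluation is missing.} The cosine transform of the odd power $\ell^{2k+1}$ is $(-1)^{k+1}(2k+1)!$ times the finite part of $|\omega|^{-2k-2}$. It is not a combination of derivatives of $\delta$, so it does not reduce to derivatives at $\omega=0$. Consequently $\mu_{k,n}(\vec c)$ becomes $(-1)^{k+1}(2k+1)!\,\pi^{n-1}\mathcal{M}\{\Phi_n\}(-2k-1)$ (\cref{lem:mellin-vol-n-crown}). These are continued Mellin transforms of $\omega^{2j}\mathrm{sech}^n(\pi\omega)$ at \emph{negative} odd integers, not ``moments as above''.
\begin{itemize}
\item Evaluating them requires the functional equations of $\beta$ and $\zeta$. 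They produce $\beta(2r)$ and $\zeta(2r+1)$, the genuinely new constants of the ring.
\item They carry negative powers of $\pi$. For example $\mathcal{M}\{\mathrm{sech}^2\pi\omega\}(-2k-1)\in\QQ\,\pi^{-1}\zeta(2k+3)$, and passing from $\mathrm{sech}^n$ to $\mathrm{sech}^{n+2}$ introduces $\pi^{-2}$. These cannot be ``absorbed'' via $\pi^{2k}\in\QQ\zeta(2k)$.
\item One must show, as \cref{lem:mellin-sechn-values} does, that $\pi^{n-1}\mathcal{M}\{\mathrm{sech}^n\pi\omega\}(2p+1)$ involves only nonnegative powers of $\pi$.
\end{itemize}
The same parity bookkeeping is what cancels the $\pi^{-2j-2}$ in your $\DD_n$ base case. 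Only $\beta$ at odd and $\zeta$ at even positive arguments occur there, and these are rational multiples of the matching powers of $\pi$; your sketch does not establish this. Likewise, the bound $r\leqslant\dim_\RR\mathcal{M}_\Sigma(\vec b;\vec c)$ has to be read off from these explicit indices, not from a weight count.
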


\begin{remark}
Mirzakhani volume integrals are \emph{periods} in the sense of Kontsevich--Zagier \cite{KontsevichZagier2001} for specialized values of $b_i$ and $c_j$ (e.g.: when $b_i$ and $\cosh c_j$ algebraic numbers, see \cref{thm:vol-periods}). Compare this to the exponential volumes in \cite{goncharov-sun}, which are \emph{exponential} periods \cite[\S4.3]{KontsevichZagier2001}.
\end{remark}

\begin{theorem}[string and dilaton-type equations, \cref{thm:vol-2pi}]

For $\Sigma = \Sigma_{g,m+1,\vec{a}}$ with $2g-2+m+l>0$, and $\vec{b}=(b_1,\dotsc,b_m)$,
\begin{equation}
\begin{split}
V_{\Sigma}(\vec{b},2\pi\sqrt{-1};\vec{c}\;|\vec{d}) = &\sum_{i=1}^m \int_{0}^{b_i} b_i V_{\Sigma\cup p_{m+1}}(\vec{b};\vec{c}\;|\vec{d})\,\mathrm{d}b_i 
\\&+\sum_{j=1}^l d_j V_{\AA_{a_j}}(\vec{c}_j|d_j) \int_0^{d_j} \frac{V_{\Sigma\cup p_{m+1}}(\vec{b};\vec{c}\;|\vec{d})}{V_{\AA_{a_j}}(\vec{c}_j|d_j)} \mathrm{d}d_j.
\end{split}
\end{equation}
For $\Sigma = \AA_n$ with $n\geqslant 3$,
\begin{equation}
\frac{\partial V_{\Sigma}}{\partial d}\left(\vec{c}\;|2\pi\sqrt{-1}\right) = -\pi\sqrt{-1}V_{\Sigma\cup p_1}(\vec{c}).
\end{equation}
For $\Sigma = \Sigma_{g,m+1,\vec{a}}\neq\AA_n$ and $\vec{b}=(b_1,\dotsc,b_m)$,
\begin{align}
\frac{\partial V_\Sigma}{\partial b_{m+1}} \bigl(\vec{b},2\pi\sqrt{-1};\vec{c}\;|\vec{d}\bigr)
&=
2\pi \sqrt{-1} (2g-2+m+l)V_{\Sigma\cup p_{m+1}}\bigl(\vec{b};\vec{c}\;|\vec{d}\bigr)
\text{, and}\\
\frac{\partial V_\Sigma}{\partial b_{m+1}} \bigl(\vec{b},2\pi\sqrt{-1};\vec{c}\bigr)
&= 
2\pi \sqrt{-1} (2g-2+m+l)V_{\Sigma\cup p_{m+1}}\bigl(\vec{b};\vec{c}\bigr).
\end{align}
\end{theorem}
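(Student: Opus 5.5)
We follow the strategy of Do--Norbury for ordinary Weil--Petersson volumes. The idea is to evaluate $b_{m+1}=2\pi\sqrt{-1}$ inside a Mirzakhani--McShane type recursion for $V_\Sigma$ with respect to the $(m+1)$-st cuff. We take that recursion in the form assembled from the neck recursion (\cref{prop:neck}), the disk recursion and the crown recursion, exactly as they appear in the proof of polynomiality/structure (\cref{thm:vol-str}). So the first step is to write
\begin{align*}
\frac{\partial}{\partial b_{m+1}}\bigl(b_{m+1}V_\Sigma\bigr)
\end{align*}
as a sum of three kinds of terms. The first are pants-removal terms joining cuff $m+1$ to another cuff $b_i$, with kernel $H(x,b_{m+1})$ built from $\frac{1}{1+e^{(x\pm b_{m+1})/2}}$. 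The second are pants terms that separate along a neck. The third are terms in which the pair of pants bounding cuff $m+1$ meets a crown (a neck $d_j$). The $c$-variables and $\vec d$ enter only through the volumes of the complementary pieces $V_{\Sigma'}$, $V_{\AA_{a_j}}(\vec c_j|d_j)$ and $V_{\DD}$.

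The key algebraic input is the identity for the kernel at imaginary length. Because $e^{\pm\pi\sqrt{-1}}=-1$,
\[
H(x,2\pi\sqrt{-1})=\frac{1}{1-e^{x/2}}+\frac{1}{1-e^{-x/2}}=1 .
\]
The kernel therefore degenerates to a constant, and each recursion integral collapses to an elementary one.

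In the first formula, consider a cuff--cuff term. It becomes $\int_0^{b_i}$ (respectively $\int_0^\infty$ after symmetrization) of $x\,V_{\Sigma\setminus P}(x,\dots)$. Gluing a cusp $p_{m+1}$ in place of the pants identifies this with $b_i\,V_{\Sigma\cup p_{m+1}}$ integrated in $b_i$. Here we use that removing a pants with a cusp leaves the same surface with $b_i\mapsto x$. The neck terms behave the same way. The only difference is that the complementary piece is $\AA_{a_j}$, whose volume is not polynomial. To handle it we factor
\[
V_{\Sigma\cup p_{m+1}}=V_{\AA_{a_j}}(\vec c_j|d_j)\cdot\bigl(V_{\Sigma\cup p_{m+1}}/V_{\AA_{a_j}}\bigr)
\]
using the first line of \cref{prop:neck}, and integrate the latter factor in $d_j$. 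Terms of the recursion separating $\Sigma$ into two pieces both away from cuff $m+1$ are multiplied by the kernel integral over both lengths. With $H\equiv 1$ these become total derivatives that vanish at the lower limit, exactly as in Do--Norbury.

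For the derivative formulas, we differentiate the recursion in $b_{m+1}$ before specializing. This produces $\partial_{b}H$ at $2\pi\sqrt{-1}$, which is an explicit odd function whose integral against $x\,V(x)\,\mathrm{d}x$ yields $2\pi\sqrt{-1}$ times the evaluation of $V$ at the cusp. The count of boundary terms then gives the Euler-characteristic factor $2g-2+m+l$. Each cuff contributes one, each crown contributes one, and the dilaton-type rearrangement contributes $2g-2$, via the cusp analogue of $\int x V\,\mathrm{d}x$ computed from \cref{prop:neck} at $b=0$. The integrated (no $\vec d$) version follows from the fixed-$\vec d$ one by integrating against $\ell\,\mathrm{d}\ell$, again by \cref{prop:neck}.

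For $\Sigma=\AA_n$ we use the explicit representation from \cref{thm:vol-n-crown-1-int}, with $d\mapsto 2\pi\sqrt{-1}$. Differentiating $\cos(\omega d)$ gives $-\omega\sin(2\pi\sqrt{-1}\omega)=-\sqrt{-1}\,\omega\sinh 2\pi\omega$. Comparing with \cref{thm:vol-n-gon-1-int} for $\DD_n=\AA_n\cup p_1$, the prefactor $\pi^{n-1}$ versus $\pi^{n-2}$ produces $-\pi\sqrt{-1}V_{\DD_n}$, modulo the factor-of-two convention between the integrands. Analytic continuation in $d$ is justified by the decay $\prod P_{-1/2+\sqrt{-1}\omega}/\cosh\pi\omega\sim e^{-n\pi\omega}$ against $e^{2\pi\omega}$, which requires $n\geqslant3$.

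The main obstacle is justifying the analytic continuation to $b_{m+1}=2\pi\sqrt{-1}$ for the general surfaces, since the volumes are not polynomial in $\vec c,\vec d$. We would use \cref{thm:vol-str}, which gives a Taylor germ polynomial in $b_i^2$, together with the exponential decay estimates from \cref{thm:finvolflipper}, to show that each recursion integral is entire in $b_{m+1}$ and that evaluation commutes with integration.
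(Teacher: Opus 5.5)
Your treatment of $\Sigma=\AA_n$ is exactly the paper's argument. The two integral formulas match on the nose, with no factor-of-two issue: $\pi^{n-1}\cdot(-\sqrt{-1})=-\pi\sqrt{-1}\cdot\pi^{n-2}$. The dominated-convergence bound $O(\omega e^{-(n\pi-|\mathrm{Im}\,d|)\omega})$ near $d=2\pi\sqrt{-1}$ is what forces $n\geqslant 3$.

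The general-surface part, however, has a genuine gap.

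\textbf{The recursion you specialize does not exist here.} The paper has no Mirzakhani--McShane-type recursion in the cuff length $b_{m+1}$. The neck, disk and crown recursions all cut along mapping-class-invariant curves or arcs. None of them produces a kernel integral in the length of $\beta_{m+1}$. The proof of \cref{thm:vol-str} likewise uses only the neck recursion plus Mirzakhani's polynomiality.

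\textbf{Your key identity is false.} With Mirzakhani's kernel $H(x,y)=\frac{1}{1+e^{(x+y)/2}}+\frac{1}{1+e^{(x-y)/2}}$, one has $e^{(x\pm 2\pi\sqrt{-1})/2}=-e^{x/2}$ for \emph{both} signs. Hence $H(x,2\pi\sqrt{-1})=\frac{2}{1-e^{x/2}}$, which is neither constant nor regular at $x=0$. So the following steps are all unsupported:
\begin{itemize}
\item the ``collapse to elementary integrals'';
\item the vanishing of the ``total derivative'' terms;
\item the boundary-term count producing $2g-2+m+l$.
\end{itemize}

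\textbf{The missing idea is that none of this is needed.} For $l\geqslant 1$, apply the first line of \cref{prop:neck-recursion} at every neck:
\[
V_\Sigma(\vec b,b_{m+1};\vec c\,|\vec d)=V_{g,m+1+l}(\vec b,b_{m+1},\vec d)\prod_{j=1}^l d_j\,V_{\AA_{a_j}}(\vec c_j|d_j).
\]
Thus $b_{m+1}$ enters only through Mirzakhani's polynomial $V_{g,m+1+l}$.
\begin{itemize}
\item Continuation to $b_{m+1}=2\pi\sqrt{-1}$ is therefore trivial, so your ``main obstacle'' disappears. The Taylor germ in $\vec c$ from \cref{thm:vol-str} plays no role in it.
\item Apply Do--Norbury's Theorem~2 verbatim to $V_{g,m+1+l}$. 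The neck lengths $d_j$ act as ordinary cuffs there, which is why the factor is $2g-2+m+l$.
\item Rearrange using the same factorization for $V_{\Sigma\cup p_{m+1}}$. Your division by $V_{\AA_{a_j}}(\vec c_j|d_j)$ is precisely this last step.
\item The cases $2g-2+m+l=0$, namely $(g,m,l)=(0,1,1)$ or $(0,0,2)$, need separate treatment because $\Sigma_{g,m+l}$ is not hyperbolic. There $V_{0,3}\equiv 1$, so both sides of the derivative identity vanish.
\item For the unfixed-neck derivative, integrate against $\prod_j d_j\,\mathrm{d}d_j$. Differentiation under the integral sign is justified by $V_{\AA_{a}}(\vec c|d)\leqslant V_{\AA_a}(\vec 0|d)$ (\cref{thm:finvolflipper}) together with the exponential decay of the latter in $d$.
\end{itemize}
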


\begin{remark} These identities generalize the identities given in \cite[Theorem~2]{do2009weil} which Do and Norbury show generalize the string and dilaton equations (due to Witten \cite{witten_conjecture}). 
\end{remark}

We conclude this subsection of the introduction by listing some explicit volumes of moduli spaces of flippered surfaces (\cref{thm:vol-fli-triang}, \cref{thm:vol-1-crown}, \cref{thm:vol-flip-2-crown}, \cref{thm:vol-flip-quadr}, \cref{prop:vol-5-gon-2-flip}, \cref{prop:vol-3-crown-formula}, \cref{prop:vol-flip-(1-1)-ann}): 
\begin{table}[ht]
\begin{center}
\begin{tabular}{ |c|c| } 
\hline
    $\Sigma$ & Generalised Mirzakhani Volume \\ \hline
$\mathbb{D}_3$ & $V_\Sigma(c_1,c_2,c_3)=\frac{2
}{\sqrt{\cosh^2c_1+\cosh^2c_2+\cosh^2c_3+2\cosh c_1 \cosh c_2 \cosh c_3-1}}$   \\ \hline
$\mathbb{D}_4$ & ${V}_\Sigma(c,c,0,0)= \frac{2}{1+\cosh c}$   \\ \hline
$\mathbb{D}_4$ & $V_\Sigma(c_1,c_2,0,0)= \frac{2\log\left(\frac{1+\cosh c_1}{1+\cosh c_2}\right)}{\cosh c_1- \cosh c_2}$, if $c_1\neq c_2$   \\ \hline
$\mathbb{D}_4$ & $V_\Sigma(c_1,c_2,c_1,c_2)= \frac{\log \left(\frac{1+\cosh(c_1+c_2)}{1+\cosh(c_1-c_2)}\right)}{\sinh c_1 \sinh c_2}$ \\ \hline
$\mathbb{D}_5$ & ${V}_\Sigma(c_1,c_2,0,0,0)=\frac{2\mathrm{Li}_2(1)+\log^2\frac{\cosh c_1+1}{\cosh c_2+1}+2\sum_{i=1,2}\mathrm{Li}_2\left(\frac{\cosh c_i-1}{\cosh c_i+1}\right)}{\cosh c_1+\cosh c_2}$ \\ \hline
$\mathbb{A}_1$ & $V_\Sigma(c|d)=\frac{1}{\sqrt{2\cosh c +2\cosh d}} $ \\ \hline
$\mathbb{A}_2$ & $V_\Sigma(c,0|d)=\sqrt{\frac{2}{\cosh c-\cosh d}}\text{arccot} \sqrt{\frac{1+\cosh d}{\cosh c-\cosh d}}$, if $c>d$  \\ \hline
$\mathbb{A}_2$ & $V_\Sigma(c,0|d)=\frac{1}{\cosh c/2}$, if $c=d$  \\ \hline
$\mathbb{A}_2$ & $V_\Sigma(c,0|d)=\sqrt{\frac{2}{\cosh d-\cosh c}}\text{arccoth}\sqrt{\frac{\cosh d+1}{\cosh d-\cosh c}}$, if $c<d$  \\ \hline
$\mathbb{A}_3$ & $V_{\Sigma}(c,\vec{0}|d)=\frac{2\text{Li}_2(1)+\frac{1}{2}\sum\log^2\left(\frac{\sqrt{\text{ch} c+\text{ch} d}+\sqrt{\text{ch} d+1}}{\sqrt{\text{ch} c+\text{ch} d}\pm\sqrt{\text{ch} d-1}}\right)+\sum_{1,\dotsc,4}\text{Li}_2\left(\frac{\sqrt{\text{ch} c+\text{ch} d}\pm\sqrt{\text{ch} d\pm1}}{\sqrt{\text{ch} c+\text{ch} d}\pm\sqrt{\text{ch} d\pm1}}\right)}{\sqrt{2(\text{ch} c+\text{ch} d)}}$ \\ \hline

$\mathbb{A}_{1,1}$ & $V_\Sigma(c,c)=\frac{\mathrm{Li}_2(-e^{-c})-\mathrm{Li}_2(-e^c)}{2\sinh c}$   \\ \hline
\end{tabular}
\caption{Some explicit examples of generalised Mirzakhani volumes.}
\label{table:volumes}
\end{center}
\end{table}

Explicit expressions do become quite unwieldy quite quickly, see e.g.: formulas for the volumes $V_{\DD_4}(c_1,c_2,c_3,c_4)$ and $V_{\AA_2}(c_1,c_2|d)$ (respectively \cref{thm:vol-flip-quadr} and \cref{thm:vol-flip-2-crown}).

\subsection*{Organisation of the paper}
 
\cref{sec:teich-moduli-spaces-flipsurf} sets up the geometry of flippered hyperbolic
surfaces: their anatomy, Teichm\"uller and moduli spaces, and the
flipper-clipping and Nielsen cut operations.
\cref{sec:gen-chekhov-action} defines the generalised Chekhov action and
establishes its recursive properties.
\cref{sec:WP-volume-form} constructs the Weil--Petersson volume form via
flipper-clipping and establishes the Nielsen cut recursion for it.
\cref{sec:volumes} proves the finiteness of the renormalised volumes, derives the three volume recursions and computes the volumes of simple surfaces. \cref{sec:structure} proves the string and dilaton-type equations,  determines the structure of the Mirzakhani volume coefficients and establishes that the volumes are periods.
\cref{sec:explicit-volumes} presents explicit computations of volumes for moduli spaces of some simpler surfaces. \cref{app:identity} 
through \cref{app:add-formula} support us with various facts and somewhat more technical computations used along the way.

\subsection*{AI declaration} Version 5.6 of OpenAI’s ChatGPT was used to explore proof ideas, test preliminary arguments, and identify possible references. The authors independently checked every source and argument, wrote the final text, and assume full responsibility for its content.

\subsection*{Acknowledgements}
The authors thank Anton Alekseev, Dylan Allegretti, Norman Do, Maksim Karev, Omar Kidwai, Paul Norbury, Kasra Rafi, and Nicolas Tholozan for useful discussions.
The second listed author gratefully acknowledges support from the Beijing Natural Science Foundation (International Scientists Project), Funding No. 1S24065.

\section{Teichm\"uller and Moduli spaces of flippered surfaces}
\label{sec:teich-moduli-spaces-flipsurf}

We familiarise ourselves with Teichm\"uller and moduli spaces of flippered surfaces: in this section we introduce parametrisations of these spaces, to prepare for upcoming volume calculations.

\subsection{Worldsheets of open strings as hyperbolic surfaces}

\begin{notation}
\label{notn:sigmasurface}
For the remainder of the article let $\Sigma:=\Sigma_{g,m,\vec{a}}$, with $m\in\mathbb{Z}_{\geqslant 0}$ and $\vec{a}=(a_1,\ldots,a_l)\in\mathbb{Z}_{>0}^l$, denote a bordered topological surface of finite type (that admits a complete hyperbolic metric) with
\begin{itemize}
\item genus $g$
\item 
$m$ interior punctures, which we label $\{p_1,\ldots,p_m\}$;
\item
$n=\sum_{k=1}^{l}a_k$ boundary punctures, which we label $\{q_1,\ldots,q_n\}$; 
\item
$\Sigma\cup\{q_1,\ldots,q_n\}$ is a surface with (only) interior punctures and $l$ boundary curves, which we label $\{\delta_1,\ldots,\delta_l\}$, so that the $i$-th boundary $\delta_i$ contains $a_i\geqslant 1$ punctures. 

\end{itemize}
\end{notation}

\begin{remark}
    The condition that $\Sigma$ admits a hyperbolic metric means:
    \begin{itemize}
        \item if $g=0$ and $l=0$, then $m\geqslant3$; 
        \item if $g=0$, $l=1$ and $m=0$, then $n\geqslant3$;
        \item if $g=1$ and $l=0$, then $m\geqslant1$;
        \item if $g\geq2$, there are no additional constraints on $l,m,n$.
    \end{itemize}
\end{remark}

By (canonically) extending geodesic boundaries with funnels, Mirzakhani's moduli spaces $\mathcal{M}_{g,m}(b_1,\ldots,b_m)$ is may be regarded as the moduli space of funnelled hyperbolic surfaces with \emph{fixed boundary holonomy}. This perspective, combined with the surface-doubling approach for studying open strings (see, e.g.: \cite{buryak2017matrix,pandharipande2014intersection}), naturally suggests flippered surfaces as an object of study.

\begin{definition}[Flippered surface]\label{defn:flippered}
We refer to $\Sigma$, equipped with a complete hyperbolic metric $h$ with (necessarily bi-infinite) geodesic boundaries, as a \emph{flippered surface}. We henceforth use $X=(\Sigma,h)$ to denote a generic flippered surface.
\end{definition}

\subsection{The anatomy of a flippered surface}

Although basic, we take a moment to affirm the uniqueness of geodesic representatives of homotopy classes of paths on hyperbolic surfaces.

\begin{theorem}[e.g.: {\cite[Theorems~1.5.2 and 1.5.3]{Buser}, \cite[Lemma~2.10]{huang_thesis}}]
\label{thm:orthogeodesic}
    Any essential isotopy class $[c]$ of paths on a hyperbolic surface $\Sigma$ contains a unique (up to parametrisation) shortest curve $\gamma$. Moreover, $\gamma$ is a geodesic, meets $\partial\Sigma$ perpendicularly (if at all), and is simple if and only if $[c]$ is simple.
\end{theorem}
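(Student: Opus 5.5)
The plan is to use the universal cover and run the standard Cartan--Hadamard/convexity argument, treating the boundary by doubling.

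First I would fix conventions. A \emph{path} means either a closed curve or an arc whose endpoints lie on $\partial\Sigma$. Isotopies of arcs are allowed to slide endpoints along $\partial\Sigma$, and \emph{essential} means the class is not homotopic into a boundary component or into a puncture/cusp neighbourhood.

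\textbf{Step 1: reduce to a surface without boundary.} Let $D\Sigma$ be the metric double of $X$ along its geodesic boundary. Because the boundary is geodesic, $D\Sigma$ is a complete hyperbolic surface without boundary, and it carries an isometric involution $\iota$ whose fixed set is $\partial\Sigma$. Under doubling:
\begin{itemize}
\item an arc class $[c]$ with endpoints on $\partial\Sigma$ becomes the closed class $[c\cdot\iota(\bar c)]$;
\item a closed class stays a closed class in $\Sigma\subset D\Sigma$.
\end{itemize}
Essentiality of $[c]$ implies that the doubled class is non-trivial and non-peripheral in $D\Sigma$, i.e.\ it is represented by a hyperbolic element of $\pi_1(D\Sigma)$.

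\textbf{Step 2: existence and uniqueness for closed classes.} A hyperbolic element $\gamma\in\mathrm{PSL}_2(\RR)$ has a unique invariant axis in $\mathbb{H}^2$, and its translation length is the infimum of displacement, attained exactly on that axis. Hence the projection of the axis is the unique closed geodesic in the class, and it is length-minimising, which gives uniqueness up to parametrisation. To pass from \emph{homotopy} to \emph{isotopy} classes of curves I would invoke the standard fact (Baer/Epstein) that on surfaces homotopic simple curves are isotopic.

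\textbf{Step 3: arcs.} By uniqueness, the closed geodesic representing the doubled class is $\iota$-invariant, with $\iota$ reversing its orientation. It therefore crosses $\mathrm{Fix}(\iota)=\partial\Sigma$, and does so perpendicularly, since an $\iota$-invariant geodesic through a fixed point must be orthogonal to the fixed line. Cutting it at these points yields two $\iota$-symmetric halves; the half in $\Sigma$ is the orthogeodesic representative $\gamma$, with length half that of the doubled geodesic.

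An alternative for existence is to work directly in $\widetilde\Sigma\subset\mathbb{H}^2$, a convex region bounded by geodesic lines. There $[c]$ corresponds to a pair of disjoint boundary lines $\tilde\alpha,\tilde\beta$. They are ultraparallel: if they were asymptotic, the arc would exit into a cusp or tine and so be inessential. Ultraparallel lines have a unique common perpendicular, which is the shortest path between them; uniqueness then follows from strict convexity of the distance function.

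Minimality of $\gamma$ among \emph{all} curves in the class, not just geodesics, follows because any representative lifts to a path joining the same two lifts, and the common perpendicular (respectively the axis) is shortest.

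\textbf{Step 4: simplicity.} If $[c]$ is not simple, no representative is simple, so in particular $\gamma$ is not. Conversely, suppose $[c]$ is simple but $\gamma$ self-intersects. Then two lifts of $\gamma$ cross in $\mathbb{H}^2$, so their endpoint pairs on $\partial\mathbb{H}^2$ are linked. Endpoint data are homotopy invariants, so the lifts of a simple representative would have the same linked endpoints and would also have to cross. This contradicts the simplicity of that representative.

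\textbf{Main obstacle.} I expect this last step to be the hardest. For arcs one has to account for lifts of boundary components and endpoints that slide along $\partial\Sigma$. I would handle this through the double $D\Sigma$, where arcs become $\iota$-symmetric closed curves, and then apply the linked-endpoints argument there. The one remaining check is that a simple arc doubles to a simple closed curve, which holds because the two halves meet only on $\partial\Sigma$.
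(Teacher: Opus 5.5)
The paper gives no proof of this statement; it only cites Buser and Huang's thesis. Your argument is the standard one behind those citations: the axis of a hyperbolic deck transformation for closed classes, the common perpendicular of two ultraparallel boundary lifts in the convex region $\widetilde\Sigma\subset\mathbb{H}^2$ for arcs, and linked endpoints for simplicity. It never uses compactness. That is an advantage here, because Buser works with compact surfaces while the paper's flippered surfaces have funnels, flippers and tines. Two steps, however, are asserted rather than proved. Both are settled by the convexity of $\widetilde\Sigma$, which you mention only as an alternative.

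\textbf{Closed classes.} You find the geodesic $\gamma^*$ in $D\Sigma$ (equivalently, as an axis in $\mathbb{H}^2$), but you never check that it lies in $\Sigma$. For closed curves this is exactly the content of ``meets $\partial\Sigma$ perpendicularly (if at all)'', namely that $\gamma^*$ does not meet $\partial\Sigma$.

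\begin{itemize}
\item Let $F\colon D\Sigma\to\Sigma$ be the folding map: the identity on $\Sigma$ and $\iota$ on the other half. It is a retraction that preserves lengths of curves.
\item Because $F$ is a retraction, homotopy classes in $\Sigma$ inject into those of $D\Sigma$. This transfers uniqueness and minimality from $D\Sigma$ to $\Sigma$.
\item For existence, $F\circ\gamma^*$ has the same length as $\gamma^*$ and lies in the same class. By your uniqueness statement it equals $\gamma^*$, so $\gamma^*\subset\Sigma$.
\item $\gamma^*$ cannot touch $\partial\Sigma$: a geodesic tangent to an arch coincides with it, and arches are not closed.
\end{itemize}

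\textbf{Arcs via doubling.} The $\iota$-symmetry shows only that $\gamma^*$ meets $\partial\Sigma$ orthogonally at two parameter values. It does not show that the segment between them stays in $\Sigma$; a priori it could cross $\partial\Sigma$ elsewhere. Nor does it show that this segment lies in $[c]$ rather than in some other arc class. The missing computation is as follows.

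\begin{itemize}
\item Suppose $\tilde c$ joins the boundary lifts $\tilde\alpha$ and $\tilde\beta$.
\item Then the doubled loop corresponds to the deck transformation $R_{\tilde\beta}\circ R_{\tilde\alpha}$, a product of two lifts of $\iota$.
\item This transformation translates along the line containing the common perpendicular of $\tilde\alpha$ and $\tilde\beta$, by twice their distance.
\item Hence the half of $\gamma^*$ is exactly the projection of that common perpendicular, which lies in $\widetilde\Sigma$ by convexity.
\end{itemize}

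With this identification your two routes in Step~3 coincide. Your reduction of simplicity for arcs to the closed case in Step~4 relies on the same identification, so it then goes through as well. Alternatively, run the linked-endpoints argument directly in $\widetilde\Sigma$, replacing endpoint pairs on $\partial\mathbb{H}^2$ by the cyclic order of boundary lines along $\partial\widetilde\Sigma$.
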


\begin{definition}[arches, cuffs, flippers, straps, and tines]
\label{defn:anatomy}
We refer to the bi-infinite geodesic boundaries of $X$ as \emph{arches}. The following behaviour can hold at the punctures of a flippered surface $X$:
\begin{itemize}
    \item Interior punctures $p_i$ are geometrised as flares/funnels or cusps. For flares, $\beta_i$ denotes the geodesic representative of a simple loop around $p_i$ and we refer to $\beta_i$ as the $i$-th \emph{cuff}. For cusps, we use $\beta_i$ to denote the cusp $p_i$.
    \item A boundary puncture $q_j$ geometrises to ``half'' of a flare/funnel or cusp when $X$ is regarded as a subset of the doubled surface $DX$. For half-flares, the cuff corresponding to $q_j$ on $DX$ restricts to an orthogeodesic on $X$, and we refer to this orthogeodesic half-cuff as the $j$-th \emph{strap} $\sigma_j$. We refer to the half-funnel on $X$ bounded by $\sigma_j$ as the $j$-th \emph{flipper}. For half-cusps ---which we call a \emph{tine} \cite{huang_thesis}, we use $\sigma_j$ to formally denote the tine $q_j$.     
\end{itemize}
For the remainder of the article, we use $X$ to denote a (finite type) flippered hyperbolic surface (see \cref{fig:anatomy}), with 
\begin{itemize}
    \item $\{\beta_i\}_{i=1,\ldots,m}$ denoting the cuffs for its funnels (and cusps);
    \item $\{\alpha_j\}_{j=1,\ldots,n}$ denoting its boundary arches; 
    \item $\{\sigma_j\}_{j=1,\ldots,n}$ denoting its straps (and tines).
\end{itemize}
To simplify future notation, we label the straps so that $\sigma_i$ always joins $\alpha_i$ and $\alpha_{i+1}$, where the labelling index is cyclic with respect to the number of tines on crown containing these arches and straps.
\end{definition}

\begin{definition}[neck]
    Each boundary curve $\delta_i$ on $X\cup\{q_1,\ldots,q_n\}$ has a unique homotopy representative $\nu_i$ which is a simple closed geodesic on $X$. We refer to $\nu_i$ as the $i$-th \emph{neck} of $X$.
\end{definition}

\begin{notation}[disks and annuli]\label{notn:specialsurfaces}
    There are three classes of examples for which the nomenclature and notation in this paper applies awkwardly or with ambiguity:
    \begin{itemize}
    \item when $\Sigma$ is a disk with $n\geqslant 3$ boundary punctures, with no interior punctures (i.e.: $m=0$) and with no necks. We write $\mathbb{D}_n$ to denote such $\Sigma$, and refer to it as an \emph{$n$-gon}.
    
    \item when $\Sigma$ is disk with $n\geqslant 1$ boundary punctures and $m=1$ interior puncture, the surface contains a unique simple closed geodesic, and this geodesic serves as both the cuff of the interior puncture (or a cusp in the non-generic case) as well as the neck of the punctured boundary. We use the notation $\mathbb{A}_{n}$ to denote $\Sigma$, and refer to $\mathbb{A}_n$ as an \emph{$n$-tined crown}.

    \item when $\Sigma$ is a closed annulus with $a_1$ punctures on the first boundary and $a_2$ boundary punctures on the second boundary, the surface contains no interior punctures (and hence no cuffs). Moreover, the surface supports a unique simple closed geodesic which serves as a neck for both (punctured) boundaries. We use the notation $\mathbb{A}_{a_1,a_2}$ for such $\Sigma$ and refer to it as an \emph{$(a_1,a_2)$-annulus}.
    
    \end{itemize}
\end{notation}

\subsection{Teichm\"{u}ller and moduli spaces of flippered surfaces}
\label{sec:teichandmodspace}

We define the Teichm\"{u}ller and moduli space for flippered surfaces in the standard manner:

\begin{definition}[Teichm\"uller space]
The \emph{Teichm\"uller space} $\mathcal{T}_\Sigma$ is defined as the space of isotopy classes of flippered hyperbolic metrics on $\Sigma$.    
\end{definition}

\begin{proposition}[\cref{cor:topology-teich-space}] 
The Teichm\"uller space for a flippered hyperbolic surface $\mathcal{T}_\Sigma$ is naturally stratified into $2^{n+m}$ strata according to the geometric type of its punctures, described in \cref{defn:anatomy}.
Each stratum is a topological ball, and $\mathcal{T}_\Sigma$ has the structure of a manifold with corners of dimension $\frac{1}{2}(3|\chi(D\Sigma)|+n)$, where $D\Sigma$ is the surface obtained by doubling $\Sigma$ along its boundary. 
\end{proposition}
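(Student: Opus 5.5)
We will prove the statement by reducing to the Teichmüller theory of the doubled surface with a reflection symmetry, and then give coordinates directly. Fix a flippered metric $X$ on $\Sigma$ and form the double $DX$ along the arches $\alpha_j$. $DX$ is a complete hyperbolic surface of finite type. Each interior puncture $p_i$ of $\Sigma$ gives two punctures of $DX$, each boundary puncture $q_j$ gives one, and the crowns close up to handles. $DX$ carries an orientation-reversing isometric involution $\iota$ whose fixed set is the union of the arches. Conversely, every $\iota$-invariant complete hyperbolic metric on $D\Sigma$ (with $D\Sigma$ a surface with punctures and funnels allowed) restricts to a flippered metric on $\Sigma$. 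This is because the fixed locus of an orientation-reversing isometry is geodesic, and the bi-infinite geodesics in the classes of the arcs $\delta_i\setminus\{q\}$ are exactly its components. Hence $\mathcal{T}_\Sigma$ is identified with the fixed-point set of the induced involution $\iota^*$ on the (funnelled/cusped) Teichmüller space $\mathcal{T}_{D\Sigma}$. Here $\mathcal{T}_{D\Sigma}$ is taken with boundary lengths in $[0,\infty)$, so that cusps are length $0$ and funnels have positive length.

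Next we produce explicit global coordinates. We choose a pants decomposition of $D\Sigma$ that is $\iota$-invariant. To get one, take a maximal system of disjoint, pairwise non-isotopic simple closed curves and simple orthogeodesic arcs on $\Sigma$. These are the necks $\nu_i$, curves in the interior, and arcs between arches, and their doubles give an $\iota$-invariant pants decomposition of $D\Sigma$. Invariant Fenchel--Nielsen coordinates then consist of:
\begin{itemize}
\item lengths of interior curves together with their twists;
\item lengths of doubled arcs (twice the arc length), which carry no twist, since invariance forces the twist to vanish;
\item boundary lengths $b_i\in[0,\infty)$ and strap lengths $c_j\in[0,\infty)$.
\end{itemize}
The invariant locus is therefore $\mathbb{R}_{>0}^{k}\times\mathbb{R}^{k'}\times[0,\infty)^{m+n}$. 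This exhibits a manifold with corners, and the $2^{n+m}$ strata are indexed by which of the $b_i,c_j$ vanish, that is, cusp versus funnel and tine versus flipper. Each stratum is a product of open half-lines and lines, so it is a ball.

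The dimension follows by counting. The boundary length parameters contribute $m+n$. The interior parameters number half the real dimension of the fixed locus of an antiholomorphic involution on $\mathcal{T}$ of the double with all ends cusped, namely $\tfrac12\cdot 3|\chi(D\Sigma)|$ after accounting for the $n$ boundary punctures of $D\Sigma$ that are fixed by $\iota$. The extra $n/2$ appears because each $\iota$-fixed puncture of $D\Sigma$ contributes one length parameter but only half a pants-curve count. We will verify the formula $\tfrac12(3|\chi(D\Sigma)|+n)$ by an explicit Euler characteristic count, $\chi(D\Sigma)=2\chi(\Sigma)$ with the appropriate puncture bookkeeping, and we will check it on $\mathbb{D}_3$ (dimension $3$, coordinates $c_1,c_2,c_3$) and on $\mathbb{A}_1$ (coordinates $c,d$, and the twist is killed).

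The main obstacle is the \emph{continuity across strata}: showing that Fenchel--Nielsen coordinates extend continuously as a cuff or strap length tends to $0$, i.e.\ as a funnel degenerates to a cusp or a flipper to a tine. This yields a genuine manifold-with-corners structure rather than a disjoint union. We first try to handle this with the standard fact that hyperbolic pants, and right-angled hexagons, depend real-analytically on boundary lengths in $[0,\infty)$. The degenerate ideal cases are obtained as limits, and the gluing maps vary continuously in these parameters. A second point to check is that the crown necks $\nu_i$ are well defined for every $n\ge1$. The degenerate cases of \cref{notn:specialsurfaces}, where the neck coincides with a cuff, need separate coordinates: for $\mathbb{A}_n$ we use the neck length together with $n-1$ arc-based parameters.
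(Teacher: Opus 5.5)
Your proposal follows the same strategy as the paper. Both realise $\mathcal{T}_\Sigma$ inside $\mathcal{T}_{D\Sigma}$ (funnels and cusps allowed) as the reflection-symmetric locus. Both use Fenchel--Nielsen coordinates for an $\iota$-invariant pants decomposition, with twists of doubled arcs forced to vanish and boundary lengths $(b_i,b_i,2c_j)$. Both stratify by which of the $b_i,c_j$ vanish.

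The only real difference is the decomposition. The paper takes a maximal arc collection $\square$ consisting \emph{only} of essential arcs. By \cref{lem:gen-triang-compl-regions}, the complementary regions are half-pants, hexagons or degenerate hexagons, so $\square\cup\square'$ is a pants decomposition of $D\Sigma$. The coordinates are therefore just orthogeodesic lengths $\vec\ell\in\mathbb{R}^{|\square|}_{>0}$ together with $(\vec b,\vec c)$, with no twists and no special cases.

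Your mixed system of necks, interior curves and arcs is equally valid. It has the merit that fixed-neck-length subspaces become coordinate subspaces, which makes the codimension-$l$ claim for $\mathcal{T}_\Sigma(\vec b;\vec c\,|\vec d)$ immediate. It costs you two things. First, you need an analogous claim, which you assert without argument (it is routine), that a maximal mixed system doubles to a pants decomposition. Second, you need separate handling of $\mathbb{A}_n$, where the neck is peripheral; there your system reduces to the paper's pure-arc one anyway.

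Your dimension paragraph should be replaced by the direct count. The invariant pants decomposition of $D\Sigma$ (genus $2g+l-1$, with $n+2m$ punctures) has $6g-6+3l+n+2m=\tfrac12(3|\chi(D\Sigma)|-2m-n)$ curves. The symmetric locus has exactly one parameter per curve: a doubled arc contributes its length, and a pair $\gamma,\iota\gamma$ contributes $\ell_\gamma$ and $\tau_\gamma$. Adding the $m+n$ parameters $b_i,c_j$ gives $\tfrac12(3|\chi(D\Sigma)|+n)$. As written, the value $\tfrac12\cdot 3|\chi(D\Sigma)|$ you attribute to the interior parameters is not right.
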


\begin{definition}[moduli space]
The \emph{moduli space} $\mathcal{M}_\Sigma$ is defined as the space of isometry classes of flippered hyperbolic surfaces.  
\end{definition}

The (pure) mapping class group $\mathrm{MCG}_\Sigma$, consisting of isotopy classes of puncture-preserving homeomorphisms of $\Sigma$, again serves as the group of deck transformations relating $\mathcal{T}_\Sigma$ and $\mathcal{M}_\Sigma$:
\[
\mathcal{M}_\Sigma=\mathcal{T}_\Sigma/\mathrm{MCG}_\Sigma.
\]

\begin{definition}[moduli spaces with fixed holonomy]
\label{def:fixed-holonomy}
For $\vec{b}=(b_1,\ldots,b_m)\in[0,\infty)^m$ and $\vec{c}=(c_1,\ldots,c_n)\in[0,\infty)^n$, define
\begin{align*}
\mathcal{T}_\Sigma(b_1,\ldots,b_m;c_1,\ldots,c_n)
&=:\mathcal{T}_\Sigma(\vec{b};\vec{c})
\subset\mathcal{T}_\Sigma
\quad\text{and}\\
\mathcal{M}_\Sigma(b_1,\ldots,b_m;c_1,\ldots,c_n)
&=:\mathcal{M}_\Sigma(\vec{b};\vec{c})
\subset\mathcal{M}_\Sigma,
\end{align*}
to respectively be the subsets of hyperbolic surfaces in $\mathcal{T}_\Sigma$ and $\mathcal{M}_\Sigma$ whose 
\begin{itemize}
    \item 
    $i$-th cuff $\beta_i$ is of length $b_i$ for all $i=1,\ldots,m$, and
    \item 
    $j$-th strap $\sigma_j$ is length $c_j$ for all $j=1,\ldots,n$.
\end{itemize}
Furthermore, given $\vec{d}=(d_1,\ldots,d_l)\in[0,\infty)^l$, define
\begin{align*}
\mathcal{T}_\Sigma(b_1,\ldots,b_m;c_1,\ldots,c_n|d_1,\ldots,d_l)
&=:\mathcal{T}_\Sigma(\vec{b};\vec{c}\;|\vec{d})
\subset\mathcal{T}_\Sigma(\vec{b};\vec{c})
\quad\text{and}\\
\mathcal{M}_\Sigma(b_1,\ldots,b_m;c_1,\ldots,c_n|d_1,\ldots,d_l)
&=:\mathcal{M}_\Sigma(\vec{b};\vec{c}\;|\vec{d})
\subset\mathcal{M}_\Sigma(\vec{b};\vec{c}).
\end{align*}
to respectively be the subsets of hyperbolic surfaces whose $k$-th neck lengths is $d_k$ for all $k=1,\ldots,l$.
\end{definition}

\begin{notation}[fixed holonomy for special surfaces]
When $\Sigma=\mathbb{D}_n$, $\mathbb{A}_{n}$, or $\mathbb{A}_{a_1,a_2}$, the above notation for Teichm\"uller and moduli spaces with fixed holonomy may not make sense due to the absence of cuffs and/or necks. Instead, we write:
    \begin{itemize}
        \item  $\mathcal{T}_{\mathbb{D}_n}(\vec{c})$ and $\mathcal{M}_{\mathbb{D}_n}(\vec{c})$;
        \item $\mathcal{T}_{\mathbb{A}_{n}}(\vec{c}\;|\vec{d})$ and $\mathcal{M}_{\mathbb{A}_{n}}(\vec{c}\;|\vec{d})$;
        \item $\mathcal{T}_{\mathbb{A}_{a_1,a_2}}(\vec{c};d)$ and $\mathcal{M}_{\mathbb{A}_{a_1,a_2}}(\vec{c};\vec{d})$.
    \end{itemize}
\end{notation}

\begin{remark}
    We show in \cref{cor:top-teich-space-fix-hol} that Teichm\"uller spaces 
    with fixed holonomy are open balls (of various dimensions, including $0$). In particular, $\mathcal{T}_\Sigma(\vec{b};\vec{c})$ embeds as a half-dimensional affine subspace in $\mathcal{T}_{D\Sigma}(b_1,\ldots,b_m,b_1,\ldots,b_m,2c_1,\ldots,2c_n)$, where $D\Sigma$ is the surface obtained from doubling $\Sigma$ along its boundary, and hence has dimension $\frac{1}{2}(3|\chi(D\Sigma)|-2m-n)$. The fixed neck length subset $\mathcal{T}_\Sigma(\vec{b};\vec{c}\;|\vec{d})$ embeds as a $l$ co-dimensional subset of $\mathcal{T}(\vec{b};\vec{c})$, i.e.: 
    \[
    \dim_{\mathbb{R}}\mathcal{T}_\Sigma(\vec{b};\vec{c}\;|\vec{d})
    =
     \tfrac{1}{2}(3|\chi(D\Sigma)|-2m-2l-n).
    \]
\end{remark}

The fact that doubling allows us to embed various Teichm\"uller spaces of flippered (and crowned) surfaces as canonical real analytic subspaces in Teichm\"uller spaces of hyperbolic bordered surfaces means:

\begin{proposition}
    The Teichm\"uller spaces $\mathcal{T}_\Sigma$, $\mathcal{T}_\Sigma(\vec{b};\vec{c})$, $\mathcal{T}_\Sigma(\vec{b};\vec{c}\;|\vec{d})$ and their corresponding moduli spaces all inherit natural and canonical real analytic manifold (possibly with corners) structure as subspaces of $\mathcal{T}_{D\Sigma}$
\end{proposition}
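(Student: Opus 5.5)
The plan is to realise every space in question as the fixed-point locus of an anti-holomorphic involution inside the Teichmüller space of the double, and then to pass to the quotient by the mapping class group.

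\textbf{Doubling.} Let $\iota\colon D\Sigma\to D\Sigma$ be the orientation-reversing involution swapping the two copies of $\Sigma$. It acts on $\mathcal{T}_{D\Sigma}$ by $[h]\mapsto[\iota^*h]$. This action is real analytic; in Fenchel--Nielsen coordinates adapted to an $\iota$-invariant pants decomposition it is affine, preserving lengths and negating twists up to a shift.

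\textbf{Identifying $\mathcal{T}_\Sigma$ with the fixed locus.} Doubling a flippered metric along its arches gives an $\iota$-invariant complete metric on $D\Sigma$. Its boundary punctures become interior funnels or cusps, and its straps of length $c_j$ become cuffs of length $2c_j$. Conversely, an $\iota$-invariant class has a representative metric that is genuinely $\iota$-invariant, by uniqueness of geodesic representatives (\cref{thm:orthogeodesic}) applied to the fixed arcs. Cutting this representative along $\mathrm{Fix}(\iota)$ returns a flippered metric. Hence $\mathcal{T}_\Sigma\cong\mathrm{Fix}(\iota)\subset\mathcal{T}_{D\Sigma}$.

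\textbf{Structure of the fixed locus.} In the adapted coordinates the fixed locus is cut out by linear equations in the twists: twists along $\iota$-invariant curves are set to $0$, and twists along swapped pairs are identified up to sign. It is therefore a real analytic affine subspace of $\mathcal{T}_{D\Sigma}$, or of the version of $\mathcal{T}_{D\Sigma}$ that allows cusps or funnels at each puncture, which is a manifold with corners where lengths equal $0$.

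\textbf{Fixed-holonomy subspaces.} The spaces $\mathcal{T}_\Sigma(\vec b;\vec c)$ and $\mathcal{T}_\Sigma(\vec b;\vec c\,|\,\vec d)$ are obtained by further fixing length coordinates: cuff lengths, doubled strap lengths, and neck lengths. A neck $\nu_k$ is an $\iota$-invariant curve, so one can choose the pants decomposition to include it, and its length is then a coordinate. These are again analytic submanifolds.

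\textbf{Moduli spaces.} $\mathrm{MCG}_\Sigma$ embeds in the centraliser of $\iota$ in $\mathrm{MCG}_{D\Sigma}$ and acts properly discontinuously by real analytic maps preserving the fixed locus. The quotients therefore inherit analytic orbifold-with-corners structures, which are honest manifolds away from the finitely many orbifold points, just as in the standard theory.

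\textbf{Main obstacle.} The delicate point is making the corner structure canonical across strata where straps degenerate to tines ($c_j=0$) or cuffs to cusps. I would handle this by using length parameters that extend analytically through $0$ (for example $\cosh$ of the length, or trace coordinates of the holonomy in $\mathrm{PSL}_2\RR$), so that each stratum closure is the face $\{\ell=0\}$ of an analytic corner chart. I would also check that the coordinate changes between different invariant pants decompositions are analytic, which follows because they are restrictions of the analytic changes of coordinates on $\mathcal{T}_{D\Sigma}$.
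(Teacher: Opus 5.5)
For the Teichm\"uller spaces your route is the paper's. There, $\calT_\Sigma$ is the locus in $\calT_{D\Sigma}$ of surfaces carrying the reflection, read off in Fenchel--Nielsen coordinates for the invariant decomposition $\square\cup\square'$ (a doubled maximal arc collection, all of whose curves are $\iota$-invariant) as $\{\tau_i=0,\ L_i=2\ell_i,\ L^\partial_j=2c_j,\ L^\partial_{n+j}=L^\partial_{n+m+j}=b_j\}$. This yields the chart onto $\RR_{>0}^{|\square|}\times\RR_{\geqslant0}^{n+m}$ in \cref{prop:orthogeod-coord}.

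One of your steps is wrong: a neck is \emph{not} $\iota$-invariant. The curves $\nu_k$ and $\iota(\nu_k)$ cobound the double of the annular region between $\nu_k$ and the $k$-th crown, which is a sphere with two holes and $a_k\geqslant1$ punctures, so they are not isotopic. The fix is to put both curves into the decomposition and to add the length equations you omitted: $\ell_\gamma=\ell_{\iota(\gamma)}$ for swapped pairs, and equal cuff lengths at the two copies of each $p_i$. Then fixing $\vec d$ is a coordinate slice. This justifies \cref{cor:top-teich-space-fix-hol} more directly than the paper's arc coordinates, in which neck lengths are not coordinates.

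As for your `main obstacle', the paper simply takes the lengths $b_i,c_j\in[0,\infty)$ as corner coordinates. Replacing them by $\cosh\ell$ or traces would change rather than repair the analytic structure at the faces, since $\ell$ is not an analytic function of $\cosh\ell$ at $\ell=0$.

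The genuine gap is in the moduli spaces. The statement asserts a manifold (with corners) structure, but your argument only produces an orbifold. Your claim of `finitely many orbifold points, as in the standard theory' is also false, since orbifold loci can be positive-dimensional (e.g.\ the hyperelliptic locus in $\calM_g$, $g\geqslant3$).

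The missing idea is that for $l\geqslant1$ the pure mapping class group acts \emph{freely}:
\begin{enumerate}
\item A stabiliser is a finite group of orientation-preserving isometries of $X$ fixing every boundary puncture.
\item Each such isometry induces an orientation-preserving self-map of the circle $\delta_k$ fixing its $a_k\geqslant1$ punctures.
\item Hence it maps every arch to itself preserving direction, i.e.\ acts on it by a translation of finite order, which must be trivial.
\item An orientation-preserving isometry fixing a boundary geodesic pointwise is the identity.
\end{enumerate}
Freeness, proper discontinuity and $\mathrm{MCG}_\Sigma$-invariance of the level sets of $\vec b,\vec c,\vec d$ then give real-analytic manifolds with corners as quotients.

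For $l=0$ (Mirzakhani's setting) the moduli spaces are in general only orbifolds; for instance, every point of $\calM_{1,1}(b)$ has nontrivial stabiliser. So there the claim can hold only in that weaker sense. The paper's own justification, the doubling embedding, does not address the quotients at all.
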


\subsection{Puncture-doubling and flipper clipper}

We define operations called \emph{puncture-doubling} and \emph{flipper-clipping}, which allows us to relate Teichm\"uller/moduli spaces of flippered and crowned hyperbolic surfaces.

\begin{notation}[doubling boundary punctures]
\label{not:doubling}
We write $\dot{\Sigma}(j_1,\dots,j_k)$ to denote the surface obtained from $\Sigma$ by \emph{doubling} the boundary punctures $q_{j_1},\dots, q_{j_k}$ as follows: for each $q_{j_i}$, we remove a small open disk from $\Sigma$ bounded by a contractible arc with endpoints at $q_{j_i}$ (see \cref{fig:puncturedouble}). We refer to $\dot{\Sigma}(j_1,\dots,j_k)$ as the \emph{puncture-double} of $\Sigma$ at $q_{j_1},\dots, q_{j_k}$.
\end{notation}

\begin{center}
\begin{figure}[ht!]
    \includegraphics[width=0.75\textwidth]{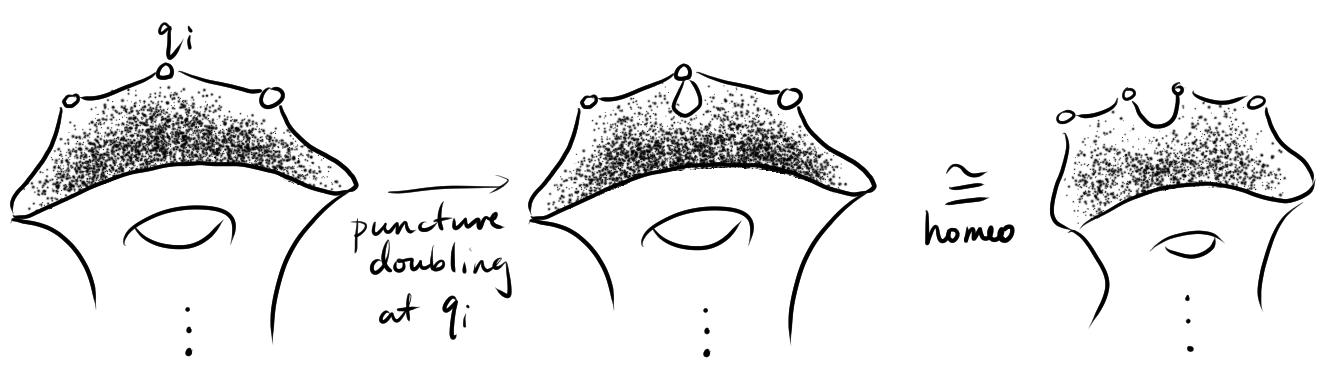}
\caption{Puncture doubling a surface at a boundary puncture labelled $q_i$ increases the number of punctures by $1$.}\label{fig:puncturedouble}
\end{figure}
\end{center}

\begin{definition}[flipper-clipping]
\label{def:flip-clip}
Given a flippered surface $X\in\mathcal{T}_{\Sigma}$, we produce a canonical crowned surface $\clip(X)\in\mathcal{T}_{\dot{\Sigma}(X)}$, where $\dot{\Sigma}(X)$ is the puncture-double of $\Sigma$ at (and only at) all flippered boundary punctures of $X$. The surface $\clip(X)$ is obtained as follows: for every flipper on $X$, remove the largest (by inclusion) open half-plane contained inside the flipper (see \cref{fig:flipper-clip}). We refer to $\clip(X)$ as the \emph{flipper-clipping} of $X$.
\end{definition}

\begin{figure}[H]
  \centering
\resizebox{1\textwidth}{!}{%
\begin{circuitikz}
\tikzstyle{every node}=[font=\Huge]

\draw[rotate=90][line width=3pt,short] (-11,26) .. controls (-4.75,20.25) and (4.75,20.25) .. (11,26);



\draw[rotate=90] [line width=3pt] (-11,5.5) .. controls (-4.75,11) and (4.75,11) .. (11,5.5);

\draw[rotate=90] [line width=3pt,short] (0,9.65) -- (0,21.65);

\draw[rotate=90] [line width=3pt,short] (-3.03+3,21.5-0.3) -- (-2.45+3,21.5-0.3);

\draw[rotate=90] [line width=3pt,short] (-2.5+3,21.5-0.3) -- (-2.5+3,22-0.3);

\draw[rotate=90] [line width=3pt,short] (-3.03+3,21.5-0.3-11) -- (-2.45+3,21.5-0.3-11);

\draw[rotate=90] [line width=3pt,short] (-2.5+3,21.5-0.3-11.5) -- (-2.5+3,22-0.3-11.5);

\node [font=\fontsize{50}{50}] at (-16,-8) {$X$};

\node [font=\fontsize{50}{50}] at (-16,1) {$\sigma_i$};

\node [font=\fontsize{50}{50}] at (-23,-2) {$\alpha_i$};

\node [font=\fontsize{50}{50}] at (-7.5,-2) {$\alpha_{i+1}$};

\end{circuitikz}
}%
   \hspace*{1in}
  \centering
\resizebox{1\textwidth}{!}{%
\begin{circuitikz}
\tikzstyle{every node}=[font=\Huge]

\draw[rotate=90][line width=3pt,short] (-11,26) .. controls (-4.75,20.25) and (4.75,20.25) .. (11,26);


\draw[rotate=90][blue][line width=3pt,short] (11,26) .. controls (6.25,20.5) and (6.25,10.75) .. (11,5.5);

\draw[rotate=90] [line width=3pt] (-11,5.5) .. controls (-4.75,11) and (4.75,11) .. (11,5.5);

\draw[rotate=90] [line width=3pt,short] (0,9.65) -- (0,21.65);

\draw[rotate=90] [line width=3pt,short] (-3.03+3,21.5-0.3) -- (-2.45+3,21.5-0.3);

\draw[rotate=90] [line width=3pt,short] (-2.5+3,21.5-0.3) -- (-2.5+3,22-0.3);

\draw[rotate=90] [line width=3pt,short] (-3.03+3,21.5-0.3-11) -- (-2.45+3,21.5-0.3-11);

\draw[rotate=90] [line width=3pt,short] (-2.5+3,21.5-0.3-11.5) -- (-2.5+3,22-0.3-11.5);

\node [font=\fontsize{50}{50}] at (-16,-8) {$\text{Clip}(X)$};

\node [font=\fontsize{50}{50}] at (-16,1) {$\sigma_i$};

\node [font=\fontsize{50}{50}] at (-23,-2) {$\alpha_i$};

\node [font=\fontsize{50}{50}] at (-7.5,-2) {$\alpha_{i+1}$};

\end{circuitikz}
}%
   \caption{Left: $i$-th flipper in $X$. Right: after clipping of the $i$-th flipper (in \textcolor{blue}{blue}), we obtain two tines.
   }
   \label{fig:flipper-clip}
\end{figure}

\begin{remark}
Flipper-clipping does not affect cusps. In particular, the flipper-clipping of a crowned hyperbolic surface is unchanged.  
\end{remark}

To remove the dependence on $X$ in the target Teichm\"uller space in \cref{def:flip-clip}, we highlight the following subspaces of $\mathcal{T}_\Sigma$:

\begin{definition}[flipper-based strata in Teichm\"uller space]
\label{def:T_Sigma(A)}
Given a subset $\mathcal{A}\subseteq\{1,\dotsc,n\}$, let $\mathcal{T}_\Sigma^\mathcal{A} \subset \mathcal{T}_\Sigma$ be the subset of hyperbolic surfaces in $\mathcal{T}_\Sigma$ whose $j$-th strap $\sigma_j$ has (strictly) positive length if and only if $j\in\mathcal{A}$, for all $j=1,\dotsc,n$.
\end{definition}

\begin{remark}
\label{rem:teich-fix-hol-subset}
   Observe that $\mathcal{T}_\Sigma(\vec{b};\vec{c}) \subset \mathcal{T}_\Sigma^\mathcal{A}$ when $\mathcal{A}$ is the set of indices for which $c_j$ is (strictly) positive. Since it is an open subset, it is also a real-analytic manifold (with corners).
\end{remark}

\begin{lemma}[clipping is bijective]
\label{lem:flip-clip}
    Given a subset $\mathcal{A}\subset\{1,\dotsc,n\}$, flipper-clipping defines a bijection
\[
\clip : \mathcal{T}_\Sigma^\mathcal{A} \to \mathcal{T}_{\dot{\Sigma}(\mathcal{A})}^\varnothing.
\]
\end{lemma}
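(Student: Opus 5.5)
We will establish the bijection by writing down an explicit inverse, \emph{flipper-gluing}, and checking that both maps are well defined on isotopy classes and that the two composites are the identity.

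\textbf{Local model of a flipper.} Work in the upper half-plane. A flipper with strap $\sigma_j$ of length $c_j>0$ is bounded by three pieces: the strap, and the two ultraparallel ends of the arches $\alpha_j,\alpha_{j+1}$, which are orthogonal to $\sigma_j$. It is therefore isometric to a quarter-plane-like region bounded by:
\begin{itemize}
\item two geodesics $\alpha_j,\alpha_{j+1}$ with disjoint endpoint pairs on $\partial\HH$;
\item their common perpendicular $\sigma_j$.
\end{itemize}
The region is the side of $\sigma_j$ that contains the boundary interval $I$ between the outer endpoints of $\alpha_j$ and $\alpha_{j+1}$.

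\textbf{Clipping in the model.} Every open half-plane contained in this region is bounded by a geodesic with endpoints in $\overline{I}$. The largest such half-plane is bounded by the geodesic $\gamma$ joining the two endpoints of $\overline{I}$. The geodesic $\gamma$ is asymptotic to $\alpha_j$ at one end and to $\alpha_{j+1}$ at the other. Removing this half-plane therefore leaves a region bounded by three geodesics $\alpha_j,\gamma,\alpha_{j+1}$, which pairwise share ideal endpoints. This creates two tines, in accordance with \cref{fig:flipper-clip}.

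The global operation is well defined for the following reasons:
\begin{itemize}
\item The flipper embeds isometrically in $X$ (it is the half of the funnel of $DX$ cut off by the cuff), and these model regions are disjoint for distinct $j$.
\item Maximality makes the removed half-plane canonical.
\item The result is complete, with geodesic boundary, and has topological type $\dot\Sigma(\mathcal{A})$ with all punctures cusp-like. Hence $\clip(X)\in\mathcal{T}^\varnothing_{\dot\Sigma(\mathcal{A})}$.
\item Isotopies of metrics are carried along, since the construction is natural under isometries. So $\clip$ descends to Teichm\"uller space.
\end{itemize}

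\textbf{The inverse, flipper-gluing.} Let $Y\in\mathcal{T}^\varnothing_{\dot\Sigma(\mathcal{A})}$. For each doubled puncture $q_j$, $j\in\mathcal{A}$, the surface $Y$ has a boundary arch $\gamma_j$, namely the arch between the two new tines, whose ends are asymptotic to its neighbours $\alpha_j$ and $\alpha_{j+1}$. Glue a hyperbolic half-plane to $Y$ along $\gamma_j$ by the identity map of the geodesic. We check the required properties as follows.
\begin{enumerate}
\item The gluing has no shear ambiguity, because a half-plane has no distinguished point. Any two gluings differ by an isometry of the half-plane preserving its boundary geodesic pointwise; for an orientation-consistent gluing that isometry is the identity.
\item The result is a complete hyperbolic surface with geodesic boundary. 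Develop locally in $\HH$: the union of $Y$ near $\gamma_j$ and the half-plane is the region bounded by the lifts of $\alpha_j$ and $\alpha_{j+1}$. Those lifts no longer share an ideal endpoint, since the endpoint is absorbed by the interior of the glued half-plane.
\item Hence the lifts of $\alpha_j,\alpha_{j+1}$ are ultraparallel. Their common perpendicular is a strap of positive length, so $q_j$ becomes a flipper.
\item The other punctures are untouched, so their straps still have length $0$. The resulting surface therefore lies in $\mathcal{T}_\Sigma^{\mathcal{A}}$.
\end{enumerate}

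\textbf{The composites are the identity.} Gluing followed by clipping is the identity: the glued half-plane is exactly the maximal half-plane in the new flipper, since $\gamma_j$ joins the endpoints of $\overline{I}$. Clipping followed by gluing is the identity by the local model above.

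\textbf{Main obstacle.} The step requiring most care is the claim that the maximal half-plane exists, is unique, and is contained in the \emph{embedded} flipper rather than merely in its universal-cover picture. We will handle this via the doubling description. The flipper is half of an embedded funnel in $DX$, and the funnel lifts isometrically to the region between the cuff lift and an interval of $\partial\HH$ on which the cyclic stabilizer acts freely. Half of it is then an embedded region in which the model computation above is valid. Uniqueness follows because any half-plane in the flipper has boundary endpoints in $\overline{I}$, and such half-planes are nested inside the one bounded by $\gamma$.
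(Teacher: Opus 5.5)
Your proposal is correct and follows the same route as the paper. The paper's entire proof is the observation that re-attaching a half-plane along each new arch is canonical (gluing along a complete geodesic has no twist parameter) and inverts $\clip$; you supply the local model and the checks that the paper leaves implicit.

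There are two small slips that do not affect the argument:
\begin{itemize}
\item In the clipping step, the lifts of $\alpha_j$ and $\alpha_{j+1}$ do not share an ideal endpoint; only the consecutive pairs $\alpha_j,\gamma$ and $\gamma,\alpha_{j+1}$ do, so ``pairwise'' should read ``consecutively''.
\item In step (2) of the gluing, those lifts never shared an endpoint, so nothing is ``absorbed''. They are ultraparallel simply because their endpoint pairs are disjoint and unlinked, the lift of $\gamma_j$ joining two distinct ideal points.
\end{itemize}
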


\begin{proof}
The reverse process of attaching half-planes to specified arches is canonical and gives the inverse map to $\clip$.    
\end{proof}
In fact, it is intuitively clear that $\clip$ is a real-analytic homeomorphism (see \cref{cor:flip-clip-homeo} for a proof). Together with the construction in \cref{subsec:shears}, we obtain a parametrization of $\mathcal{T}_\Sigma^\mathcal{A}$ by shearing coordinates in \cref{prop:shearingcoords}.

\begin{remark}
    Although we have introduced flipper-clipping as a function $\clip : \mathcal{T}_\Sigma^\mathcal{A} \to \mathcal{T}_{\dot{\Sigma}(\mathcal{A})}^\varnothing$, we will abuse notation and use it to also denote maps between
    \begin{itemize}
        \item Teichm\"uller spaces with fixed holonomy $\clip:\mathcal{T}_\Sigma(\vec{b};\vec{c})\to\mathcal{T}_{\dot{\Sigma}}(\vec{b};\vec{0})$,
        \item Teichm\"uller spaces of fixed neck lengths $\clip:\mathcal{T}_\Sigma(\vec{b};\vec{c}\;|\vec{d})\to\mathcal{T}_{\dot{\Sigma}}(\vec{b};\vec{0}|\vec{d})$,
        \item moduli spaces $\clip:\mathcal{M}_\Sigma(\vec{b};\vec{c})\to\mathcal{M}_{\dot{\Sigma}}(\vec{b};\vec{0})$, and
        \item moduli spaces of fixed neck lengths $\clip:\mathcal{M}_\Sigma(\vec{b};\vec{c}\;|\vec{d})\to\mathcal{M}_{\dot{\Sigma}}(\vec{b};\vec{0}|\vec{d})$.
    \end{itemize}
\end{remark}

\subsection{Shearing coordinates on Teichm\"uller space of crowned surfaces}
\label{subsec:shears}

The contents of this subsection is well-known to experts.
See, for example, \cite[Section~7.4]{martelli2022introductiongeometrictopology} for a definition of the shearing parameter between two adjacent ideal hyperbolic triangles and the construction of shearing coordinates on Teichm\"uller space of surfaces with punctures.

\begin{notation}[shearing sign convention]
    We take \emph{left} shears to be positive.
\end{notation}

\begin{definition}[ideal arcs and ideal triangulation]
Given a surface $\Sigma=\Sigma_{g,m,\vec{a}}$, we refer to an embedded open arc in $\Sigma$ with ``endpoints'' placed at the interior and boundary punctures of $\Sigma$ as an \emph{ideal arc} if it does not bound an open disk in $\Sigma$. We call a collection $\triangle$ of pairwise disjoint ideal arcs in $\Sigma$ as an \emph{ideal triangulation} of $\Sigma$ if
\begin{itemize}
    \item no arc is peripheral in the sense of being isotopic to one of the arches,
    \item no two arcs in $\triangle$ are isotopic via isotopy fixing the endpoints, and
    \item $\triangle$ is maximal in cardinality among sets satisfying above conditions.
\end{itemize}
We identify ideal triangulations up to isotopies of $\Sigma$. 
\end{definition}

\begin{lemma}[{\cite[Lemma~2.13]{HT25}}]
Given a surface $\Sigma=\Sigma_{g,m,\vec{a}}$, 
with $m$ interior punctures and $l$ boundary curves with $n=\sum_{k=1}^l a_k$ boundary punctures so that there are $a_k$ punctures on the $k$-th boundary (see \cref{notn:sigmasurface}), every ideal triangulation $\triangle$ of $\Sigma$ has cardinality
$$
|\triangle|=\tfrac{1}{2}(3|\chi(D\Sigma)|-n),
$$  
where $D\Sigma$ is the surface obtained by doubling $\Sigma$ along its boundary. Furthermore, if the genus of $\Sigma$ is $g$, then
$$
|\triangle|=6g-6+3m+3l+n.
$$
\end{lemma}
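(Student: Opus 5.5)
The count of arcs comes from an Euler characteristic computation on the closed surface obtained by collapsing boundary data, followed by checking that the two stated formulas agree.

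\emph{Step 1: the combinatorial model.} Fix an ideal triangulation $\triangle$ of $\Sigma$. Cut $\Sigma$ along all arcs of $\triangle$. By maximality, every complementary region is an ideal triangle. Its sides are either arcs of $\triangle$ or boundary arches; a standard innermost-region argument excludes monogons and bigons, using the non-peripherality and non-isotopy conditions.

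Let $T$ be the number of triangles and $A=|\triangle|$. Each arc of $\triangle$ bounds two triangle-sides and each of the $n$ arches bounds one, so counting sides gives
\[
3T = 2A + n.
\]

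\emph{Step 2: Euler characteristic.} Consider the compact cell complex obtained by adding the $m$ interior punctures and the $n$ boundary punctures back as vertices. This is a genus $g$ surface with $l$ boundary circles, so its Euler characteristic is $2-2g-l$. Its cells are:
\begin{itemize}
\item vertices: $m+n$;
\item edges: $A+n$, since the arches are also edges;
\item faces: $T$.
\end{itemize}
This gives
\[
m+n-(A+n)+T = 2-2g-l,
\]
so $T = A + 2 - 2g - l - m$.

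\emph{Step 3: solve for $A$.} Substituting into $3T = 2A + n$ gives
\[
3A + 6 - 6g - 3l - 3m = 2A + n,
\]
that is, $A = 6g-6+3m+3l+n$.

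\emph{Step 4: match with $D\Sigma$.} The double $D\Sigma$ has genus $2g+l-1$ and $2m+n$ punctures: each boundary puncture becomes a single interior puncture, and each interior puncture appears twice. Hence
\[
\chi(D\Sigma)=2-2(2g+l-1)-(2m+n)=4-4g-2l-2m-n.
\]
Since $\Sigma$ admits a hyperbolic metric this is negative, so $|\chi(D\Sigma)| = 4g+2l+2m+n-4$. Then
\[
\tfrac12\bigl(3|\chi(D\Sigma)|-n\bigr) = 6g-6+3l+3m+n,
\]
which agrees with Step 3.

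\emph{Main obstacle.} The only non-routine point is Step 1: justifying that maximality forces every complementary region to be a genuine ideal triangle. This requires showing that any region which is not a triangle admits a further ideal arc that is non-peripheral and not isotopic to any existing arc. That holds for polygons with at least four sides and for regions containing topology or punctures. The special cases of low complexity, such as $\mathbb{A}_1$ and $\mathbb{D}_3$, should be checked directly.
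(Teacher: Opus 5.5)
Your proof is correct. The paper gives no proof of this lemma; it is quoted from \cite{HT25}. The natural comparison is therefore the doubling argument the paper uses for the companion count of maximal arc collections in \cref{lem:gen-triang-cardinality}.

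Applied to ideal triangulations, that route goes as follows. The arcs of $\triangle$, their mirror copies, and the $n$ arches together cut the punctured surface $D\Sigma$ into ideal triangles: each triangle of $\Sigma$ doubles to two, and boundary punctures become interior punctures. They therefore form an ideal triangulation of $D\Sigma$, and the standard count for punctured surfaces gives $2|\triangle|+n=3|\chi(D\Sigma)|$ immediately. The genus formula then follows from $\chi(D\Sigma)=4-4g-2l-2m-n$.

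You instead run the Euler characteristic count directly on the compactified bordered surface, with the arches as extra edges. You get the genus formula first and recover the $D\Sigma$ form only as an arithmetic check in Step~4. The two are the same computation organized differently, since the punctured-surface count $-3\chi$ is your Steps~1--3 with $n=0$. Yours is self-contained and avoids checking that the doubled collection is an ideal triangulation of $D\Sigma$. The doubling route explains why the lemma is phrased via $D\Sigma$ and parallels the paper's pants-decomposition proof of \cref{lem:gen-triang-cardinality}.

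Both approaches rest on your Step~1, where two details need care:
\begin{itemize}
\item Monogon regions are ruled out by the clause in the definition that an ideal arc must not bound an open disk, not by non-peripherality or non-isotopy.
\item A loop around a single interior puncture bounds a punctured disk, not an open disk, so self-folded triangles are permitted and ``ideal triangle'' must include them. This does not affect your counts: each arc still contributes exactly two sides, and each face is still an open $2$-cell.
\end{itemize}

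Lastly, for $l=0$ the double is disconnected, so ``genus $2g+l-1$'' is not literally meaningful. Your value of $\chi(D\Sigma)$ is still correct, for instance as $2(2-2g-l-m)-n$, obtained by gluing two copies of $\Sigma$ along $n$ contractible arches.
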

Given a crowned hyperbolic surface $X \in \mathcal{T}_{\Sigma}^\varnothing$, there is a unique geodesic representative of $\triangle=\triangle(X)$ such that if some puncture $p_i$ is geometrised as a flare, then the arcs in $\triangle$ with $p_i$ as an endpoint geometrize to geodesics that spiral towards the $i$-th cuff  in the following way: from the perspective of a point on the cuff and looking away from the funnel, the spiralling geodesics pass to the left.

\begin{proposition}[shearing coordinates, see e.g. {\cite[Cor.~2.22]{huang_thesis}}]
\label{prop:shearingcoords}
Let  $\triangle$ be an ideal triangulation  of $\Sigma$, and let $s_i$ denote the shearing parameter associated to the $i$-th arc of $\triangle$. Then the map 
$$
s_{\triangle} : \mathcal{T}_{\Sigma}^\varnothing \to\RR^{\triangle} 
$$
is a real-analytic embedding into the convex polyhedral cone in $\RR^{\triangle} $ given by the following conditions for $i=1,\dotsc,m$: the collection of $\{s_j\}$ corresponding to arcs incident to $\beta_i$ sum (with multiplicity) to a non-negative number.
\end{proposition}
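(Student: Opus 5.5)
The plan is to prove this in three stages: the triangle-by-triangle description of crowned surfaces, then injectivity, then surjectivity onto the cone together with analyticity. Throughout I fix $X\in\mathcal{T}_\Sigma^\varnothing$ and the geodesic realisation of $\triangle$ described just above, with arcs at flares spiralling to the left.

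\emph{Step 1 (the shear vector).} The complement of the geodesic arcs in $X$ is a union of ideal hyperbolic triangles. This uses two facts: all boundary punctures are tines, since $\mathcal{A}=\varnothing$; and the spiralling arcs cut funnels down to their cuffs. Every ideal triangle is isometric to a standard one, so each interior arc $e$ acquires a well-defined shear $s_e$. This is the signed distance between the feet of the perpendiculars dropped from the opposite vertices of the two adjacent triangles. This gives the map $s_\triangle$, and I will check that it is isotopy invariant.

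\emph{Step 2 (injectivity and the cusp/flare condition).} I will develop $\widetilde{X}$ into $\HH$ triangle by triangle, using the shears to glue. Given two metrics with the same shears, the developing maps agree up to $\mathrm{PSL}_2(\RR)$, so the two metrics are isotopic; this gives injectivity. For an interior puncture $p_i$, I compose the gluing matrices of the triangles around $p_i$. The result is a parabolic or hyperbolic element whose translation length is $|\sum s_j|$, summed with multiplicity over the arcs incident to $\beta_i$. This is the standard horocycle computation. With the left-spiralling convention, the sum is exactly $b_i\geqslant 0$. A cusp corresponds to sum $0$.

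Tines impose no constraint. The reason is that at a tine the two arches are asymptotic, and in the boundary-cut picture the triangles adjacent to a boundary puncture terminate on an arch, not on a closed cycle. So no holonomy condition arises there.

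\emph{Step 3 (surjectivity onto the cone and analyticity).} Given any vector satisfying the inequalities, I glue ideal triangles with the prescribed shears and boundary arcs. Where the sum at $p_i$ is positive, the resulting incomplete end is completed by attaching a funnel along the geodesic representative, as in Thurston's completion argument. Where the sum is zero, the end is already a cusp. This produces a flippered surface with only tines, realising every point of the cone. The shears are real-analytic functions of cross-ratios of endpoints of lifts, which vary analytically in $\mathcal{T}_{D\Sigma}$. Conversely, the developing map depends analytically on the shears. So $s_\triangle$ is a real-analytic embedding.

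\emph{Main obstacle.} I expect the hardest step to be the completion at punctures with negative sum. There the spiralling would be in the wrong direction relative to the fixed convention, so one must show that this cannot arise from an actual surface. My first approach would be to compare, on a universal-cover model, the orientation of the spiral against the side from which one looks at the cuff.
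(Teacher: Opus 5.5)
The paper gives no proof of this proposition; it is quoted from the literature. What you sketch is the standard Thurston--Fock argument: glue ideal triangles along the prescribed shears, read off the holonomy at each interior puncture, and complete the ends. The outline is sound, but you have misplaced its one real point. Nothing needs completing at punctures with negative sum. Such vectors lie outside the cone, and a negative sum is realised by an actual surface, only with the arcs spiralling the other way.

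What you need is a two-sided sign lemma. If the arcs at $p_i$ are realised as geodesics spiralling onto $\beta_i$, the sum of the incident shears (with multiplicity) is $+b_i$ for the left-passing spiral and $-b_i$ for the right-passing one. This is a short computation, not the hardest step. Put the common endpoint of the lifted arcs at $\infty$, so the triangles around $p_i$ become strips $[x_{j-1},x_j]$ of widths $w_j=x_j-x_{j-1}$. Up to a global sign fixed by the convention, the shear across $\mathrm{Re}\,z=x_j$ is $\log(w_{j+1}/w_j)$. The sum over one period therefore telescopes to $\log\lambda$, where $z\mapsto x^*+\lambda(z-x^*)$ is the holonomy carrying strip $j$ to strip $j+k$. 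The strips accumulate on the axis $\mathrm{Re}\,z=x^*$ from one side only. So whether $\lambda>1$ or $\lambda<1$ is decided by which side they occupy, i.e.\ by the spiralling direction.

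You need this lemma twice, and Step~3 as written omits the second use. After gluing a point of the cone and completing an end with positive sum, you must check that the glued arcs spiral onto the new cuff to the left. Otherwise they are not the geodesic realisation of $\triangle$ used to define $s_\triangle$, and the surface you built need not have the prescribed coordinates. The two-sided lemma settles this at once, because a right-passing spiral would force the sum to be $-b_i<0$. With that inserted, the rest goes through as you describe: injectivity, containment of the image in the cone, surjectivity onto it, and the analyticity statements.
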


\subsubsection{Change of shearing coordinates under flips}
Let $\alpha$ be an ideal arc in an  ideal triangulation $\triangle$ of $\Sigma$. We say that an ideal triangulation $\triangle'$ is obtained from $\triangle$ by a \textit{flip} of $\alpha$ if $\triangle'\neq \triangle$, and there is an arc $\alpha'$ of $\triangle'$ such that $\triangle\setminus\{\alpha\}=\triangle'\setminus\{\alpha'\}$.

\begin{proposition}[\cite{pennerbook}]
\label{prop:flips}
Let $\triangle$ be an ideal triangulation of $\Sigma$. Suppose that $e\in \triangle$ separates two ideal triangles complementary to $\triangle$, where these two triangles have frontier arcs $a,b,e\in\triangle$ and $c,d,e \in \triangle$ in counterclockwise order. Perform a flip of $e$ to produce the ideal triangulation $\triangle'$. Provided $a,b,c,d$ are all pairwise distinct and in $\triangle$, then $s_{e'}=-s_{e}$ and 
\begin{equation}
\begin{split}
&s_{a'} = s_a+\log(1+e^{s_e}), 
\quad s_{b'} = s_b-\log(1+e^{-s_e}),
\\&
s_{c'} = s_c+\log(1+e^{s_e}), \quad
s_{d'} = s_d-\log(1+e^{-s_e}).
\end{split}
\end{equation}
\end{proposition}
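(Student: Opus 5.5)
The flip formula is a local statement: it concerns only the quadrilateral $Q$ formed by the two ideal triangles adjacent to $e$. The hypothesis that $a,b,c,d$ are pairwise distinct ensures that each of their shears changes only through its interaction with $Q$. I will therefore lift to the universal cover $\HH^2$ and work with cross-ratios of four ideal points.

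First I will normalise the lift of $Q$ so that its ideal vertices are $\infty, -1, 0, x$ with $x>0$, and $e$ is the geodesic from $0$ to $\infty$. With the positive-left convention, the shear along $e$ is $s_e=\log x$, up to fixing the sign convention once for the whole proof. After the flip, $e'$ joins $-1$ and $x$. Computing the shear of the new quadrilateral about $e'$ should give $s_{e'}=-\log x$, since the cross-ratio is inverted when the diagonal is swapped; this yields $s_{e'}=-s_e$.

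Next I will treat a side arc, say $a$, lying between $\infty$ and $-1$. Its shear is computed from the cross-ratio of the four ideal vertices of the two triangles adjacent to $a$. One of these triangles lies outside $Q$ and is unchanged by the flip. Only the inner vertex changes, from $0$ (triangle $\infty,-1,0$) to $x$ (triangle $\infty,-1,x$). I will use the cross-ratio form of the shear, $s=\log\bigl|\frac{(p-r)(q-s')}{(p-s')(q-r)}\bigr|$. Comparing the two cross-ratios shares three points, so the ratio reduces to the factor $\frac{x+1}{1}$ or its reciprocal. Hence $s_{a'}-s_a=\pm\log(1+x)=\pm\log(1+e^{s_e})$. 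For $b$, the side between $-1$ and $0$, the opposite vertex moves from $\infty$ to $x$. The analogous factor is $\frac{x}{1+x}$, which gives $s_{b'}-s_b=\log\frac{x}{1+x}$, that is, $-\log(1+e^{-s_e})$. The sides $c$ and $d$ follow by the rotational symmetry of $Q$ that exchanges the two triangles and sends $x\mapsto x$.

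Finally, the remaining arcs of $\triangle$ are unaffected, because their adjacent triangles do not change. Spiralling ends at flares cause no trouble, since the computation uses only the local lift, where all vertices are distinct ideal points.

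The main obstacle is sign and orientation bookkeeping. I must confirm that the counterclockwise ordering $a,b,e$ and $c,d,e$, together with the positive-left convention, produces $+\log(1+e^{s_e})$ exactly on $a$ and $c$ rather than on $b$ and $d$. I will check this by a consistency test: with $s_e=0$ (a symmetric square), $s_{a'}-s_a=\log 2$ and $s_{b'}-s_b=-\log 2$, which must match the actual geometry of the flipped square.
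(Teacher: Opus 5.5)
Your argument is correct. The paper gives no proof of this proposition; it is quoted from \cite{pennerbook}, so there is no internal argument to match. The standard proof runs through lambda lengths, and your route is genuinely different from it.

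In the lambda-length proof one decorates the vertices of $Q$ and of the adjacent outer triangles by horocycles. Then $e^{s_e}=\lambda_a\lambda_c/(\lambda_b\lambda_d)$, and $e^{s_a}$ is a ratio of lambda lengths whose factor $\lambda_b/\lambda_e$ becomes $\lambda_{e'}/\lambda_d$ after the flip. The Ptolemy relation $\lambda_e\lambda_{e'}=\lambda_a\lambda_c+\lambda_b\lambda_d$ then gives $e^{s_{a'}-s_a}=1+e^{s_e}$ at once. Likewise $e^{s_{b'}-s_b}=\lambda_a\lambda_c/(\lambda_e\lambda_{e'})=(1+e^{-s_e})^{-1}$. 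This is the same lambda-length/shear dictionary the paper uses in \cref{app:shear-FN}.

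Your direct cross-ratio computation in $\HH$ needs no decorations, which is convenient at flared vertices. The Ptolemy argument also survives there, since shears are decoration-independent and horocycles can be chosen locally in the universal cover. What your route costs is explicit orientation bookkeeping, and the absolute-value cross-ratio you wrote cannot do that bookkeeping because it ignores orientation. Use an oriented version instead: for a diagonal $pq$ whose quadrilateral has vertices $r,p,t,q$ in counterclockwise order, set $s=\log\bigl(-\tfrac{(t-p)(r-q)}{(t-q)(r-p)}\bigr)$.

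This formula reproduces the convention of \cref{lem:shear-width-quadrilateral}. It gives $s_e=\log x$ and $s_{e'}=-\log x$. Writing $y<-1$ for the outer vertex across $a$, it gives $s_a=-\log(-1-y)$ and $s_{a'}=\log(1+x)-\log(-1-y)$, so the sign on $a$ is forced. Your $s_e=0$ test settles the sign only if you separately compute which way the foot of the perpendicular moves, which is the same bookkeeping.

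Your normalization is compatible with the hypothesis. In $\HH$ the triangle $(-1,0,\infty)$ has sides $a,b,e$ in counterclockwise order, and $(0,x,\infty)$ has sides $c,d,e$.

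One slip: the symmetry you use for $c,d$ is $z\mapsto -x/z$, which swaps $-1\leftrightarrow x$ and $0\leftrightarrow\infty$. It preserves $s_e$, not the point $x$. It lies in $\mathrm{PSL}_2(\RR)$, so it carries your computations for $a,b$ over to $c,d$ with no sign change.
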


\begin{figure}[H]
    \centering
    \begin{tikzpicture}[scale=2]

\pgfmathsetmacro{\r}{sqrt(2)}

    \draw[rotate=45] (-1,-1) rectangle (1,1);
    
    \draw (0,-\r) -- (0,\r);

    \draw[->, thick] (\r+0.3,0) -- (\r+1.2,0);

    \begin{scope}[shift={(\r+2.9,0)}]
        \draw[rotate=45] (-1,-1) rectangle (1,1);
        \draw (-\r,0) -- (\r,0);
    \end{scope}


\node [font=\fontsize{14}{10}] at (-0.15,0.1) {$s_{e}$};

\node [font=\fontsize{14}{10}] at (-0.85,0.85) {$s_{a}$};

\node [font=\fontsize{14}{10}] at (0.85,0.85) {$s_{d}$};

\node [font=\fontsize{14}{10}] at (0.85,-0.85) {$s_{c}$};

\node [font=\fontsize{14}{10}] at (-0.85,-0.85) {$s_{b}$};

\node [font=\fontsize{14}{10}] at (4.3,0.1) {$s_{e'}$};

\node [font=\fontsize{14}{10}] at (4.3-0.85,0.85) {$s_{a'}$};

\node [font=\fontsize{14}{10}] at (4.3+0.85,0.85) {$s_{d'}$};

\node [font=\fontsize{14}{10}] at (4.3+0.85,-0.85) {$s_{c'}$};

\node [font=\fontsize{14}{10}] at (4.3-0.85,-0.85) {$s_{b'}$};
    
\end{tikzpicture}
    \caption{A flip of an arc.}
    \label{fig: flip}
\end{figure}

\begin{example}[Ideal pentagon]
\label{ex:pentagon}

Consider an ideal pentagon with the ideal triangulation as in \cref{fig: pentagon}, and the sequence of two flips as in \cref{fig: pentagon}. Then by \cref{prop:flips}:
\begin{equation}
\begin{split}
s'_2 = -s_2, \quad s_1' = \log\left(e^{s_1}+e^{s_1+s_2}\right),
\end{split}
\end{equation}
and 
\begin{equation}
\begin{split}
s''_1 = -\log\left(e^{s_1}+e^{s_1+s_2}\right), \quad s_2'' = \log\left(e^{s_1}+e^{-s_2}+e^{s_1-s_2}\right).
\end{split}
\end{equation}

\begin{figure}[H]
    \centering
    \begin{tikzpicture}[scale=1.2]

\pgfmathsetmacro{\r}{0}

\draw[thick]
    (270:2) --
    (342:2) --
    (54:2)  --
    (126:2) --
    (198:2) -- cycle;

\draw[thick] (54:2) -- (270:2);

\draw[thick] (126:2) -- (270:2);

\begin{scope}[shift={(6,0)}]
\draw[thick]
    (270:2) --
    (342:2) --
    (54:2)  --
    (126:2) --
    (198:2) -- cycle;

\draw[thick] (54:2) -- (270:2);

\draw[thick] (54:2) -- (198:2);
    
\end{scope}

\begin{scope}[shift={(12,0)}]
\draw[thick]
    (270:2) --
    (342:2) --
    (54:2)  --
    (126:2) --
    (198:2) -- cycle;

\draw[thick] (198:2) -- (342:2);

\draw[thick] (54:2) -- (198:2);
    
\end{scope}

\draw[->, thick] (2.3,0) -- (3.8,0);

\draw[->, thick] (2.3+6,0) -- (3.8+6,0);

\node [font=\fontsize{14}{10}] at (1,0) {$s_{1}$};

\node [font=\fontsize{14}{10}] at (-1,0) {$s_{2}$};

\node [font=\fontsize{14}{10}] at (0.3+6,0) {$s'_{1}$};

\node [font=\fontsize{14}{10}] at (-0.6+6,0.8) {$s'_{2}$};

\node [font=\fontsize{14}{10}] at (0+12,-0.2) {$s''_{1}$};

\node [font=\fontsize{14}{10}] at (-0.6+12,0.8) {$s''_{2}$};

\end{tikzpicture}
    \caption{Flips on ideal pentagon.}
    \label{fig: pentagon}
\end{figure}
    
\end{example}

\subsection{Orthogeodesic coordinates}
\label{subsec:orthogeo-coord}

We introduce a class of \emph{global} coordinates on $\mathcal{T}_\Sigma$. These coordinates enjoy the property that Teichm\"uller subspaces with fixed holonomy $\mathcal{T}_\Sigma(\vec{b};\vec{c}) \subset \mathcal{T}_\Sigma$, are realised as (affine) coordinate subspaces. We refer to standard references like \cite[Section~7.3]{martelli2022introductiongeometrictopology} for the construction of Fenchel-Nielsen coordinates on Teichm\"uller space of surfaces with punctures.

\begin{definition}[essential arcs and maximal arc collection]
\label{def:gen-triang}
Given a surface $\Sigma=\Sigma_{g,m,\vec{a}}$, we refer to a properly embedded closed arc in $\Sigma$ with endpoints placed at the boundary $\partial \Sigma$ of $\Sigma$ as an \emph{essential arc} if 
\begin{itemize}
    \item no arc in $\square$, together with a subarc of the boundary, bound an open disk (see \cref{fig:forbidden-arcs}, left),
    \item no arc in $\square$,  together with two boundary subarcs (meeting at a puncture), bound an open disk (see \cref{fig:forbidden-arcs}, right).
\end{itemize}    
We call a collection $\square$ of disjoint essential arcs a \emph{maximal arc collection} on $\Sigma$ if
\begin{itemize}
    \item no two arcs in $\square$ are isotopic through properly embedded arcs, and
    \item $\square$ is maximal in cardinality among sets satisfying above conditions.
\end{itemize}
We identify maximal arc collections up to isotopies of $\Sigma$.
\end{definition}

\begin{figure}[H]
    \centering
    \begin{tikzpicture}[
    thick,
    node/.style={circle, fill=black, inner sep=1.5pt},
    loop/.style={out=135, in=225, looseness=18}
]

\draw (0,0) -- (0,2);

\draw[densely dotted] (0,2) -- (0,2.2);

\draw[densely dotted] (0,0) -- (0,-0.2);




\node [font=\fontsize{7}{7}] at (-0.3,1.7) {$\partial\Sigma$};


\draw[blue] (0, 1.5) arc (90:270:0.5);

\draw (3,0) -- (3,2);

\draw[densely dotted] (3,2) -- (3,2.2);

\draw[densely dotted] (3,0) -- (3,-0.2);


\node[node] (mid) at (3,1) {};


\node [font=\fontsize{7}{7}] at (2.7,1.7) {$\partial\Sigma$};


\draw[blue] (3, 1.5) arc (90:270:0.5);

\end{tikzpicture}
    \caption{Two types of forbidden arcs (up to isotopy) in a maximal arc collection.}
    \label{fig:forbidden-arcs}
\end{figure}

\begin{remark}
    The condition that a properly embedded arc is essential is equivalent to requiring that the double of the arc on the double of $\Sigma$ (doubled along the boundary) is an essential (non-peripheral) closed curve.
\end{remark}

\begin{lemma}[complementary regions for maximal arc collections]
\label{lem:gen-triang-compl-regions}
Up to homeomorphism that sends arcs to arcs, there are five possible topological types of closures of complementary regions to a maximal arc collection $\square$ on $\Sigma$:
\begin{itemize}
    \item a pair of half-pants  (\cref{fig:gen-triang-compl-regions}, left-most), 
    \item a hexagon  (\cref{fig:gen-triang-compl-regions}, second starting from the left),
    \item a degenerate hexagon  (\cref{fig:gen-triang-compl-regions}, right-most three types).
\end{itemize}
\end{lemma}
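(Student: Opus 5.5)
The plan is to pass to the double and reduce the statement to the classical fact that maximal multicurves cut a surface into pairs of pants. Let $\square$ be a maximal arc collection on $\Sigma$ and let $D\Sigma$ be the double of $\Sigma$ along $\partial\Sigma$. By the remark following \cref{def:gen-triang}, each arc $\gamma\in\square$ doubles to an essential, non-peripheral simple closed curve $D\gamma$ on $D\Sigma$. Since the arcs of $\square$ are pairwise non-isotopic, the curves $D\gamma$ are pairwise non-isotopic, and they are disjoint. The doubled surface carries an orientation-reversing involution $\iota$ fixing $\partial\Sigma$, and every $D\gamma$ is $\iota$-invariant.

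The first key step is maximality. I would show that $D\square$ is a maximal multicurve on $D\Sigma$, so that $D\Sigma\setminus D\square$ consists of pairs of pants. The strategy is to argue by contradiction. If some complementary component $P$ of $D\square$ were not a pair of pants, I would find an essential curve in $P$ that is either $\iota$-invariant or can be replaced by an $\iota$-invariant one. An invariant curve meets $\mathrm{Fix}(\iota)$ in an arc of $\Sigma$, which would contradict the maximality of $\square$. To produce such a curve I would use the quotient. The component $P$ is $\iota$-invariant: it is the double of a complementary region $R$ of $\square$ in $\Sigma$, because $\iota$ fixes $\partial\Sigma\cap R$ pointwise and $R$ meets $\partial\Sigma$. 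So it suffices to show directly that if $R$ is not one of the listed types, then $R$ contains an essential arc non-isotopic to the arcs of $\square$. This becomes a classification of $R$ by its topology.

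The second step is that classification. The region $R$ is a surface of genus $g_R$ with boundary. Its boundary is made up of arcs of $\square$ alternating with pieces of $\partial\Sigma$, and those pieces may contain boundary punctures; interior punctures may also lie in $R$. Maximality forces the following:
\begin{itemize}
\item $g_R=0$, since otherwise a nonseparating arc exists;
\item $R$ has at most two boundary components meeting the frontier;
\item the number of interior punctures and of boundary punctures is small.
\end{itemize}
Counting through $\chi(DR)=-1$, the region $DR$ is a pair of pants, and $\iota$ acts on it as a reflection. The reflections of a pair of pants, with their fixed sets meeting the three cuffs (which may be cusps, i.e.\ punctures), give the possible quotients:
\begin{itemize}
\item a half-pants, when the reflection fixes one cuff setwise and swaps the other two;
\item a right-angled hexagon, when all three cuffs are reflected;
\item degenerate hexagons, when one, two or three of the cuffs are punctures, so that the corresponding sides shrink to boundary punctures.
\end{itemize}
This produces the five types: the half-pants, the hexagon, and the three degenerate hexagons. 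I must check that a degenerate hexagon with all cuffs punctured is excluded or counted correctly. I would also check that the forbidden-arc conditions of \cref{fig:forbidden-arcs} rule out monogon and punctured-monogon regions.

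I expect the main obstacle to be the maximality transfer: showing that a non-pants component of the double actually yields an arc, rather than only an $\iota$-invariant closed curve disjoint from $\mathrm{Fix}(\iota)$, such as a curve around an interior puncture pair. I would first try the direct Euler characteristic argument on $R$, choosing arcs in $R$ explicitly: connect two frontier sides through a puncture, or cut around an interior puncture. This bypasses equivariant curve theory, and I would fall back on the double only for bookkeeping.
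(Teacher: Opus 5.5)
\textbf{Verdict: there is a genuine gap.} Your reduction is the right one: everything comes down to showing that a complementary region $R$ not of the listed types contains an essential arc not isotopic to any arc of $\square$. That is exactly the step your proposal does not carry out.
\begin{itemize}
\item Your list of consequences of maximality is asserted, not proved.
\item The item ``$R$ has at most two boundary components meeting the frontier'' is wrong. Any two boundary circles of $R$ can be joined by an arc, which is nonseparating in $R$ and hence new, so there is at most one.
\item ``The number of interior and boundary punctures is small'' is precisely what needs proof.
\item $\chi(DR)=-1$ is never derived. It is essentially equivalent to the lemma, so passing to reflections of a pair of pants presupposes what is to be shown.
\end{itemize}
As you suspect, the double does not help: producing an $\iota$-invariant essential curve meeting $\mathrm{Fix}(\iota)$ inside a non-pants component is the same problem as producing the arc in $R$.

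\textbf{What is missing is a direct case analysis in $\Sigma$.} This is how the paper argues; it uses the double only afterwards, for the count in \cref{lem:gen-triang-cardinality}. Genus forces a nonseparating arc, and two boundary circles give a connecting arc. So $R$ is a disk with $k$ interior punctures whose boundary circle carries $r$ arcs of $\square$ and $t$ boundary punctures.
\begin{itemize}
\item If $k\geqslant 2$, an arc based on a boundary piece of $R$ and enclosing exactly one interior puncture is new; it cuts off a half-pants.
\item If $k=1$ and $r+t\geqslant 2$, the same enclosing arc is new. So $k=1$ forces the half-pants.
\item If $k=0$, the two forbidden-arc conditions and pairwise non-isotopy give $r+t\geqslant 3$. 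If $r+t\geqslant 4$, an arc joining two boundary pieces that splits the $r+t$ sides into two groups of size at least $2$ is new. Hence $r+t=3$: the hexagon for $t=0$ and the degenerate hexagons for $t=1,2,3$.
\end{itemize}
In each case you must also check that the new arc is essential in $\Sigma$ and not isotopic to an arc of $\square$. This holds because neither piece it cuts $R$ into is a rectangle or one of the two forbidden disks. An isotopy rectangle cannot leave $R$ without containing an arc of $\square$ parallel to one of its sides, which would contradict pairwise non-isotopy.

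Once this is in place, your reflection-of-pants picture is a valid and tidy way to organize the types. It also produces a degenerate half-pants, namely $\Sigma=\mathbb{A}_1$ with $\square=\varnothing$, where the $\iota$-invariant cuff is a boundary puncture. Like the fully degenerate hexagon $\Sigma=\mathbb{D}_3$, this occurs only when the region is all of $\Sigma$, and the statement's list of five does not include it.
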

\begin{proof}
By the maximality condition, the closure $\overline{C}$ of a complementary region $C$ to $\square$ in $\Sigma$ cannot have:
\begin{itemize}
    \item positive genus; 
    \item more than one boundary component (those can be connected by an arc); 
    \item or more than one interior puncture (otherwise, we can further separate a pair of half-pants from $\overline{C}$).
\end{itemize} Similarly, if $\overline{C}$ has exactly one interior puncture, then $\overline{C}$ is a pair of half-pants. 

Suppose that $\overline{C}$ contains no interior punctures and no boundary punctures. Note that the arcs and boundary arcs in $\overline{C}\setminus C$ alternate; by the first and third conditions in \cref{def:gen-triang}, $\overline{C}\setminus C$ contains least $3$ arcs; and if $\overline{C}\setminus C$ has at least $4$ arcs, the collection cannot be maximal. Hence $\overline{C}$ is a hexagon.

If $\overline{C}$ contains boundary punctures but not interior punctures, add a (non-essential) arc around each boundary puncture. Using a similar argument, they cut out a hexagon. Depending on the number of added arcs, we obtain the three types of degenerate hexagons. The last type appears only if it is the entire $\Sigma$.  
\end{proof}

\begin{figure}[H]
    \centering
    \begin{tikzpicture}[
    thick,
    node/.style={circle, fill=black, inner sep=1.5pt},
    loop/.style={out=135, in=225, looseness=18}
]

\draw[red] (0,0) ++(60:1) arc[start angle=60, end angle=120, radius=1];

\draw[blue] (0,0) ++(120:1) arc[start angle=120, end angle=180, radius=1];

\draw[red] (0,0) ++(180:1) arc[start angle=180, end angle=240, radius=1];

\draw[blue] (0,0) ++(240:1) arc[start angle=240, end angle=300, radius=1];

\draw[red] (0,0) ++(300:1) arc[start angle=300, end angle=360, radius=1];

\draw[blue] (0,0) ++(360:1) arc[start angle=360, end angle=420, radius=1];

\draw[red] (2.5,0) ++(60:1) arc[start angle=60, end angle=180, radius=1];


\draw[red] (2.5,0) ++(180:1) arc[start angle=180, end angle=240, radius=1];

\draw[blue] (2.5,0) ++(240:1) arc[start angle=240, end angle=300, radius=1];

\draw[red] (2.5,0) ++(300:1) arc[start angle=300, end angle=360, radius=1];

\draw[blue] (2.5,0) ++(360:1) arc[start angle=360, end angle=420, radius=1];

\node[node] (mid) at (1.62,0.48) {};


\draw[red] (5,0) ++(60:1) arc[start angle=60, end angle=180, radius=1];


\draw[red] (5,0) ++(180:1) arc[start angle=180, end angle=240, radius=1];

\draw[blue] (5,0) ++(240:1) arc[start angle=240, end angle=300, radius=1];

\draw[red] (5,0) ++(300:1) arc[start angle=300, end angle=420, radius=1];


\node[node] (mid) at (4.13,0.48) {};

\node[node] (mid) at (5.88,0.48) {};


\draw[red] (7.5,0) ++(60:1) arc[start angle=60, end angle=180, radius=1];


\draw[red] (7.5,0) ++(180:1) arc[start angle=180, end angle=300, radius=1];


\draw[red] (7.5,0) ++(300:1) arc[start angle=300, end angle=420, radius=1];


\node[node] (mid) at (6.63,0.48) {};

\node[node] (mid) at (8.38,0.48) {};

\node[node] (mid) at (7.5,-1) {};


\draw[blue] (-2.5,0) ++(0:1) arc[start angle=0, end angle=180, radius=1];

\draw[blue] (-3.5,0) -- (-3.5,-1);

\draw[blue] (-1.5,0) -- (-1.5,-1);

\draw[red] (-3.5,-1) -- (-1.5,-1);

\node[node] (mid) at (-2.5,0) {};

\end{tikzpicture}
    \caption{Types of closures of complementary regions in a maximal arc collection on $\Sigma$. In \textcolor{blue}{blue}: arcs. In \textcolor{red}{red}: the boundary $\partial\Sigma$ of $\Sigma$.}
    \label{fig:gen-triang-compl-regions}
\end{figure}

\begin{lemma}
\label{lem:gen-triang-cardinality}
Given a surface $\Sigma=\Sigma_{g,m,\vec{a}}$, 
with genus $g$, $m$ interior punctures and $l$ boundary curves with $n=\sum_{k=1}^l a_k$ boundary punctures so that there are $a_k$ punctures on the $k$-th boundary (see \cref{notn:sigmasurface}), every maximal arc collection $\square$ on $\Sigma$ has cardinality
\[
|\square|=\tfrac{1}{2}
(3|\chi(D\Sigma)|-2m-n).
\]  
In particular, if the genus of $\Sigma$ is $g$, then
\[
|\square|=6g-6+2m+3l+n.
\]
\end{lemma}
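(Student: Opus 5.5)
The plan is to prove the formula by an Euler characteristic count. The count combines the classification of complementary regions in \cref{lem:gen-triang-compl-regions} with a count of arc-sides. First I will verify that the two stated expressions agree. The double $D\Sigma$ has genus $2g+l-1$ and $2m+n$ punctures. Hence $|\chi(D\Sigma)|=4g-4+2l+2m+n$, and
\[
\tfrac12\bigl(3|\chi(D\Sigma)|-2m-n\bigr)=6g-6+3l+2m+n .
\]
So it suffices to prove $|\square|=6g-6+3l+2m+n$.

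Model $\Sigma$ as a compact genus-$g$ surface with $l$ boundary circles, with the $m$ interior punctures removed and the $n$ boundary punctures regarded as marked points on the boundary circles. This model has $\chi=2-2g-l-m$. Let $k=|\square|$. Each arc of $\square$ is a properly embedded closed arc, so cutting along it raises $\chi$ by exactly $1$. After cutting along all $k$ arcs, the complementary regions therefore have total Euler characteristic $2-2g-l-m+k$.

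By \cref{lem:gen-triang-compl-regions}, each region is one of two kinds.
\begin{itemize}
\item A half-pants: a disk containing one interior puncture, so $\chi=0$. Its frontier contains exactly one arc-side.
\item A (possibly degenerate) hexagon: a disk, so $\chi=1$. Its frontier contains $3-j$ arc-sides, where $j\in\{0,1,2,3\}$ is the number of boundary punctures it contains.
\end{itemize}
Let $F$ be the number of hexagonal regions. Every interior puncture lies in exactly one half-pants, so there are exactly $m$ half-pants. Every boundary puncture lies in exactly one hexagon, so the values $j_R$ over the hexagons $R$ sum to $n$. This gives the first relation,
\[
F = 2-2g-l-m+k .
\]
Each arc has two sides, each lying on the frontier of some region, possibly the same region twice. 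Counting arc-sides region by region gives the second relation,
\[
2k = m + \textstyle\sum_R (3-j_R) = m+3F-n .
\]
Substituting the first relation into the second gives $2k=m+3k+6-6g-3l-3m-n$, that is, $k=6g-6+3l+2m+n$, as claimed.

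The main obstacle is making the bookkeeping airtight rather than the algebra. Three points need care.
\begin{itemize}
\item Cutting along each arc raises $\chi$ by exactly one, including arcs whose two sides bound the same region.
\item The region types in \cref{lem:gen-triang-compl-regions} carry exactly the stated numbers of arc-sides; for the half-pants, the frontier is a single arc together with a single boundary segment.
\item The degenerate case with $j=3$, where the region is the whole of $\Sigma=\mathbb{D}_3$ and $k=0$, is consistent with the count. Indeed $-6+3+3=0$.
\end{itemize}
Finally, the argument applies to any maximal arc collection, so the cardinality is independent of the choice of $\square$.
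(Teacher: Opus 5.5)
Your argument is correct, but it takes a different route from the paper. The paper doubles. By the classification of complementary regions, $\square$ and its mirror copy glue to a pants decomposition of $D\Sigma$ in which each arc becomes one closed curve. So $|\square|$ is the number of pants curves on a surface of genus $2g+l-1$ with $n+2m$ punctures, namely $3(2g+l-1)-3+n+2m$. You instead stay on $\Sigma$ and do the Euler-characteristic and arc-side bookkeeping familiar from counting edges of ideal triangulations.

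The doubling argument has several advantages:
\begin{itemize}
\item it is shorter;
\item it needs only the coarse fact that each region doubles to a pair of pants, not exact arc-side counts;
\item it explains why the formula is naturally phrased through $\chi(D\Sigma)$;
\item it is the same doubling the paper reuses right afterwards to get orthogeodesic coordinates from Fenchel--Nielsen coordinates.
\end{itemize}
Your count is intrinsic to $\Sigma$ and avoids quoting the pants-decomposition count. As a by-product it gives the number of regions: $m$ half-pants and $F=4g-4+2l+m+n$ hexagons. This matches the $|\chi(D\Sigma)|=4g-4+2l+2m+n$ pants of the double.

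One edge case needs a separate line. The classification you cite, as stated in the paper, omits $\Sigma=\mathbb{A}_1$. There $\square=\varnothing$ is already maximal, and the only region is a once-punctured disk with one boundary puncture and no arc-sides. So two of your assertions fail in this case: that each half-pants has exactly one arc-side, and that every boundary puncture lies in a hexagon. The two errors happen to cancel in $2k=m+3F-n$. You can either check this case directly ($k=0=-6+3+2+1$), or allow a punctured region with $j\in\{0,1\}$ boundary punctures and $1-j$ arc-sides. With that extension, both of your relations hold unchanged.
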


\begin{proof}
The maximal arc collections  $\square$ and $\square'$ (on the opposite-oriented copy $\Sigma'$) combine to a pair of pants decomposition of $D\Sigma$ by \cref{lem:gen-triang-compl-regions}. Since the genus of $D\Sigma$ is $2g+l-1$ and the number of interior punctures is $n+2m$, every pants decomposition of $D\Sigma$ has
$$
3(2g+l-1)-3+(n+2m) = 6g-6+2m+3l+n
$$
simple closed curves. Therefore, 
\[
|\square| 
= 6g-6+2m+3l+n 
= \tfrac{1}{2}
(3|\chi(D\Sigma)|-2m-n).
\]
\end{proof}

Given a flippered hyperbolic surface $X \in \mathcal{T}_{\Sigma}$, there is a unique geodesic representative of $\square=\square(X)$ comprised (only) of essential orthogeodesics. 
As noted in the proof of \cref{lem:gen-triang-cardinality}, maximal arc collections double to pants decompositions of $D\Sigma$. We take advantage of the relationship between the orthogeodesic coordinates parametrization of the Teichm\"uller space of flippered surfaces $\mathcal{T}_{\Sigma}$ and the Fenchel--Nielsen parametrization of $\mathcal{T}_{D\Sigma}$ to show:

\begin{proposition}
\label{prop:orthogeod-coord}
Let $\square$ be a maximal arc collection on $\Sigma$. Let $\ell_i$ denote the length of the orthogeodesic representative of the $i$-th entry in $\square$. Then the map
\[
\mathcal{O}_\square :=(\vec{\ell},\vec{b},\vec{c}): \mathcal{T}_\Sigma \to \RR^{|\square|}_{>0} \times \RR^{n+m}_{\geqslant 0}
\]
is a real-analytic homeomorphism. 
\end{proposition}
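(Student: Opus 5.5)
The idea is to transport the question to the double $D\Sigma$ and use Fenchel--Nielsen coordinates there. Given $X\in\mathcal{T}_\Sigma$, double it along its arches to get a surface $DX$ with an orientation-reversing isometric involution $\iota$ fixing the doubled arches. By the proof of \cref{lem:gen-triang-cardinality}, the orthogeodesic representatives of $\square$ double to a pants decomposition $P$ of $D\Sigma$. The pieces of $P$ are the doubled arcs $D\gamma_i$, of length $2\ell_i$; the cuffs $\beta_i$ and their mirrors $\iota\beta_i$, of length $b_i$ each; and the doubled straps, of length $2c_j$, which are closed geodesics (or cusps) around the doubled boundary punctures.

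\textbf{Step 1: twists are forced.} Each $D\gamma_i$ is $\iota$-invariant and meets the fixed locus of $\iota$ perpendicularly. Hence the Fenchel--Nielsen twist along $D\gamma_i$ is pinned to a fixed value (zero, with the seam convention in which seams run along $\partial\Sigma$). The reason is that the pair of pants on one side of $D\gamma_i$ is the mirror of the pair of pants on the other side. The same reasoning, via \cref{lem:gen-triang-compl-regions}, shows that the pants of $P$ are either $\iota$-invariant doubles of hexagons or degenerate hexagons, or doubles of half-pants. In the latter case the interior cuff $\beta_i$ is not a pants curve of $P$ but an interior puncture of $D\Sigma$, so it carries no twist.

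\textbf{Step 2: reconstruct $X$ from the data.} Each closed complementary region is a right-angled hexagon, or a degenerate hexagon with ideal vertices or half-cusps, or a half-pants. Such a region is uniquely determined, as a real-analytic function, by the lengths of its alternate sides $\ell_i, c_j$ and by the cuff length $b_i$. This follows from the hexagon and pentagon trigonometric formulas, with degenerations at $c_j=0$ and $b_i=0$. Moreover, any positive side lengths are realised.

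The alternate sides lying on $\partial\Sigma$ have lengths determined by the others. Gluing the pieces along the orthogeodesics $\gamma_i$ with matching lengths is canonical: there is no twist, because the arcs meet $\partial\Sigma$ orthogonally and their endpoints must match. Finally, the flippers (half-funnels) are attached canonically along the straps. This gives an inverse map $\RR^{|\square|}_{>0}\times\RR^{n+m}_{\geqslant0}\to\mathcal{T}_\Sigma$, where the marking is pinned down by $\square$.

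Injectivity of $\mathcal{O}_\square$ follows from \cref{thm:orthogeodesic}: the geodesic representatives of $\square$ are unique, so the decomposition, and hence $X$, is determined by the coordinates. Equivalently, $\mathcal{O}_\square$ is the restriction of the Fenchel--Nielsen chart of $\mathcal{T}_{D\Sigma}$ to the fixed locus of $\iota$, which is the set where all twists equal zero. This gives real-analyticity of $\mathcal{O}_\square$ and its inverse directly, since the fixed locus is a real-analytic coordinate subspace. The count $|\square|+n+m$ also matches the dimension $\tfrac12(3|\chi(D\Sigma)|+n)$ from \cref{lem:gen-triang-cardinality}.

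\textbf{Main obstacle.} The delicate part is the behaviour at the corners, where $c_j=0$ or $b_i=0$: tines or cusps, and degenerate hexagons. There one must check two things. First, the Fenchel--Nielsen parametrisation of $\mathcal{T}_{D\Sigma}$, extended to allow cusp lengths $0$, remains a real-analytic homeomorphism onto the closed orthant. Second, the hexagon formulas, such as $\cosh$ of a side expressed via $\cosh\ell,\cosh c$, extend analytically in $c$ (or in $\cosh c$) across $c=0$.

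I would handle this by writing the side lengths through $\cosh c_j$ and $\cosh(b_i/2)$, which are analytic in $c_j$ and $b_i$ down to $0$, and by invoking the standard analytic extension of Fenchel--Nielsen coordinates to bordered surfaces in which boundary lengths are allowed to vanish.
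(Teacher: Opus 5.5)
Your proposal is correct and is essentially the paper's own argument: $\mathcal{T}_\Sigma$ is identified with the $\iota$-fixed locus in the Fenchel--Nielsen chart of $\mathcal{T}_{D\Sigma}$ for the pants decomposition $\square\cup\square'$, cut out by $\tau_i=0$, $L_i=2\ell_i$, boundary lengths $2c_j$ and $b_i,b_i$; your hexagon-gluing reconstruction and corner discussion add detail the paper leaves implicit. One correction to your Step~1 justification: $\iota$ does not swap the two pairs of pants adjacent to $D\gamma_i$ (in $\mathbb{D}_4$, for instance, they have boundary lengths $2c_1,2c_2$ versus $2c_3,2c_4$), but preserves each of them, and the twist vanishes because the fixed arches are seams of both adjacent pants meeting $D\gamma_i$ at the same two points, namely the endpoints of $\gamma_i$.
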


\begin{proof}
Let $\{L_i,\tau_i\}_{i=1}^{|\square|},\{L_j^\partial\}_{j=1}^{n+2m}$ be the Fenchel-Nielsen coordinates associated with the pants decomposition given by $\square \cup \square'$ and the dual arc system given by $\partial  \Sigma = \{\alpha_1,\ldots,\alpha_n\}$. The Teichm\"uller space $\mathcal{T}_\Sigma$ sits as a subset of $\mathcal{T}_{D\Sigma}$ consisting of marked surfaces with a reflection automorphism taking $\square$ to $\square'$ whilst fixing $\alpha_1,\ldots,\alpha_n$. Explicitly, the subspace is given by constraining the twist parameters $\tau_i$ to be $0$, with the length parameters $L_i$ satisfying $L_i=2\ell_i$ such that 
\[
L_j^\partial=2c_j \,(j=1,\ldots,n), L_{n+j}^\partial=b_j\,(j=1,\ldots,m),L_{n+m+j}^\partial=b_j\,(j=1,\ldots,m).
\]
The claim that $\mathcal{O}_\square$ is a real-analytic diffeomorphism is immediate.
\end{proof}

\begin{corollary}
\label{cor:topology-teich-space}
The Teichm\"uller space for a flippered hyperbolic surface $\mathcal{T}_\Sigma$ is naturally stratified into $2^{n+m}$ strata according to the geometric type of its punctures, described in \cref{defn:anatomy}.
Each stratum is a topological ball, and $\mathcal{T}_\Sigma$ has the structure of a $\frac{1}{2}(3|\chi(D\Sigma)|+n)$-dimensional manifold with corners.
\end{corollary}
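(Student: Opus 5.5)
The corollary follows directly from \cref{prop:orthogeod-coord}: we transport the obvious corner structure of the model space $\RR^{|\square|}_{>0}\times\RR^{n+m}_{\geqslant 0}$ back to $\mathcal{T}_\Sigma$ along $\mathcal{O}_\square$. First, we fix a maximal arc collection $\square$ on $\Sigma$, which exists by \cref{lem:gen-triang-cardinality}. Since $\mathcal{O}_\square$ is a real-analytic homeomorphism onto $\RR^{|\square|}_{>0}\times\RR^{n+m}_{\geqslant 0}$, and this target is a manifold with corners, $\mathcal{T}_\Sigma$ inherits that structure. Its dimension is $|\square|+n+m$. Substituting $|\square|=\tfrac12(3|\chi(D\Sigma)|-2m-n)$ gives
\[
|\square|+n+m=\tfrac12\bigl(3|\chi(D\Sigma)|-2m-n\bigr)+n+m=\tfrac12\bigl(3|\chi(D\Sigma)|+n\bigr),
\]
as claimed.

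Next we describe the stratification. The closed orthant $\RR^{n+m}_{\geqslant 0}$ decomposes into $2^{n+m}$ relatively open faces, indexed by which of the coordinates $b_1,\dots,b_m,c_1,\dots,c_n$ vanish. By \cref{defn:anatomy}, $b_i=0$ holds exactly when $p_i$ is geometrised as a cusp, and $c_j=0$ exactly when $q_j$ is a tine; otherwise the puncture is a funnel or flipper respectively. So the preimages under $\mathcal{O}_\square$ of these faces are precisely the strata determined by the geometric type of each puncture. In particular, the stratum with $\{j: c_j>0\}=\mathcal{A}$ and all $b_i$ unconstrained in type lies inside $\mathcal{T}_\Sigma^{\mathcal{A}}$, consistent with \cref{def:T_Sigma(A)}. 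Each face is a product of copies of $\RR_{>0}$ (and a point in each vanishing coordinate), hence an open ball. Therefore each stratum is a topological ball of dimension $|\square|+\#\{\text{nonvanishing coordinates}\}$.

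Finally, we check that the stratification is \emph{natural}, that is, independent of the choice of $\square$. This holds because the type of a puncture (cusp versus funnel, tine versus flipper) is intrinsic to the metric and does not depend on the chart. Equivalently, the functions $b_i$ and $c_j$ are geometric lengths on $\mathcal{T}_\Sigma$ and are the same functions in every chart.

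The only point needing care is that $\mathcal{O}_\square$ really extends to the boundary faces with $b_i=0$ or $c_j=0$, so that it is a homeomorphism onto the \emph{closed} orthant and not just the open one. This is asserted in \cref{prop:orthogeod-coord}, and it rests on the Fenchel--Nielsen coordinates of $\mathcal{T}_{D\Sigma}$ allowing zero boundary lengths (cusps). We take it as given, since the statement there is exactly for $\RR^{n+m}_{\geqslant 0}$.
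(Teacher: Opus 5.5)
Your proposal is correct and follows essentially the same route as the paper. You pull back the decomposition of $\RR^{|\square|}_{>0}\times\RR^{n+m}_{\geqslant 0}$ into its $2^{n+m}$ faces along the orthogeodesic chart of \cref{prop:orthogeod-coord}, and you compute the dimension $|\square|+n+m$ using \cref{lem:gen-triang-cardinality}. Your remark that the strata are cut out by the intrinsic length functions $b_i,c_j$, and are therefore independent of the choice of $\square$, is a sensible point that the paper leaves implicit.
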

\begin{proof}
The partition of the closed ray $[0,\infty)$ as 
\[
[0,\infty)=\{0\}\sqcup(0,\infty)
\] 
induces the product partition of the non-negative orthant $[0,\infty)^{k}$ into $2^k$ balls of dimensions $0$ to $k$. Subsequently, we induce a partition of $\mathcal{T}_\Sigma$ into $2^{m+n}$ subsets, according to the vector $(\vec{b},\vec{c})\in [0,\infty)^{m}\times[0,\infty)^{n}$ of the cuff and strap lengths. Then by \cref{prop:orthogeod-coord}, each stratum is a topological ball, and $\mathcal{T}_\Sigma$ has the structure of a manifold with corners, and by \cref{lem:gen-triang-cardinality} it has dimension
$$
\dim_{\RR}\mathcal{T}_\Sigma = \tfrac{1}{2}
(3|\chi(D\Sigma)|-2m-n)+(n+m)=\tfrac{1}{2}(3|\chi(D\Sigma)|+n).
$$
\end{proof}

\begin{corollary}
\label{cor:top-teich-space-fix-hol}
The Teichm\"uller space with fixed holonomy $\mathcal{T}_\Sigma(\vec{b};\vec{c})$ embeds as a half-dimensional subspace in $\mathcal{T}_{D\Sigma}(b_1,\ldots,b_m,b_1,\ldots,b_m,2c_1,\ldots,2c_n)$,  and hence has dimension  $\frac{1}{2}(3|\chi(D\Sigma)|-2m-n)$. The fixed neck length subset $\mathcal{T}_\Sigma(\vec{b};\vec{c}\;|\vec{d})$ embeds as an $l$ co-dimensional subset of $\mathcal{T}(\vec{b};\vec{c})$ and hence is $\frac{1}{2}(3|\chi(D\Sigma)|-l-2m-n)$-dimensional.    
\end{corollary}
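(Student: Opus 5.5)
The plan is to read both claims off the orthogeodesic coordinates of \cref{prop:orthogeod-coord}, using \cref{lem:gen-triang-cardinality} for the count. Fix a maximal arc collection $\square$ on $\Sigma$. The map $\mathcal{O}_\square=(\vec{\ell},\vec{b},\vec{c})$ is a real-analytic homeomorphism from $\mathcal{T}_\Sigma$ onto $\RR^{|\square|}_{>0}\times\RR^{n+m}_{\geqslant 0}$. Fixing the cuff and strap lengths $(\vec{b},\vec{c})$ therefore cuts out the affine coordinate slice
\[
\mathcal{T}_\Sigma(\vec{b};\vec{c})\cong\RR^{|\square|}_{>0}\times\{(\vec{b},\vec{c})\},
\]
which is an open ball of dimension $|\square|=\tfrac12(3|\chi(D\Sigma)|-2m-n)$.

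For the embedding into the Teichm\"uller space of the double, I will reuse the description in the proof of \cref{prop:orthogeod-coord}. There, $\mathcal{T}_\Sigma$ is the locus in $\mathcal{T}_{D\Sigma}$ where all Fenchel--Nielsen twists $\tau_i$ vanish and $L_i=2\ell_i$. Inside $\mathcal{T}_{D\Sigma}(b_1,\ldots,b_m,b_1,\ldots,b_m,2c_1,\ldots,2c_n)$ the free Fenchel--Nielsen coordinates are the pairs $(L_i,\tau_i)$ for $i=1,\ldots,|\square|$, so this space has dimension $2|\square|$. The image of $\mathcal{T}_\Sigma(\vec{b};\vec{c})$ is the slice $\{\tau_i=0\}$, which is half-dimensional and affine in these coordinates. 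The one small point to verify is that the boundary data match: the doubled cuffs have lengths $b_j$ twice each, and the doubled straps have lengths $2c_j$. Both are immediate from the reflection symmetry.

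For the neck-length subset, I will choose $\square$ adapted to the necks. For each boundary curve $\delta_k$, the neck $\nu_k$ together with $\delta_k$ bounds a region that is a crown $\mathbb{A}_{a_k}$. The double of this neck is a pair of simple closed curves $\nu_k,\nu_k'$ on $D\Sigma$, and I will include them in the pants decomposition $\square\cup\square'$. This step is the main obstacle, because $\nu_k$ is a closed curve rather than an arc, so the maximal arc collection has to be replaced by a mixed system. Namely, I will cut $\Sigma$ along the necks into the crowns $\mathbb{A}_{a_k}$ and a complementary surface $\Sigma'$ whose necks become cuffs. I will then apply \cref{prop:orthogeod-coord} on each piece, adding a twist parameter along each $\nu_k$; its doubled pair $\nu_k,\nu_k'$ contributes the lengths $d_k$ and a twist.

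Under this decomposition each neck length $d_k$ becomes a coordinate of a global chart, and $\mathcal{T}_\Sigma(\vec{b};\vec{c}\,|\,\vec{d})$ is the coordinate slice obtained by fixing $d_1,\ldots,d_l$. A simpler alternative is to argue that the map $X\mapsto(\ell(\nu_1),\ldots,\ell(\nu_l))$ is a real-analytic submersion on $\mathcal{T}_\Sigma(\vec{b};\vec{c})$. Submersivity can be checked by varying, for each $k$ independently, one orthogeodesic length inside the crown $\mathbb{A}_{a_k}$; this changes $\nu_k$ and no other neck. Either way, the level set has codimension $l$, so its dimension is $\tfrac12(3|\chi(D\Sigma)|-2m-n)-l$. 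That is the claimed dimension once the ``$-l$'' in the corollary's statement is read as this codimension count. I also expect a small discrepancy with the remark's formula ``$-2l$'' (which is $-l$ after halving twice-counted doubled necks), and I will reconcile it by counting $\nu_k$ and $\nu_k'$ on $D\Sigma$.
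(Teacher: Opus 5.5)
Your treatment of the first claim is the paper's own. The proof of \cref{prop:orthogeod-coord} realises $\mathcal{T}_\Sigma$ inside $\mathcal{T}_{D\Sigma}$ as the locus $\{\tau_i=0\}$ (with $L_i=2\ell_i$). So $\mathcal{T}_\Sigma(\vec b;\vec c)$ is that slice of $\mathcal{T}_{D\Sigma}(b_1,\ldots,b_m,b_1,\ldots,b_m,2c_1,\ldots,2c_n)$, and it has dimension $|\square|$ by \cref{lem:gen-triang-cardinality}.

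For the neck claim the paper only asserts the result. Your neck-adapted chart is the right way to prove it, but as written it misapplies \cref{prop:orthogeod-coord}. The complementary piece $\Sigma'$ has no boundary arches, hence no essential arcs, and both \cref{prop:orthogeod-coord} and \cref{lem:gen-triang-cardinality} tacitly require $l\geqslant 1$. Instead, use:
\begin{itemize}
\item classical Fenchel--Nielsen coordinates on $\mathcal{T}_{g,m+l}(\vec b,\vec d)$ for $\Sigma'$;
\item orthogeodesic coordinates only on the crowns $\mathbb{A}_{a_k}$, whose cuff is the neck, of length $d_k$;
\item one twist per neck.
\end{itemize}
The count $(6g-6+2m+2l)+(n-l)+l+l=|\square|$ then shows $d_1,\ldots,d_l$ are coordinates of a global chart, and codimension $l$ follows. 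This needs $2g-2+m+l>0$. For $\mathbb{A}_{a_1,a_2}$ the two necks coincide, so the codimension is $1$. For $\mathbb{A}_n$ the neck is the cuff. That is why those surfaces get separate notation.

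Drop the ``simpler alternative''. When $a_k=1$ there is no essential arc inside the crown region at all, since an $\mathbb{A}_1$ has an empty maximal arc collection. When $a_k\geqslant 2$, the claim that varying one such arc length, with all others fixed, ``changes $\nu_k$ and no other neck'' is unjustified. That variation reshapes the piece of the decomposition on the non-disk side of the arc. A neck $\nu_j$ with $j\neq k$ passes through that piece as soon as one of its arch-sides lies on crown $j$. You also never show that the derivative of $\ell(\nu_k)$ in that direction is nonzero.

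Finally, your dimension $|\square|-l=\tfrac12(3|\chi(D\Sigma)|-2m-2l-n)$ is correct. It is exactly the formula in the Remark after \cref{def:fixed-holonomy}, so there is nothing to reconcile with that Remark. The misprint is in the corollary itself: its $\tfrac12(3|\chi(D\Sigma)|-l-2m-n)$ equals $|\square|-\tfrac{l}{2}$, which is not even an integer for odd $l$, and should read $-2l$. Say so plainly instead of ``reading'' the printed $-l$ as a codimension count.
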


\begin{corollary}[clipping is real-analytic]
\label{cor:flip-clip-homeo}
Let $\mathcal{A}\subset\{1,\ldots,n\}$ be a set of indices for which $c_j$, $j\in\mathcal{A}$, is strictly positive. The map 
\[
\clip : \mathcal{T}_\Sigma^\mathcal{A} \to \mathcal{T}_{\dot{\Sigma}(\mathcal{A})}^\varnothing
\]    
is a real-analytic diffeomorphism.    
\end{corollary}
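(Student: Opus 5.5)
Since \cref{lem:flip-clip} already gives bijectivity with the explicit inverse (re-attaching half-planes along the arches created by clipping), it remains to prove that $\clip$ and its inverse are real-analytic. The plan is to express $\clip$ in coordinates: orthogeodesic coordinates $\mathcal{O}_\square$ on the source $\mathcal{T}_\Sigma^\mathcal{A}$ (\cref{prop:orthogeod-coord}), and shearing coordinates $s_\triangle$ on the target $\mathcal{T}_{\dot{\Sigma}(\mathcal{A})}^\varnothing$ (\cref{prop:shearingcoords}). Both charts are real-analytic embeddings onto open subsets of the relevant orthants and cones, restricted to the stratum $\{c_j>0 \iff j\in\mathcal{A}\}$. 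It therefore suffices to show that $s_\triangle\circ\clip\circ\mathcal{O}_\square^{-1}$ is real-analytic with real-analytic inverse. Flips change shearing coordinates real-analytically (\cref{prop:flips}), and changes between maximal arc collections are real-analytic by the Fenchel--Nielsen doubling in the proof of \cref{prop:orthogeod-coord}. Hence we may choose $\square$ and $\triangle$ conveniently.

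We choose a maximal arc collection $\square$ on $\Sigma$ and an ideal triangulation $\triangle$ of $\dot\Sigma(\mathcal{A})$ that are adapted to each other. Each complementary region of $\square$ is a (degenerate) right-angled hexagon or a pair of half-pants (\cref{lem:gen-triang-compl-regions}). In the hexagon case, the side lengths along the orthogeodesics are the coordinates $\ell_i$, together with straps $c_j$ where a flipper meets the region. After clipping, each hexagon becomes an ideal polygon: each orthogeodesic side is extended to an ideal arc, and each flippered side yields two tines. We triangulate this polygon by a fixed pattern and take $\triangle$ to be the union of these triangulations, together with the arcs extending $\square$ and spiralling arcs in the half-pants.

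Shears are then computed locally. The shear across an arc of $\triangle$ is a cross-ratio of four ideal endpoints, and these endpoints are determined by the right-angled hexagon (or pentagon, quadrilateral, or half-pants) geometry. By standard hyperbolic trigonometry, these endpoints are real-analytic functions of the $\cosh$ of the side lengths $\ell_i,c_j,b_k$. Concretely, a geodesic side of length $\ell$ separating two ideal endpoints on adjacent arches contributes shears such as $\log$ of expressions in $\cosh\ell$ and $\sinh\ell$. This gives analyticity of the forward map in the region where all $\ell_i,c_j>0$ for $j\in\mathcal{A}$.

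For the inverse, we recover each $\cosh\ell_i$, $\cosh c_j$ and $b_k$ from shears. For example, the length of the orthogeodesic between two arches of an ideal polygon is an explicit analytic function of the shears, namely $\log$ of a positive sum of exponentials of partial shear sums, as in \cref{ex:pentagon}. The cuff length $b_k$ is the sum of shears around $\beta_k$. All of these are analytic on the open stratum.

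The main obstacle I expect is bookkeeping at degenerate regions: tines ($c_j=0$, $j\notin\mathcal{A}$), cusps ($b_k=0$), and half-pants whose cuff lies on the boundary of the shear cone. Since $\mathcal{T}_\Sigma^\mathcal{A}$ fixes which straps vanish, the tine degeneration is uniform across the stratum and causes no trouble. The cuff case must be handled as a manifold-with-corners statement. Here I would argue via doubling: the cuff length is an analytic function on both sides, namely a trace of holonomy, and the spiralling convention makes the shear parametrisation analytic up to $b_k=0$. Failing that, I would fall back on the general principle that both charts are restrictions of analytic charts on $\mathcal{T}_{D\Sigma}$ to the fixed locus of the reflection, where clipping corresponds to an analytic operation on holonomy representations.
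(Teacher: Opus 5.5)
Your outline would work, but it is much heavier than the paper's proof, which is one line: $\clip$ is the identity in orthogeodesic coordinates.

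The fact your hexagon discussion brushes against without using is this. For $X\in\mathcal{T}_\Sigma^{\mathcal{A}}$, each strap $\sigma_j$ with $j\in\mathcal{A}$ survives in $\clip(X)$ as an essential orthogeodesic from $\alpha_j$ to $\alpha_{j+1}$, cutting off the two new tines. The removed open half-planes lie strictly beyond the straps, so the arches, the orthogeodesics of $\square$ and the cuffs are untouched. Hence $\square\cup\{\sigma_j\}_{j\in\mathcal{A}}$ is a maximal arc collection on $\dot\Sigma(\mathcal{A})$; the count in \cref{lem:gen-triang-cardinality} rises by exactly $|\mathcal{A}|$. Applying \cref{prop:orthogeod-coord} to both $\Sigma$ and $\dot\Sigma(\mathcal{A})$ turns $\clip$ into $(\vec\ell,\vec b,(c_j)_{j\in\mathcal{A}})\mapsto(\vec\ell,(c_j)_{j\in\mathcal{A}},\vec b,\vec 0)$.

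In particular, the corner at $b_k=0$, which you flag as the main obstacle, disappears. The length $b_k$ is a coordinate on both sides, and clipping does not change it.

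What your shear-based route buys is a hands-on check that orthogeodesic and shearing charts on $\mathcal{T}^\varnothing_{\dot\Sigma(\mathcal{A})}$ are analytically compatible. That compatibility is what the later pull-back of the shear-defined $\Omega^{\mathrm{WP},\varnothing}$ by $\clip$ actually uses, e.g.\ through $e^{s}=\sinh^2(\ell/2)$ in \cref{lem:shear-width-quadrilateral}. The corollary itself does not need it.

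If you pursue that route anyway, treat the half-pants and the corner with the explicit $\mathbb{A}_1$ relations $s_1+s_2=b$ and $(1+e^{s_1})(1+e^{-s_2})=\cosh^2(\ell_\eta/2)$ (cf.\ \cref{eq:1-flip-crown-system}). These are analytic in both directions through $b=0$. Your fallback appeal to traces of holonomy would not settle this, because $b$ is not an analytic function of $2\cosh(b/2)$ at $b=0$.
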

\begin{proof}
$\clip$ is the identity map with respect to the orthogeodesic coordinates for the spaces $\mathcal{T}_\Sigma^\mathcal{A}$ and $\mathcal{T}_{\dot{\Sigma}(\mathcal{A})}^\varnothing$.  
\end{proof}

\subsection{Converting between shearing and orthogeodesic parameters}

Generally speaking, it is taxing to convert from shearing parameters to orthogeodesic lengths, and vice versa. The simplest case when this is doable makes use of the following lemma:

\begin{lemma}
\label{lem:shear-width-quadrilateral}
Given an ideal hyperbolic quadrilateral labeled as per \figref{figure: shear and width of the quadrilateral}, the shearing parameter $s$ and the length $\eta$ of the depicted orthogeodesic are related by:
\[
e^s = \sinh^2 \left(\frac{\eta}{2}\right).
\]
\end{lemma}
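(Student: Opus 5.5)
The plan is to prove the identity by a direct computation in the upper half-plane model $\HH$. Since both $s$ and $\eta$ are isometry invariants, we may first normalise the quadrilateral by an element of $\mathrm{PSL}_2(\RR)$. We place the diagonal along which $s$ is measured on the geodesic from $0$ to $\infty$ and the vertex of one triangle at $-1$. The shearing parameter is the signed distance between the feet of the perpendiculars dropped from the third vertices of the two triangles onto the diagonal. With this normalisation the fourth vertex sits at $x=e^{\pm s}$: the foot from $-1$ is at height $1$, the foot from $x>0$ is at height $x$, and the shear is $\pm\log x$. The sign is fixed by our convention that left shears are positive, read against the orientation in \figref{figure: shear and width of the quadrilateral}; I will choose it so that the vertex is $e^{-s}$ if the figure's orthogeodesic turns out to be the one that requires it. The quadrilateral then has cyclic vertices $0, x, \infty, -1$, and its sides are the geodesics $(0,x)$, $(x,\infty)$, $(\infty,-1)$ and $(-1,0)$.

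Next we identify the depicted orthogeodesic $\eta$ with the common perpendicular of one pair of opposite sides, say $(\infty,-1)$ and $(0,x)$. Translating by $z\mapsto z+1$ turns these into the imaginary axis and the semicircle with endpoints $1$ and $1+x$. For a geodesic with endpoints $0<p<q$, the inversion in $|z|=\sqrt{pq}$ preserves both the axis and the semicircle. The common perpendicular therefore lies on the circle $|z|=\sqrt{pq}$, and an elementary computation gives
\[
\cosh \eta=\frac{q+p}{q-p}.
\]
I would verify this quickly, either via the cross-ratio formula for the distance between ultraparallel geodesics or by mapping $\sqrt{pq}\,i$ to $i$. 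Substituting $p=1$ and $q=1+x$ gives $\cosh\eta=1+2/x$, hence $\sinh^2(\eta/2)=(\cosh\eta-1)/2=1/x$. Choosing $x=e^{-s}$, or equivalently taking the other pair of opposite sides, which gives $\sinh^2(\eta/2)=x$, we get $\sinh^2(\eta/2)=e^{s}$.

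As a sanity check, at $s=0$ the quadrilateral is regular and $\eta=2\log(1+\sqrt2)$, so $\sinh(\eta/2)=1$, as expected.

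The main obstacle is purely bookkeeping. We must match the sign convention for shears (left positive) and decide which pair of opposite sides the figure's orthogeodesic joins, since the two pairs give $e^{s}$ and $e^{-s}$ respectively. I would settle this by checking orientation on the normalised picture, that is, which way the perpendicular feet are displaced as seen from each triangle. The rest is routine.
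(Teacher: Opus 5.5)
Your argument is correct and is essentially the paper's proof. The paper uses the same normalisation (vertices $\infty,0,1,e^s+1$ with diagonal $\overline{1\infty}$, i.e.\ your picture translated by $1$), but instead of computing $\cosh\eta$ directly it projects $\overline{1,e^s+1}$ onto $\overline{0\infty}$ (your first pair, with common perpendicular $\eta'$), gets $\ell=\log(1+e^s)$, and uses $e^{\ell}=\coth^2(\eta'/2)=\cosh^2(\eta/2)$, which is your finding that the two orthogeodesics satisfy $\sinh(\eta/2)\sinh(\eta'/2)=1$.

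Your bookkeeping leaves no freedom in $x$, so you cannot choose $x=e^{-s}$. With left shears positive, the foot $x\sqrt{-1}$ lies to the left of the foot $\sqrt{-1}$, seen across the diagonal from the triangle $(-1,0,\infty)$, exactly when $x>1$, so $x=e^s$ is forced. The depicted $\eta$ is then the common perpendicular of $(-1,0)$ and $(x,\infty)$ (in the paper's figure, of $\overline{01}$ and $\overline{e^s+1,\infty}$). That is your ``other pair'', which gives $\sinh^2(\eta/2)=x=e^s$ directly.
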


\begin{proof}
Position the quadrilateral in the upper half-plane model $\HH$ so that its vertices are at $\infty, 0,1, e^s+1 \in \partial \HH$, and the diagonal is the blue vertical geodesic $\overline{1\infty}$ as in \cref{figure: shear and width of the quadrilateral}. Indeed, such quadrilateral has the correct shearing $s$. Project the vertices $1,e^s+1 \in \partial \HH$ onto the vertical geodesic $\overline{0\infty}$. The distance between the projections is $\ell = \ln (e^s+1)$. Standard calculations show: 
\[
e^s = e^{\ell}-1 = \cosh^2 \Bigl(\frac{\eta}{2}\Bigr) - 1 =  \sinh^2 \Bigl(\frac{\eta}{2}\Bigr).
\]
\end{proof}

\begin{figure}[H]
  \centering
\resizebox{1\textwidth}{!}{%
\begin{circuitikz}
\tikzstyle{every node}=[font=\Huge]
\draw [line width=3pt]  (0,15.75) circle (15cm);
\draw [line width=3pt,short] (-11,26) .. controls (-4.75,20.25) and (4.75,20.25) .. (11,26);
\draw [line width=3pt,short] (-11,26) .. controls (-6.25,20.5) and (-6.25,10.75) .. (-11,5.5);
\draw [line width=3pt,short] (11,26) .. controls (6.25,20.5) and (6.25,10.75) .. (11,5.5);
\draw [line width=3pt] (-11,5.5) .. controls (-4.75,11) and (4.75,11) .. (11,5.5);
\draw [line width=3pt,short] (0,9.65) -- (0,21.65);
\node [font=\fontsize{50}{50}] at (4,13.25) {$s$};
\node [font=\fontsize{50}{50}] at (1.5,17.75) {$\eta$};
\draw [line width=3pt,short] (-3.03+3,21.5-0.3) -- (-2.45+3,21.5-0.3);
\draw [line width=3pt,short] (-2.5+3,21.5-0.3) -- (-2.5+3,22-0.3);
\draw [line width=3pt,short] (-3.03+3,21.5-0.3-11) -- (-2.45+3,21.5-0.3-11);
\draw [line width=3pt,short] (-2.5+3,21.5-0.3-11.5) -- (-2.5+3,22-0.3-11.5);

\draw [blue,line width=3pt,short]  (-11,26) -- (11,5.5);

\end{circuitikz}
}%

   \hspace*{.5in}
  \begin{tikzpicture}[scale=4]


  \draw[thick] (-0.2,0) -- (1.75,0);

  \draw[very thick, black] (0,0) -- (0,1.6);
  \node[left] at (0,1.6) {$\infty$};
  \node[below] at (0,0) {$0$};

  \def\x{1.5} 


    \draw[very thick] (\x,0) arc (0:180:{(\x-1)/2});

    \draw[very thick] (1,0) arc (0:180:1/2);

    \draw[very thick, blue] (1,0) -- (1,1.6);

    \draw[very thick, black] (\x,0) -- (\x,1.6);

    \draw[thick, green] (0,1) arc (90:0:1);

    \draw[thick, green] (0,\x) arc (90:0:\x);

    \draw[very thick] (\x, 0.866) arc (90:150:0.866);

    
  \filldraw[black] (1,0) circle (0.02) node[below] {$1$};
  \filldraw[black] (\x,0) circle (0.02) node[below] {$e^s+1$};
  \filldraw[black] (0,0) circle (0.02) node[below] {$0$};


    \draw [decorate,decoration={brace,amplitude=10pt}]
    (1,0) -- (1.5,0) node [midway,yshift=15pt] {$e^s$};

    \draw [decorate,decoration={brace,amplitude=10pt}]
    (0,0) -- (1,0) node [midway,yshift=15pt] {$1$};

    \draw [decorate,decoration={brace,amplitude=10pt}]
    (0,1) -- (0,1.5) node [midway,xshift=-15pt] {$\ell$};

    \node[below] at (1.2,0.8) {$\eta$};

\end{tikzpicture}
   \caption{Left: Ideal quadrilateral with a diagonal and an orthogeodesic $\eta$. Right: a depiction of the same quadrilateral in the upper half-plane model; in \textcolor{green}{green}: the projection of one of the sides onto the opposite side.}
    \label{figure: shear and width of the quadrilateral}
\end{figure}

\section{Generalised Chekhov's action and its recursive structure}
\label{sec:gen-chekhov-action}

In \cite{chekhov2024}, Chekhov defines the following action functional on the decorated moduli space of crowns $S:\mathcal{M}^{\mathrm{dec}}_{\mathbb{A}_{n}}(\vec{0}|\vec{d})\rightarrow\mathbb{R}$,
\[
S = \kappa \log \left(\prod_{i=1}^n \lambda_{i}h_i \right),
\]
where $h_i$ is the length of the horocyclic arc at $i$-th tine, $\lambda_{i}$ is the lambda length of the $i$-th arch, and $\kappa>0$ is a ``coupling'' constant. Chekhov shows a posteriori that $S$ is decoration independent and hence descends to an action on the moduli space. In addition, Chekhov argues that letting $\kappa=1$ is the most natural choice \cite[Remark~2.5]{chekhov2024}. The goals of this section are as follows: we
\begin{itemize}
    \item give a geometric interpretation of the action functional (\cref{prop: essential intervals});
    \item use this interpretation as basis for generalizing Chekhov's action functional to flippered surfaces (\cref{defn: action for flippered surface});
    \item give first examples of these generalized Chekhov's actions by computing them for flippered triangles (\cref{subsec:action-triang})
    \item introduce topological recursion formulae for the generalized Chekhov's action (\cref{sec:chekhovactionrecursion}). 
\end{itemize}

\begin{remark}
\label{rmk:additive-action}
Chekhov implicitly asserts in \cite[Lemma~2.6]{chekhov2024} that the action for a crowned hyperbolic surface is equal to the sum of actions for all of its crowns. We will adopt this convention when we generalise the action.
\end{remark}

\subsection{Re-interpreting and generalising Chekhov's action}

\begin{definition}[orthojectories]\label{defn:orthojectory}
    An \emph{orthojectory} on a flippered hyperbolic surface $X$ is an \emph{oriented} geodesic on $X$ which orthogonally emanates from a point on $\partial X$. We further require an orthojectory to be \emph{inextensible} in the sense that it is not a subsegment of a longer geodesic on $X$. We refer to the starting point of an orthojectory as its \emph{base point}.
\end{definition}

\begin{definition}[peripheral and essential orthojectories]\label{def:orthojectories-types}
    An orthojectory $\rho$ on a flippered hyperbolic surface $X$ is called \emph{peripheral} if its entire image can be homotoped, via proper homotopies, arbitrarily deeply into a flipper/tine. Otherwise, it is called \emph{essential}.
\end{definition}

All orthojectories launched near a tine are \emph{extremely} short geodesic segments which join two adjacent arches and hence are peripheral. All orthojectories launched sufficiently deep within a flipper disappear off into the flipper and hence are also peripheral. The upshot is that non-peripheral orthojectories launch from a set of finite measure on $\partial X$.

\begin{definition}[essential interval]\label{defn:essentialint}
Let $\mathbb{I}_i\subset\alpha_i$ denote the smallest closed interval on $\alpha_i$ to contain all of the base points for essential orthojectories on the arch $\alpha_i$. 
\end{definition}

\begin{lemma}[essential interval endpoints]
\label{lem:ess-int-endpts}
Let $\alpha_i$ be the arch of a flippered surface $X$, and let $\tilde{\alpha}_i$ denote one of its lifts. Denote the adjacent arches of $\tilde{\alpha}_i$ on the universal cover $\tilde{X}\subset \HH$ by $\tilde{\alpha}'$ and $\tilde{\alpha}''$, and denote the ideal ends of $\tilde{\alpha}'$ and $\tilde{\alpha}''$ which are not in a shared flipper/tine with $\tilde{\alpha}_i$ by $\tilde{p}'$ and $\tilde{p}''$. Then, the orthogonal projectives of $\tilde{p}'$ and $\tilde{p}''$ onto $\tilde{\alpha}_i$ are the endpoints of the lift $\tilde{\mathbb{I}}_i$ of the essential interval to $\alpha_i$. Moreover, the essential interval $\mathbb{I}_i$ is precisely the set of all base points for essential orthojectories emanating from $\alpha_i$.
\end{lemma}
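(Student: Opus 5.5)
We work in the universal cover $\tilde X\subset\HH$, which is a convex region bounded by lifts of arches, and we study orthojectories as geodesic rays $\tilde\rho$ launched orthogonally from points $x\in\tilde\alpha_i$ into $\tilde X$. Let $\tilde\alpha'$ and $\tilde\alpha''$ be the two arches adjacent to $\tilde\alpha_i$, sharing the ideal ends $\tilde q'$ and $\tilde q''$ of $\tilde\alpha_i$ (in the case of tines) or sharing a flipper with it. Let $\pi'$ and $\pi''$ be the orthogonal projections of $\tilde p'$ and $\tilde p''$ onto $\tilde\alpha_i$.

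The \textbf{first step} is a monotonicity statement. As $x$ moves along $\tilde\alpha_i$ toward $\tilde q'$, the perpendicular rays are pairwise disjoint. They sweep out the half-plane on the $\tilde X$ side of $\tilde\alpha_i$, and their ideal endpoints move monotonically along the circle at infinity.

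Now consider the perpendicular through $\pi'$. It is the geodesic from $\pi'$ to $\tilde p'$ (the perpendicular from a boundary point projects to it), so it is asymptotic to $\tilde\alpha'$ at $\tilde p'$.
\begin{itemize}
\item For $x$ strictly beyond $\pi'$ (toward $\tilde q'$), the ideal endpoint of the perpendicular lies strictly between $\tilde p'$ and $\tilde q'$. This is exactly the arc at infinity cut off by $\tilde\alpha'$ together with the tine or flipper region. By convexity the ray must therefore cross $\tilde\alpha'$ (and so terminates there), or, in the flipper case, it exits into the half-funnel between $\tilde\alpha_i$ and $\tilde\alpha'$.
\item In either case the whole ray lies in the region bounded by $\tilde\alpha_i$, $\tilde\alpha'$ and the flipper or tine. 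This region projects into a neighbourhood of the tine or flipper, and sliding the base point toward $\tilde q'$ gives a proper homotopy pushing the segment arbitrarily deep. Hence such orthojectories are peripheral.
\end{itemize}
The same argument on the other side shows that every essential base point lies in the segment $[\pi',\pi'']$. This gives $\tilde{\mathbb{I}}_i\subseteq[\pi',\pi'']$.

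The \textbf{second step} is the reverse inclusion, together with the final claim that every point of $[\pi',\pi'']$ is an essential base point. For $x$ in the closed segment, the perpendicular ray ends at a point of the circle at infinity in the closed arc from $\tilde p'$ to $\tilde p''$ not containing $\tilde\alpha_i$. So the ray either meets some other lift of an arch outside the two adjacent peripheral regions, or goes to infinity within that arc.

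To show essentiality, argue by contradiction. Suppose the projected orthojectory could be properly homotoped deep into a tine or flipper. Lift this homotopy, fixing $x$. The lifted endpoint, or end at infinity, must then lie in the peripheral region adjacent to $\tilde\alpha_i$ at that tine or flipper. This is either the region bounded by $\tilde\alpha_i$ and $\tilde\alpha'$ (respectively $\tilde\alpha''$), or a region not adjacent to $\tilde\alpha_i$; the latter is impossible, since $x$ cannot be pushed deep into a cusp or funnel it does not bound. The region bounded by $\tilde\alpha_i$ and $\tilde\alpha'$ is excluded because the endpoint is not strictly between $\tilde p'$ and $\tilde q'$. The boundary case $x=\pi'$ needs separate care: the ray is then asymptotic to $\tilde\alpha'$ and does not enter the tine region, so it is essential as well.

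Once both inclusions hold, $[\pi',\pi'']$ is exactly the set of essential base points. It is therefore closed and connected, so it coincides with the smallest closed interval containing them, namely $\mathbb{I}_i$.

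The main obstacle is making ``peripheral'' rigorous in terms of the lifted endpoint, that is, characterising proper homotopy into a flipper or tine by the location of the lift. The flipper case also requires care: there the region between $\tilde\alpha_i$, the strap lift and $\tilde\alpha'$ is a half-funnel and not a cusp, and one must check that $\tilde p'$ is well defined as the far end of $\tilde\alpha'$. I would handle this by first treating the tine case, where the relevant region is a cusp neighbourhood, and then reducing the flipper case to it via $\clip$ (\cref{lem:flip-clip}). Clipping preserves arches and essential arcs, and converts each flipper into two tines, in a way compatible with the definition of $\tilde p'$.
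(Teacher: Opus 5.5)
Your Steps~1 and~2 are correct and are essentially the paper's proof. Perpendiculars based beyond $\pi'$ cross $\tilde\alpha'$ or escape into the flipper, so they are peripheral. The perpendicular at $\pi'$ runs to $\tilde p'$ and is essential. The ``precisely'' clause then follows by contradiction.

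Your version of the contradiction is cleaner than the paper's remark that the homotopy would drag $\xi'$ or $\xi''$ deep. You use that a lifted proper homotopy preserves the terminal boundary lift (or end) of $\tilde\rho$. Since the deep part of a tine or half-funnel is simply connected, its preimage components are pairwise disjoint, and only those at the two ends of $\tilde\alpha_i$ meet $\tilde\alpha_i$. One small correction: at $x=\pi'$ the ray \emph{does} enter a tine or flipper, namely the one at $\tilde p'$. For a $1$-tined crown this is a lift of the same tine as $\tilde q'$. The ray is essential because that preimage component misses $\tilde\alpha_i$, which your general Step~2 argument already detects, so no separate care is needed.

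The step that would fail is the proposed reduction of the flipper case to the tine case via $\clip$, because clipping is not compatible with $\tilde p'$.
\begin{itemize}
\item In $\clip(X)$, the arch adjacent to $\tilde\alpha_i$ on the flipper side is the new clipped arch. Its non-shared ideal end is the \emph{near} end $a'$ of $\tilde\alpha'$, not its far end $\tilde p'$.
\item By the tri-rectangle relation, the projections of $\tilde p'$ and $a'$ to $\tilde\alpha_i$ lie on opposite sides of the foot of the strap (of length $c$). Each is at distance $\operatorname{arcsinh}(1/\sinh c)$ from it, so they are $2\log\coth(c/2)$ apart.
\item Orthojectories based strictly between these two projections cross $\tilde\alpha'$; the strap itself is one of them. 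In $X$ they are peripheral, being homotopic deep into the flipper. In $\clip(X)$ they separate the new arch and its two tines from the rest, so they are essential.
\end{itemize}
Applying the tine case to $\clip(X)$ therefore yields a strictly longer interval. This is consistent with the action not being clip-invariant (\cref{cor:action-clip}).

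Simply drop the reduction: your Steps~1 and~2 apply verbatim to flippers, because a half-funnel is simply connected and lifts to pairwise disjoint regions exactly as tine spikes do. The only repair is in Step~1. For base points between $\pi'$ and the projection of $a'$, sliding the base point is not a proper homotopy of arcs, since the perpendiculars degenerate into a ray escaping at $a'$. Instead, use that $\tilde X$ is contractible. The arc is then properly homotopic, with endpoints sliding along $\tilde\alpha_i$ and $\tilde\alpha'$, to the equidistant arc (from the strap) at any prescribed depth inside the half-funnel.
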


\begin{proof}
Note that an orthojectory properly homotopes into a flipper/tine if and only if its lift on the universal cover properly homotopes into the corresponding flipper/tine. Thus, $\tilde{\mathbb{I}}_i$ is equal to the essential interval of $\tilde{\alpha}_i$. Consider the orthojectories $\xi'$ and $\xi''$ respectively going from $\tilde{\alpha}_i$ and to $\tilde{p}'$ and $\tilde{p}''$ (see \cref{fig:gen-chekhov's-act}). The orthojectory $\xi'$ is essential, but all orthojectories  
on the side of $\xi'$ which is deeper in the flipper/tine are peripheral.
Thus, any such orthojectory is forced to go into the flipper or hits $\tilde{\alpha}'$, and hence is peripheral. Therefore, the basepoint for $\xi'$ gives one of the endpoints of $\tilde{\mathbb{I}}_i$. The same claim holds for $\xi''$, and the first half of the result follows.\medskip

To finish off, we show that $\tilde{\mathbb{I}}_i$ (and hence $\mathbb{I}_i$) precisely consists of the basepoints for all essential orthojectories via an argument by contradiction. Assume that some peripheral orthojectory $\hat{\xi}$ has base point in $\tilde{\mathbb{I}}_i$. Then, the proper homotopy pushing $\hat{\xi}$ into the relevant flipper/tine will push either $\xi'$ or $\xi''$ arbitrarily deeply into the flipper/tine, which is impossible.
\end{proof}

\begin{figure}[H]
    \centering
    \begin{tikzpicture}[scale=1.2]

  \draw[very thick]
    (0,2) ++(-30:1) arc[start angle=-30, end angle=-150, radius=1];

  \draw[very thick]
    (-2,0) ++(-80:1.2) arc[start angle=-80, end angle=70, radius=1.2];

  \draw[very thick]
    (2,0) ++(120:0.9) arc[start angle=120, end angle=250, radius=0.9];

  \draw[very thick]
    (-1.4,-2.5) ++(-10:1) arc[start angle=-10, end angle=110, radius=1];

\draw[thick, blue] (-1.79,-1.19) to [out=10,in=-100] (-0.1,1);

\draw (0,1.2) node {$\tilde{\alpha}_i$};

\draw (-1,1.2) node {$\tilde{p}'$};

\draw (1.2,1.2) node {$\tilde{p}''$};


\draw (0,-0.2) node {$\xi'$};


\draw[thick, red] [rotate around={-5:(-0.1,1)}] (-0.1,0.95) rectangle (-0.05,1);

\end{tikzpicture}
    \caption{An orthojectory $\xi'$ bounding one end of the essential interval on $\tilde{\alpha}_i$.}
    \label{fig:gen-chekhov's-act}
\end{figure}

\begin{proposition}[Chekhov's action via essential intervals]
\label{prop: essential intervals}
The action functional $S$ for a crowned surface is equal to the half-sum of the lengths of all essential intervals:
\[
S = \frac{1}{2} \sum_{i=1}^n \ell (\mathbb{I}_i).
\]
\end{proposition}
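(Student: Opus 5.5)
The plan is to reduce the identity to a local, per-tine computation in the upper half-plane and then sum over tines. Chekhov's action with $\kappa=1$ is $S=\log\prod_i\lambda_i h_i=\sum_i(\log\lambda_i+\log h_i)$, and it is decoration independent. So I first fix a convenient decoration. On each arch $\alpha_i$, joining tines $q_{i-1}$ and $q_i$, the lambda length satisfies $\log\lambda_i=\tfrac12\,\ell_{\mathrm{trunc}}(\alpha_i)$, where $\ell_{\mathrm{trunc}}(\alpha_i)$ is the signed length of $\alpha_i$ between the horocycles at its two ends. Redistributing, $2S=\sum_i \ell_{\mathrm{trunc}}(\alpha_i)+2\sum_j\log h_j$. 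The target is $\sum_i\ell(\mathbb{I}_i)$, so it suffices to compare, arch by arch, the truncated length of $\alpha_i$ with $\ell(\mathbb{I}_i)$. The two should differ by a correction at each end that depends only on the tine at that end.

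Concretely, by \cref{lem:ess-int-endpts}, the endpoint of $\tilde{\mathbb{I}}_i$ near the tine $q_i$ is the orthogonal projection onto $\tilde\alpha_i$ of the far ideal end $\tilde p''$ of the adjacent arch $\tilde\alpha_{i+1}$. I normalise the tine in $\HH$ so that $q_i=\infty$, $\tilde\alpha_i$ is the vertical geodesic over $0$, and $\tilde\alpha_{i+1}$ is the vertical geodesic over $1$. Then $\tilde p''=x\in\RR$ with $x>1$ (or $x<0$, depending on orientation), and the horocycle at $\infty$ is $\{y=y_0\}$ with horocyclic length $h_i=1/y_0$ between the two arches. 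The projection of $x$ onto the geodesic over $0$ is the point at height $|x|$. Symmetrically, the projection of the far end of $\tilde\alpha_i$, which is the point $p'$ on the real line, onto $\tilde\alpha_{i+1}$ is at height $|p'-1|$. So the signed distance from the horocycle to the endpoint of $\mathbb{I}_i$ on $\alpha_i$ is $\log(y_0/|x|)$, and similarly on $\alpha_{i+1}$.

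Summing the two ends of every arch gives
\[
\ell(\mathbb I_i)=\ell_{\mathrm{trunc}}(\alpha_i)-\log|x_i|-\log|x'_{i-1}|+2\log y_0\text{-type terms}.
\]
The $\log y_0$ terms combine into $-2\log h$ contributions. The key step is to show that the leftover terms $\log|x|+\log|p'-1|$ at each tine cancel: in the normalisation above they should correspond to cross-ratio data which, after summing over the whole crown, telescopes. If it does not telescope, I would instead show directly that the leftover is $\log$ of a product of lambda-length ratios, in the form $\lambda_{i+1}\lambda_{i}/\lambda_{\text{diag}}$, whose total product over the crown is $1$ by a Ptolemy-type identity.

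I expect this bookkeeping of the per-tine correction terms, and checking that they vanish globally, to be the main obstacle. A cleaner route is to choose the decoration so that the horocycle at each tine passes exactly through the endpoints of the essential intervals of both adjacent arches, if such a horocycle exists. Then $\ell_{\mathrm{trunc}}(\alpha_i)=\ell(\mathbb I_i)$ on the nose, and the remaining task is to show that $\sum_j\log h_j=0$ for this decoration, i.e. each $h_j=1$. In the normalisation above this reduces to checking $y_0=|x|=|p'-1|$, an explicit identity for a tine between two arches whose far endpoints are $p'$ and $x$. Finally, additivity over crowns (\cref{rmk:additive-action}) extends the identity from one crown to general crowned surfaces.
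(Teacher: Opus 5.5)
Your overall plan, and in particular the ``cleaner route'' in your last paragraph, is the paper's proof: take every horocycle of length $1$, show that it cuts each arch exactly at the endpoints of its essential interval, and conclude that $\log\lambda_i=\tfrac12\ell(\mathbb{I}_i)$ and $S=\sum_i\log\lambda_i$. As written, however, the proposal never closes the key step, because of a misidentification in your normalisation. You take $\tilde q_i=\infty$, $\tilde\alpha_i$ the vertical geodesic over $0$, and $\tilde\alpha_{i+1}$ the vertical geodesic over $1$. By \cref{lem:ess-int-endpts}, $\tilde p''$ is the ideal end of $\tilde\alpha_{i+1}$ not at the shared tine, so $\tilde p''=1$; it is not a free parameter $x>1$. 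Likewise the far end of $\tilde\alpha_i$ is $p'=0$. Hence the two essential-interval endpoints near this tine sit at $(0,1)$ on $\tilde\alpha_i$ and at $(1,1)$ on $\tilde\alpha_{i+1}$, and the horocycle $\{y=1\}$ through both of them has length exactly $1$ between the two arches. This is the paper's observation: the extremal orthojectory and the two arches bound half an ideal triangle with angles $0,0,\pi/2$, and in such a triangle the horocycle through the right-angled vertex has length $1$.

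Consequently the ``leftover terms'' $\log|x|+\log|p'-1|$ you flag as the main obstacle are identically zero. For an arbitrary decoration, the truncation point at each arch-end differs from the interval endpoint by exactly $-\log h_j$ (signed). So $\ell(\mathbb{I}_i)=\ell_{\mathrm{trunc}}(\alpha_i)+\log h_j+\log h_{j'}$, where $q_j,q_{j'}$ are the tines at the two ends of $\alpha_i$. Each tine accounts for exactly two arch-ends, which also covers the $1$-tined case. Summing gives $\sum_i\ell(\mathbb{I}_i)=\sum_i\ell_{\mathrm{trunc}}(\alpha_i)+2\sum_j\log h_j=2S$. The correction is purely local: there is nothing to telescope and no Ptolemy relation is needed, so both fallback routes chase terms that do not exist. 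Your hedge ``if such a horocycle exists'' is also unnecessary, since that horocycle is the unit one. Once you substitute $x=1$ and $p'=0$, your final paragraph becomes a complete proof, identical to the paper's.
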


\begin{proof}
Since Chekhov's action function for a crowned hyperbolic surface $X$ can be defined using any decoration, we set all decorating horocycles to have unit length, that is, $h_i = 1$ for $i=1,\dotsc,n$. In this setting, we have
\[S 
= \log \left( \prod_{i=1}^n \lambda_{i}h_i\right)
= \log \left( \prod_{i=1}^n \lambda_{i}\right).
\]
Since $\lambda_i$ is the exponentiation of the halved (signed) distance between decorating horocycles, the desired claim would follow immediately from showing that essential interval $\mathbb{I}_i$ for each arch $\alpha_i$ is precisely the geodesic segment adjoining the length $1$ decorating horocycles.\medskip

Consider a lift $\tilde{\alpha}_i$ of $\alpha_i$ to the universal cover $\tilde{X}\subset\HH$ of $X$, and denote the adjacent boundary arches by $\tilde{\alpha}'$ and $\tilde{\alpha}''$. We study the essential interval $\mathbb{I}_i\subset\alpha_i$ via its lift $\tilde{\mathbb{I}}_i\subset\tilde{\alpha}_i$ instead. \cref{lem:ess-int-endpts} tells us that $\tilde{\mathbb{I}}_i$ is bookended by the base points of the essential orthojectories $\xi'$ and $\xi''$ emanating from $\tilde{\alpha}_i$ and tending to the ideal endpoints of $\tilde{\alpha}'$ and $\tilde{\alpha}''$ which are not in a shared tine with $\tilde{\alpha}_i$.\medskip

At this point, we make the key observation that $\xi'$ and $\xi'$ each isolate half of an ideal triangle on $\tilde{X}$ (i.e. a triangle with angles $0,0,\pi/2$). In such triangles, a horocycle based at an ideal vertex and hitting the non-ideal vertex necessarily has length $1$ (see \cref{fig:5-crown} for a sample depiction with $n=i=5$). Therefore, the essential interval $\tilde{\mathbb{I}}_i$ is precisely the interval running between two length $1$ horocycles.
\end{proof}

\begin{remark}
    The cases when the crown containing $\alpha_i$ has $2$ or $1$ tines look different on the crowned surface. If there are $2$ tines, then the two orthojectories that bookend $\mathbb{I}_i$ intersect as per \cref{fig:2-tine-crow-ess-int}. If the crown containing $\alpha_i$ has exactly $1$ tine, the orthojectories themselves also fail to be \textit{embedded} (see \cref{fig:1-tine-crow-ess-int}). However, this does not affect the argument as we work on the universal cover. 
\end{remark}

\begin{figure}[H]
    \centering
    \begin{tikzpicture}

\draw (-2,-0.5) to [out=30,in=-60] (-1.1,2) to [out=-60,in=-120] (1.3,1.9) to [out=-120,in=165] (2.5,-0.3) to [out=165,in=90] (0.4,-2) to [out=90,in=30] (-2,-0.5);

\draw[densely dotted] (0.2,0.25) circle (0.5);

\draw [blue] (-0.9,1.15) arc (270:332:0.55);

\draw[red] (-2,-0.5) to [out=30,in=-115] (-0.4,1.43);

\draw[red] [rotate=-20] (-0.87,1.2) rectangle (-0.82,1.15);

\draw [blue] (1.15,1.15) arc (-90:-155:0.48);

\draw[red] (2.5,-0.3) to [out=165,in=-70] (0.7,1.42); 

\draw[red] [rotate=15] (1.04,1.18) rectangle (0.99,1.13);

\draw [blue] (1.63,-0.33) arc (-160:-220:0.48);

\draw [blue] (0,-1) arc (140:40:0.48);

\draw [blue] (-1,-0.39) arc (-20:63:0.48);

\draw (0.2,1.55) node {\scriptsize $\lambda_{5}$};

\draw (1.7,0.6) node {\scriptsize $\lambda_{4}$};

\draw (1.3,-0.9) node {\scriptsize $\lambda_{3}$};

\draw (-0.6,-0.8) node {\scriptsize $\lambda_{2}$};

\draw (-1.3,0.75) node {\scriptsize $\lambda_{1}$};

\draw (-0.7,1.4) node {\tiny $h_1$};

\draw (1,1.35) node {\tiny $h_5$};

\draw (1.8,-0.15) node {\tiny $h_4$};

\draw (0.35,-1.1) node {\tiny $h_3$};

\draw (-1.2,-0.15) node {\tiny $h_2$};

\end{tikzpicture}
    \caption{An essential interval on a crown with $\geq3$ tines, measured along a side of lambda length $\lambda_{5}$. It is bookended by orthojectories marked in {\color{red}red}.}
    \label{fig:5-crown}
\end{figure}

\begin{figure}[H]
    \centering
    \begin{tikzpicture}[scale=1.5]

\draw[thick] (-4,0) .. controls (-2,0) and (-1,1) .. (0,1);

\draw[thick] (4,0) .. controls (2,0) and (1,1) .. (0,1);

\draw[thick] (-4,0) .. controls (-2,0) and (-1,-1) .. (0,-1);

\draw[thick] (4,0) .. controls (2,0) and (1,-1) .. (0,-1);

\draw[thick] (0,0) circle (0.35);

\draw[thick, red] (-4,0) .. controls (-2,0) and (-1,0.9) .. (0,0.75);

\draw[thick, red] (0,0.75) arc(85:50:0.75) coordinate (A);

\draw[thick, red] (A) arc(50:30:1) coordinate (B);

\draw[thick, red] (B) -- (1.15,-0.7);

\draw[thick, red] (4,0) .. controls (2,0) and (1,0.9) .. (0,0.75);

\draw[thick, red] (0,0.75) arc(95:130:0.75) coordinate (C);

\draw[thick, red] (C) arc(130:150:1) coordinate (D);

\draw[thick, red] (D) -- (-1.15,-0.7);

\draw[thick, red] (4,0) .. controls (2,0) and (1,-0.9) .. (0,-0.75);

\draw[thick, red] (0,-0.75) arc(-95:-130:0.75) coordinate (E);

\draw[thick, red] (E) arc(-130:-150:1) coordinate (F);

\draw[thick, red] (F) -- (-1.15,0.7);

\draw[thick, red] (-4,0) .. controls (-2,0) and (-1,-0.9) .. (0,-0.75);

\draw[thick, red] (0,-0.75) arc(-85:-50:0.75) coordinate (G);

\draw[thick, red] (G) arc(-50:-30:1) coordinate (H);

\draw[thick, red] (H) -- (1.15,0.7);

\draw[thick, red] [rotate around={25:(1.15,-0.7)}] (1.15,-0.7) rectangle (1.2,-0.65);

\draw[thick, red] [rotate around={-25:(-1.15,-0.7)}] (-1.15,-0.7) rectangle (-1.2,-0.65);

\draw[thick, red] [rotate around={-25:(1.15,0.7)}] (1.15,0.7) rectangle (1.2,0.65);

\draw[thick, red] [rotate around={25:(-1.15,0.7)}] (-1.15,0.7) rectangle (-1.2,0.65);

\draw[thick, blue] (-4,0) ++(-14:2.95) arc[start angle=-14, end angle=14, radius=2.95];

\draw[thick, blue] (4,0) ++(166:2.95) arc[start angle=166, end angle=194, radius=2.95];

\end{tikzpicture}
    \caption{Essential intervals on a 2-tined crown, bookended by orthojectories marked in {\color{red}red}.}
    \label{fig:2-tine-crow-ess-int}
\end{figure}

\begin{figure}[H]
    \centering
    \begin{tikzpicture}[scale=1.5]

\draw[thick] (0,0) arc (-90:0:1) ;  

\draw[thick] (1,1) .. controls (1,2) and (0,3) .. (0,5);

\draw[thick] (0,0) arc (-90:-180:1) ;  

\draw[thick] (-1,1) .. controls (-1,2) and (0,3) .. (0,5);

\draw[thick] (0,1) circle (0.3);

\draw[thick, red] (0,5) .. controls (0,3) and (0.9,2) .. (0.8,1);

\draw[thick, red] (0.8,1) arc (-5:-95:0.75) coordinate (A);

\draw[thick, red] (A) arc (-95:-245:0.6) coordinate (B);

\draw[thick, red] (B) -- (0.8,1.9); 

\draw[thick, red] (0,5) .. controls (0,3) and (-0.9,2) .. (-0.8,1);

\draw[thick, red] (-0.8,1) arc (185:275:0.75) coordinate (C);

\draw[thick, red] (C) arc (275:425:0.6) coordinate (D);

\draw[thick, red] (D) -- (-0.8,1.9); 

\draw[thick, red] [rotate around={25:(0.755,1.91)}] (0.73,1.885) rectangle (0.78,1.935);

\draw[thick, red] [rotate around={-25:(-0.755,1.91)}] (-0.73,1.885) rectangle (-0.78,1.935);

\draw[thick, blue] (0,5) ++(-76:3.2) arc[start angle=-76, end angle=-104, radius=3.2];
\end{tikzpicture}
    \caption{The essential interval on a 1-tined crown, bookended by orthojectories marked in {\color{red}red}.}
    \label{fig:1-tine-crow-ess-int}
\end{figure}

\subsubsection{Generalising Chekhov's action}

\begin{definition}[generalized action]
\label{defn: action for flippered surface}
Given \cref{prop: essential intervals}, one a geometrically natural generalisation of Chekhov's action is 
\begin{align}
S_0(X) := \frac{1}{2} \sum_{i=1}^n \ell (\mathbb{I}_i).
\end{align}
However, the following renormalisation 
\begin{align}
S(X) = \frac{1}{2} \sum_{i=1}^n \ell (\mathbb{I}_i)+\sum_{i=1}^n \log \cosh (c_i/2)
\end{align}
yields cleaner volume recursion formulae, so we define $S:\mathcal{M}_\Sigma\to\mathbb{R}$ to be the \textit{generalized Chekhov's action} for a flippered surface $X$. 
\end{definition}

\begin{remark}
Note that if $X$ is a crowned surface, then all $c_i=0$ and thus $S_0(X)=S(X)$. More generally, 
over the moduli space $\mathcal{M}_\Sigma(\vec{b};\vec{c}\;|\vec{d})$ (or $\mathcal{M}_\Sigma(\vec{b};\vec{c}))$
the difference $S_0(X)-S(X)$ is constant, therefore the generalized Mirzakhani volumes (\cref{def:gen-mirz-vol}) taken with respect to $S_0$ and $S$ differ by a multiplicative factor of $\prod_{i=1}^n \frac{1}{\cosh(c_i/2)}$.
\end{remark}

\begin{example}
We give explicit expressions for $S(X)$ with the simplest topologies, namely, $\mathbb{D}_3$ and $\mathbb{A}_1$. For $\mathbb{D}_3$ with straps of length $(c_1,c_2,c_3)\in[0,\infty)^3$, we show in \cref{subsec:action-triang}
\begin{itemize}
    \item if $c_1=c_2=c_3=0$, then $e^S=1$;
    \item if $c_2=c_3=0$, then $e^S=\cosh^2 (c_1/2)$;
    \item if $c_3=0$, then $e^S=\frac{1}{2}(\cosh c_1+\cosh c_2)$;
    \item the general formula is $e^S= \frac{1}{4}(\cosh c_1+\cosh c_2+\cosh c_3-1+\sqrt{D})$, where $D = \cosh^2 c_1 + \cosh^2 c_2 + \cosh^2 c_3 +2\cosh c_1 \cosh c_2 \cosh c_3 -1$.
\end{itemize}
For $\mathbb{A}_1$ with strap $c$ and neck length $d$, we show in \cref{sec:1crown}
\begin{itemize}
    \item if $c=0$, then $e^S = 2\cosh(d/2)$;
    \item the general formula is $e^S = \sqrt{\frac{\cosh d +\cosh c}{2}}+\cosh(d/2)$.
\end{itemize}
\end{example}

These examples serve as building blocks for expressing actions for general surfaces via ``topologically recursive'' formulae that we next establish.

\subsection{Topological recursion for Chekhov's action}
\label{sec:chekhovactionrecursion}

We use the term topological recursion in this context to mean that there is a method of expressing Chekhov's action functional $S(X)$ for a flippered surface using the action of topologically simpler surfaces which come from geometrically decomposing $X$. There are various manifestations of topological recursion, each arising from a different way of topologically decomposing a surface. Indeed, \cite[Lemma~2.6]{chekhov2024} already asserts the neck recursion stated in \cref{prop:neck} for moduli spaces of crowned hyperbolic surfaces. To state these results precisely, we introduce the notion of Nielsen cutting.

\begin{definition}[Nielsen cut]
\label{def:cut-orth}
Given an essential orthogeodesic $\eta$ on a flippered hyperbolic surface $X \in \mathcal{T}_\Sigma$, we write $X\|\eta$ to denote the (possibly disconnected) flippered hyperbolic surface obtained from $X\setminus\eta$ by attaching a flipper to each side of the cut. Since this is akin to cutting and followed by Nielsen extension \cite{bers1976nielsen}, we refer to the entire operation as a \emph{Nielsen cut}. Nielsen cuts with respect to disjoint essential orthogeodesics $\eta_1,\dotsc,\eta_k$ commute, and we adopt the notation $X\|\{\eta_1,\dotsc,\eta_k\}$ to refer to the (set of) surface(s) obtained from Nielsen cutting with respect to $\eta_1,\dotsc,\eta_k$.
\end{definition}

\subsubsection{Arch-to-arch Nielsen cutting}

\begin{theorem}[arch-to-arch cut]
\label{prop:action-arch-arch}
Let $\eta$ be an essential orthogeodesic on $X$ with endpoints on $\partial X$.  Then Chekhov's action $S(X)$ satisfies:
\begin{align}
e^{-S(X)} &= e^{-S(X\|\eta)}\cdot\sinh^2(\ell_\eta/2).
\end{align}
\end{theorem}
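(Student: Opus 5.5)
The plan is to compare the two sides of $S(X)$ term by term: the strap corrections $\sum\log\cosh(c_i/2)$ and the essential-interval sum $\tfrac12\sum\ell(\mathbb{I}_i)$. Write $\ell=\ell_\eta$. The surface $X\|\eta$ has the same straps as $X$, plus two new straps, one on each side of the cut, each of length $\ell$. So the strap correction term increases by exactly $2\log\cosh(\ell/2)$. The theorem is therefore equivalent to
\[
\tfrac12\sum_{X\|\eta}\ell(\mathbb{I})\;-\;\tfrac12\sum_{X}\ell(\mathbb{I})\;=\;2\log\sinh(\ell/2)-2\log\cosh(\ell/2)\;=\;2\log\tanh(\ell/2),
\]
an identity about essential intervals only. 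We prove it by working on the universal cover, where \cref{lem:ess-int-endpts} describes every essential interval concretely: its endpoints are the orthogonal projections of the far ideal endpoints of the two neighbouring arches.

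\textbf{Step 1 (localisation).} Let $\eta$ run from $\alpha_j$ to $\alpha_k$ (possibly $j=k$). In $X\|\eta$, the arch $\alpha_j$ is replaced by two arches $\alpha_j',\alpha_j''$. Each shares one end with the old tine or flipper of $\alpha_j$, and its other end lies in the new flipper attached along $\eta$; the same happens at $\alpha_k$. By \cref{lem:ess-int-endpts}, an essential interval depends only on its arch and the far endpoints of the two neighbouring arches. Hence the only intervals that change are:
\begin{itemize}
\item the intervals of $\alpha_j$ and $\alpha_k$, which are replaced by those of $\alpha_j',\alpha_j'',\alpha_k',\alpha_k''$;
\item the intervals of the (at most four) arches adjacent to $\alpha_j$ or $\alpha_k$ across a tine or flipper, because the far endpoint of their neighbour has moved.
\end{itemize}
All other lifts are untouched. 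So the difference above splits as a sum of two contributions, one from each endpoint of $\eta$, and it suffices to show that each contribution equals $\log\tanh(\ell/2)$.

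\textbf{Step 2 (one endpoint, explicit trigonometry).} Normalise in $\HH$ so that the lift $\tilde\alpha_j$ is the unit semicircle and the lift of $\eta$ leaves it orthogonally from $\sqrt{-1}$ along the imaginary axis, reaching the lift of $\alpha_k$ at height $e^{\ell}$. The new flipper is bounded by this segment. The arches $\alpha_j'$ and $\alpha_j''$ are then the geodesics joining, respectively, $-1$ and $+1$ to the appropriate endpoints of the half-funnel attached along the imaginary segment. Let $p',p''$ be the far endpoints of the old neighbours of $\tilde\alpha_j$. We compute the projections
\begin{itemize}
\item of $p',p''$ and of the new far endpoints onto $\tilde\alpha_j$, $\tilde\alpha_j'$, $\tilde\alpha_j''$;
\item and onto the neighbouring arches, projecting the old and the new far endpoints.
\end{itemize}
These are right-angled pentagon and trirectangle computations, in the spirit of the proof of \cref{lem:shear-width-quadrilateral}. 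The expected mechanism is that the dependence on $p',p''$ and on the neighbouring flipper or tine cancels pairwise. The interval of $\alpha_j$ is cut at the foot of $\eta$. Each neighbour's interval shrinks by exactly the amount by which the piece of $\alpha_j$ on its side extends past the new projection point. What remains is the signed distance from the foot of $\eta$ to the projection, onto $\tilde\alpha_j$, of the ideal endpoint of the lift of $\alpha_k$ (or of the new arch). This distance is a function of $\ell$ alone, and the standard formula $\tanh(\text{distance})=\operatorname{sech}(\ell)$, or an equivalent one, should produce $\log\tanh(\ell/2)$ after the factor $\tfrac12$.

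\textbf{Step 3 (degenerate cases) and main obstacle.} We must also handle the cases where neighbouring punctures are tines rather than flippers, where $j=k$, and where the crown has one or two tines, so that the relevant orthojectories are non-embedded as in \cref{fig:1-tine-crow-ess-int}. Working on the universal cover makes all of these identical to the generic computation, as in the proof of \cref{prop: essential intervals}. As a consistency check, we will verify the formula on $\mathbb{D}_4$ cut into two copies of $\mathbb{D}_3$, using the explicit $\mathbb{D}_3$ actions listed in the example, together with \cref{lem:shear-width-quadrilateral}. The main obstacle is Step 2: organising the bookkeeping so that the neighbouring intervals' changes visibly cancel against the pieces of the split arch. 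If the direct approach gets messy, I would first clip all flippers via \cref{lem:flip-clip} so that the neighbours become tines. The horocycle description of essential intervals from \cref{prop: essential intervals} then turns the lengths into lambda lengths, and the cancellation becomes a Ptolemy-type identity.
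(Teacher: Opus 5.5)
Your reduction is correct, and it is the paper's framework. The two new straps add $2\log\cosh(\ell/2)$ to the strap term, so the theorem is equivalent to $\tfrac12\sum_{X\|\eta}\ell(\mathbb I)-\tfrac12\sum_X\ell(\mathbb I)=2\log\tanh(\ell/2)$, to be checked on the universal cover via \cref{lem:ess-int-endpts}. The gap is in Step~1 and in the mechanism you predict for Step~2. The claim that the intervals of the arches adjacent to $\alpha_j,\alpha_k$ change ``because the far endpoint of their neighbour has moved'' is false. In the Nielsen extension, the half of $\tilde\alpha_j$ kept on one side of $\tilde\eta$ is continued by the flipper boundary orthogonal to $\tilde\eta$ at its foot. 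That continuation is the other half of the \emph{same} geodesic. In your own normalisation, both $\tilde\alpha_j'$ and $\tilde\alpha_j''$ are the unit semicircle with endpoints $\pm1$, and $\tilde\alpha_k',\tilde\alpha_k''$ are both the semicircle of radius $e^{\ell}$. Every other arch therefore sees the same far ideal endpoint as before. By \cref{lem:ess-int-endpts}, all essential intervals other than those of $\alpha_j,\alpha_k$ are literally unchanged, so no neighbouring interval shrinks and nothing cancels. Your proposed $\mathbb D_4$ check already shows this: the arch between the two tines has interval length $2\log\cosh(\ell/2)$ both before and after the cut. As written, Step~2 is an unexecuted computation organised around a cancellation that does not occur.

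With the correct picture the proof is two lines, and it is exactly the paper's. The new interval $\tilde{\mathbb I}_j'$ lies on $\tilde\alpha_j$ itself. It runs from the projection of the old far endpoint on the $-1$ side to the projection of $-e^{\ell}$, which is the far endpoint of $\tilde\alpha_k$ across the new flipper. Symmetrically, $\tilde{\mathbb I}_j''$ runs from the projection of $e^{\ell}$ to the projection of the old far endpoint on the $+1$ side. Hence
\[
\ell(\mathbb I_j)=\ell(\mathbb I_j')+\ell\bigl(\pi_{\tilde\alpha_j}(\tilde\alpha_k)\bigr)+\ell(\mathbb I_j'').
\]
For each half of the projection, the trirectangle with an ideal vertex at $\pm e^{\ell}$ gives $\sinh a\,\sinh\ell=1$, i.e.\ $e^{a}=\coth(\ell/2)$. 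So $\ell(\pi_{\tilde\alpha_j}(\tilde\alpha_k))=2\log\coth(\ell/2)$, and each endpoint of $\eta$ contributes $-\log\coth(\ell/2)=\log\tanh(\ell/2)$, as required. The case $j=k$ is the same computation at each foot of a lift of $\eta$. The right-angled pentagon computations and the Ptolemy fallback are unnecessary.
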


\begin{proof}
Suppose that the endpoints of $\eta$ are on distinct arches $\alpha_i$ and $\alpha_j$ of $X$. Denote by $\alpha_i',\alpha_i''$ the arches of $X\|\eta$ resulting from extending the two geodesic rays $\alpha_i\setminus\eta$ during the process of Nielsen cutting $X$ along $\eta$, and denote by $\mathbb{I}_i', \mathbb{I}_i''$ the respective essential intervals of $\alpha_i',\alpha_i''\subset X\|\eta$. 

In the universal cover $\tilde{X}\subset \HH$, let $\tilde{\eta}$ be a lift of $\eta$, and let $\tilde{\alpha}_i$ and $\tilde{\alpha}_j$ denote the respective lifts of $\alpha_i$ and $\alpha_j$ orthogonal to $\tilde{\eta}$. Let $\pi_{\tilde{\alpha}_i}(\tilde{\alpha}_j)$ denote the nearest point projection of $\tilde{\alpha}_j$ onto $\tilde{\alpha}_i$.  
Observe that $\pi_{\tilde{\alpha}_i}(\tilde{\alpha}_j) \subset \tilde{\mathbb{I}}_i$, and each of the endpoints of  $\pi_{\tilde{\alpha}_i}(\tilde{\alpha}_j)$ is one of the endpoints of either $\tilde{\mathbb{I}}'_i$ or $\tilde{\mathbb{I}}''_i$  (see \cref{fig:topol-rec-action}), and
\begin{align*}
\ell(\mathbb{I}_i) 
= 
\ell(\mathbb{I}_i') + \ell(\pi_{\tilde{\alpha}_i}(\tilde{\alpha}_j))
+\ell(\mathbb{I}_i'').  
\end{align*}
By the tri-rectangle identity (see, e.g.: \cite[Theorem~2.3.1]{Buser}), the length of $\pi_{\tilde{\alpha}_i}(\tilde{\alpha}_j)$ only depends on the length of $\eta$:
\begin{equation*}
\ell(\pi_{\tilde{\alpha}_i}(\tilde{\alpha}_j))
= 
2\log \coth \left(\ell_\eta/2\right).  
\end{equation*}
By symmetry, we also have $\ell(\pi_{\tilde{\alpha}_j}(\tilde{\alpha}_i)) = 2\log \coth \left(\ell_\eta/2\right)$. Observe that by \cref{lem:ess-int-endpts} the essential intervals of all other arches of $X$ are unaffected from cutting along $\eta$. Combined with \cref{prop: essential intervals} and \cref{defn: action for flippered surface}:
\begin{align*}
e^{-S(X)} =&  \frac{e^{-\frac{1}{2} \sum_{k=1}^n \ell (\mathbb{I}_k)} }{\prod_{i=1}^n\cosh(c_i/2)}
\\
=& \frac{e^{-\frac{1}{2} \sum_{k=1,k\neq i,j}^n \ell (\mathbb{I}_k)} \cdot e^{-\frac{1}{2}( \ell(\mathbb{I}_i')+\ell(\mathbb{I}_i'')+\ell(\mathbb{I}_j')+\ell(\mathbb{I}_j''))} \cdot e^{-\frac{1}{2}(\ell(\pi_{\tilde{\alpha}_i}(\tilde{\alpha}_j))+\ell(\pi_{{\tilde{\alpha}_j}}(\tilde{\alpha}_i)))} }{\prod_{i=1}^n \cosh(c_i/2)}  
\\
=& \frac{e^{-\frac{1}{2} \sum_{k=1,k\neq i,j}^n \ell (\mathbb{I}_k)} \cdot e^{-\frac{1}{2}( \ell(\mathbb{I}_i')+\ell(\mathbb{I}_i'')+\ell(\mathbb{I}_j')+\ell(\mathbb{I}_j''))} }{\prod_{i=1}^n \cosh(c_i/2)\cdot \cosh^2(\ell_\eta/2)} 
\\
&\cdot e^{-\frac{1}{2}(\ell(\pi_{\tilde{\alpha}_i}(\tilde{\alpha}_j))+\ell(\pi_{{\tilde{\alpha}_j}}(\tilde{\alpha}_i)))} \cdot \cosh^2(\ell_\eta/2)
\\
=&e^{-S(X\|\eta)}\cdot\sinh^2(\ell_\eta/2).
\end{align*}
Next, if the endpoints of $\eta$ are on the same arch $\alpha_i$, we similarly get
\begin{align*}
\ell(\mathbb{I}_i) = \ell(\mathbb{I}_i') + 2\ell(\pi_{\tilde{\alpha}_i}(\tilde{\alpha}_i^*))+\ell(\mathbb{I}_i''),    
\end{align*}
where $\tilde{\alpha}_i$ and $\tilde{\alpha}_i^*$ are both lifts of $\alpha_i$ connected by some lift $\tilde{\eta}$ of $\eta$. Supplanting the r\^{o}le of $\tilde{\alpha}_j$ with $\tilde{\alpha}_i^*$, the result follows by the same calculations.
\end{proof}

\begin{figure}[H]
    \centering
    \begin{tikzpicture}[scale=1.2]
\usetikzlibrary{decorations.pathreplacing}

  \draw[very thick]
    (0,3) ++(-30:2) arc[start angle=-30, end angle=-150, radius=2];

   \draw[very thick]
    (0,-3.5) ++(30:1.5) arc[start angle=30, end angle=150, radius=1.5]; 

  \draw[very thick]
    (-2.5,0.5) ++(-70:1) arc[start angle=-70, end angle=70, radius=1];

  \draw[very thick]
    (2.5,0.5) ++(110:1.2) arc[start angle=110, end angle=250, radius=1.2];

    (-1.4,-2.5) ++(-10:1) arc[start angle=-10, end angle=110, radius=1];

\draw[thick, blue] (-2.15,-0.45) to [out=25,in=-112] (-0.75,1.13);

\draw[thick, blue] (2.08,-0.62) to [out=160,in=-70] (0.65,1.1);

\draw[thick, blue] (-1.3,-2.75) to [out=55,in=-95] (-0.15,1);

\draw[thick, blue] (1.3,-2.75) to [out=125,in=-85] (0.15,1);

\draw (-1,1.55) node {$\tilde{\alpha}_i$};

\draw (0,-2.2) node {$\tilde{\alpha}_j$};




\draw (-0.15,-0.5) node {$\tilde{\eta}$};



\draw[thick, blue] (0,1) -- (0,-2);


\draw[very thick, dotted] (-1.7,-1) -- (-1.2,-1.5);

\draw[very thick, dotted] (1.2,-1.7) -- (1.7,-1.2);

   \draw [decorate,decoration={brace,amplitude=14pt}]
    (-0.75,1.13) -- (0.65,1.1) node [midway,yshift=22pt] {$\tilde{\mathbb{I}}_i$};

    \draw [decorate,decoration={brace,amplitude=3pt}]
    (-0.15,1) -- (0.15,1);

    \draw [decorate,decoration={brace,amplitude=5pt}]
    (0.65,1.1) -- (0.15,1);

    \draw [decorate,decoration={brace,amplitude=5pt}]
    (-0.15,1) -- (-0.75,1.13);

\node [font=\fontsize{5}{5}] at (0,1.18) {$\pi_{\tilde{\alpha}_i}(\tilde{\alpha}_j)$}; 

\node [font=\fontsize{8}{8}] at (-0.5,0.7) {$\tilde{\mathbb{I}}'_i$}; 

\node [font=\fontsize{8}{8}] at (0.5,0.7) {$\tilde{\mathbb{I}}''_i$};

\end{tikzpicture}
    \caption{Splitting of the essential interval $\mathbb{I}_i$.}
    \label{fig:topol-rec-action}
\end{figure}

\begin{corollary}
Let $\eta_1,\dotsc,\eta_k$ be disjoint orthogeodesics that constitute a maximal arc collection on $X$. Then Chekhov's action $S(X)$ satisfies:
\[
e^{-S(X)} = e^{-S(X\|\{\eta_1,\dotsc,\eta_k\})} \cdot \prod_{i=1}^k\sinh^2(\ell_{\eta_i}/2).
\]
\end{corollary}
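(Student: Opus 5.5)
The plan is to iterate the arch-to-arch cut identity of \cref{prop:action-arch-arch}, cutting along $\eta_1,\dotsc,\eta_k$ one at a time and inducting on $k$. The case $k=1$ is exactly \cref{prop:action-arch-arch}. For the inductive step, set $X_j := X\|\{\eta_1,\dotsc,\eta_j\}$, which is a (possibly disconnected) flippered surface. The identity we are proving is multiplicative, and the action is additive over components by \cref{rmk:additive-action}. So \cref{prop:action-arch-arch} can be applied to whichever component of $X_{j-1}$ contains $\eta_j$, giving
\[
e^{-S(X_{j-1})} = e^{-S(X_j)}\cdot\sinh^2(\ell_{\eta_j}/2).
\]
Chaining these identities for $j=1,\dotsc,k$ and using $X_k = X\|\{\eta_1,\dotsc,\eta_k\}$ yields the claim. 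Commutativity of Nielsen cuts (\cref{def:cut-orth}) guarantees that the order of cutting is irrelevant.

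The point requiring care is that at each stage $\eta_j$ survives in $X_{j-1}$ as an \emph{essential orthogeodesic} of the same length $\ell_{\eta_j}$.

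\emph{Same length.} Since the $\eta_i$ are disjoint, $\eta_j$ lies in the complement of $\eta_1,\dotsc,\eta_{j-1}$. Nielsen cutting only removes the cut arcs and glues flippers, that is, half-funnels, along the new boundary. Hence the hyperbolic metric on $X\setminus(\eta_1\cup\dots\cup\eta_{j-1})$ embeds isometrically in $X_{j-1}$, and $\eta_j$ remains a geodesic meeting the boundary orthogonally, with unchanged length. One subtlety is that its endpoints may now lie on the new arches created by the cut. These are extensions of the old arches, so orthogonality is preserved.

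\emph{Essential.} Suppose $\eta_j$ were inessential in $X_{j-1}$. Then it would bound, together with boundary subarcs (possibly through a puncture), a disk as in \cref{fig:forbidden-arcs}. Pulling this disk back to $X$, and accounting for the flipper-boundaries created by earlier cuts, would make $\eta_j$ either isotopic to some earlier $\eta_i$ (excluded, since the arcs of $\square$ are pairwise non-isotopic) or itself inessential in $X$ (excluded by definition). I would check this using the doubling picture. By the proof of \cref{lem:gen-triang-cardinality}, $\square\cup\square'$ doubles to a pants decomposition of $D\Sigma$, and Nielsen cutting corresponds to cutting $D\Sigma$ along the doubled curves. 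These remain essential and non-peripheral in the cut surface until they themselves are cut, because distinct pants curves are non-isotopic.

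The main obstacle is this essentiality check, since it must handle the newly created flippers correctly. I expect the doubling argument to dispatch it cleanly. As a sanity check, by \cref{lem:gen-triang-compl-regions} the final surface $X_k$ is a disjoint union of Nielsen-extended half-pants and (degenerate) hexagons, that is, flippered $\mathbb{A}_1$'s and $\mathbb{D}_3$'s, consistent with the explicit actions computed in \cref{subsec:action-triang}. Maximality of the collection is in fact not needed for the identity itself; any disjoint, pairwise non-isotopic essential family works by the same induction.
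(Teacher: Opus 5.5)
Your proposal is correct and matches the paper's approach. The paper states this corollary without proof, as the iterate of the arch-to-arch cut theorem (\cref{prop:action-arch-arch}). Your induction, together with the check that each remaining $\eta_j$ is still an essential orthogeodesic of the same length in the partially cut surface, supplies exactly the details the paper leaves implicit.
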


\begin{corollary}
\label{cor:action-clip}
Let $X$ denote a flippered hyperbolic surface, and let $\mathrm{Clip}(X)$ be the flipper-clipping of $X$, then
\[
e^{-S(X)} = e^{-S(\text{Clip}(X))} \cdot \prod_{i=1}^n \coth^2\left(c_i/2\right).
\]    
\end{corollary}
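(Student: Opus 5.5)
The plan is to prove the identity one flipper at a time, treating each clipping as an arch-to-arch Nielsen cut. By \cref{lem:ess-int-endpts}, clipping a single flipper only changes the essential intervals of the two arches $\alpha_i,\alpha_{i+1}$ adjacent to the strap $\sigma_i$. So it suffices to compare $X$ with the surface $X'$ obtained by clipping only the $i$-th flipper (with $c_i>0$), and then iterate over $i$. Clipping changes nothing when $c_i=0$, which is consistent with reading $\coth^2(0/2)$ as contributing trivially; I would state the formula for flippered straps, i.e.\ for $i\in\mathcal{A}$.

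\textbf{Step 1: the action of $X'$.} In $X'$ the flipper is replaced by two tines, and a new arch $\alpha^*$ appears (the boundary of the removed half-plane). The strap $\sigma_i$ is an essential orthogeodesic of $X'$ joining $\alpha_i$ to $\alpha_{i+1}$. Nielsen cutting $X'$ along $\sigma_i$ gives two pieces:
\begin{itemize}
\item a copy of $X$ (the original flipper is reattached on one side);
\item a degenerate ideal quadrilateral piece bounded by $\alpha^*$, the two tines, and a flipper with strap $\sigma_i$. This is the flippered triangle $\mathbb{D}_3$ with straps $(c_i,0,0)$.
\end{itemize}
Applying \cref{prop:action-arch-arch} to $X'$ and $\sigma_i$ then gives
\[
e^{-S(X')}=e^{-S(X)}\,e^{-S(\mathbb{D}_3(c_i,0,0))}\,\sinh^2(c_i/2),
\]
using additivity of the action over components, as in \cref{rmk:additive-action} and the definition via essential intervals.

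\textbf{Step 2: the triangle value.} The remaining input is $e^{S(\mathbb{D}_3(c,0,0))}=\cosh^2(c/2)$, one of the listed examples from \cref{subsec:action-triang}. If I may not rely on that forward reference, I would compute it directly. Its essential intervals are obtained from \cref{lem:ess-int-endpts} by projecting ideal endpoints in the upper half-plane. The renormalisation term $\log\cosh(c/2)$ contributes one factor of $\cosh(c/2)$, and the half-sum of the essential intervals should contribute the other. Substituting into the Step~1 identity gives
\[
e^{-S(X')}=e^{-S(X)}\tanh^2(c_i/2),
\]
that is, $e^{-S(X)}=e^{-S(X')}\coth^2(c_i/2)$.

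\textbf{Step 3: iteration and the main obstacle.} Iterating over all flippers yields the claimed formula, since clippings of distinct flippers commute. The step I expect to be the main obstacle is the bookkeeping in Step~1. I must check two things:
\begin{itemize}
\item that Nielsen cutting $X'$ along $\sigma_i$ really returns exactly $X$ together with $\mathbb{D}_3(c_i,0,0)$, including the strap-length renormalisation constants on both sides;
\item that \cref{prop:action-arch-arch}, which was proven for distinct arches, applies even though $\sigma_i$ bounds the region adjacent to tines.
\end{itemize}
I would verify both in the universal cover. There the two new tines of $X'$ become ideal vertices, and the lengths of the projections $\pi_{\tilde\alpha_i}(\tilde\alpha_{i+1})$ can be checked explicitly.
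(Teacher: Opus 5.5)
Your proposal is correct and follows essentially the paper's argument: the paper Nielsen-cuts $\mathrm{Clip}(X)$ along all straps at once, obtaining $X$ together with copies of $\mathbb{D}_3(c_i,0,0)$, and combines \cref{prop:action-arch-arch} with $e^{S(\mathbb{D}_3(c,0,0))}=\cosh^2(c/2)$ from \cref{cor:action-flip-triang}, whereas you do the same thing one flipper at a time. Two small corrections: \cref{prop:action-arch-arch} is not limited to distinct arches, since its same-arch case already covers the one-tined-crown situation you were worried about; and restricting the product to $i\in\mathcal{A}$ is necessary rather than merely consistent, because $\coth^2(0/2)$ is infinite, not $1$.
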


\begin{proof}
Let $\sigma_i$ (of respective lengths $c_i$) denote the straps of $X$. The Nielsen cut $\mathrm{Clip}(X)\|\{\sigma_1,\ldots,\sigma_k\}$ consists of a copy of $X$ union $k$ copies of $\mathbb{D}_3$ with strap lengths $(c_i,0,0)$. From \cref{cor:action-flip-triang}, the contribution of the $\mathbb{D}_i$ term to $e^S(\mathrm{Clip}(X))$ is given by $\cosh^2(c_i/2)$, and thus
\[
e^{S(\text{Clip}(X))}\cdot \prod_{i=1}^n \sinh^2\left(c_i/2\right)
= 
e^{S(X)}\cdot \prod_{i=1}^n \cosh^2\left(c_i/2\right).
\]
The result follows from rearrangement.
\end{proof}

\subsubsection{Arch-to-cuff Nielsen cutting}
Let $\eta$ be a simple orthogeodesic on a flippered hyperbolic surface $X$ with one endpoint on the arch $\alpha_j$ and the other endpoint on the cuff $\beta_i$. Denote by $X_{\beta_i}^{\text{nf}}$ the closure of the non-flare component of $X\setminus \beta_i$. Further, denote by $\beta_i^\eta$ the arch of $X_{\beta_i}^{\text{nf}}\|\eta$ obtained from extending the geodesic $\beta_i\setminus\eta$ during the process of Nielsen cutting $X_{\beta_i}^{\text{nf}}$ along $\eta$, and denote by $\mathbb{I}_{\beta_i}^\eta$ the respective essential interval. We prove:

\begin{theorem}[arch-to-cuff]
\label{prop:action-arch-cuff}
Let $\eta$ be a simple orthogeodesic on $X$ with one endpoint on the arch $\alpha_j$ and the other endpoint on the cuff $\beta_i$. Then Chekhov's action $S(X)$ satisfies:
\[
e^{-S(X)} = \frac{1}{2}e^{-S(X_{\beta_i}^{\text{nf}}\|\eta)} \cdot e^{\ell (\mathbb{I}_{\beta_i}^\eta)/2} \cdot \sinh\ell_\eta.
\]
\end{theorem}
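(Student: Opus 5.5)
The plan is to follow the proof of \cref{prop:action-arch-arch}: write both sides via \cref{prop: essential intervals} and \cref{defn: action for flippered surface} as exponentials of half-sums of essential interval lengths and strap terms, and compare them arch by arch. First I will list the arches of $X_{\beta_i}^{\text{nf}}\|\eta$. Every arch $\alpha_k$ with $k\neq j$ survives unchanged. The arch $\alpha_j$ is split into two arches $\alpha_j',\alpha_j''$, obtained by extending the two rays of $\alpha_j\setminus\eta$. There is one new arch $\beta_i^\eta$. There are two new flippers, between $\alpha_j'$ and $\beta_i^\eta$ and between $\beta_i^\eta$ and $\alpha_j''$. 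As in the arch-to-arch case, each of their straps is an orthogeodesic parallel to a copy of $\eta$ with length $\ell_\eta$. So the renormalisation in $S(X_{\beta_i}^{\text{nf}}\|\eta)$ acquires exactly the extra factor $\cosh^2(\ell_\eta/2)$ relative to $S(X)$.

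Second, I will argue via \cref{lem:ess-int-endpts} that for $k\neq j$ the essential intervals $\mathbb{I}_k$ are unchanged. The endpoints of $\tilde{\mathbb{I}}_k$ are determined by the adjacent arches in the universal cover, and cutting along $\eta$ or deleting the funnel beyond $\beta_i$ does not change these. The new term $\tfrac12\ell(\mathbb{I}_{\beta_i}^\eta)$ appears in $S(X_{\beta_i}^{\text{nf}}\|\eta)$ but has no counterpart in $S(X)$. This is exactly what the factor $e^{\ell(\mathbb{I}_{\beta_i}^\eta)/2}$ in the statement compensates for.

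Third, and this is the main step, I will analyse $\mathbb{I}_j$. In $\tilde X$, take a lift $\tilde\eta$ joining $\tilde\alpha_j$ to a lift $\tilde\beta_i$ of the cuff. Orthojectories from $\tilde\alpha_j$ that exit through $\tilde\beta_i$ into the funnel are essential, because a funnel is not a flipper or tine. Hence $\tilde{\mathbb{I}}_j$ contains the nearest-point projection $\pi_{\tilde\alpha_j}(\tilde\beta_i)$. I expect the decomposition
\[
\ell(\mathbb{I}_j)=\ell(\mathbb{I}_j')+\ell\bigl(\pi_{\tilde\alpha_j}(\tilde\beta_i)\bigr)+\ell(\mathbb{I}_j''),
\]
where the endpoints of $\pi_{\tilde\alpha_j}(\tilde\beta_i)$ are the inner endpoints of $\tilde{\mathbb{I}}_j'$ and $\tilde{\mathbb{I}}_j''$. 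The reason is that after the Nielsen cut, the endpoints of $\tilde\beta_i^\eta$ on the relevant side are the ideal points that bookend $\tilde{\mathbb{I}}_j'$ and $\tilde{\mathbb{I}}_j''$. The tri-rectangle identity then gives $\ell(\pi_{\tilde\alpha_j}(\tilde\beta_i))=2\log\coth(\ell_\eta/2)$. Unlike the arch-to-arch case, only one projection appears, since $\beta_i$ is not an arch of $X$.

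Assembling these pieces gives
\[
e^{-S(X)}=e^{-S(X_{\beta_i}^{\text{nf}}\|\eta)}\cdot e^{\ell(\mathbb{I}_{\beta_i}^\eta)/2}\cdot\tanh(\ell_\eta/2)\cdot\cosh^2(\ell_\eta/2).
\]
Since $\tanh(\ell_\eta/2)\cosh^2(\ell_\eta/2)=\tfrac12\sinh\ell_\eta$, this is the claimed identity. The hardest point will be justifying the endpoint matching in the third step: that the ideal points bookending $\tilde{\mathbb{I}}_j',\tilde{\mathbb{I}}_j''$ in the cut surface are exactly the endpoints of $\tilde\beta_i$. I will check this carefully in the upper half-plane, placing $\tilde\eta$ on the imaginary axis as in \cref{lem:shear-width-quadrilateral}.
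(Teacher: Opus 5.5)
Your proposal is correct and is essentially the paper's proof. The paper likewise splits $\ell(\mathbb{I}_j)=\ell(\mathbb{I}_j')+\ell(\pi_{\tilde{\alpha}_j}(\tilde{\beta}_i))+\ell(\mathbb{I}_j'')$ by appeal to the arch-to-arch argument, keeps the other essential intervals fixed via \cref{lem:ess-int-endpts}, accounts for the two new straps of length $\ell_\eta$ through the factor $\cosh^2(\ell_\eta/2)$, and lets $e^{\ell(\mathbb{I}_{\beta_i}^\eta)/2}$ absorb the new arch's interval, concluding with $\tanh(\ell_\eta/2)\cosh^2(\ell_\eta/2)=\tfrac12\sinh\ell_\eta$; your planned explicit check of the endpoint matching goes beyond what the paper spells out.
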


\begin{proof}
Note that $\eta$ is non-separating, since one of its endpoints is on a loop. As in the proof of \cref{prop:action-arch-arch}, denote by $\alpha_j',\alpha_j''$ the arches of $X_i\|\eta$ resulting from extending the two geodesic rays $\alpha_i\setminus\eta$ during the process of Nielsen cutting $X_i$ along $\eta$, and denote by $\mathbb{I}_j', \mathbb{I}_j''$ the respective essential intervals of $\alpha_j',\alpha_j''\subset X_i\|\eta$. 

By the same argument as in the proof of \cref{prop:action-arch-arch}, 
\begin{align*}
\ell(\mathbb{I}_j) 
= 
\ell(\mathbb{I}_j') + \ell(\pi_{\tilde{\alpha}_j}(\tilde{\beta}_i))
+\ell(\mathbb{I}_j'').  
\end{align*}
Note that $\beta_i$ is not an arch of $X$ and hence does not carry an essential interval, while $\beta_i^\eta$ is an arch of $X_{\beta_i}^{\text{nf}}\|\eta$ and carries the essential interval $\mathbb{I}_{\beta_i}^\eta$. Further, by \cref{lem:ess-int-endpts} the essential intervals of all the other arches of $X$ are unaffected by cutting along $\eta$. Combining these observations with \cref{prop: essential intervals} and \cref{defn: action for flippered surface}, we obtain:
\begin{align*}
e^{-S(X)}  = &  \frac{e^{-\frac{1}{2} \sum_{k=1}^n \ell (\mathbb{I}_k)}}{\prod_{i=1}^n\cosh(c_i/2)} 
\\=& \frac{e^{-\frac{1}{2} \sum_{k=1,k\neq j}^n \ell (\mathbb{I}_k)} \cdot e^{-\frac{1}{2}( \ell(\mathbb{I}_j')+\ell(\mathbb{I}_j''))} }{\prod_{i=1}^n \cosh(c_i/2)\cdot \cosh^2(\ell_\eta/2)}
\cdot e^{-\frac{1}{2}(\ell(\pi_{\tilde{\alpha}_j}(\tilde{\beta}_i)))}\cdot \cosh^2(\ell_\eta/2)
\\=&
  \frac{1}{2}e^{-S(X_{\beta_i}^{\text{nf}}\|\eta)} \cdot e^{\ell (\mathbb{I}_{\beta_i}^\eta)/2} \cdot \sinh\ell_\eta.
\end{align*}
\end{proof}

\section{Weil--Petersson volume form and its recursive structure}
\label{sec:WP-volume-form}

The \emph{Weil--Petersson form} \cite{weil} is a mapping class group invariant symplectic form on the Teichm\"uller space $\mathcal{T}_{g,m}(\vec{b})$, and induces a symplectic form on $\mathcal{M}_{g,m}(\vec{b})$. Wolpert famously showed \cite{wolpertwpform} that Fenchel--Nielsen coordinates yield \emph{global} Darboux coordinates. Mirzakhani showed that the volume of the moduli space with respect to The top exterior product of the Weil--Petersson form is a rational polynomial in $\mathbb{Q}_{>0}[\pi^2,b_1^2,\ldots,b_m^2]$ and encodes intersection numbers of Chern classes of certain natural line bundles over moduli space.\medskip

In contrast, Teichm\"uller spaces of flippered surfaces (either with or without fixed boundary holonomy) may be odd-dimensional and often fail to be symplectic. Nevertheless, we will use Thurston's shearing coordinates to define a generalization of the top exterior product (i.e.: the \emph{Weil--Petersson volume form}) on Teichm\"uller spaces of  flippered surfaces $\mathcal{T}_\Sigma(\vec{b};\vec{c})$ and $\mathcal{T}_\Sigma(\vec{b};\vec{c}\;|\vec{d})$ which
\begin{itemize}
    \item are mapping class group invariant (\cref{eq:vol-form-teich-fixed-hol});
    \item generalize the Weil--Petersson volume form for crowned hyperbolic surfaces introduced in \cite{HT25};
    \item satisfy topologically recursive formulae for the volume forms (\cref{prop:top-rec-vol-form-orthogeod}) analogous to those employed by Mirzakhani in \cite{mirz_simp}.
\end{itemize}

\subsection{Defining the WP volume form for flippered surfaces}
\label{subsec:flippervolform}

Our primary goal is to define Weil--Petersson volume forms (as opposed to symplectic forms) for $\mathcal{T}_\Sigma(\vec{b};\vec{c})$ as well as for $\mathcal{T}_\Sigma(\vec{b};\vec{c}\;|\vec{d})$. We do this via the following steps:
\begin{description}
    \item[Step~1] we partition $\mathcal{T}_\Sigma$ into subsets which naturally identify (via the flipper clipping map) with Teichm\"uller spaces of \emph{crowned} hyperbolic surfaces.
    \item[Step~2] we pullback the Weil--Petersson volume form for Teichm\"uller spaces of crowned hyperbolic surfaces to the various strata in $\mathcal{T}_\Sigma$.
    \item[Step~3] $\mathcal{T}_\Sigma(\vec{b};\vec{c})$ and $\mathcal{T}_\Sigma(\vec{b};\vec{c}\;|\vec{d})$ are subsets in these strata, and we divide the Weil--Petersson volume form by particular choices of differentials to yield volume forms on these Teichm\"uller (sub)spaces.
\end{description}

\subsubsection{WP volume form on the moduli space of crowned surfaces}

Let us first review how one is able to define a mapping class group-invariant volume form on Teichm\"uller spaces of crowned hyperbolic surfaces (c.f. \cite[Lemma~2.8]{goncharov-sun}). 

\begin{definition}[Weil--Petersson volume for spaces of crowned surfaces]\label{defn:wpform}
Given an ideal triangulation $\triangle$ of $\Sigma$,
let $\{s_i\}_{i=1}^{|\triangle|}$ denote the associated shearing coordinates on $\mathcal{T}_\Sigma^\varnothing$ (see \cref{subsec:shears}), define the volume form $\Omega^{\mathrm{WP},\varnothing}_{\Sigma}$ on $\mathcal{T}_\Sigma^\varnothing$ as:
\begin{align}
\label{eq:vol-form-crowned-surf}  \Omega^{\mathrm{WP},\varnothing}_{\Sigma} = \pm \bigwedge_{i=1}^{|\triangle|}\mathrm{d}s_i,
\end{align}
where the sign is taken to output positive volumes (for a chosen top-dimensional chain).
\end{definition}

\begin{remark}[sign of the Weil--Petersson volume form]
The Weil--Petersson volume as defined above is non-canonical as there two possible choices of orientation for any top dimensional set. However, once an initial choice is made (say, for some fundamental domain of the mapping class group action), this then defines a legitimate volume form. A more explicit approach (see \cite{goncharov-sun}) might be to ``twist'' the volume form by a $\mathbb{Z}/2\mathbb{Z}$-torsor which (effectively) counts the parity of the number of flips and transpositions (on interior edges) taken to change between triangulations.
\end{remark}

\begin{remark}[mapping class group invariance]
The proof that $\Omega^{\mathrm{WP},\varnothing}_{\Sigma}$ is independent of the choice of the ideal triangulation (and hence mapping class group invariant) is essentially due to Penner's work: one first verifies that it is preserved under flips (\cite[Theorem~4.7]{pennerbook}), and then show that you can deform from any ideal triangulation to another. To see this, note that the underlying graph in the dual ``fatgraph'' of $\Sigma$ is a fatgraph in the classical sense, but with some half-edges attached to edges on cycles corresponding to boundary components. Observe that we can slide the half-edges to any edge on a boundary cycle (whilst preserving the fatgraph structure) via Whitehead moves. Then, we can deform between arbitrary fatgraphs (of the same topological surface) by first ``ignoring'' the half-edges and invoking \cite[Proposition~7.1]{pennervolume} to perform Whitehead moves to deform between the two fatgraphs (without the half-edges). This is possible even with the half-edges present, as we can always slide a half-edge away from the particular edge that we want to apply a Whitehead move to. After deforming from the initial fatgraph to the final modulo half-edges, we can then slide all half-edges into the desired position to complete the process.
\end{remark}

\subsubsection{Weil--Petersson volume forms on the flipper-based strata}
Given $\mathcal{A}\subseteq\{1,\ldots,n\}$, recall the following:
\begin{itemize}
    \item $\mathcal{T}_\Sigma^\mathcal{A} \subset \mathcal{T}_\Sigma$ (see \cref{def:T_Sigma(A)}) is the stratum of flippered surfaces for which $c_j$, $j\in\mathcal{A}$ are strictly positive (and all other $c_j$ are equal to $0$);
    \item the flipper-clipping map $\mathrm{Clip}:\mathcal{T}^{\mathcal{A}}_\Sigma\to\mathcal{T}^\varnothing_{\dot{\Sigma}(\mathcal{A})}$ is a real-analytic diffeomorphism (\cref{cor:flip-clip-homeo}).
\end{itemize}

\begin{definition}[Weil--Petersson volume form for strata]

We define the \emph{Weil--Petersson volume form} for the stratum $\mathcal{T}_\Sigma^\mathcal{A}$ as the following pullback:
\begin{equation}
\label{eq:vol-form-strata}
\Omega^{\mathrm{WP},\mathcal{A}}_{\Sigma} := \clip^*\left(\Omega^{\mathrm{WP},\varnothing}_{\dot{\Sigma}(\mathcal{A})}\right),
\end{equation}
where $\Omega^{\mathrm{WP},\varnothing}_{\dot{\Sigma}(\mathcal{A})}$ is the Weil--Petersson volume form on $\mathcal{T}_{\dot{\Sigma}(\mathcal{A})}^\varnothing$ (see \cref{eq:vol-form-crowned-surf}).
    
\end{definition}

\subsubsection{Weil--Petersson volume form for $\mathcal{T}_\Sigma(\vec{b};\vec{c})$ and $\mathcal{T}_\Sigma(\vec{b};\vec{c}\;|\vec{d})$}

Observe that $\mathcal{T}_\Sigma(\vec{b};\vec{c})$ is a subset of $\mathcal{T}_\Sigma^\mathcal{A}$ when $\mathcal{A}$ is the set of indices for which $c_j$ is (strictly) positive (\cref{rem:teich-fix-hol-subset}). Specifically, set $\mathcal{A} = \{j_1,\dotsc,j_k\}\subseteq \{1,\dotsc,n\}$, and define the map 
\begin{align}
f_\mathcal{A}:\mathcal{T}_\Sigma^\mathcal{A} &\to \RR^{|\mathcal{A}|}  \times \RR^m_{\geqslant 0}\notag \\
\label{eq:map f_A}
X &\mapsto \left(2\log\sinh\left(\tfrac{\ell({\sigma_{j_1})}}{2}\right),\dotsc,2\log\sinh\left(\tfrac{\ell({\sigma_{j_k})}}{2}\right);\ell(\beta_1),\dotsc,\ell(\beta_m)\right).
\end{align}
Then, $f_{\mathcal{A}}$ is a bundle map where the fibers are precisely given by 
\[
\mathcal{T}_\Sigma(\vec{b};\vec{c}) = f^{-1}_{\mathcal{A}}\left(\log \sinh^2\left(\frac{c_{j_1}}{2}\right),\dotsc,\log \sinh^2\left(\frac{c_{j_k}}{2}\right); b_1,\dotsc, b_m \right).
\]

\begin{notation}[division by differential forms]
We use ``$/\mu$'', where $\mu$ is a $k$-form, to denote division by differential forms in the sense of \cite[6.17.21]{MR350769} (see also \cref{subsec:submersions} for a local reference).
\end{notation}

In order to divide by a differential form, we have as input:
\begin{itemize}
    \item a surjective submersion $f:M\to N$, 

    \item a smooth top-dimensional form $\omega$ on $M$,

    \item and a smooth non-vanishing top-dimensional form $\mu$ on $N$.
\end{itemize}
For each level set $f^{-1}(y)$ of $f$, we obtain a top-dimensional form $\omega/\mu$ satisfying
\[
\omega = (f^*\mu)\wedge(\omega/\mu)\text{ on }f^{-1}(y).
\]

\begin{definition}[WP volume form for $\mathcal{T}_\Sigma(\vec{b};\vec{c})$]
\label{eq:vol-form-teich-fixed-hol}
We define the Weil--Petersson volume form $\Omega^{\mathrm{WP}}_{\Sigma}(\vec{b};\vec{c})$ on $\mathcal{T}_\Sigma(\vec{b};\vec{c})$ as
\begin{align*}
\Omega^{\mathrm{WP}}_{\Sigma}(\vec{b};\vec{c}) := \Omega^{\mathrm{WP},\mathcal{A}}_{\Sigma}/\mu_{\mathcal{A}}, 
\end{align*}
where 
\begin{itemize}
    \item $f=f_{\mathcal{A}}:\mathcal{T}^{\mathcal{A}}_\Sigma\to\RR^{|\mathcal{A}|}  \times \RR^m_{\geqslant 0}$ is the input submersion (see \cref{prop:orthogeod-coord}),
    \item $\omega = \Omega^{\mathrm{WP},\mathcal{A}}_{\Sigma}$ is the Weil--Petersson volume form on $\mathcal{T}^{\mathcal{A}}_\Sigma$,
    \item and $\mu=\mu_{\mathcal{A}}$ is the Euclidean volume element on $\RR^{|\mathcal{A}|}\times\RR^m_{\geqslant 0}$. 
\end{itemize}
\end{definition}

\begin{remark}[choice for $f_\mathcal{A}$]
Whilst the choices for $\Omega^{\mathrm{WP},\mathcal{A}}_{\Sigma}$ and $\mu_{\mathcal{A}}$ are natural, one might have naively chosen $f_\mathcal{A}$ to instead map via
\[
X\mapsto (\ell(\sigma_{j_1}),\ldots,\ell(\sigma_{j_k}),\ell(\beta_1),\ldots,\ell(\beta_m)).
\]
We first note that these two choices are more-or-less equivalent, as the induced volume forms agree up to multiplication by $\coth(c_{j_1}/2)\cdots\coth(c_{j_k}/2)$, which is a constant on $\mathcal{T}_\Sigma(\vec{b};\vec{c})$. One technical reason for using the present normalization for $f_{\mathcal{A}}$ in defining the Weil--Petersson form is that $\Omega_\Sigma^{\mathrm{WP}}(\vec{b};\vec{c})$ varies continuously as $c_j\to0$. In terms of conceptual geometric justification, \cref{lem:shear-width-quadrilateral} tells us that we are simply looking at level sets of relevant shearing parameters between pairs of adjacent arches in $X$ rather than their corresponding strap lengths. This choice for $f_{\mathcal{A}}$ also produces a very mildly cleaner statement when stating topologically recursive structure of the Weil--Petersson volume form $\Omega^{\mathrm{WP}}_{\Sigma}(\vec{b};\vec{c})$ in \cref{prop:top-rec-vol-form-orthogeod} and  \cref{prop:vol-form-teich-fixedhol-orth-coord}. 
\end{remark}

\begin{remark}
Both $\mathcal{T}_\Sigma^{\mathcal{A}}$ and $\RR^{|\mathcal{A}|}  \times \RR^m_{\geqslant 0}$ are manifolds with corners, and to apply \cref{subsec:submersions} to a fixed $\mathcal{T}_\Sigma(\vec{b};\vec{c})$ we restrict to the relevant (manifold) strata of $\mathcal{T}_\Sigma^{\mathcal{A}}$ and $\RR^{|\mathcal{A}|}  \times \RR^m_{\geqslant 0}$, where the stratification is according to $\vec{b} \in \RR^m_{\geqslant 0}$.
\end{remark}

The Teichm\"uller space $\mathcal{T}_\Sigma(\vec{b};\vec{c}\;|\vec{d})$ consisting of marked surfaces in $\mathcal{T}_\Sigma(\vec{b};\vec{c})$ with fixed neck lengths can be naturally assigned a volume by dividing by $\mathrm{d}d_j$:

\begin{definition}[WP volume form for $\mathcal{T}_\Sigma(\vec{b};\vec{c}\;|\vec{d})$]
\label{eq:vol-form-teich-fixed-hol-neck}
We define the Weil--Petersson volume form $\Omega^{\mathrm{WP}}_{\Sigma}(\vec{b};\vec{c}\;|\vec{d})$ on $\mathcal{T}_\Sigma(\vec{b};\vec{c}\;|\vec{d})$ as
\begin{align*}
\Omega^{\mathrm{WP}}_{\Sigma}(\vec{b};\vec{c}\;|\vec{d}) := \Omega^{\mathrm{WP}}_{\Sigma}(\vec{b};\vec{c})/\mu_l, 
\end{align*}
where 
\begin{itemize}
    \item $f:\mathcal{T}_\Sigma(\vec{b};\vec{c})\to\mathbb{R}^l$, $X\mapsto(\ell(\delta_1),\ldots,\ell(\delta_l))$ is the requisite submersion,
    \item $\omega = \Omega^{\mathrm{WP}}_{\Sigma}(\vec{b};\vec{c})$ is the Weil--Petersson volume form on $\mathcal{T}_\Sigma(\vec{b};\vec{c})$,
    \item and $\mu_l$ is the Euclidean volume element on $\RR^{l}$. 
\end{itemize}
\end{definition}

\subsection{Recursion formulae for Weil--Petersson volume forms}
A key point in Mirzakhani's integration scheme for moduli spaces of hyperbolic surfaces is that one should be able to reduce calculations on moduli spaces of surfaces of higher complexity to calculations on (covers of) moduli spaces of constituent subsurfaces. For Mirzakhani's work, this is complicated by the action of the mapping class group being non-trivial on (all) subsurface decompositions (and in particular, those which cut out a pair of pants), and she uses the McShane identity \cite{mcshane_allcusps,mirz_simp} to deal with this. For the present paper, we consider only surface decompositions on which the mapping class group acts trivially, and this produces significantly simpler expressions. In a planned sequel work, we will give novel McShane-type identities for crowned hyperbolic surfaces (i.e.: not just those in \cite{huang_thesis}, which were used in \cite{goncharov-sun}) and obtain other recursion formulae. In any case, we use the following notation when describing surface decompositions:

\begin{notation}[surface decomposition]
Consider a disconnected (topological) surface $\Sigma=\Sigma_1 \sqcup \cdots\sqcup\Sigma_\kappa$ obtained from cutting along essential simple closed curves and essential arcs, where each connected component $\Sigma_1, \cdots,\Sigma_\kappa$ is a (connected) finite type surfaces admitting hyperbolic metrics (see \cref{notn:sigmasurface}). We write
\begin{itemize}
    \item $\mathcal{T}_\Sigma$, 
    $\mathcal{T}_\Sigma(\vec{b};\vec{c})$, and $\mathcal{T}_\Sigma(\vec{b};\vec{c}\;|\vec{d})$ to respectively mean $\mathcal{T}_{\Sigma_1}\times\cdots\times\mathcal{T}_{\Sigma_\kappa}$, $\mathcal{T}_{\Sigma_1} (\vec{b}_1;\vec{c}_1)\times\cdots\times\mathcal{T}_{\Sigma_\kappa}(\vec{b}_\kappa;\vec{c}_\kappa)$, and $\mathcal{T}_{\Sigma_1}(\vec{b}_1;\vec{c}_1|\vec{d}_1)\times\cdots\times\mathcal{T}_{\Sigma_\kappa}
    (\vec{b}_\kappa;\vec{c}_\kappa|\vec{d}_\kappa)$, where the length vectors $\vec{b}_1,\ldots,\vec{b}_\kappa$, $\vec{c}_1,\ldots,\vec{c}_\kappa$, and $\vec{d}_1,\ldots,\vec{d}_\kappa$ are selectively ordered and labeled in accordance to the puncture ordering on $\Sigma_1 \sqcup \cdots\sqcup\Sigma_\kappa$;
    \item $\Omega^{\mathrm{WP}}_{\Sigma}
    (\vec{b};\vec{c})$ and $\Omega^{\mathrm{WP}}_{\Sigma}
    (\vec{b};\vec{c}\;|\vec{d})$ to respectively mean $\Omega^{\mathrm{WP}}_{\Sigma_1}(\vec{b};\vec{c}_1)\wedge\cdots\wedge\Omega^{\mathrm{WP}}_{\Sigma_\kappa}(\vec{b}_\kappa;\vec{c}_\kappa)$, and $\Omega^{\mathrm{WP}}_{\Sigma_1}
    (\vec{b};\vec{c}_1|\vec{d}_1)
    \wedge\cdots\wedge
    \Omega^{\mathrm{WP}}_{\Sigma_\kappa}(\vec{b}_\kappa;\vec{c}_\kappa|\vec{d}_\kappa)$.
\end{itemize}
We may omit specifying notation such as $\vec{b}$, $\vec{c}$, $\vec{d}$ and/or $\Sigma$ if the context is clear.
\end{notation}

We now examine what happens to the Weil--Petersson volume form when one cuts an essential simple closed curve, and then for cutting an essential arc. First, a little notation:

\begin{notation}[puncture labeling after cuts]
\label{not:punc-label-cuts}
Let $\gamma$ be an essential simple closed curve on $\Sigma$. Then $\Sigma\setminus\gamma$ has $2$ extra interior punctures compared to $\Sigma$. We label the extra punctures $p_{m+1},p_{m+2}$, and leave the existing labels unchanged. Similarly, if $\eta$ is an essential orthogeodesic on $X$, the underlying topological surface $\Sigma\setminus\eta$ has $2$ extra boundary punctures compared to $\Sigma$. We label the extra punctures $q_{n+1},q_{n+2}$, and leave the existing labels unchanged. 
\end{notation}

\subsubsection{Cutting along essential simple closed curves}
For Teichm\"uller spaces of surfaces with fixed boundary lengths (with \emph{no} tines or flippers), the Fenchel--Nielsen coordinate based expression for the Weil--Petersson symplectic form tells us that for any essential simple closed curve $\gamma$,
\[
\Omega^\mathrm{WP}_\Sigma(\vec{b})
=
\Omega
\wedge
\mathrm{d}\ell_\gamma\wedge\mathrm{d}\tau_\gamma,
\]
where $\Omega$ is a differential whose restriction to
\[
\{X\in\calT_\Sigma(\vec{b})\mid \ell_\gamma(X)=x\}
\cong
\calT_{\Sigma\setminus\gamma}(\vec{b},x,x)
\]
is equal to $\Omega^\mathrm{WP}_{\Sigma\setminus\gamma}(\vec{b},x,x)$. For crowned surfaces, one naturally expects an analogous statement. Chekhov implicitly asserts this in the paragraph (prior to and) justifying \cite[Lemma~2.6]{chekhov2024} in \S~2.3; and for \emph{decorated} crowned surfaces, Goncharov--Sun have a related claim in the form of \cite[Proposition~2.24]{goncharov-sun} with a conjectural constant balancing the equality, which is a key ingredient in establishing the neck recursion in \cite[Theorem~1.7]{goncharov-sun}. To establish the neck recursion for flippered surfaces, we explicitly convert between shearing and Fenchel--Nielsen parameters in a subsurface containing some chosen simple closed curve $\gamma$, and show:

\begin{theorem}
\label{prop:WP-vol-form-scc}
Let $\gamma$ be an arbitrary essential (non-peripheral) simple closed curve on $\Sigma$, then
\begin{align}
\label{eq:vol-form-neck-curve}
\Omega^{\mathrm{WP}}_{\Sigma}(\vec{b};\vec{c})
= 
\Omega\wedge \mathrm{d}\ell_\gamma\wedge \mathrm{d}\tau_\gamma,
\end{align}
where $\Omega$ is a differential whose restriction to 
\[
\{X\in\calT_\Sigma(\vec{b};\vec{c})\mid \ell_\gamma(X)=x,\ \tau_\gamma(X)=y\}
\cong
\calT_{\Sigma\setminus\gamma}(\vec{b},x,x;\vec{c})
\]
is equal to $\Omega^\mathrm{WP}_{\Sigma\setminus\gamma}(\vec{b},x,x;\vec{c})$. 

Further, if $\gamma = \nu_i$ is a neck curve, then
\begin{align}
\label{eq:vol-form-neck-curve-fix-neck}
\Omega^{\mathrm{WP}}_{\Sigma}(\vec{b};\vec{c}\;|\vec{d})
&= 
\Omega'
\wedge \mathrm{d}\tau_\gamma,
\end{align}
where $\Omega'$ is a differential whose restriction to 
\[
\{X\in\calT_\Sigma(\vec{b};\vec{c}\;|\vec{d})\mid \ell_{\nu_i}(X)=d_i,\ \tau_{\nu_i}(X)=y\}
\cong
\calT_{\Sigma\setminus\gamma}(\vec{b},d_i;\vec{c}\;|\vec{d})
\]
is equal to $\Omega^{\mathrm{WP}}_{\Sigma\setminus\gamma}(\vec{b},d_i;\vec{c}\;|\vec{d})$. 

If $\gamma$ is not a neck curve, then
\begin{align}
\Omega^{\mathrm{WP}}_{\Sigma}(\vec{b};\vec{c}\;|\vec{d}) 
= \Omega'' \wedge \mathrm{d}\ell_\gamma\wedge \mathrm{d}\tau_\gamma,
\end{align}
where $\Omega''$ is a differential whose restriction to 
\[
\{X\in\calT_\Sigma(\vec{b};\vec{c}\;|\vec{d})\mid \ell_\gamma(X)=x,\ \tau_\gamma(X)=y\}
\cong
\calT_{\Sigma\setminus\gamma}(\vec{b},x,x;\vec{c}\;|\vec{d})
\]
is equal to $\Omega^{\mathrm{WP}}_{\Sigma\setminus\gamma}(\vec{b},x,x;\vec{c}\;|\vec{d})$. 
\end{theorem}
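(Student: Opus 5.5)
The plan is to reduce everything to the crowned case via flipper-clipping, and then to a local computation in shearing coordinates. By \cref{cor:flip-clip-homeo}, $\clip$ is a real-analytic diffeomorphism $\mathcal{T}^{\mathcal{A}}_\Sigma\to\mathcal{T}^\varnothing_{\dot\Sigma(\mathcal{A})}$. Since a simple closed curve $\gamma$ is disjoint from the flippers, $\ell_\gamma$ and the Fenchel--Nielsen twist $\tau_\gamma$ are unchanged under clipping, and clipping commutes with cutting along $\gamma$. The map $f_{\mathcal{A}}$ records $2\log\sinh(c_j/2)$, which by \cref{lem:shear-width-quadrilateral} is a shearing coordinate on the clipped surface: it is the shear of the diagonal of the quadrilateral formed by the two new tines and the adjacent arches. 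So after fixing an ideal triangulation of $\dot\Sigma(\mathcal{A})$ containing these diagonals, dividing by $\mu_{\mathcal{A}}$ amounts to deleting the factors $\mathrm{d}s_j$ for those diagonals. Thus it suffices to prove, for a crowned surface with fixed cuff lengths, that
\[
\bigwedge_i \mathrm{d}s_i=\pm\,\Omega^{\mathrm{WP},\varnothing}_{\Sigma\setminus\gamma}\wedge \mathrm{d}\ell_\gamma\wedge\mathrm{d}\tau_\gamma\wedge\mathrm{d}b_{\bullet},
\]
in a compatible sense.

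For that, I will choose an ideal triangulation adapted to $\gamma$. Cut $\Sigma$ along $\gamma$ and pick ideal triangulations on each side in which the new punctures $p_{m+1},p_{m+2}$ are geometrised as funnels with spiralling arcs. Then glue by adding the arcs crossing the cylinder around $\gamma$. Concretely, I will use a local model: a subsurface containing $\gamma$ (a four-holed sphere or one-holed torus neighbourhood, or an annulus if $\gamma$ is a neck next to a crown). In this model the shears are explicit, $\ell_\gamma$ equals (up to sign) the sum of the shears on arcs incident to $\gamma$ on one side, and $\tau_\gamma$ is a linear combination of shears plus a function of the remaining coordinates. The affine change of variables then has constant Jacobian $\pm1$.

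The key check is that the shears of $\Sigma$ split into three groups: shears of $\Sigma\setminus\gamma$ with the spiralling convention, with $\ell_\gamma$ appearing as the cuff lengths $x,x$; one extra combination giving $\tau_\gamma$; and one giving $\ell_\gamma$. Because the cuff-length constraints on $\Sigma\setminus\gamma$ are linear in shears, the division by $\mathrm{d}b_{m+1}\,\mathrm{d}b_{m+2}$ on the cut surface matches the factor $\mathrm{d}\ell_\gamma$ once the two constraints are identified. This yields \eqref{eq:vol-form-neck-curve}. The fixed-neck statements then follow by further dividing both sides by $\prod_k \mathrm{d}d_k$. If $\gamma=\nu_i$, then $\mathrm{d}\ell_\gamma=\mathrm{d}d_i$ is absorbed, which leaves $\Omega'\wedge\mathrm{d}\tau_\gamma$ and the identification with $\mathcal{T}_{\Sigma\setminus\gamma}(\vec b,d_i;\vec c\,|\vec d)$. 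Otherwise the neck functions live on $\Sigma\setminus\gamma$ and commute with the splitting. Throughout, independence of the triangulation comes from the flip invariance remark, so any adapted triangulation may be used.

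I expect the main obstacle to be the explicit identification of $\tau_\gamma$ with a linear combination of shears. The Fenchel--Nielsen twist is defined via perpendiculars, whereas shears depend on spiralling directions, so the relation picks up nonlinear terms in the other coordinates. I would first try to show that these terms depend only on variables already present in $\Omega$ (the shears of the cut surface and $\ell_\gamma$). Their differentials then wedge to zero against $\Omega\wedge\mathrm{d}\ell_\gamma$, leaving a unimodular triangular Jacobian. Failing that, I would verify the identity directly in the annular model $\mathbb{A}_1\cup\mathbb{A}_1$ and propagate it by locality.
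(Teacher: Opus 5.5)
Your outer frame matches the paper: reduce via $\mathrm{Clip}$ (which preserves $\ell_\gamma,\tau_\gamma$), work with shears near $\gamma$, and obtain the neck statements by dividing further by $\mathrm{d}d_i$. The gap is the core step, the splitting of the crowned form.

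Your main mechanism is false for ideal-triangulation shears on $\Sigma$. That mechanism is: $\ell_\gamma$ is a sum of shears, $\tau_\gamma$ is a linear combination of shears plus a function of the rest, hence the change of variables is affine with Jacobian $\pm1$. A cuff length is a sum of shears only on the \emph{cut} surface, where arcs spiral into $\gamma',\gamma''$. On $\Sigma$, every arc that meets $\gamma$ crosses it, and the dependence is transcendental.

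Concretely, take (as the paper does) an ideal triangulation of $\dot\Sigma(\mathcal{A})$ in which exactly two triangles meet $\gamma$. Their union is then an embedded $\mathbb{A}_{1,1}$ with crossing arcs $\mu_0,\mu_1$. Normalize $\tau=0$ when $\mu_0\perp\gamma$. The lambda lengths are $\lambda_0=2\cosh\frac{\tau}{2}/\tanh\frac{\ell}{2}$ and $\lambda_1=2\cosh\frac{\ell+\tau}{2}/\tanh\frac{\ell}{2}$, with arches $2\cosh\frac{\ell}{2}$. They give
\[
s_0=2\log\cosh\tfrac{\ell+\tau}{2}-2\log\sinh\tfrac{\ell}{2},\qquad s_1=2\log\sinh\tfrac{\ell}{2}-2\log\cosh\tfrac{\tau}{2}.
\]
Neither $\ell$ nor $\tau$ is affine in $(s_0,s_1)$; for instance $\cosh\frac{\tau}{2}=e^{-s_1/2}\sinh\frac{\ell}{2}$. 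The identity $\mathrm{d}s_0\wedge\mathrm{d}s_1=\pm\mathrm{d}\ell\wedge\mathrm{d}\tau$ does not hold for structural reasons. It holds because the Jacobian $\bigl(\tanh\frac{\tau}{2}-\tanh\frac{\ell+\tau}{2}-\tanh\frac{\ell}{2}\tanh\frac{\tau}{2}\tanh\frac{\ell+\tau}{2}\bigr)/\tanh\frac{\ell}{2}$ collapses to $-1$ by the $\tanh$ addition formula. This explicit computation is the missing idea. Your fallback (``verify in the annular model'') names the right place but gives no way to carry it out.

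The ``propagate by locality'' step also needs two concrete inputs that your proposal does not supply.
First, such a triangulation must exist, so that $\Omega^{\mathrm{WP},\varnothing}_{\dot\Sigma(\mathcal{A})}=\pm\Omega\wedge\mathrm{d}s_0\wedge\mathrm{d}s_1$ with $\Omega=\bigwedge_{\mu\neq\mu_0,\mu_1}\mathrm{d}s_\mu$.
Second, you need the comparison with the cut surface, whose triangulation replaces $\mu_0,\mu_1$ by spiralling arcs $\mu_{0,L},\mu_{0,R}$ with shears $\ell_{\gamma'},\ell_{\gamma''}$. All other shears agree, except those of the two annulus boundary arcs $\mu_L,\mu_R$. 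These shift by a function of $(\ell_\gamma,\tau_\gamma)$ alone (a limit of cross-ratios as the twist tends to infinity). Hence their differentials agree on the slice $\{\ell_\gamma=x,\tau_\gamma=y\}$. This is where your ``triangular'' intuition is right, but it sits on top of the nonlinear unimodular $2\times2$ block above.

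Separately, your reduction claim fails in general. You claim one can pick a triangulation containing all strap diagonals, so that dividing by $\mu_{\mathcal{A}}$ just deletes factors $\mathrm{d}s_j$.
For adjacent flippers $j,j+1$ on one crown, the closing sides $\delta_j,\delta_{j+1}$ of the two quadrilaterals have interleaved endpoints and must cross. Already for $\dot{\mathbb{D}}_3(1,2,3)$, no two strap quantities are simultaneously shears.
For a one-tined crown the quadrilateral is not embedded; in the two-arc chart, $\sinh^2(c/2)=X_1+X_2^{-1}+X_1X_2^{-1}$.
The step is also unnecessary. As in the paper, divide $\mathrm{Clip}^*(\Omega\wedge\mathrm{d}\ell_\gamma\wedge\mathrm{d}\tau_\gamma)$ by $\mu_{\mathcal{A}}$ directly, using only that clipping preserves $\ell_\gamma$ and $\tau_\gamma$.
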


\begin{proof}
Let $\mathcal{A}$ denote the set of indices for which $c_i$ is strictly positive. The Weil--Petersson form $\Omega^{\mathrm{WP}}_\Sigma(\vec{b};\vec{c})$ on $\mathcal{T}_\Sigma(\vec{b},\vec{c})$ is given by
\[
\mathrm{Clip}^*
(\Omega^{\mathrm{WP},\varnothing}_{\dot{\Sigma}(\mathcal{A})})
/\omega,
\]
where
\[
\omega:=\mathrm{d}b_1\wedge\ldots\wedge\mathrm{d}b_m\wedge\mathrm{d}\log(\sinh^2(c_1/2))\wedge\cdots\wedge\mathrm{d}\log(\sinh^2(c_l/2)).
\]
As a small abuse of notation, let $\gamma$ on $\Sigma$ denote the corresponding curve on $\dot{\Sigma}(\mathcal{A})$ (\cref{not:doubling}). By \cref{app:conversion}), we have
\[
\Omega^{\mathrm{WP},\varnothing}_{\dot{\Sigma}(\mathcal{A})}
=
\Omega\wedge\mathrm{d}\ell_\gamma\wedge\mathrm{d}\tau_\gamma,
\]
where $\Omega$ restricts to $\Omega^{\mathrm{WP}}_{\dot{\Sigma}(\mathcal{A})\setminus\gamma}/(\mathrm{d}\ell_{\gamma'}\wedge\mathrm{d}\ell_{\gamma''})$ on 
\[
\{X\in\calT_{\dot{\Sigma}(\mathcal{A})\setminus\gamma}\mid \ell_{\gamma'}(X)=\ell_{\gamma''}(X)=x\}.
\]
The result then follows because the clipping map preserves the length and twist of $\gamma$, and 
\[
\Omega^{\mathrm{WP}}_{\dot{\Sigma}(\mathcal{A})}(\vec{b},x,x;\vec{c})
=
\Omega^{\mathrm{WP}}_{\dot{\Sigma}(\mathcal{A})\setminus\gamma}/(\mathrm{d}\ell_{\gamma'}\wedge\mathrm{d}\ell_{\gamma''}\wedge\omega).
\]

To obtain the desired formula for when we fix neck lengths to being length $\vec{d}$, we first consider the case when $\gamma=\nu_i$ and divide by $\mathrm{d}\ell_\gamma=\mathrm{d}\ell_{\nu_i}$, use the fact that $\mathrm{d}\ell_\gamma\wedge\mathrm{d}\tau_\gamma
/\mathrm{d}\ell_\gamma=\mathrm{d}\tau_\gamma$ and replace $x$ with $d_i$. The statement when $\gamma$ is not a neck curve follows immediately from \cref{eq:vol-form-neck-curve}.
\end{proof}

\subsubsection{Cutting along essential arcs}

\begin{theorem}[WP volume under Nielsen cuts]
\label{prop:top-rec-vol-form-orthogeod}
Let $\eta$ be an essential arc on $\Sigma$. Then, for $\vec{c}_\eta = \left(c_1,\dotsc, c_n,\ell_\eta,\ell_\eta\right)$ 
\begin{align}
\label{eq:nielsen-cut-vol-form}
\Omega^{\mathrm{WP}}_{\Sigma}(\vec{b};\vec{c}) 
&= 
(\|\eta)^{*}\left(\Omega^{\mathrm{WP}}_{\Sigma\setminus\eta}(\vec{b};\vec{c}_\eta) \right)\wedge \coth(\ell_\eta/2)\;\mathrm{d}\ell_\eta,\text{ on $\mathcal{T}_\Sigma(\vec{b};\vec{c})$, and}
\\
\Omega^{\mathrm{WP}}_{\Sigma}(\vec{b};\vec{c}\;|\vec{d}) 
&=
(\|\eta)^{*}\left(\Omega^{\mathrm{WP}}_{\Sigma\setminus\eta}(\vec{b};\vec{c}_\eta|\vec{d}) \right)\wedge \coth(\ell_\eta/2)\;\mathrm{d}\ell_\eta,\text{ on $\mathcal{T}_\Sigma(\vec{b};\vec{c}\;|\vec{d})$.}
\end{align}
\end{theorem}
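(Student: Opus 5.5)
We will work entirely through flipper-clipping and shearing coordinates, reducing the statement to a computation on a single ideal quadrilateral. Let $\mathcal{A}$ be the set of indices with $c_j>0$. Then $\Omega^{\mathrm{WP}}_\Sigma(\vec b;\vec c)$ is $\clip^*\Omega^{\mathrm{WP},\varnothing}_{\dot\Sigma(\mathcal{A})}$ divided by $\mu_{\mathcal{A}}$, i.e.\ by the wedge of $\mathrm{d}b_i$ and $\mathrm{d}\log\sinh^2(c_j/2)$. On the cut side, $X\|\eta$ has two new straps of length $\ell_\eta$. Hence $\Omega^{\mathrm{WP}}_{\Sigma\setminus\eta}(\vec b;\vec c_\eta)$ is $\clip^*\Omega^{\mathrm{WP},\varnothing}_{\dot{(\Sigma\setminus\eta)}}$ divided by the same $\mu_{\mathcal{A}}$ and additionally by $\mathrm{d}\log\sinh^2(\ell_\eta/2)\wedge\mathrm{d}\log\sinh^2(\ell_\eta/2)$, one factor for each new strap. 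We will compare these two quotients.

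\textbf{Choice of triangulation.} On $\clip(X\|\eta)$ each new flipper becomes a pair of tines, and the essential orthogeodesic $\eta$ gives an ideal quadrilateral $Q$. The vertices of $Q$ are the four tine-ends adjacent to the two arches $\alpha_i,\alpha_j$ met by $\eta$, and the arcs separating them are essentially the two new straps. Equivalently, $\clip(X)$ is obtained by gluing the two clipped sides of $\clip(X\|\eta)$ along $Q$. We choose an ideal triangulation $\triangle$ of $\dot\Sigma(\mathcal{A})$ containing the diagonal $e$ of $Q$ dual to $\eta$, together with the two sides of $Q$ that become the arcs through the new tines. By \cref{lem:shear-width-quadrilateral}, the shear on $e$ is $s_e=\log\sinh^2(\ell_\eta/2)$. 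The remaining arcs of $\triangle$ restrict to an ideal triangulation $\triangle'$ of $\dot{(\Sigma\setminus\eta)}$, up to the two extra arcs cutting off the new tines.

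\textbf{Identifying the shears.} The main bookkeeping is to check that the shears of $\triangle'$ on the cut surface agree with those of $\triangle$, except on the edges bounding $Q$. On those edges they differ by a function of $s_e$ alone, in the manner of \cref{prop:flips}, because gluing in $Q$ only modifies the shears of edges adjacent to the triangles of $Q$. The new arcs isolating the tines of each new flipper on $\dot{(\Sigma\setminus\eta)}$ carry shears equal to $\log\sinh^2(\ell_\eta/2)$, again by \cref{lem:shear-width-quadrilateral} applied to the clipped flipper of strap length $\ell_\eta$. We then expect the top form to factor as
\[
\bigwedge_{\triangle}\mathrm{d}s=\pm\bigwedge_{\triangle'}\mathrm{d}s'\,\Big/\,\bigl(\mathrm{d}s_{\eta}'\wedge \mathrm{d}s_\eta''\bigr)\wedge \mathrm{d}s_e,
\]
where $s_\eta',s_\eta''$ are the two new-strap shears. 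This works because the correction terms are exact triangular changes of variables, and both new-strap shears equal $s_e$ on the fixed-$\ell_\eta$ locus. This triangularity check, ensuring the Jacobian is exactly $1$ and that the tine-shears come out as above, is the step we expect to be the main obstacle.

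\textbf{Conclusion.} Given this factorization, we divide both sides by $\mu_{\mathcal{A}}$, which commutes with everything above. This yields $\Omega^{\mathrm{WP}}_\Sigma(\vec b;\vec c)=(\|\eta)^*\Omega^{\mathrm{WP}}_{\Sigma\setminus\eta}(\vec b;\vec c_\eta)\wedge\mathrm{d}s_e$. Finally,
\[
\mathrm{d}s_e=\mathrm{d}\log\sinh^2(\ell_\eta/2)=\coth(\ell_\eta/2)\,\mathrm{d}\ell_\eta.
\]
For the fixed-neck version, the neck lengths are functions on $\mathcal{T}_\Sigma(\vec b;\vec c)$ that agree with the neck lengths of $X\|\eta$ under the Nielsen cut, since $\eta$ is disjoint from the necks. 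We may therefore divide both sides by $\mu_l$ and the identity persists.
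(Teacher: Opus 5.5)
\textbf{Distinct arches.} When $\eta$ joins two distinct arches $\alpha_i\neq\alpha_j$, your plan is the paper's. You triangulate $\dot\Sigma(\mathcal{A})$ using a diagonal $e$ of the embedded quadrilateral $Q$ together with its two non-arch sides $\delta_1,\delta_2$. You read off $s_e=\log\sinh^2(\ell_\eta/2)$ and the two new-strap shears from \cref{lem:shear-width-quadrilateral}, and then divide.

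The step you flag as the main obstacle is actually easy. The Nielsen extension of each severed ray of $\alpha_i,\alpha_j$ is the same complete geodesic as before. So after clipping, each half of $Q$ develops onto $Q$ itself, with $\delta_2$ (resp.\ $\delta_1$) becoming the clipped arch. Hence every shear, including those on $\delta_1,\delta_2$, is literally unchanged, and $s_{\gamma_1}=s_{\gamma_2}=s_e$ on the image of $\|\eta$. No flip-type corrections arise.

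A small correction: on $\calT_{\Sigma\setminus\eta}$ you must divide by $\mathrm{d}\log\sinh^2(\ell_{\sigma_{n+1}}/2)\wedge\mathrm{d}\log\sinh^2(\ell_{\sigma_{n+2}}/2)$. These are two independent functions. The wedge-square of a single function, as you wrote it, vanishes.

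\textbf{The gap: both endpoints on one arch.} Your argument does not cover the case where both endpoints of $\eta$ lie on the same arch $\alpha_i$. The statement includes this case, and the paper needs it: for instance $\mathbb{A}_2=\mathbb{A}_1\cup\mathbb{D}_3$ in \cref{prop:prod-formula-con-func}, $\mathbb{A}_3=\mathbb{A}_1\cup\mathbb{D}_4$ in \cref{prop:vol-3-crown-formula}, and generally any disk recursion that cuts off all tines of a crown.

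In this case your construction does not exist. There are no ``four tine-ends''. The ideal quadrilateral spanned by two lifts $\tilde\alpha_i,\tilde\alpha_i^*$ joined by $\tilde\eta$ is only immersed in $\Sigma$: both of its triangles would have the single boundary arch $\alpha_i$ as a side, which no ideal triangulation allows. Indeed $\eta$ starts and ends in the unique triangle containing $\alpha_i$, so it must cross the triangulation at least twice. It is never dual to a single edge, and \cref{lem:shear-width-quadrilateral} no longer identifies $\log\sinh^2(\ell_\eta/2)$ with one shear coordinate.

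The paper handles this case with a separate argument:
\begin{enumerate}
\item It works in a neighbourhood $\mathcal{U}$ of $\eta\cup\alpha_i$, using the arcs $\gamma,\delta$ of \cref{fig:vol-form-rec3}.
\item It writes $\log\sinh^2(\ell_\eta/2)=g(s_1,\dots,s_k,s_\gamma)$ for a function $g$ not involving $s_\delta$.
\item After cutting, one new strap is governed by the quadrilateral lemma through $s_{\gamma_1}$. The other satisfies $\log\sinh^2(\ell_{\sigma_{n+2}}/2)=g(s_1,\dots,s_k,s_{\gamma_2})$ with the same $g$, and $s_{\gamma_2}$ pulls back to $s_\gamma$.
\item It divides by $\mathrm{d}s_{\gamma_1}\wedge\mathrm{d}g\wedge\mu_{\mathcal{A}}$ and uses $\mathrm{d}g=\coth(\ell_\eta/2)\,\mathrm{d}\ell_\eta$; this takes the place of your $\mathrm{d}s_e$.
\end{enumerate}
You need this second argument, or an equivalent one, to cover all essential arcs.
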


\begin{proof}
We divide the proof into two cases, depending on whether the arc $\eta$ joins distinct arches or has the endpoints on the same arch.\medskip

\noindent\textsc{Distinct arch case.} Let $\alpha_i\neq\alpha_j$ be the arches of $\Sigma$ that $\eta$ meets at its endpoints. Since $\eta$ is embedded and $\alpha_i,\alpha_j$ are disjoint (since they are distinct by assumption), there is a unique (up to isotopy) embedded ideal quadrilateral $Q \subseteq \Sigma$ which contains $\alpha_i,\alpha_j$ and has $\eta$ as an essential arc.

\begin{figure}[H]
    \centering
    \centering
\resizebox{1\textwidth}{!}{%
\begin{circuitikz}
\tikzstyle{every node}=[font=\Huge]

\draw[line width=3pt,short] (-11,26) .. controls (-4.75,20.25) and (4.75,20.25) .. (11,26);

\draw [blue][line width=3pt,short] (-11,26) .. controls (-6.25,20.5) and (-6.25,10.75) .. (-11,5.5);

\draw [blue][line width=3pt,short] (11,26) .. controls (6.25,20.5) and (6.25,10.75) .. (11,5.5);

\draw [line width=3pt] (-11,5.5) .. controls (-4.75,11) and (4.75,11) .. (11,5.5);

\draw [line width=3pt,short] (0,9.65) -- (0,21.65);

\draw [line width=3pt,short] (-3.03+3,21.5-0.3) -- (-2.45+3,21.5-0.3);

\draw [line width=3pt,short] (-2.5+3,21.5-0.3) -- (-2.5+3,22-0.3);

\draw [line width=3pt,short] (-3.03+3,21.5-0.3-11) -- (-2.45+3,21.5-0.3-11);

\draw [line width=3pt,short] (-2.5+3,21.5-0.3-11.5) -- (-2.5+3,22-0.3-11.5);

\node [font=\fontsize{50}{50}] at (-5,23.5) {$\alpha_i$};

\node [font=\fontsize{50}{50}] at (-5,7.5) {$\alpha_{j}$};

\node [font=\fontsize{50}{50}] at (-1,14) {$\eta$};

\node [font=\fontsize{50}{50}] at (-3,17) {$\gamma$};

\node [font=\fontsize{50}{50}] at (4,17) {$Q$};

\node [font=\fontsize{50}{50}] at (-9,15.5) {$\delta_1$};

\node [font=\fontsize{50}{50}] at (9,15.5) {$\delta_2$};

\draw[blue] [line width=3pt,short] (-11,26) -- (11,5.5);

\end{circuitikz}
}%
    \caption{Embedded ideal quadrilateral $Q$ in $\Sigma$.}
    \label{fig:vol-form-rec1}
\end{figure}

Choose $\mathcal{A}$ so that $\mathcal{T}_\Sigma(\vec{b};\vec{c}) \subset \mathcal{T}_\Sigma^\mathcal{A}$. Let $\gamma$ be one of the diagonals of $Q$ and let $\delta_1,\delta_2$ be the sides of $Q$ other than $\alpha_i,\alpha_j$, see \cref{fig:vol-form-rec1}. Complete $\{\gamma,\delta_1,\delta_2\}$ to an ideal triangulation of $\dot{\Sigma}(\mathcal{A})$, and let $\{s_1,\dotsc,s_k,s_{\delta_1},s_{\delta_2},s_\gamma\}$ be the associated shearing coordinates on $\mathcal{T}_{\dot{\Sigma}(\mathcal{A})}^{\varnothing}$. By \cref{eq:vol-form-teich-fixed-hol}, we have (up to sign):
\begin{equation}
\label{eq:vol-step1}
\begin{split}
\Omega^{\mathrm{WP}}_{\Sigma}(\vec{b};\vec{c}) 
&=
\pm\left(\left(\bigwedge_{i=1}^k \mathrm{d}s_i\right) \wedge \mathrm{d}s_{\delta_1} \wedge \mathrm{d}s_{\delta_2} \wedge \mathrm{d}s_{\gamma}\right)
\Big/\mu_{\mathcal{A}} 
\\&
= 
\pm\left[\left(\left(\bigwedge_{i=1}^k \mathrm{d}s_i\right) \wedge \mathrm{d}s_{\delta_1} \wedge \mathrm{d}s_{\delta_2} \right)
\Big/\mu_{\mathcal{A}} \right]\wedge \mathrm{d}s_{\gamma}.
\end{split}
\end{equation}
\cref{eq:vol-step1} holds because 
\[
f_{\mathcal{A}}^*\mu_\mathcal{A}\wedge \left[\left(\left(\bigwedge_{i=1}^k \mathrm{d}s_i\right) \wedge \mathrm{d}s_{\delta_1} \wedge \mathrm{d}s_{\delta_2} \right)
\Big/\mu_{\mathcal{A}} \right]\wedge \mathrm{d}s_{\gamma}
=
\left(\left(\bigwedge_{i=1}^k \mathrm{d}s_i\right) \wedge \mathrm{d}s_{\delta_1} \wedge \mathrm{d}s_{\delta_2} \wedge \mathrm{d}s_{\gamma}\right),
\]
but $\Omega^{\mathrm{WP}}_{\Sigma}(\vec{b};\vec{c})$ is the unique form on $\mathcal{T}_\Sigma(\vec{b};\vec{c})$ satisfying this property.

\begin{figure}[H]
    \centering
    \centering
\resizebox{1\textwidth}{!}{%
\begin{circuitikz}
\tikzstyle{every node}=[font=\Huge]

\draw[line width=3pt,short] (-11,26) .. controls (-4.75,20.25) and (4.75,20.25) .. (11,26);

\draw [blue][line width=3pt,short] (-11,26) .. controls (-6.25,20.5) and (-6.25,10.75) .. (-11,5.5);

\draw [line width=3pt,short] (11,26) .. controls (6.25,20.5) and (6.25,10.75) .. (11,5.5);

\draw [line width=3pt] (-11,5.5) .. controls (-4.75,11) and (4.75,11) .. (11,5.5);

\draw [line width=3pt,short] (0,9.65) -- (0,21.65);

\draw [line width=3pt,short] (-3.03+3,21.5-0.3) -- (-2.45+3,21.5-0.3);

\draw [line width=3pt,short] (-2.5+3,21.5-0.3) -- (-2.5+3,22-0.3);

\draw [line width=3pt,short] (-3.03+3,21.5-0.3-11) -- (-2.45+3,21.5-0.3-11);

\draw [line width=3pt,short] (-2.5+3,21.5-0.3-11.5) -- (-2.5+3,22-0.3-11.5);



\node [font=\fontsize{50}{50}] at (-2,14) {$\sigma_{n+1}$};

\node [font=\fontsize{50}{50}] at (-3,17) {$\gamma_1$};


\node [font=\fontsize{50}{50}] at (-9,15.5) {$\delta_1$};


\draw[blue] [line width=3pt,short] (-11,26) -- (11,5.5);


\draw[line width=3pt,short] (-11+25,26) .. controls (-4.75+25,20.25) and (4.75+25,20.25) .. (11+25,26);

\draw [line width=3pt,short] (-11+25,26) .. controls (-6.25+25,20.5) and (-6.25+25,10.75) .. (-11+25,5.5);

\draw [blue][line width=3pt,short] (11+25,26) .. controls (6.25+25,20.5) and (6.25+25,10.75) .. (11+25,5.5);

\draw [line width=3pt] (-11+25,5.5) .. controls (-4.75+25,11) and (4.75+25,11) .. (11+25,5.5);

\draw [line width=3pt,short] (0+25,9.65) -- (0+25,21.65);

\draw [line width=3pt,short] (-3.03+3+25,21.5-0.3) -- (-2.45+3+25,21.5-0.3);

\draw [line width=3pt,short] (-2.5+3+25,21.5-0.3) -- (-2.5+3+25,22-0.3);

\draw [line width=3pt,short] (-3.03+3+25,21.5-0.3-11) -- (-2.45+3+25,21.5-0.3-11);

\draw [line width=3pt,short] (-2.5+3+25,21.5-0.3-11.5) -- (-2.5+3+25,22-0.3-11.5);



\node [font=\fontsize{50}{50}] at (-1+25-1,14) {$\sigma_{n+2}$};

\node [font=\fontsize{50}{50}] at (-3+25,17) {$\gamma_2$};



\node [font=\fontsize{50}{50}] at (9+25,15.5) {$\delta_2$};

\draw[blue] [line width=3pt,short] (-11+25,26) -- (11+25,5.5);

\end{circuitikz}
}%
    \caption{Ideal quadrilateral $Q$ (as a part of $\dot{\Sigma\setminus\eta}$) after Nielsen cutting $X\|\eta$ and flipper clipping $\text{Clip}(X\|\eta)$.}
    \label{fig:vol-form-rec2}
\end{figure}

Next, let $\mathcal{A}_\eta = \mathcal{A}\sqcup\{n+1,n+2\}$ (see \cref{not:punc-label-cuts}). Then $\mathcal{T}_{\Sigma\setminus \eta}(\vec{b};\vec{c}_\eta)\subset \mathcal{T}_{\Sigma\setminus\eta}^{\mathcal{A}_\eta}$. Let $\gamma_1,\gamma_2$ be ideal arcs on $\dt{(\Sigma\setminus\eta)}(\mathcal{A_\eta})$ so that $\{s_1,\dotsc,s_k,s_{\delta_1},s_{\delta_2},s_{\gamma_1},s_{\gamma_2}\}$ are shearing coordinates on $\mathcal{T}_{\dt{(\Sigma\setminus\eta)}(\mathcal{A_\eta})}^{\varnothing}$, see \cref{fig:vol-form-rec2} (we slightly abuse the notation here by using the same notation for shears of the ``same'' ideal geodesics under $\|\eta$). By \cref{eq:vol-form-teich-fixed-hol},
\[
\Omega^{\mathrm{WP}}_{\Sigma\setminus\eta}(\vec{b};\vec{c}_\eta)
= \pm \left(\left(\bigwedge_{i=1}^k \mathrm{d}s_i\right) \wedge \mathrm{d}s_{\delta_1} \wedge \mathrm{d}s_{\delta_2} \wedge \mathrm{d}s_{\gamma_1}\wedge \mathrm{d}s_{\gamma_2}\right)\Big/\mu_{\mathcal{A_\eta}}.
\]
By \cref{lem:shear-width-quadrilateral},
\[
\mu_{\mathcal{A}_{\eta}} = \pm \mathrm{d}s_{\gamma_1}\wedge  \mathrm{d}s_{\gamma_2} \wedge  \mu_{\mathcal{A}},
\]
so dividing by $\mu_{\mathcal{A}}$ is tantamount to wedging by $\mathrm{d}s_{\gamma_1}\wedge \mathrm{d}s_{\gamma_2}$ and then dividing by $\mu_{\mathcal{A_\eta}}$, and hence
\[
\Omega^{\mathrm{WP}}_{\Sigma\setminus\eta}(\vec{b};\vec{c}_\eta)
= \pm \left(\left(\bigwedge_{i=1}^k \mathrm{d}s_i\right) \wedge \mathrm{d}s_{\delta_1} \wedge \mathrm{d}s_{\delta_2} \right)\Big/\mu_{\mathcal{A}}.
\]
Substituting the above identity into \cref{eq:vol-step1} and invoking \cref{lem:shear-width-quadrilateral} to obtain
\[
\mathrm{d}s_{\gamma} = \coth(\ell_\eta/2)\,\mathrm{d}\ell_\eta,
\]
we get $\Omega^{\mathrm{WP}}_{\Sigma}(\vec{b};\vec{c}) = \pm(\|\eta)^{*}(\Omega^{\mathrm{WP}}_{\Sigma\setminus\eta}(\vec{b};\vec{c}_\eta)) \wedge \coth(\ell_\eta/2)\;\mathrm{d}\ell_\eta$.
\vspace{1em}

\noindent\textsc{Same arch case.} Let $\alpha_i$ be the arch of $\Sigma$ that $\eta$ meets at its endpoints. Consider a small neighborhood $\mathcal{U}$ of $\eta\cup\alpha_i$, and let $\gamma, \delta \subset \mathcal{U} \subset \Sigma$ 
be ideal arcs as in \cref{fig:vol-form-rec3}. Choose $\mathcal{A}$ so that $\mathcal{T}_\Sigma(\vec{b};\vec{c}) \subset \mathcal{T}_\Sigma^\mathcal{A}$, and complete $\{\gamma
,\delta
\}$ to an ideal triangulation of $\dot{\Sigma}(\mathcal{A})$; let $\{s_1,\dotsc,s_k,s_{\delta},s_{\gamma}
\}$ be the associated shearing coordinates on  $\mathcal{T}_{\dot{\Sigma}(\mathcal{A})}^{\varnothing}$.

\begin{figure}[H]
    \centering
    \begin{tikzpicture}[scale=1.5]

\draw[rotate=90][thick] (0,0) arc (-90:0:1) ;  

\draw[rotate=90][thick] (1,1) .. controls (1,2) and (0,3) .. (1.5,3.5);

\draw[rotate=90][thick] (0,0) arc (-90:-180:1) ;  

\draw[rotate=90][thick] (-1,1) .. controls (-1,2) and (0,3) .. (-1.5,3.5);

\draw[rotate=90][thick,blue] (1.5,3.5) 
arc (-68:-112:4);

\draw[rotate=90][thick] (0.66,2.6) -- (-0.66,2.6);


\draw [rotate=90][thick,blue] 
(1.5,3.5) .. controls (1,3.33) .. (0,2.6);

\draw [rotate=90][thick,blue] 
(0,2.6) arc (180-55:180-0:2) coordinate (A);

\draw [rotate=90][thick,blue] 
(A) arc (180-0:180-(-50):0.8) coordinate (B);

\draw [rotate=90][thick,blue] 
(B) arc (180-(-50):5+180-(-175):0.9) coordinate (C);

\draw [rotate=90][thick,blue] 
(C) .. controls (0.9,1.5) and (0.62,2.3) .. (0.62,2.6);

\draw [rotate=90][thick,blue] 
(0.62,2.6) .. controls (0.66,3) and (1,3.4) .. (1.5,3.5);

\draw[rotate=90][thick, densely dotted] (0,1.2) circle (0.3);

\node [font=\fontsize{12}{12}] at (-1,1.2) {$\alpha_i$};

\node [font=\fontsize{12}{12}] at (-2.4,0.3) {$\eta$};

\node [font=\fontsize{12}{12}] at (-1,0.6) {$\mathcal{U}$};

\node [font=\fontsize{12}{12}] at (-3.4,0.05) {$\delta$};

\node [font=\fontsize{12}{12}] at (-0.3,0) {$\gamma$};

\draw[rotate=90](0.66,2.6) rectangle (0.66-0.05,2.6-0.05);

\draw[rotate=90] (-0.66,2.6) rectangle (-0.66+0.05,2.6-0.05);

\end{tikzpicture}
    \caption{A neighborhood $\mathcal{U}$ of $\alpha_i$ and $\eta$.}
    \label{fig:vol-form-rec3}
\end{figure}
Then by \cref{eq:vol-form-teich-fixed-hol},
\begin{align}
\Omega^{\mathrm{WP}}_{\Sigma}(\vec{b};\vec{c}) 
= \pm 
\left(\left(\bigwedge_{i=1}^k \mathrm{d}s_i\right) \wedge \mathrm{d}s_{\delta}\wedge \mathrm{d}s_{\gamma}\right)
\Big/\mu_{\mathcal{A}}.\label{eq:volstep2}
\end{align}
Suppose that $f_\eta = g(s_1,\dotsc,s_k,s_{\gamma})$ for some $g\in C^\infty\left(\mathcal{T}_{\dot{\Sigma}(\mathcal{A})}^{\varnothing}\right)$.

Let $\mathcal{A}_\eta = \mathcal{A}\sqcup\{n+1,n+2\}$, then $\mathcal{T}_{\Sigma\setminus \eta}(\vec{b};\vec{c}_\eta)\subset \mathcal{T}_{\Sigma\setminus\eta}^{\mathcal{A}_\eta}$. Let $\gamma_1,\gamma_2$ be the ideal arcs on $\dt{(\Sigma\setminus\eta)}(\mathcal{A_\eta})$ as in \cref{fig:vol-form-rec4}, then $\{s_1,\dotsc,s_k,s_{\delta},s_{\gamma_1},s_{\gamma_2}\}$ are shearing coordinates on $\mathcal{T}_{\dt{(\Sigma\setminus\eta)}(\mathcal{A_\eta})}^{\varnothing}$. 

\begin{figure}[H]
    \centering
    \begin{tikzpicture}[scale=1.5]

\draw[rotate=90][thick] (0,0-3) arc (-90:0:1) ;  

\draw[rotate=90][thick] (1,1-3) .. controls (1,2-3) and (0,3-3) .. (1.5,3.5-3);

\draw[rotate=90][thick] (0,0-3) arc (-90:-180:1) ;  

\draw[rotate=90][thick] (-1,1-3) .. controls (-1,2-3) and (0,3-3) .. (-1.5,3.5-3);

\draw[rotate=90][thick] (1.5,3.5-3) 
arc (-68:-112:4);

\draw[rotate=90][thick] (0.66,2.6-3) -- (-0.66,2.6-3);


\draw [rotate=90][thick,blue] 
(1.5,3.5-3) .. controls (1,3.33-3) .. (0,2.6-3);

\draw [rotate=90][thick,blue] 
(0,2.6-3) arc (180-55:180-0:2) coordinate (A);

\draw [rotate=90][thick,blue] 
(A) arc (180-0:180-(-30):0.8) coordinate (B);

\draw [rotate=90][thick,blue] 
(B) arc (180-(-30):5+180-(-175):0.9) coordinate (C);

\draw [rotate=90][thick,blue] 
(C) .. controls (0.9,1.5-3) and (0.62,2.3-3) .. (0.62,2.6-3);

\draw [rotate=90][thick,blue] 
(0.62,2.6-3) .. controls (0.66,3-3) and (1,3.4-3) .. (1.5,3.5-3);

\draw[rotate=90][thick, densely dotted] (0,1.2-3) circle (0.3);


\node [font=\fontsize{7}{7}] at (-2.85+4+0.5-1,0.35) {$\sigma_{n+2}$};


\node [font=\fontsize{12}{12}] at (-0.3+4-1,0) {$\gamma_2$};

\draw[rotate=90](0.66,2.6-3) rectangle (0.66-0.05,2.6-0.05-3);

\draw[rotate=90] (-0.66,2.6-3) rectangle (-0.66+0.05,2.6-0.05-3);


\draw[rotate=90][thick] (-1.5,1.7) .. controls (-0.4,2.1) and (-0.4,3.1)   .. (-1.5,3.5);

\draw[rotate=90][thick,blue] (1.5,3.5) arc (-68:-112:4);

\draw[rotate=90][thick] (1.5,1.7) arc (68:112:4);

\draw[rotate=90][thick] (0.66,2.6) -- (-0.66,2.6);

\draw[rotate=90][thick,blue] (1.5,3.5) .. controls (1,3.3) .. (0,2.6);

\draw[rotate=90][thick,blue] (-1.5,1.7) .. controls (-1,1.9) .. (0,2.6);

\draw[rotate=90][thick] (1.5,1.7) .. controls (0.4,2.1) and (0.4,3.1)  .. (1.5,3.5);

\node [font=\fontsize{12}{12}] at (-3.3,0.05) {$\delta$};

\node [font=\fontsize{7}{7}] at (-2.85+0.5,0.35) {$\sigma_{n+1}$};

\node [font=\fontsize{10}{10}] at (-2.3,-0.1) {$\gamma_1$};

\draw[rotate=90](0.66,2.6) rectangle (0.66-0.05,2.6-0.05);

\draw[rotate=90] (-0.66,2.6) rectangle (-0.66+0.05,2.6-0.05);

\end{tikzpicture}
    \caption{What happens to $\mathcal{U}$ after applying Nielsen cutting and flipper clipping to get $\text{Clip}(X\|\eta)$.}
    \label{fig:vol-form-rec4}
\end{figure}

By \cref{eq:vol-form-teich-fixed-hol},
\[
\Omega^{\mathrm{WP}}_{\Sigma\setminus\eta}(\vec{b};\vec{c}_\eta)
= \pm \left(\left(\bigwedge_{i=1}^k \mathrm{d}s_i\right) \wedge \mathrm{d}s_{\delta}  \wedge \mathrm{d}s_{\gamma_1}\wedge \mathrm{d}s_{\gamma_2}\right)\Big/\mu_{\mathcal{A_\eta}}.
\]
Note that by \cref{lem:shear-width-quadrilateral}, $\mu_{\mathcal{A_\eta}}=\pm\mathrm{d}s_{\gamma_1}\wedge \mathrm{d} \log\sinh^2(\ell_{\sigma_{n+2}})\wedge \mu_{\mathcal{A}}$, and observe that $\log\sinh^2(\ell_{\sigma_{n+2}}) = g(s_1,\dotsc,s_k,s_{\gamma_2})$. Therefore  
\[
\Omega^{\mathrm{WP}}_{\Sigma\setminus\eta}(\vec{b};\vec{c}_\eta)
= \pm \left(\left(\bigwedge_{i=1}^k \mathrm{d}s_i\right) \wedge \mathrm{d}s_{\delta} \wedge \mathrm{d}s_{\gamma_2}\right)\Big/\left(\mathrm{d}g(s_1,\dotsc,s_k,s_{\gamma_2})\wedge \mu_{\mathcal{A}}\right).
\]
Since the pullback of $s_{\gamma_2}$ along $\|\eta$ is $s_{\gamma}$ and $\mathrm{d}g(s_1,\dotsc,s_k,s_\gamma)=\mathrm{d}f_\eta=\coth(\ell_\eta/2)\,\mathrm{d}\ell_\eta$, wedging by $\coth(\ell_\eta/2)\,\mathrm{d}\ell_\eta$ and comparing the result with \cref{eq:volstep2}, we conclude the proof of \cref{eq:nielsen-cut-vol-form}. The analogous result when neck lengths are fixed follows as a simple consequence.
\end{proof}

\begin{remark}
    The introduction of the function $g$ is simply to help us switch between expressions expressed in terms of (the conflated) shearing coordinates on $\mathcal{T}^\varnothing_{\dot{\Sigma}(\mathcal{A})}$ and $\mathcal{T}^\varnothing_{\dot{(\Sigma\setminus\eta)}(\mathcal{A})}$.
\end{remark}

\subsection{WP volume form in coordinates}

The recursion formulae \cref{prop:WP-vol-form-scc} and \cref{prop:top-rec-vol-form-orthogeod} allow us to express Weil--Petersson volume forms $\Omega^{\mathrm{WP}}_\Sigma(\vec{b};\vec{c})$ in terms of Weil--Petersson forms of surfaces constituting $\Sigma$. When carried through to the extreme, this results in expressions for the Weil--Petersson form in terms of $\Omega^{\mathrm{WP}}_{\mathbb{D}_3}$, $\Omega^{\mathrm{WP}}_{\mathbb{A}_1}$ as well as differentials of lengths of arcs, lengths of simple closed curves and twists of simple closed curves. As an example, iterated applications of \cref{prop:top-rec-vol-form-orthogeod} yields the following:

\begin{proposition}
\label{prop:vol-form-teich-fixedhol-orth-coord}
Let $\square$ be a maximal arc collection on $\Sigma$. By maximality, $\square$ decomposes $\Sigma$ into 
\begin{itemize}
    \item $M$ number of flippered triangles $\mathbb{D}_3$ with respective strap lengths (which are orthogeodesic coordinates) $\vec{c}_{(i)}:=(c_{(i),1},c_{(i),2},c_{(i),3})$, $i=1,\ldots,M$; 
    \item and $m$ number of funneled half-pants $\mathbb{A}_1$ respectively containing the $j$-th funnel and a strap of length $c'_{(j)}$ for $j=1,\ldots,m$.
\end{itemize}
Then, the Weil--Petersson volume form $\Omega^{\mathrm{WP}}_{\Sigma}(\vec{b};\vec{c})
$ on $\mathcal{T}_\Sigma(\vec{b};\vec{c})$ satisfies 
\begin{align}
\Omega^{\mathrm{WP}}_{\Sigma}(\vec{b};\vec{c}) 
= \pm\Biggl(
\prod_{i=1}^M
\Omega^{\mathrm{WP}}_{\mathbb{D}_3}(\vec{c}_{(i)}) \cdot
\prod_{j=1}^{m}
\Omega^{\mathrm{WP}}_{\mathbb{A}_1}(b_j;c'_{(j)})
\Biggr)
\bigwedge_{\eta\in\square}\coth(\ell_\eta/2)\mathrm{d}\ell_\eta.
\label{eqn:volformbigproduct}
\end{align}
\end{proposition}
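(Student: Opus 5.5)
The plan is to induct on $|\square|$, Nielsen cutting along the arcs of $\square$ one at a time and applying \cref{prop:top-rec-vol-form-orthogeod} at each step. Order the arcs of $\square$ as $\eta_1,\dots,\eta_k$. By \cref{def:cut-orth}, Nielsen cuts along disjoint orthogeodesics commute, and after cutting along $\eta_1,\dots,\eta_{r}$ the remaining arcs $\eta_{r+1},\dots,\eta_k$ are still disjoint essential orthogeodesics on the (possibly disconnected) surface $X\|\{\eta_1,\dots,\eta_r\}$. They are essential because their doubles remain distinct non-peripheral curves in the pants decomposition of the double used in \cref{lem:gen-triang-cardinality}. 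So \cref{prop:top-rec-vol-form-orthogeod} applies at each stage. The new straps produced by cutting $\eta_r$ both have length $\ell_{\eta_r}$. For disconnected surfaces we use the product convention from the surface decomposition notation. Each step therefore contributes one factor $\coth(\ell_{\eta_r}/2)\,\mathrm{d}\ell_{\eta_r}$ and replaces the form on the current surface by the pullback of the form on the cut surface, with the strap vector extended by $(\ell_{\eta_r},\ell_{\eta_r})$.

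After all $k$ cuts, \cref{lem:gen-triang-compl-regions} identifies the pieces. Because we Nielsen-extend rather than truncate, each complementary region becomes a flippered surface: hexagons and degenerate hexagons become $\mathbb{D}_3$ with some straps possibly $0$, and half-pants become $\mathbb{A}_1$ carrying the funnel of length $b_j$ and one strap. The strap lengths of these pieces are exactly the $\ell_\eta$ and the original $c_j$, i.e.\ the orthogeodesic coordinates of \cref{prop:orthogeod-coord}. The composite of the cut maps is then a map $\mathcal{T}_\Sigma(\vec b;\vec c)\to\prod_i\mathcal{T}_{\mathbb{D}_3}\times\prod_j\mathcal{T}_{\mathbb{A}_1}$. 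Its fibres over the strap data are points for $\mathbb{D}_3$, and points for $\mathbb{A}_1$ as well: the doubled half-pants is a pair of pants with fixed boundary lengths, which is rigid. So the pulled-back forms $\Omega^{\mathrm{WP}}_{\mathbb{D}_3}(\vec c_{(i)})$ and $\Omega^{\mathrm{WP}}_{\mathbb{A}_1}(b_j;c'_{(j)})$ are $0$-forms, i.e.\ constants depending on the strap lengths. This is why they appear as a product of functions multiplying the wedge in \eqref{eqn:volformbigproduct}. Collecting the $k$ factors gives $\bigwedge_{\eta\in\square}\coth(\ell_\eta/2)\,\mathrm{d}\ell_\eta$, up to the sign coming from reordering.

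As a consistency check, the degree matches: $k=|\square|=\dim\mathcal{T}_\Sigma(\vec b;\vec c)$ by \cref{cor:top-teich-space-fix-hol}. The $\mathrm{d}\ell_\eta$ are also independent, since by \cref{prop:orthogeod-coord} the $\ell_\eta$ are global coordinates on the fixed-holonomy slice.

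The main obstacle is justifying the intermediate steps when the cut surface is disconnected, or is one of the special surfaces $\mathbb{D}_n$, $\mathbb{A}_n$ for which \cref{prop:top-rec-vol-form-orthogeod} was phrased only on a connected $\Sigma$. I would first check that the proof of that theorem is local to the quadrilateral $Q$ or the neighbourhood $\mathcal U$, and therefore applies verbatim componentwise. I would also verify that the pullback $(\|\eta)^*$ is compatible with the product decomposition of Weil--Petersson forms on disconnected surfaces, which amounts to the statement that the shear coordinates split as a disjoint union.
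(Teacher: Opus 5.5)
Your proposal is correct and takes the same route as the paper, which obtains \eqref{eqn:volformbigproduct} simply by iterating the Nielsen-cut recursion \cref{prop:top-rec-vol-form-orthogeod} over the arcs of $\square$ and identifying the resulting pieces as $\mathbb{D}_3$'s and $\mathbb{A}_1$'s via \cref{lem:gen-triang-compl-regions}. The extra care you take over essentiality of the remaining arcs and over applying the theorem componentwise to disconnected intermediate surfaces fills in details that the paper leaves implicit.
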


\begin{remark}
The product notation used in \cref{eqn:volformbigproduct} implicitly assert that $\Omega^{\mathrm{WP}}_{\mathbb{D}_3}$ and $\Omega^{\mathrm{WP}}_{\mathbb{A}_1}$ are volume $0$-forms. These functions are explicitly determined in \cref{subsec:vol-form-flip-triang} and \cref{subsec:1-crown-vol-form}.    
\end{remark}

\begin{remark}
\cref{prop:vol-form-teich-fixedhol-orth-coord} and the smoothness of $\Omega^{\mathrm{WP}}_{\mathbb{D}_3}$ and $\Omega^{\mathrm{WP}}_{\mathbb{A}_1}$ hints that there is may be a precise sense in which these forms on different dimensional strata might ``glue'' to a global volume form (for a manifold with corners) on $\mathcal{T}_\Sigma$. For our purposes, however, we only work with submanifolds of $\mathcal{T}_\Sigma$ fixed $\vec{c}$ values and hence only ever work within a single stratum at any given moment.
\end{remark}

One can give a more general statement by allowing for ``mixed coordinates'' comprising both arc lengths and Fenchel--Nielsen coordinates of simple closed curves. 

\begin{theorem}
Consider a collection of pairwise disjoint arcs $\square=\{\eta_i\}$ and simple closed curves $\Gamma=\{\gamma_j\}$ which decompose $\Sigma$ into
\begin{itemize}
    \item $M$ flippered triangles $\mathbb{D}_3$ with straps of length $\vec{c}_{(i)}:=(c_{(i),1},c_{(i),2},c_{(i),3})$, $i=1,\ldots,N$, whose lengths are all orthogeodesic coordinates for arcs in $\square$;
    \item $N$ $\mathbb{A}_1$ with parameters $\ell_{(j)}$ and $c'_{(j)}$, for $j=1,\ldots,N$ (which come from lengths of corresponding arcs in $\square$ and curves in $\Gamma\cup\{\beta_1,\ldots,\beta_m\}$).
\end{itemize}
Then, the Weil--Petersson volume form $\Omega^{\mathrm{WP}}_{\Sigma}(\vec{b};\vec{c})
$ on $\mathcal{T}_\Sigma(\vec{b};\vec{c})$ is equal to
\begin{align*}
\pm
\Biggl(
\prod_{i=1}^M
\Omega^{\mathrm{WP}}_{\mathbb{D}_3}(\vec{c}_{(i)}) 
\cdot
\prod_{j=1}^{N}
\Omega^{\mathrm{WP}}_{\mathbb{A}_1}(\ell_{(j)};c'_{(j)})
\Biggr)
\Biggl(
\bigwedge_{\eta\in\square}
\coth(\ell_\eta/2)\mathrm{d}\ell_\eta
\Biggr)
\wedge
\Biggl(
\bigwedge_{\gamma\in\Gamma}
\mathrm{d}\ell_\gamma\wedge\mathrm{d}\tau_\gamma
\Biggr).
\end{align*}
\end{theorem}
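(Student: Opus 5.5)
The plan is to induct on $|\square|+|\Gamma|$, peeling off one cut at a time with the two recursion results already established: \cref{prop:WP-vol-form-scc} for simple closed curves and \cref{prop:top-rec-vol-form-orthogeod} for essential arcs. Because the curves and arcs are pairwise disjoint, Nielsen cuts along arcs commute with one another and with cutting along closed curves. The order of cutting is therefore immaterial, and at each stage the remaining arcs and curves stay essential on the relevant component of the cut surface. The base case is a single component that is a $\mathbb{D}_3$ or an $\mathbb{A}_1$. There the Teichm\"uller space with fixed holonomy is a point, so the volume form is a $0$-form, namely the function $\Omega^{\mathrm{WP}}_{\mathbb{D}_3}(\vec{c}_{(i)})$ or $\Omega^{\mathrm{WP}}_{\mathbb{A}_1}(\ell_{(j)};c'_{(j)})$.

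I will cut all closed curves in $\Gamma$ first. Applying \cref{eq:vol-form-neck-curve} to $\gamma_1$ gives
\[
\Omega^{\mathrm{WP}}_\Sigma(\vec{b};\vec{c})=\Omega\wedge\mathrm{d}\ell_{\gamma_1}\wedge\mathrm{d}\tau_{\gamma_1},
\]
where $\Omega$ restricts on each level set to $\Omega^{\mathrm{WP}}_{\Sigma\setminus\gamma_1}(\vec{b},x,x;\vec{c})$. Iterating over $\Gamma$ produces the factor $\bigwedge_{\gamma\in\Gamma}\mathrm{d}\ell_\gamma\wedge\mathrm{d}\tau_\gamma$, wedged with the volume form of the (possibly disconnected) surface $\Sigma\setminus\Gamma$. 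In that form the lengths $\ell_\gamma$ now appear as cuff lengths. For a disconnected surface, the notation for surface decompositions says this form is the wedge of the component forms, so I will treat each component separately.

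On each component I then apply \cref{prop:top-rec-vol-form-orthogeod} repeatedly to the arcs of $\square$ lying in it. Each Nielsen cut contributes one factor $\coth(\ell_\eta/2)\,\mathrm{d}\ell_\eta$ and replaces the form by the pullback under $\|\eta$ of the form on the cut surface. In that cut surface two new straps of length $\ell_\eta$ appear. The result is that these straps become the strap parameters $c_{(i),k}$ or $c'_{(j)}$ of the final pieces, which is exactly the orthogeodesic-coordinate identification asserted in the statement. Once every arc has been cut, the pieces are by hypothesis $\mathbb{D}_3$'s and $\mathbb{A}_1$'s, each with a cuff coming from $\Gamma\cup\{\beta_1,\ldots,\beta_m\}$. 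Their $0$-form volumes multiply the accumulated wedge of differentials, and the signs are absorbed into the $\pm$.

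The main obstacle is bookkeeping rather than new geometry. First, the pulled-back forms must be evaluated on the correct level sets: after cutting $\gamma$, its length has become a cuff length, and \cref{prop:top-rec-vol-form-orthogeod} must be applied on $\mathcal{T}(\vec{b},x,x;\ldots)$ with $x$ regarded as a fixed parameter. One has to check that the differentials $\mathrm{d}\ell_\eta$ and $\mathrm{d}\ell_\gamma,\mathrm{d}\tau_\gamma$ remain independent, so that the division by forms is legitimate. I would verify this by noting that the collection of lengths $\ell_\eta$, the pairs $(\ell_\gamma,\tau_\gamma)$, and the parameters of the final pieces form a coordinate system, obtained by doubling to Fenchel--Nielsen coordinates on $D\Sigma$ as in \cref{prop:orthogeod-coord}. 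Second, there is the degenerate situation where an arc has both endpoints on one arch, or an $\mathbb{A}_1$ has its strap arising from an arc whose two sides both bound that same piece. These cases are covered by the same-arch case of \cref{prop:top-rec-vol-form-orthogeod}, so no separate treatment is needed.
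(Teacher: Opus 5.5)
Your proposal is correct and is essentially the paper's argument. The paper gives no separate proof of this statement; it presents it, like \cref{prop:vol-form-teich-fixedhol-orth-coord}, as what results from iterating \cref{prop:WP-vol-form-scc} over the curves in $\Gamma$ and \cref{prop:top-rec-vol-form-orthogeod} over the arcs in $\square$ until only the $0$-forms $\Omega^{\mathrm{WP}}_{\mathbb{D}_3}$ and $\Omega^{\mathrm{WP}}_{\mathbb{A}_1}$ remain, and your remarks on restricting to level sets of $(\ell_\gamma,\tau_\gamma)$ and on independence via doubling to Fenchel--Nielsen coordinates on $D\Sigma$ supply bookkeeping the paper leaves implicit.
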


\begin{remark}
    If $\Gamma=\Gamma'\sqcup\{\nu_1,\ldots,\nu_l\}$ contains \emph{all} of the necks of $\Sigma$, then we can also give a comparatively clean expression for $\Omega^{\mathrm{WP}}_\Sigma(\vec{b};\vec{c}\;|\vec{d})$ by replacing the $\bigwedge_{\gamma\in\Gamma}
\mathrm{d}\ell_\gamma\wedge\mathrm{d}\tau_\gamma$ term in the above result by
\[
\Biggl(\bigwedge_{\gamma\in\Gamma'}
\mathrm{d}\ell_\gamma\wedge\mathrm{d}\tau_\gamma
\Biggr)
\wedge
\Biggl(\bigwedge_{k=1}^{l}
\mathrm{d}\tau_{\nu_k}
\Biggr).
\]

\end{remark}

\subsubsection{Weil-Petersson volume forms on spaces of flippered triangles}
\label{subsec:vol-form-flip-triang}

\begin{proposition}
\label{prop:vol-form-triang}
The volume form of the Teichm\"uller space $\mathcal{T}_{\DD_3}(\vec{c})$ of the flippered triangle with strap lengths $\vec{c}=(c_1, c_2, c_3)$ is given by the function
\begin{equation*}
\Omega^{\mathrm{WP}}_{\DD_3}(\vec{c})= \frac{\cosh c_1+\cosh c_2+\cosh c_3-1+\sqrt{D}}{2\sqrt{D}},
\end{equation*}
where $D = \cosh^2 c_1 + \cosh^2 c_2+\cosh^2 c_3 + 2\cosh c_1 \cosh c_2 \cosh c_3-1$. Moreover, $\Omega^{\mathrm{WP}}_{\DD_3}(\vec{c})\leq1$ and equals 1 precisely when at least one of $c_1,c_2,c_3$ is $0$. 
\end{proposition}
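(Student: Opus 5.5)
Since $\dim\mathcal{T}_{\DD_3}(\vec c)=\tfrac12(3|\chi(D\DD_3)|-n)=\tfrac12(3-3)=0$, the space $\mathcal{T}_{\DD_3}(\vec c)$ is a single point. So $\Omega^{\mathrm{WP}}_{\DD_3}(\vec c)$ is just the scalar obtained by dividing the pulled-back shear volume form by $\mu_{\mathcal A}$. By \cref{eq:vol-form-teich-fixed-hol}, with $\mathcal A=\{j: c_j>0\}$, this scalar is the absolute value of a Jacobian determinant:
\[
\Omega^{\mathrm{WP}}_{\DD_3}(\vec c)=\left|\det\frac{\partial(s_1,\dots,s_{|\mathcal A|})}{\partial(\log\sinh^2(c_{j}/2))_{j\in\mathcal A}}\right|.
\]
Here the $s_i$ are shearing coordinates for an ideal triangulation of the clipped polygon $\dot{\DD}_3(\mathcal A)$, which is an ideal $(3+|\mathcal A|)$-gon. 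I will treat the cases $|\mathcal A|=0,1,2,3$ separately.

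For $|\mathcal A|=0$ both forms are empty wedges, so the value is $1$. For $|\mathcal A|=1$ the clipped polygon is a quadrilateral, and its strap is exactly the orthogeodesic of \cref{lem:shear-width-quadrilateral}. Hence $\log\sinh^2(c/2)=s$ for its unique diagonal, and the value is $1$. For $|\mathcal A|=2$ the clipped polygon is a pentagon. I will choose the triangulation in which each strap $\sigma_j$ crosses exactly one diagonal, and the quadrilateral carrying $\sigma_j$ (formed by the two arches, the clipped side between them, and a diagonal) is bounded by triangulation arcs. I expect that with the right choice, as in \cref{ex:pentagon}, each $\log\sinh^2(c_j/2)$ equals a single shear coordinate, or at worst the change of variables is triangular with unit diagonal. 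This gives the value $1$. I will confirm it against the general formula: for $c_3=0$, $D=(\cosh c_1+\cosh c_2)^2$, and the formula indeed gives $1$.

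The main case is $|\mathcal A|=3$, where the clipped polygon is an ideal hexagon whose sides alternate between the arches $\alpha_1,\alpha_2,\alpha_3$ and the three clipped sides. I will take the triangulation consisting of a central triangle and three diagonals $e_1,e_2,e_3$, with shears $s_1,s_2,s_3$. For each strap $\sigma_i$ I identify the quadrilateral $Q_i$ with sides $\alpha_i$, the clipped side, $\alpha_{i+1}$, and the closing diagonal. The shear of the diagonal of $Q_i$ that is transverse to $\sigma_i$ is not a triangulation coordinate. I will compute it from $s_1,s_2,s_3$ using the flip formulas of \cref{prop:flips}, as in \cref{ex:pentagon}, and then apply \cref{lem:shear-width-quadrilateral}. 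I expect this to give
\[
\sinh^2(c_i/2)=e^{s_i}\bigl(1+e^{s_{i+1}}\bigr)^{\pm1}\bigl(1+e^{-s_{i-1}}\bigr)^{\pm1},
\]
up to the exact cyclic pattern. The Jacobian of $(\log\sinh^2(c_i/2))_i$ with respect to $(s_i)_i$ is then a cyclic $3\times3$ matrix with $1$'s on the diagonal and logistic terms $e^{s}/(1+e^{s})$ off it, and its determinant is a rational function of $x_i=e^{s_i}$.

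The main obstacle is inverting this relation to write the determinant in terms of $\cosh c_i$. The plan is to solve the three multiplicative equations for the $x_i$. This reduces to a quadratic, which is the source of $\sqrt D$. I will then simplify the reciprocal of the determinant to $\frac{2\sqrt D}{\cosh c_1+\cosh c_2+\cosh c_3-1+\sqrt D}$. As a check, the numerator equals $4e^{S}$ from the flippered triangle action formula, so I expect the quantity $e^{S}$ (or $e^{S}$ for a single ear) to appear naturally as a product of the factors $1+x_i$.

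Finally, for the inequality put $x_i=\cosh c_i\ge1$. The claim $\Omega\le1$ is equivalent to $x_1+x_2+x_3-1\le\sqrt D$, whose left side is positive. Squaring and cancelling gives the equivalent inequality
\[
(x_1-1)(x_2-1)(x_3-1)\ge0,
\]
which holds, with equality exactly when some $c_i=0$. This agrees with the lower-dimensional cases computed above.
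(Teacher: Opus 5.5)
Your plan is the paper's proof: the same central-triangle triangulation of the clipped hexagon, flips via \cref{prop:flips} combined with \cref{lem:shear-width-quadrilateral}, and the resulting Jacobian. One correction to your guessed relation: only one neighbouring shear enters each strap, $\sinh^2(c_i/2)=X_i(1+X_{i+1})$ with $X_i=e^{s_i}$, so the determinant is $1+\prod_i\frac{X_i}{1+X_i}$. Also, instead of solving a quadratic, the paper just checks that $D=4\bigl(\prod_i(1+X_i)+X_1X_2X_3\bigr)^2$, which also makes your guess $e^{S}=\prod_i(1+X_i)$ immediate.

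Your direct computation of the strata $|\mathcal A|\le 2$ and your algebraic proof of $\Omega^{\mathrm{WP}}_{\DD_3}(\vec c)\le 1$ via $(\cosh c_1-1)(\cosh c_2-1)(\cosh c_3-1)\ge 0$ are more complete than the paper's treatment. The paper only computes the case $|\mathcal A|=3$ and handles equality by letting some $X_i\to 0$.
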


\begin{remark}
    The mapping class group action is trivial for $\mathbb{D}_3$, so this result also holds for the moduli space $\mathcal{M}_{\DD_3}(\vec{c})$.
\end{remark}

\begin{proof}

\begin{figure}[H]
    \centering
    \begin{tikzpicture}[scale=1.2]
\clip (-2.5,-1.5) rectangle (2.5,2.5);
\usetikzlibrary{decorations.pathreplacing}

  \draw[very thick]
    (0,2) ++(-10:1) arc[start angle=-10, end angle=-170, radius=1];

  \draw[very thick]
    (-2,0) ++(-60:1.2) arc[start angle=-60, end angle=75, radius=1.2];

  \draw[very thick]
    (2,0) ++(100:0.9) arc[start angle=100, end angle=250, radius=0.9];


\draw[very thick, blue]
    (-1.47,1.65) ++(20:0.5) arc[start angle=20, end angle=-110, radius=0.52];


\draw[very thick, blue]
    (1.65,1.57) ++(-75:0.70) arc[start angle=-75, end angle=-200, radius=0.70];


\draw[very thick, blue]
    (0.41,-3.56) ++(65:3) arc[start angle=65, end angle=123, radius=3.17];

\draw[thick, blue] (-1.4,-1.04) to [out=25,in=-70] (-0.98,1.8);

\draw[thick, blue] (-0.98,1.8) to [out=-80,in=-165] (1.83,0.9);

\draw[thick, blue] (1.83,0.9) to [out=-165,in=33] (-1.4,-1.04);

\draw (-0.5,0.5) node {$s_1$};

\draw (0.6,0.9) node {$s_2$};

\draw (0.3,0) node {$s_3$};



\end{tikzpicture}
    \caption{An ideal triangulation of $\dot{\DD}_3(1,2,3)$.}
    \label{fig:flip-triang-flip-clip}
\end{figure}

Consider the ideal triangulation of the puncture-double surface $\dot{\DD}_3(1,2,3)$ as in \cref{fig:flip-triang-flip-clip} (see \cref{not:doubling}), and let $\{s_1,s_2,s_3\}$ be the associated shears. Independently performing a flip of each of the arcs in the ideal triangulation and applying \cref{prop:flips} together with \cref{lem:shear-width-quadrilateral}, we obtain 
\begin{equation}
\label{eq:shears-flip-triang}
\begin{cases}
s_1 + \log(1+e^{s_2}) = \log\sinh^2(c_1/2), \\
s_2 + \log(1+e^{s_3}) = \log\sinh^2(c_2/2), \\
s_3 + \log(1+e^{s_1}) = \log\sinh^2(c_3/2).
\end{cases}
\end{equation}
Let $X_i=e^{s_i}$ for $1\leqslant i \leqslant 3$ be the associated exponentiated shears, and let $C_i=\cosh c_i$ for $1\leqslant i \leqslant 3$. Then upon exponentiation, \cref{eq:shears-flip-triang} can be written as 
\begin{equation}
\label{eq:exp-shears-flip-triang}
\begin{cases}
X_1(1+X_2) = \frac{C_1-1}{2}, \\
X_2(1+X_3) = \frac{C_2-1}{2}, \\
X_3(1+X_1) = \frac{C_3-1}{2}.
\end{cases}
\end{equation}
Thus by \cref{eq:vol-form-teich-fixed-hol}, the volume form on $\mathcal{M}_{\DD_3}(c_1,c_2,c_3)$ is equal to
\begin{equation}
\label{eq:vol-form-flip-triang}
\begin{split}
\Omega^{\mathrm{WP}}_{\DD_3}(\vec{c})&=\pm\bigwedge_{i=1}^3\frac{\mathrm{d}X_i}{X_i} \Bigg/ \bigwedge_{i=1}^3\frac{\mathrm{d}\bigl(X_i(1+X_{i+1})\bigr)}{X_i(1+X_{i+1})} \\&= \pm\frac{(1+X_1)(1+X_2)(1+X_3)}{(1+X_1)(1+X_2)(1+X_3)+X_1X_2X_3}.  
\end{split}
\end{equation}
We now express \cref{eq:vol-form-flip-triang} in $C_1,C_2,C_3$. From \cref{eq:exp-shears-flip-triang}, we compute
\[
D = C_1^2+C_2^2+C_3^2+2C_1C_2C_3-1 = 4\bigl((1+X_1)(1+X_2)(1+X_3)+X_1X_2X_3\bigr)^2.
\]
It is then straightforward to verify the equality
\begin{equation*}
\begin{split}
\frac{C_1+C_2+C_3-1+\sqrt{D}}{2\sqrt{D}} 
= \frac{(1+X_1)(1+X_2)(1+X_3)}{(1+X_1)(1+X_2)(1+X_3)+X_1X_2X_3} = \Omega^{\mathrm{WP}}_{\DD_3}(\vec{c}).
\end{split}
\end{equation*}
Now, the lower line in \cref{eq:vol-form-flip-triang} tells us that $\Omega^{\mathrm{WP}}_{\DD_3}(\vec{c})\leq1$. To get equality, at least one of the $X_i\to 0$, which is a degenerate situation where $s_i\to-\infty$ and the corresponding flipper collapses to a tine. We can see this algebraically via \cref{eq:exp-shears-flip-triang}, which asserts $C_i=1$ and hence $c_i=0$.
\end{proof}

\subsubsection{Weil-Petersson volume forms on moduli spaces of flippered 1-crowns}
\label{subsec:1-crown-vol-form}

\begin{proposition}
\label{prop:1-crown-vol-form}
The volume form on the Teichm\"uller space $\mathcal{T}_{\AA_1}(c|d)$ of the 1-crown with neck length $d$ and the flipper with strap length $c$ is given by the function
\begin{equation}
\Omega^{\mathrm{WP}}_{\AA_1}(c|d)=\frac{\sqrt{\frac{\cosh c+\cosh d}{2}}+\cosh(d/2)}{\sqrt{2\cosh c+2\cosh d}}
\end{equation}
Moreover, $\Omega^{\mathrm{WP}}_{\mathbb{A}_1}(c|d)\leq1$ and equals 1 precisely when $c=0$. 
\end{proposition}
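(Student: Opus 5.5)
I will mirror the proof of \cref{prop:vol-form-triang}, working with the flipper-clipping $\clip(X)\in\mathcal{T}^\varnothing_{\dot{\AA}_1(1)}$. Puncture-doubling the single boundary puncture of $\AA_1$ gives a 2-crown (with the funnel/neck), so $\clip(X)$ is a crowned surface with two tines and one funnel of length $d$. I choose an ideal triangulation of $\dot{\AA}_1(1)$ consisting of two arcs $e_1,e_2$ running from the two tines to the interior puncture (spiralling onto the neck), together with the arc $e_0$ joining the two tines which cuts off the clipped half-plane region (the arc whose flip gives the quadrilateral used in \cref{lem:shear-width-quadrilateral}). The shears are $s_0,s_1,s_2$. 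Two relations are then available. First, the neck length is the sum of the shears of arcs incident to the funnel, with multiplicity: $s_1+s_2=\pm d$ (with the left-spiralling convention fixing the sign to $d$). Second, flipping $e_0$ and applying \cref{prop:flips} together with \cref{lem:shear-width-quadrilateral}, the strap $\sigma$ is the orthogeodesic across the quadrilateral adjacent to $e_0$, which gives a relation of the form $s_0+\log(1+e^{s_1})+\log(1+e^{-s_2})$-type corrections $=\log\sinh^2(c/2)$. The exact form depends on which edges bound the quadrilateral, and I will determine it carefully from the picture.

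Next I apply \cref{eq:vol-form-teich-fixed-hol} followed by \cref{eq:vol-form-teich-fixed-hol-neck}:
\[
\Omega^{\mathrm{WP}}_{\AA_1}(c|d)=\pm\,(\mathrm{d}s_0\wedge \mathrm{d}s_1\wedge \mathrm{d}s_2)\big/\bigl(\mathrm{d}\log\sinh^2(c/2)\wedge \mathrm{d}\ell_\nu\wedge \mathrm{d}\tau\bigr).
\]
Here $\tau$ is the remaining fibre coordinate, which is a twist-like parameter along the neck, e.g.\ $s_1$ itself. Since $\AA_1(c|d)$ is $0$-dimensional modulo the mapping class group but the crowned Teichm\"uller space has one extra parameter, I must check the dimension count. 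By \cref{cor:top-teich-space-fix-hol}, $\dim\mathcal{T}_{\AA_1}(c|d)=\tfrac12(3\cdot 2-1-0-1)=2$?? That disagrees with the claim that the volume form is a function, so I will recheck it. The double of $\AA_1$ is a four-holed sphere with $|\chi|=2$, so the dimension is $\tfrac12(6-2\cdot1-1)-1=\tfrac12$. This count is off, so I will instead trust the triangulation count $|\triangle|=6g-6+3m+3l+n=0-6+3+3+2=2$ for $\dot{\AA}_1$. That gives only two arcs, $e_1$ and $e_2$ (both to the puncture), and no $e_0$. Hence the triangulation consists of $e_1,e_2$ with $s_1+s_2=d$, and the strap relation reads $F(s_1,s_2)=\log\sinh^2(c/2)$. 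The form is then the Jacobian $1/\bigl|\det\partial(F,s_1+s_2)/\partial(s_1,s_2)\bigr|=1/|\partial_{s_1}F-\partial_{s_2}F|$, which is a function, consistent with the statement.

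The main obstacle is computing $F$: expressing the strap (the orthogeodesic between the two arches adjacent to the clipped region) in terms of $s_1$ and $s_2$ when both arcs spiral to the same cuff. I would first try to lift to $\HH$, placing the neck's axis at $0,\infty$. The arcs $e_1,e_2$ then lift to geodesics from the tines to $\infty$, and the two relevant arches become geodesics whose endpoints are determined by $e^{s_1}$, $e^{s_2}$ and $e^{d}$. The strap length then follows from the cross-ratio formula $\cosh$-distance between two disjoint geodesics, in the same way as in \cref{lem:shear-width-quadrilateral}.

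Having $F$, I will substitute $X_i=e^{s_i}$ with $X_1X_2=e^d$, and then solve for $X_1$ (a quadratic). The solution should produce $\sqrt{(\cosh c+\cosh d)/2}$, and I will simplify the Jacobian to the stated expression, using $e^{S}$ from the example $\sqrt{(\cosh d+\cosh c)/2}+\cosh(d/2)$ as a consistency check. Finally, the bound $\le1$ is equivalent to $\sqrt{(\cosh c+\cosh d)/2}+\cosh(d/2)\le 2\sqrt{(\cosh c+\cosh d)/2}$, i.e.\ $\cosh(d/2)\le\sqrt{(\cosh c+\cosh d)/2}$. Since $\cosh^2(d/2)=(1+\cosh d)/2$, this holds because $\cosh c\ge1$, with equality exactly when $c=0$.
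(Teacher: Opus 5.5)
Once you discard the three-arc detour (an arc joining the two tines around the clipped arch is peripheral, and $\mathcal{T}_{\AA_1}(c|d)$ is a point since $D\AA_1$ is a pair of pants), your argument is essentially the paper's. Both use the two-arc triangulation of $\dot{\AA}_1(1)$ with $s_1+s_2=d$, and both compute $\Omega^{\mathrm{WP}}_{\AA_1}(c|d)$ as the reciprocal Jacobian of $(\log\sinh^2(c/2),\,s_1+s_2)$, namely $\frac{X_1X_2+X_1+1}{X_1X_2+2X_1+1}$. Two things differ. You get the strap relation $\cosh^2(c/2)=(1+e^{s_1})(1+e^{-s_2})$ from a direct cross-ratio in $\HH$, whereas the paper uses the flips of \cref{ex:pentagon} together with \cref{lem:shear-width-quadrilateral}. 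You also check $\Omega\le 1$, with equality iff $c=0$, directly on the closed form, which is cleaner than the paper's degeneration argument.
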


\begin{remark}
    The mapping class group action is trivial for $\mathbb{A}_1$, so this result also holds for the moduli space $\mathcal{M}_{\mathbb{A}_1}(c|d)$.
\end{remark}

\begin{proof}
Consider the ideal triangulation of the puncture-double surface $\dot{\AA}_1(1)$ (\cref{not:doubling}) by two ideal arcs emanating from the interior puncture and let $\{s_1,s_2\}$ be the associated shearing coordinates, listed in the counterclockwise order (from the perspective of the interior puncture). Let $X_i=e^{s_i}$ for $i=1,2$ be the associated exponentiated shears. By \cref{eq:vol-form-teich-fixed-hol}, the volume form on $\mathcal{M}_{\AA_1}(c|d)$ is equal to
\begin{equation}
\Omega^{\mathrm{WP}}_{\AA_1}(c|d) = \pm \frac{\mathrm{d}X_1}{X_1}\wedge \frac{\mathrm{d}X_2}{X_2}\Big/\mathrm{d}\log \sinh^2(c/2)\wedge \mathrm{d}d.
\end{equation}
Combining \cref{ex:pentagon} with \cref{lem:shear-width-quadrilateral}, we have 
\begin{equation}
\sinh^2(c/2)=X_1+X_2^{-1}+X_1X_2^{-1},
\end{equation}
which is convenient (for future reference) to equivalently express as 
\begin{equation}
\label{eq:1-flip-crown-system}
\cosh^2(c/2) =  (1+X_1)(1+X_2^{-1}).
\end{equation}
We compute
\begin{equation}
\begin{split}
\mathrm{d}\log \sinh^2(c/2) = \frac{(X_2+1)\mathrm{d}X_1-(X_2^{-1}+X_1X_2^{-1})\mathrm{d}X_2}{X_1X_2+1+X_1}.
\end{split}
\end{equation}
Next, $d = s_1+s_2 = \log (X_1X_2).$ Then $\mathrm{d}d = \frac{\mathrm{d}X_1}{X_1}+\frac{\mathrm{d}X_2}{X_2}$, and we compute
\begin{equation}
\begin{split}
\mathrm{d}\log \sinh^2(c/2)\wedge \mathrm{d}d &= 
\frac{X_{1}^{-1}X_{2}^{-1}+2X_{2}^{-1}+1}{X_1X_2+X_1+1} \mathrm{d}X_1\wedge \mathrm{d}X_2
\end{split}
\end{equation}
Therefore, 
\begin{equation}
\label{eq:1-crown-vol-form-shears}
\begin{split}
\Omega^{\mathrm{WP}}_{\AA_1}(c|d) &= \pm \frac{\mathrm{d}X_1}{X_1}\wedge \frac{\mathrm{d}X_2}{X_2}\Big/\mathrm{d}\log \sinh^2(c/2)\wedge \mathrm{d}d
\\& = \frac{X_1X_2+X_1+1}{X_1X_2+2X_1+1}.
\end{split}
\end{equation}
We now express \cref{eq:1-crown-vol-form-shears} in $c,d$. Note that
\begin{equation}
\begin{split}
(X_1X_2-1)^2+4X_1X_2(1+X_1)(1+X_2^{-1})= (X_1X_2+2X_1+1)^2,
\end{split}
\end{equation}
hence $ (X_1X_2+2X_1+1)^2=(e^d-1)^2+4e^d\cosh^2(c/2).$
It is then straightforward to verify the equalities
\begin{equation}
\begin{split}
\Omega^{\mathrm{WP}}_{\AA_1}(c|d) &= \frac{X_1X_2+X_1+1}{X_1X_2+2X_1+1} 
\\& = \frac{\frac{1}{2}\left(\sqrt{(e^d-1)^2+4e^d\cosh^2(c/2)}+e^d+1\right)}{\sqrt{(e^d-1)^2+4e^d\cosh^2(c/2)}
}
\\& = \frac{\sqrt{(e^{d/2}-e^{-d/2})^2+4\cosh^2(c/2)}+e^{d/2}+e^{-d/2}}{2\sqrt{(e^{d/2}-e^{-d/2})^2+4\cosh^2(c/2)}}
\\& =  \frac{\sqrt{4\sinh^2(d/2)+4\cosh^2(c/2)}+2\cosh(d/2)}{2\sqrt{4\sinh^2(d/2)+4\cosh^2(c/2)}}
\\&
= \frac{\sqrt{\frac{\cosh c+\cosh d}{2}}+\cosh(d/2)}{\sqrt{2\cosh c+2\cosh d}}.
\end{split}
\end{equation}
From the first line in \cref{eq:1-crown-vol-form-shears}, we see that $\Omega^{\mathrm{WP}}_{\AA_1}(c|d)\leq1$. Equality to $1$ is attained precisely when $X_1\to0$ or if $X_2\to+\infty$. Indeed, since $X_1X_2=e^d$, these two conditions must both occur at the same time, and this is the singular situation when the flipper degenerates to a tine. Algebraically, we can see this from \cref{eq:1-flip-crown-system}, which asserts that $\cosh^2(c/2)\to1$, and hence $c\to0$.
\end{proof}

\section{Mirzakhani volumes of moduli spaces of flippered surfaces and their recursive structure}
\label{sec:volumes}

In this section, we derive volume recursion formulae for the Mirzakhani volume in the form of the neck, disk, and the crown recursions (respectively, \cref{prop:neck-recursion}, \cref{thm:disk-recursion} and \cref{thm:crown-rec}). The upshot is that one can reduce volume calculations for moduli spaces of more complicated surfaces to that of integrals involving volume functions of ``building blocks'' ---moduli spaces of topologically simpler surfaces. To facilitate this computational scheme, we
\begin{itemize}
    \item state the volumes of the moduli spaces of flippered triangles and of flippered $1$-crowns (\cref{thm:vol-fli-triang} and \cref{thm:vol-1-crown});
    \item determine single integral expressions for the volumes of flippered $n$-gons and of flippered $n$-crowns (\cref{thm:vol-n-gon-1-int}, \cref{thm:vol-n-crown-1-int});
    \item compute the volume of the moduli space of flippered $n$-crowns without neck constraints (\cref{cor:vol-n-crown-no-neck}).
\end{itemize}

These results implicitly assume the finiteness of the Mirzakhani volumes, which we prove in \cref{thm:finvolflipper}.

\subsection{Volumes of moduli spaces of flippered triangles and $1$-crowns}
\label{subsec:vol-flip-triang}

We combine \cref{subsec:action-triang} and \cref{subsec:vol-form-flip-triang} to express the volumes of the moduli spaces of flippered triangles.

\begin{theorem}
\label{thm:vol-fli-triang}
The volume of the moduli space $\mathcal{M}_{\DD_3}(c_1,c_2,c_3)$ of the flippered triangle with straps of length $c_1, c_2, c_3$ is equal to
\begin{equation}
V_{\DD_3}(c_1,c_2,c_3) = \frac{2
}{\sqrt{D}},   
\end{equation}
where $D = \cosh^2 c_1 + \cosh^2 c_2+\cosh^2 c_3 + 2\cosh c_1 \cosh c_2 \cosh c_3-1$.
\end{theorem}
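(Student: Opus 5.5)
Since the mapping class group of $\mathbb{D}_3$ is trivial, the moduli space $\mathcal{M}_{\DD_3}(c_1,c_2,c_3)$ coincides with $\mathcal{T}_{\DD_3}(c_1,c_2,c_3)$. By \cref{cor:top-teich-space-fix-hol} this space has dimension $\tfrac12(3|\chi(D\DD_3)|-n)=\tfrac12(3\cdot 1-3)=0$, so it is a single point. The volume integral in \cref{def:gen-mirz-vol} therefore reduces to evaluating a product of two functions at that point:
\[
V_{\DD_3}(\vec c)=e^{-S(X)}\cdot\Omega^{\mathrm{WP}}_{\DD_3}(\vec c),
\]
where $X$ is the unique flippered triangle with strap lengths $\vec c$ and $\Omega^{\mathrm{WP}}_{\DD_3}(\vec c)$ is the volume $0$-form of \cref{prop:vol-form-triang}. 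The plan is to take the volume form from \cref{prop:vol-form-triang} and the action from the example following \cref{defn: action for flippered surface}, which gives
\[
e^{S}=\tfrac14\bigl(\cosh c_1+\cosh c_2+\cosh c_3-1+\sqrt D\bigr)
\]
(this is established in \cref{subsec:action-triang}, which we may take as given), and then multiply them.

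The main step is the cancellation. Write $N=\cosh c_1+\cosh c_2+\cosh c_3-1+\sqrt D$. Then
\[
\Omega^{\mathrm{WP}}_{\DD_3}(\vec c)\,e^{-S}=\frac{N}{2\sqrt D}\cdot\frac{4}{N}=\frac{2}{\sqrt D},
\]
which is the claimed formula.

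If the action formula cannot be cited directly, I would derive it in exponentiated shear coordinates $X_i=e^{s_i}$, using the system \cref{eq:exp-shears-flip-triang}. By \cref{cor:action-clip}, $e^{-S(X)}=e^{-S(\mathrm{Clip}(X))}\prod\coth^2(c_i/2)$. The clipped surface is an ideal hexagon, and its action can be computed from its lambda lengths, or from its essential intervals via \cref{prop: essential intervals}, in terms of the $X_i$. The expected outcome is $e^{S}=\tfrac12\prod(1+X_i)$ up to the $\cosh$-normalisation. One then checks against the identity $\sqrt D=2\bigl(\prod(1+X_i)+X_1X_2X_3\bigr)$ already used in the proof of \cref{prop:vol-form-triang}. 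Combined with \cref{eq:vol-form-flip-triang}, this should give $V=\bigl(\prod(1+X_i)+X_1X_2X_3\bigr)^{-1}=2/\sqrt D$.

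The hardest part of this fallback is the essential-interval bookkeeping for the hexagon, so I would rely on the already-stated formula for $e^S$ whenever possible. A sanity check is $c_i=0$: then $D=(3+1)^2/\ldots$ gives $D=4$ and $V=1$, consistent with $e^S=1$ and $\Omega=1$.
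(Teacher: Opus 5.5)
Your argument is the paper's: its proof simply combines \cref{prop:action-triang}, which gives $e^{-S}=4/N$, with \cref{prop:vol-form-triang}, which gives $\Omega^{\mathrm{WP}}_{\DD_3}(\vec c)=N/(2\sqrt D)$, so that $N$ cancels exactly as you wrote. In your fallback sketch, note that these two results together force $e^{S}=\prod_i(1+X_i)$ exactly, not $\tfrac12\prod_i(1+X_i)$, and this is what yields your final expression $\bigl(\prod_i(1+X_i)+X_1X_2X_3\bigr)^{-1}=2/\sqrt D$.
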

\begin{proof}
Combine \cref{prop:action-triang} and \cref{prop:vol-form-triang}.
\end{proof}

\begin{theorem}
\label{thm:vol-1-crown}
The volume of the moduli space $\mathcal{M}_{\AA_1}(c|d)$ of the 1-crown with neck length $d$ and the flipper with strap length $c$ equals
\begin{equation*}
V_{\AA_1}(c|d)= \frac{1}{\sqrt{2\cosh c+2\cosh d}}.
\end{equation*}
\end{theorem}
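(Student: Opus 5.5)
The plan mirrors the proof of \cref{thm:vol-fli-triang}. Since the mapping class group acts trivially on $\mathbb{A}_1$ and $\mathcal{T}_{\AA_1}(c|d)$ is zero-dimensional (by \cref{cor:top-teich-space-fix-hol}: for $\AA_1$ we have $|\chi(D\Sigma)|=2$, $m=1$, $l=1$, $n=1$, so the dimension is $\tfrac12(6-1-2-1)=1$ before fixing $d$ and $0$ after), the moduli space $\mathcal{M}_{\AA_1}(c|d)$ is a single point. The integral in \cref{def:gen-mirz-vol} therefore collapses to the product of the volume $0$-form and the weight,
\[
V_{\AA_1}(c|d)=e^{-S(X)}\cdot\Omega^{\mathrm{WP}}_{\AA_1}(c|d),
\]
where $X$ is the unique flippered $1$-crown with strap length $c$ and neck length $d$.

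The two ingredients are already available. \cref{prop:1-crown-vol-form} gives
\[
\Omega^{\mathrm{WP}}_{\AA_1}(c|d)=\frac{\sqrt{\tfrac{\cosh c+\cosh d}{2}}+\cosh(d/2)}{\sqrt{2\cosh c+2\cosh d}}.
\]
The action is the one recorded in the example after \cref{defn: action for flippered surface} and established in \cref{sec:1crown}: $e^{S}=\sqrt{\tfrac{\cosh d+\cosh c}{2}}+\cosh(d/2)$. Multiplying, the numerator of $\Omega^{\mathrm{WP}}_{\AA_1}$ cancels exactly against $e^{S}$, which leaves $1/\sqrt{2\cosh c+2\cosh d}$, as claimed.

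If the action formula for $\AA_1$ must be justified here rather than cited, I would work with the shear coordinates $X_1,X_2$ from the proof of \cref{prop:1-crown-vol-form}. \cref{cor:action-clip} reduces the problem to the clipped crowned surface $\clip(X)$, which has two tines, via $e^{-S(X)}=e^{-S(\clip X)}\coth^2(c/2)$. I would then compute $S(\clip X)$ as the half-sum of essential interval lengths (\cref{prop: essential intervals}), equivalently via lambda lengths with unit horocycles, and express it in $X_1,X_2$. The expected outcome is
\[
e^{S(X)}=\frac{X_1X_2+2X_1+1}{X_1X_2+X_1+1}\cdot e^{S}\Omega^{\mathrm{WP}},
\]
and the identity $(X_1X_2+2X_1+1)^2=(e^d-1)^2+4e^d\cosh^2(c/2)$ from that proof converts this into $c,d$. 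This lambda-length computation for the clipped two-tine crown, keeping track of the spiralling ends, is the only step I expect to be a genuine obstacle. As a sanity check, at $c=0$ the formula should reduce to Chekhov's value $e^{S}=2\cosh(d/2)$, giving $V_{\AA_1}(0|d)=1/(2\cosh(d/2))$.
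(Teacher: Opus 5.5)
Your proposal is correct and is exactly the paper's argument: $\mathcal{M}_{\AA_1}(c|d)$ is a single point, so $V_{\AA_1}(c|d)=e^{-S}\,\Omega^{\mathrm{WP}}_{\AA_1}(c|d)$, and combining \cref{prop:1-crown-action} with \cref{prop:1-crown-vol-form} makes the factor $\sqrt{(\cosh c+\cosh d)/2}+\cosh(d/2)$ cancel. Two small slips do not affect this: $D\AA_1$ is a thrice-punctured sphere, so $|\chi(D\Sigma)|=1$ rather than $2$, and the formula gives dimension $\tfrac12(3-2-1)=0$ directly because the cuff and neck are the same curve; also, the displayed ``expected outcome'' in your fallback paragraph is a tautology, whereas the shear-coordinate formula you were after is $e^{S}=(X_1X_2+X_1+1)/\sqrt{X_1X_2}$.
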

\begin{proof}
Combine \cref{prop:1-crown-action} and \cref{prop:1-crown-vol-form}.
\end{proof}

\begin{remark}
\label{rem:vol-3-gon-1-crown}
From \cref{prop:vol-form-triang} it is immediate that $V_{\DD_3}(c_1,c_2,c_3)$ equals the generalized Chekhov action (given by \cref{cor:action-flip-triang}) precisely when $c_1c_2c_3=0$. Similarly, \cref{prop:1-crown-vol-form} tells us that $V_{\AA_1}(c|d)$ equals the generalized Chekhov action (given by \cref{cor:action-flip-triang}) precisely when $c=0$. Moreover, setting all strap lengths to $0$ agrees with volumes computed in \cite{HT25}.
\end{remark}

\subsection{Neck recursion} 

Cutting a flippered surface at any of its necks yields a hyperbolic surface with one fewer crown and one more funnel (after applying Nielsen extension), as well as a surface which is a crown. 

\begin{proposition}
\label{prop:neck-recursion}
Consider $\Sigma=\Sigma'\cup \mathbb{A}_m$ (see \cref{fig:neckrecursion}) with $\vec{c}:=(\vec{c}\,',c_1,\ldots,c_m)$ and $\vec{d}'=(\vec{d},\ell)$, then 
\begin{align}
V_\Sigma\bigl(\vec{b};\vec{c}\;|\vec{d}\bigr) 
&=
V_{\Sigma'}\bigl(\vec{b},\ell;\vec{c}\,'|\vec{d}\,'\bigr)
\cdot 
V_{\mathbb{A}_m}\bigl(c_1,\ldots,c_m|\ell\bigr) \cdot \ell 
\text{, and}\\
V_\Sigma\bigl(\vec{b};\vec{c}\bigr) 
&= 
\int_0^\infty 
V_{\Sigma'}\bigl(\vec{b},\ell;\vec{c}\bigr)
\cdot
V_{\mathbb{A}_m}\bigl(c_1,\dots,c_m|\ell\bigr) 
\,\cdot \ell\;\mathrm{d}\ell.
\end{align}
\end{proposition}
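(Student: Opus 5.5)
We plan a Mirzakhani-style unfolding argument along the neck $\gamma=\nu$ separating $\Sigma'$ from the crown $\mathbb{A}_m$. Two inputs are needed: a factorisation of the Weil--Petersson volume form and an additivity property of the generalised Chekhov action $S$. For the form we invoke \cref{prop:WP-vol-form-scc} with $\gamma=\nu$. On $\mathcal{T}_\Sigma(\vec{b};\vec{c})$ it gives $\Omega^{\mathrm{WP}}_\Sigma(\vec{b};\vec{c})=\Omega\wedge\mathrm{d}\ell_\nu\wedge\mathrm{d}\tau_\nu$. Here $\Omega$ restricts on level sets to $\Omega^{\mathrm{WP}}_{\Sigma\setminus\nu}(\vec{b},\ell,\ell;\vec{c})$, and $\Sigma\setminus\nu=\Sigma'\sqcup\mathbb{A}_m$. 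By the disconnected-surface notation this is $\Omega^{\mathrm{WP}}_{\Sigma'}(\vec{b},\ell;\vec{c}\,')\wedge\Omega^{\mathrm{WP}}_{\mathbb{A}_m}(c_1,\ldots,c_m|\ell)$. One point needs care: for $\mathbb{A}_m$ the cuff of the new funnel coincides with the neck, so the ``$(\vec{b},\ell)$'' slot for $\mathbb{A}_m$ should be read as fixing $d=\ell$. In the fixed-neck version we use \eqref{eq:vol-form-neck-curve-fix-neck} instead, which gives $\Omega'\wedge\mathrm{d}\tau_\nu$ with $\ell=d_i$ frozen.

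For the action we use \cref{rmk:additive-action}: the action of a surface is the sum of the contributions of its crowns. The essential intervals $\mathbb{I}_i$ on the arches of the crown bounded by $\nu$ are determined by \cref{lem:ess-int-endpts}. That lemma only uses adjacent arches in the universal cover, so these intervals are the same whether computed in $X$ or in the crown piece $\mathbb{A}_m$ cut off by $\nu$ (Nielsen-extended). The same holds for the arches of $\Sigma'$. The $\log\cosh(c_i/2)$ renormalisation terms are also distributed accordingly. Hence $S(X)=S(X')+S(Y)$, where $X'\in\mathcal{M}_{\Sigma'}$ and $Y\in\mathcal{M}_{\mathbb{A}_m}$. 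This is the analogue of Chekhov's \cite[Lemma~2.6]{chekhov2024}, and I would spell it out via the lifts in the universal cover.

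Next comes the unfolding. $\nu$ is a neck, so it is fixed by $\mathrm{MCG}_\Sigma$; the pure mapping class group preserves crowns. Consider the stabiliser of $\nu$, which here is all of $\mathrm{MCG}_\Sigma$. The quotient of $\mathcal{T}_\Sigma(\vec{b};\vec{c})$ by it fibres over $\ell\in(0,\infty)$. The fibre is $\mathcal{M}_{\Sigma'}(\vec{b},\ell;\vec{c}\,')\times\mathcal{M}_{\mathbb{A}_m}(\vec{c}_{\mathrm{crown}}|\ell)\times(\mathbb{R}/\ell\mathbb{Z})$, the last factor coming from the Dehn twist about $\nu$ acting on $\tau_\nu$. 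We must check there is no extra finite symmetry (e.g.\ an order-two half-twist), which would add a factor $\tfrac12$. For pure mapping classes with labelled tines I expect none, since the tines of $\mathbb{A}_m$ are labelled. Integrating $e^{-S}$ then gives the first formula: the twist integral contributes the factor $\ell$, and the remaining integrals factor by additivity of $S$. For $V_\Sigma(\vec{b};\vec{c})$ we additionally integrate over $\ell$, which yields the second formula. Absolute convergence, justifying Fubini, comes from \cref{thm:finvolflipper}.

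The main obstacle I expect is the identification of the mapping class group quotient. One must verify that $\mathrm{MCG}_\Sigma$ decomposes as $\mathrm{MCG}_{\Sigma'}\times\mathrm{MCG}_{\mathbb{A}_m}\times\langle T_\nu\rangle$. Here $\mathrm{MCG}_{\mathbb{A}_m}$ is trivial apart from twisting, since the crown's Teichm\"uller space with fixed neck carries only shear-type data and $\tau_\nu$. The subtle point is to avoid double-counting the twist. The Fenchel--Nielsen twist $\tau_\nu$ of \cref{prop:WP-vol-form-scc} must be normalised so that $T_\nu$ acts by $\tau_\nu\mapsto\tau_\nu+\ell$. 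Also, rotating the crown, i.e.\ changing the spiralling base point, must not already be counted inside $\mathcal{T}_{\mathbb{A}_m}(\cdot|\ell)$. I would handle this by parametrising the crown via shears of a fixed triangulation spiralling into $\nu$, and reading off that a full turn of the crown equals $T_\nu$.
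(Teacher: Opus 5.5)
Your route is the paper's. You split $\Omega^{\mathrm{WP}}_\Sigma$ along the neck via \cref{prop:WP-vol-form-scc}, use $S(X)=S(X')+S(Y)$, integrate $\mathrm{d}\tau_\nu$ over a circle of length $\ell$, and for free necks also integrate over $\ell$. Your derivation of additivity from \cref{lem:ess-int-endpts} is more careful than the paper's bare appeal to \cref{rmk:additive-action}. However, the step you single out as the main obstacle is mis-stated in two ways, and the second one affects the constant.

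First, do not try to prove $\mathrm{MCG}_\Sigma\cong\mathrm{MCG}_{\Sigma'}\times\mathrm{MCG}_{\mathbb{A}_m}\times\langle T_\nu\rangle$. Every pure mapping class fixes $\nu$, so $T_\nu$ is central. Cutting along $\nu$ only gives an extension $1\to\langle T_\nu\rangle\to\mathrm{MCG}_\Sigma\to\mathrm{MCG}_{\Sigma'}\to 1$ (with $\mathrm{MCG}_{\mathbb{A}_m}$ trivial), and this need not split. The extension is all you need. It exhibits $\mathcal{M}_\Sigma$ as a bundle over $\mathcal{M}_{\Sigma'}\times\mathcal{M}_{\mathbb{A}_m}$. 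The twist fibre is $\mathbb{R}/\ell\mathbb{Z}$ over points of $\mathcal{T}_{\Sigma'}$ whose stabiliser in $\mathrm{MCG}_{\Sigma'}$ is trivial.

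Second, your expectation that no half-twist occurs fails when $\Sigma'$ is a one-holed torus, i.e.\ when $\Sigma$ has genus one, a single crown and no cuffs.
\begin{itemize}
\item The elliptic involution $\iota$ of $\Sigma'$ rotates $\nu$ by half its length. It extends, by a half-twist in a collar of $\nu$ and the identity near the tines, to a pure mapping class $\tilde\iota$ of $\Sigma$ with $\tilde\iota^{2}=T_\nu^{\pm1}$. Labelling the tines does not rule it out.
\item $\iota$ acts trivially on $\mathcal{T}_{\Sigma'}$, while $\tilde\iota$ shifts $\tau_\nu$ by $\pm\ell/2$. So over the coarse space $\mathcal{M}_{1,1}(\ell)$ the twist fibre has length $\ell/2$, not $\ell$.
\item Consequently, in this case the stated formula holds only with the orbifold normalisation $V_{1,1}(\ell)=\frac{1}{48}(\ell^2+4\pi^2)$, not Mirzakhani's coarse $\frac{1}{24}(\ell^2+4\pi^2)$. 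This is the normalisation under which the Do--Norbury identities used in \cref{thm:vol-2pi} hold.
\end{itemize}
The paper's proof passes over this point silently too, but your argument should state the convention explicitly.

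Finally, do not cite \cref{thm:finvolflipper} to justify Fubini, because its proof for general $\Sigma$ invokes this very recursion. The integrand $e^{-S}\Omega^{\mathrm{WP}}$ is non-negative, so Tonelli's theorem suffices.
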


\begin{proof}
By combining the additivity property of the Chekhov action with respect to the crown components (see \cref{rmk:additive-action}) and the splitting property of the Weil--Petersson volume form along the neck curves (\cref{eq:vol-form-neck-curve-fix-neck} in \cref{prop:WP-vol-form-scc}), it follows that 
\begin{equation*}
\begin{split}
&V_\Sigma\bigl(\vec{b};\vec{c}\;|\vec{d}\bigr) 
\\\smash{\mathllap{=}}\,& \int_{\mathcal{M}_{\Sigma}(\vec{b};\vec{c}\;|\vec{d})}e^{-S(X)}\Omega^{\text{WP}}_{\Sigma}\bigl(\vec{b};\vec{c}\;|\vec{d}\bigr)
\\\smash{\mathllap{=}}\,& \int_{\mathcal{M}_{\Sigma\setminus\gamma}(\vec{b},\ell_\gamma;\vec{c}\;|\vec{d})\times (\RR/\ell_\gamma\ZZ)} e^{-S(X\setminus\gamma)}\Omega^{\mathrm{WP}}_{\Sigma\setminus\gamma}(\vec{b},\ell_\gamma;\vec{c}\;|\vec{d})\wedge \mathrm{d}\tau_\gamma 
\\\smash{\mathllap{=}}\,& \int_0^\ell \left(\int_{\mathcal{M}_{\Sigma'}\bigl(\vec{b},\ell;\vec{c}\,'|\vec{d}\,'\bigr)\times \mathcal{M}_{\AA_m}(c_1,\dotsc,c_m|\ell)} e^{-S(X')}\cdot e^{-S(Y)}
\right.\\&\quad\quad\quad\quad\quad\quad\quad\quad\quad\quad\quad\quad\quad\quad\quad\left.\cdot\Omega^{\text{WP}}_{\Sigma'}\bigl(\vec{b},\ell;\vec{c}\,'|\vec{d}\,'\bigr) \wedge \Omega^{\text{WP}}_{\AA_m}(c_1,\dotsc,c_m|\ell)\right) \mathrm{d}\tau
\\\smash{\mathllap{=}}\,&
\int_0^\ell \left(\int_{\mathcal{M}_{\Sigma'}\bigl(\vec{b},\ell;\vec{c}\,'|\vec{d}\,'\bigr)} e^{-S(X')}\Omega^{\text{WP}}_{\Sigma'}\bigl(\vec{b},\ell;\vec{c}\,'|\vec{d}\,'\bigr)\right.\\
&\quad\quad\quad\quad\quad\quad\quad\quad\quad\quad\quad\quad
\left.\times \int_{\mathcal{M}_{\AA_m}(c_1,\dotsc,c_m|\ell)} e^{-S(Y)}  \Omega^{\text{WP}}_{\AA_m}(c_1,\dotsc,c_m|\ell)    \right) \mathrm{d}\tau
\\\smash{\mathllap{=}}\,& 
V_{\Sigma'}\bigl(\vec{b},\ell;\vec{c}\,'|\vec{d}\,'\bigr)
\cdot 
V_{\mathbb{A}_m}\bigl(c_1,\ldots,c_m|\ell\bigr) \cdot \ell.
\end{split}
\end{equation*}
Similarly, using \cref{eq:vol-form-neck-curve} in \cref{prop:WP-vol-form-scc},
\begin{equation*}
\begin{split}
&V_\Sigma\bigl(\vec{b};\vec{c}\bigr)
\\\smash{\mathllap{=}}\,& 
\int_{\mathcal{M}_{\Sigma}(\vec{b};\vec{c})}e^{-S(X)}\Omega^{\text{WP}}_{\Sigma}(\vec{b};\vec{c})
\\\smash{\mathllap{=}}\,&  \int_{\mathcal{M}_{\Sigma\setminus\gamma}(\vec{b},\ell_\gamma;\vec{c}\;|\ell_\gamma)\times\RR_{+}\times(\RR/\ell_\gamma\ZZ)}e^{-S(X\setminus\gamma)}\Omega^{\mathrm{WP}}_{\Sigma\setminus\gamma}(\vec{b},\ell_\gamma;\vec{c}\;|\ell_\gamma) \wedge \mathrm{d}\ell_\gamma\wedge \mathrm{d}\tau_\gamma
\\\smash{\mathllap{=}}\,& 
\int_0^\infty \left(\int_0^\ell \left(\int_{\mathcal{M}_{\Sigma'}(\vec{b},\ell;\vec{c}\,')\times\mathcal{M}_{\AA_m}(c_1\dotsc,c_m|\ell)} e^{-S(X')}\cdot e^{-S(Y)} 
\right.\right.\\&\quad\quad\quad\quad\quad\quad\quad\quad\quad\quad\quad\quad\quad\quad\quad\quad\left.\left.\cdot\Omega^{\text{WP}}_{\Sigma'}(\vec{b},\ell;\vec{c}\,')\wedge\Omega^{\text{WP}}_{\AA_m}(c_1\dotsc,c_m|\ell)\right) \mathrm{d}\tau\right) \mathrm{d}\ell
\\\smash{\mathllap{=}}\,&
\int_0^\infty  \left( \int_0^\ell \left( \int_{\mathcal{M}_{\Sigma'}(\vec{b},\ell;\vec{c}\,')} e^{-S(X')}\Omega^{\text{WP}}_{\Sigma'}(\vec{b},\ell;\vec{c}\,') \right.\right.\\
&\quad\quad\quad\quad\quad\quad\quad\quad\quad\quad\quad\quad
\left.\left.  \cdot \int_{\mathcal{M}_{\AA_m}(c_1\dotsc,c_m|\ell)} e^{-S(Y)}\Omega^{\text{WP}}_{\AA_m}(c_1\dotsc,c_m|\ell) \right) \mathrm{d}\tau \right) \mathrm{d}\ell
\\\smash{\mathllap{=}}\,& 
\int_0^\infty 
V_{\Sigma'}\bigl(\vec{b},\ell;\vec{c}\,'\bigr)
\cdot
V_{\mathbb{A}_m}\bigl(c_1,\dots,c_m|\ell\bigr) 
\,\cdot \ell\;\mathrm{d}\ell.
\end{split}
\end{equation*}
\end{proof}

\subsection{Disk recursion}
Cutting out a disk from a flippered surface along an essential arc produces a flippered surface with the same number of necks, but fewer flippers/tines, as well as a disk. The following result indicates how their volumes can be related.

\begin{theorem}
\label{thm:disk-recursion}
Consider $\Sigma=\Sigma'\cup \mathbb{D}_n$ (see \cref{fig:diskrecursion}) with $\vec{c}:=(\vec{c}\,',c_1,\ldots,c_n)$, then
\begin{align}
V_{\Sigma}\bigl(\vec{b};\vec{c}\;|\vec{d}\bigr) 
&= \frac{1}{2} \int_0^\infty V_{\Sigma'}\bigl(\vec{b};\vec{c}\,',\ell|\vec{d}\bigr)\cdot
V_{\DD_{n+1}}\bigl(c_1,\ldots,c_n,\ell\bigr) \, \mathrm{d}\cosh\ell\text{, and }\\
V_{\Sigma}\bigl(\vec{b};\vec{c}\bigr) 
&= \frac{1}{2} \int_0^\infty V_{\Sigma'}\bigl(\vec{b};\vec{c}\,',\ell\bigr)\cdot
V_{\DD_{n+1}}\bigl(c_1,\ldots,c_n,\ell\bigr) \, \mathrm{d}\cosh\ell.
\end{align}
\end{theorem}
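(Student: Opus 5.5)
Let $\eta$ denote the essential arc along which $\Sigma$ is Nielsen cut into $\Sigma'$ and $\DD_{n+1}$ (\cref{fig:diskrecursion}). The plan is to integrate over the length $\ell=\ell_\eta$ and factor the integrand, in direct analogy with the proof of \cref{prop:neck-recursion}. The key structural point is that $\eta$ cuts off a disk containing exactly the flippers/tines $q_1,\dots,q_n$. Hence the isotopy class of $\eta$ is determined by the labelled punctures, and every pure mapping class of $\Sigma$ fixes it. Consequently $\mathcal{M}_\Sigma(\vec b;\vec c)$ is parametrised by the triple $(X',Y,\ell)\in \mathcal{M}_{\Sigma'}(\vec b;\vec c\,',\ell)\times\mathcal{M}_{\DD_{n+1}}(c_1,\dots,c_n,\ell)\times(0,\infty)$ via $X\mapsto X\|\eta$. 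Unlike a closed curve, an arc carries no twist parameter: regluing is canonical, because orthogeodesic endpoints are matched at the feet of the straps. This parametrisation is justified by \cref{prop:orthogeod-coord}, taking a maximal arc collection containing $\eta$. The map is a bijection onto the product, and it inherits the same statement with $|\vec d$ appended, since the necks lie in $\Sigma'$ and are untouched.

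Next I would pull back the volume form. \cref{prop:top-rec-vol-form-orthogeod} gives
\[
\Omega^{\mathrm{WP}}_\Sigma(\vec b;\vec c)=(\|\eta)^*\bigl(\Omega^{\mathrm{WP}}_{\Sigma'}(\vec b;\vec c\,',\ell)\wedge\Omega^{\mathrm{WP}}_{\DD_{n+1}}(c_1,\dots,c_n,\ell)\bigr)\wedge\coth(\ell/2)\,\mathrm{d}\ell ,
\]
and likewise with $|\vec d$. For the action, \cref{prop:action-arch-arch} gives
\[
e^{-S(X)}=e^{-S(X')}e^{-S(Y)}\sinh^2(\ell/2),
\]
using additivity of $S$ over components, as in \cref{rmk:additive-action}. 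Here the renormalising term $\log\cosh(c_i/2)$ in \cref{defn: action for flippered surface} is already built into $S(X\|\eta)$ for the two new straps of length $\ell$, which is exactly how \cref{prop:action-arch-arch} is phrased.

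Multiplying the two factors gives the density $\sinh^2(\ell/2)\coth(\ell/2)=\tfrac12\sinh\ell$, so the measure becomes $\tfrac12\,\mathrm{d}\cosh\ell$. Fubini/Tonelli applies because the integrand is positive and the volumes are finite (\cref{thm:finvolflipper}). Carrying out the integral over $X'$ and $Y$ for each fixed $\ell$ then yields
\[
V_\Sigma=\tfrac12\int_0^\infty V_{\Sigma'}\,V_{\DD_{n+1}}\,\mathrm{d}\cosh\ell ,
\]
for both the version with fixed neck lengths and the version without.

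The main obstacle I expect is the first step: checking that the cutting map is a genuine bijection of moduli spaces rather than a finite cover. This needs the full pure mapping class group of $\Sigma$ to be the product of those of $\Sigma'$ and $\DD_{n+1}$, with no residual symmetry such as a half-twist exchanging the two sides of $\eta$. Here I would argue that $\mathrm{MCG}_{\DD_{n+1}}$ is trivial and that a mapping class fixing $\eta$ and all punctures is determined by its restriction to $\Sigma'$. A secondary issue is that the cut is defined on the stratum where the new straps have positive length; since $\ell>0$ always holds, $\ell$ ranges over all of $(0,\infty)$ and no boundary strata contribute.
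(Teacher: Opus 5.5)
Your proposal is correct and is essentially the paper's own proof. Both arguments use the mapping-class-group invariance of $\eta$ and combine the action identity of \cref{prop:action-arch-arch} with the volume-form identity of \cref{prop:top-rec-vol-form-orthogeod}. Both then simplify $\sinh^2(\ell/2)\coth(\ell/2)\,\mathrm{d}\ell=\tfrac12\,\mathrm{d}\cosh\ell$ and factor the integral over $\mathcal{M}_{\Sigma'}\times\mathcal{M}_{\DD_{n+1}}$ for each fixed $\ell$.

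One small caution concerns the interchange of integrals. Justify it by Tonelli alone, since the integrand is nonnegative, rather than citing \cref{thm:finvolflipper}. The paper's finiteness proof for $\DD_n$ goes through \cref{thm:vol-n-gon-1-int}, which itself relies on this recursion, so that citation would be circular.
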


\begin{proof}
Let $\eta$ be the arc on $\Sigma$ such that $\Sigma\setminus \eta = \Sigma' \sqcup \DD_{n+1}$. Observe that $\eta$ is invariant
under the action of the mapping class group, since all boundary punctures are fixed. Then, we can apply both the recursion of the Chekhov action (\cref{prop:action-arch-arch}) and the recursion of the Weil--Petersson volume form (\cref{prop:top-rec-vol-form-orthogeod}) with respect to $\eta \subset \Sigma$, and write
\begin{align*}
&V_{\Sigma}\bigl(\vec{b};\vec{c}\;|\vec{d}\bigr)  \\\smash{\mathllap{=}}\,& \int_{\mathcal{M}_{\Sigma}(\vec{b};\vec{c}\;|\vec{d})}e^{-S(X)}\Omega^{\text{WP}}_{\Sigma}(\vec{b};\vec{c}\;|\vec{d})
\\\smash{\mathllap{=}}\,&  \int_{\mathcal{M}_{\Sigma\setminus \eta}(\vec{b};\vec{c}_\eta|\vec{d})\times\RR_+} e^{-S(X\|\eta)}\cdot \sinh^2{(\ell_\eta/2)}\cdot \Omega^{\text{WP}}_{\Sigma\setminus \eta}(\vec{b};\vec{c}_\eta|\vec{d}) \wedge \coth(\ell_\eta/2)\mathrm{d}\ell_\eta
\\\smash{\mathllap{=}}\,&  \frac{1}{2}  \int_{\mathcal{M}_{\Sigma\setminus \eta}(\vec{b};\vec{c}_\eta|\vec{d})\times\RR_+} e^{-S(X\|\eta)} \Omega^{\text{WP}}_{\Sigma\setminus \eta}(\vec{b};\vec{c}_\eta|\vec{d}) \wedge \mathrm{d}\cosh\ell_\eta
\\\smash{\mathllap{=}}\,& \frac{1}{2} \int_{0}^\infty \left( \int_{\mathcal{M}_{\Sigma'}(\vec{b};\vec{c}\,',\ell|\vec{d})\times\mathcal{M}_{\DD_{n+1}}(c_1,\dotsc,c_n,\ell)} e^{-S(X')}\cdot e^{-S(Y)}\right.\\
&\quad\quad\quad\quad\quad\quad\quad\quad\quad\quad\quad\quad\quad\quad\left.
\cdot \Omega^{\text{WP}}_{\Sigma'}(\vec{b};\vec{c}\,',\ell|\vec{c})\wedge \Omega^{\text{WP}}_{\DD_{n+1}}(c_1,\dotsc,c_n,\ell) \right) \mathrm{d}\cosh\ell
\\\smash{\mathllap{=}}\,&  \frac{1}{2}\int_{0}^\infty \left( \int_{\mathcal{M}_{\Sigma'}(\vec{b};\vec{c}\,',\ell|\vec{d})} e^{-S(X')}\Omega^{\text{WP}}_{\Sigma'}(\vec{b};\vec{c}\,',\ell|\vec{c})\right.\\
&\quad\quad\quad\quad\quad\quad\quad\quad\quad\quad
\left.\cdot \int_{\mathcal{M}_{\DD_{n+1}}(c_1,\dotsc,c_n,\ell)} e^{-S(Y)}\Omega^{\text{WP}}_{\DD_{n+1}}(c_1,\dotsc,c_n,\ell) \right) \mathrm{d}\cosh \ell
\\\smash{\mathllap{=}}\,&  \frac{1}{2} \int_0^\infty V_{\Sigma'}\bigl(\vec{b};\vec{c}\,',\ell|\vec{d}\bigr)\cdot
V_{\DD_{n+1}}\bigl(c_1,\ldots,c_n,\ell\bigr) \, \mathrm{d}\cosh\ell.
\end{align*}
The proof for $V_{\Sigma}\bigl(\vec{b};\vec{c}\bigr)$ is identical.
\end{proof}

\begin{proposition}
Let $\Sigma$ be a surface and $\dot{\Sigma}(1,\dotsc,n)$ be the puncture-double of $\Sigma$ at $q_1\dotsc,q_n$. Then the Mirzakhani volume of the moduli space of crowned surfaces $\mathcal{M}_{\dot{\Sigma}(1,\dotsc,n)}(\vec{b};\vec{0})$ equals
\[
V_{\dot{\Sigma}(1,\dotsc,n)}(\vec{b};\vec{0}) = \int_{(1,\infty)^n} \frac{V_{\Sigma}\bigl(\vec{b};\text{arccosh}\;\vec{t}\bigr)}{(1+t_1)\dotsc(1+t_n)} \, \mathrm{d} t_1\wedge\dotsc\wedge\mathrm{d}t_n.
\]
\end{proposition}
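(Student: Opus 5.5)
We apply the disk recursion (\cref{thm:disk-recursion}) once at each doubled puncture. Puncture-doubling at $q_j$ means that $\dot{\Sigma}(1,\dotsc,n)$ contains, near each former puncture $q_j$, two adjacent tines separated by a short arch. There is an essential arc $\eta_j$ that cuts off a $\mathbb{D}_3$ containing these two tines. Nielsen cutting along $\eta_j$ splits off a flippered triangle with strap lengths $(0,0,\ell_j)$. What remains is a copy of $\Sigma$ in which the $j$-th boundary puncture has become a flipper of strap length $\ell_j=\ell(\eta_j)$.

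The arcs $\eta_1,\dotsc,\eta_n$ are pairwise disjoint. They are fixed by the pure mapping class group, since all boundary punctures are fixed, so the argument in the proof of \cref{thm:disk-recursion} applies to each of them. Iterating $n$ times, or equivalently combining \cref{prop:action-arch-arch} and \cref{prop:top-rec-vol-form-orthogeod} simultaneously for the $n$ disjoint cuts, gives
\[
V_{\dot{\Sigma}(1,\dotsc,n)}(\vec{b};\vec{0})
=\frac{1}{2^n}\int_{(0,\infty)^n} V_{\Sigma}(\vec{b};\ell_1,\dotsc,\ell_n)\prod_{j=1}^n V_{\DD_3}(0,0,\ell_j)\,\mathrm{d}\cosh\ell_1\cdots\mathrm{d}\cosh\ell_n .
\]
Each $\mathbb{D}_3$ piece has strap vector $(0,0,\ell_j)$. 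At each step the $\mathbb{D}_3$ factor decouples because the Chekhov action is additive and the Weil--Petersson form splits as a wedge product, exactly as in the single-cut proof.

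Next we evaluate the triangle volume. By \cref{thm:vol-fli-triang} with $c_1=c_2=0$ and $c_3=\ell$,
\[
D=1+1+\cosh^2\ell+2\cosh\ell-1=(1+\cosh\ell)^2,
\qquad
V_{\DD_3}(0,0,\ell)=\frac{2}{1+\cosh\ell}.
\]
The factor $2$ cancels the $\tfrac12$ from each application of the disk recursion. Substituting $t_j=\cosh\ell_j$ then yields exactly the integrand $V_\Sigma(\vec{b};\operatorname{arccosh}\vec t)/\prod_j(1+t_j)$ over $(1,\infty)^n$.

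The main obstacle is justifying the iterated application when the pieces have different types. After the first cut, the remaining surface has one flipper with positive strap $\ell_1$ and still contains tines, so the next disk recursion must be applied on a stratum with mixed $\vec{c}$. This is allowed, because \cref{thm:disk-recursion} holds for arbitrary $\vec{c}$. We must also check that each $\eta_j$ remains an essential arc cutting off a $\mathbb{D}_3$ in the partially cut surface. This holds because the cut regions are disjoint neighbourhoods of distinct punctures.

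Finally, the boundary case $\Sigma=\mathbb{D}_3$ needs care. There the complementary piece would itself be small, but the same computation still applies provided $\dot{\Sigma}$ has enough tines for $\eta_j$ to be essential. Finiteness of all the integrals involved follows from \cref{thm:finvolflipper}, which makes the Fubini-type interchange in the iterated recursion legitimate.
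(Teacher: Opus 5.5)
Your proposal is correct and follows the paper's own proof: the paper also applies the disk recursion $n$ times, once along each arc cutting off a doubled puncture together with its twin. It then inserts $V_{\mathbb{D}_3}(\ell,0,0)=2/(1+\cosh\ell)$, so each factor $\tfrac12$ cancels, and substitutes $t_i=\cosh\ell_i$.
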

\begin{proof}
Consider the collection of $n$ arcs on $\dot{\Sigma}(1,\dotsc,n)$, where each arc bounds $q_i$ together with its double for $i=1,\dotsc,n$. Let $\ell_i$ denotes the hyperbolic length of the $i$-th arc. Then apply the disk recursion ($n$ times) to the arc collection together with the volume expression  $V_{\DD_3}(\ell,0,0) = \frac{2}{1+\cosh\ell}$ and apply the change of variables $t_i=\cosh \ell_i$ for $i=1,\dotsc,n$.
\end{proof}

\begin{example}
Combining the above result with volumes of crowned hyperbolic surfaces obtained in \cite{chekhov2024} and \cite{HT25}, we obtain non-trivial integral identities. For example, for $\Sigma = \DD_3$, by \cite[Example~6.2.2]{chekhov2024},
\begin{equation}
\int_{(1,\infty)^3} \frac{2\, \mathrm{d}t_1\wedge\mathrm{d}t_2\wedge\mathrm{d}t_3}{(1+t_1)(1+t_2)(1+t_3)\sqrt{t_1^2+t_2^2+t_3^2+2t_1t_2t_3-1}}  = \frac{\pi^2}{3}.
\end{equation}
\end{example}
\medskip

\subsection{Crown recursion}
\label{subsec:crown-rec}

\begin{definition}[semi-ideal arcs]
\label{def:semi-ideal-arc}
We refer to an embedded semi-open arc on $\Sigma$ as \textit{semi-ideal} if one endpoint lies on $\partial \Sigma$ while the other ``endpoint'' tends to an interior puncture of $\Sigma$. We identify semi-ideal arcs up to isotopy through semi-ideal arcs, allowing the boundary endpoint to slide along the arch of $\partial\Sigma$. In this paper, we only consider the case $\Sigma=\AA_n$.
\end{definition}

Consider two distinct semi-ideal arcs on $\Sigma = \AA_n$ that share an endpoint at the interior puncture of $\AA_n$.  
Suppose that the other endpoints of the arcs are placed at the arches $\alpha_i, \alpha_j \subset\partial\Sigma$. Cut $\Sigma$ along the semi-ideal arcs and then reglue the closure of each component along the added semi-open boundary intervals in the natural way. This produces two surfaces $\Sigma', \Sigma''$ such that $\Sigma' \simeq \AA_{|i-j|}$ and $\Sigma'' \simeq \AA_{n-|i-j|}$ (see \cref{fig:crownrecursion}). Denoting by $\Sigma'^\circ, \Sigma''^\circ$ the  surfaces before gluing, we obtain $\Sigma = \Sigma'^\circ \cup \Sigma''^\circ$.

Such topological decompositions lead to the following recursion for the Mirzakhani volumes of the moduli spaces of $n$-crowns:

\begin{theorem} 
\label{thm:crown-rec}
Consider $\Sigma  = \AA_{n} = \AA_{n-m}^\circ\cup \AA_{m}^\circ$ (see \cref{fig:crownrecursion})  with $\vec{c}:=(\vec{c}\,',c_1,\ldots,c_m)$, then

\begin{align}
V_\Sigma\bigl(\vec{c}\;|d\bigr) 
&=
\int_\RR V_{\AA_{n-m}}\bigl(\vec{c}\,'|\;|d-\ell|\bigr)
\cdot 
V_{\mathbb{A}_m}\bigl(c_1,\ldots,c_m|\;|\ell|\bigr) \;\mathrm{d}\ell
\text{, and}\\
V_\Sigma\bigl(\vec{c}\bigr) 
&= 
2\cdot V_{\AA_{n-m}}\bigl(\vec{c}\,'\bigr) \cdot V_{\AA_{m}}\bigl(c_1,\ldots,c_m\bigr).
\end{align}
\end{theorem}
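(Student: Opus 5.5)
The plan is to realise the topological decomposition of \cref{fig:crownrecursion} geometrically and check that the action and the Weil--Petersson volume form both factor compatibly. Fix $X\in\mathcal{T}_{\AA_n}(\vec{c}\;|d)$ and work on $\mathrm{Clip}(X)$, so that all flippers become tines. There the neck $\nu$ has length $d$. Choose the two semi-ideal arcs from $\alpha_i$ and $\alpha_j$ to the interior puncture so that they geometrise as geodesics spiralling onto the neck/cuff. Pick an ideal triangulation $\triangle$ of $\dot{\AA}_n(\mathcal{A})$ that contains the two ideal arcs obtained by pushing the boundary endpoints of the semi-ideal arcs to a neighbouring tine. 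Cutting along these two arcs and regluing produces triangulations $\triangle'$ of $\AA_{n-m}$ and $\triangle''$ of $\AA_m$.

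\textbf{Step 1 (shearing bookkeeping).} The shears on $\triangle$ split as the shears of $\triangle'$ and of $\triangle''$. The only exception is the two cut arcs, each of which has a shear of $X$ that becomes one shear on each of the reglued pieces. Each neck length is the sum of the shears of the arcs incident to the puncture, so I expect $d=\ell'+\ell''$. Here $\ell',\ell''$ are the signed neck parameters of the two pieces, and their signs record the spiralling direction described in the caption of \cref{fig:crownrecursion}. This explains the appearance of $|\ell|$ and $|d-\ell|$. By \cref{eq:vol-form-crowned-surf}, this change of variables shows that $\bigwedge \mathrm{d}s$ on $\triangle$ equals the wedge of the corresponding forms for the pieces, times $\mathrm{d}\ell$, up to sign. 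Dividing by $\mu_{\mathcal{A}}$ as in \cref{eq:vol-form-teich-fixed-hol}, and then by $\mathrm{d}d$ as in \cref{eq:vol-form-teich-fixed-hol-neck}, gives
\[
\Omega^{\mathrm{WP}}_{\AA_n}(\vec{c}\;|d)=\Omega^{\mathrm{WP}}_{\AA_{n-m}}(\vec{c}\,'|\,|d-\ell|)\wedge\Omega^{\mathrm{WP}}_{\AA_m}(c_1,\ldots,c_m|\,|\ell|)\wedge \mathrm{d}\ell .
\]
The flipper factors pass through unchanged because the straps are untouched.

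\textbf{Step 2 (action).} I will show $S(X)=S(X')+S(X'')$. This uses the essential-interval description from \cref{prop: essential intervals} and \cref{lem:ess-int-endpts}: on the universal cover, the essential interval of each arch is bookended by projections of ideal endpoints of the adjacent arches. Cutting and regluing along semi-ideal arcs that end at the puncture does not change adjacency of arches except at $\alpha_i$ and $\alpha_j$. At those two arches the interval splits into two pieces that are exactly the essential intervals on the two new arches. The rationale is that the spiralling arcs meet the arches at points lying outside the projections determining the bookends. The $\log\cosh(c_k/2)$ terms distribute trivially.

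\textbf{Step 3 (integration).} The mapping class group of $\AA_n$ is generated by the Dehn twist about the neck, and it acts trivially on the isotopy class of the decomposition. Hence $\mathcal{M}_{\AA_n}(\vec{c}\;|d)$ is parametrised by the product of the two moduli spaces of the pieces together with $\ell\in\RR$. This gives the first formula by Fubini. For the second, integrate over $d\in(0,\infty)$, where $V_{\AA_n}(\vec{c})=\int_0^\infty V_{\AA_n}(\vec{c}\;|d)\,\mathrm{d}d$ by the neck-free analogue of \cref{eq:vol-form-teich-fixed-hol-neck}. Substituting $u=d-\ell$, the region $\{d>0,\ \ell\in\RR\}$ maps onto the pairs $(u,\ell)$ with $u+\ell>0$. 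This region has half the measure of $\RR^2$, and the integrand is even in each variable. Therefore its integral equals
\[
\tfrac12\int_{\RR^2}=\tfrac12\cdot 4\int_0^\infty\!\!\int_0^\infty ,
\]
which yields the factor $2$.

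The main obstacle is Step 1. I must verify precisely that $\ell$ ranges over all of $\RR$ with the sign/spiralling convention, and that the passage between the two pieces' triangulations (flipping a spiralling direction) preserves the volume forms. I would handle this by showing that the crowned-surface WP form is invariant under reversing the spiral direction. To do that, I would compute the one-tine case $\AA_1$ explicitly via \cref{prop:1-crown-vol-form}, checking that $V_{\AA_1}(c|d)$ is even in $d$.
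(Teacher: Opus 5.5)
Your core argument is the paper's. On the fan triangulation of the clipped crown, the cut-and-reglue map is just $\natural(s_1,\dots,s_{2n})=((s_1,\dots,s_{2m}),(s_{2m+1},\dots,s_{2n}))$. Hence $d=\ell'+\ell''$, the form splits off a $\mathrm{d}\ell$, $S$ is additive by the essential-interval argument, and Fubini gives the convolution.

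For the second formula the routes differ. The paper works on the enhanced double cover $\mathcal{M}^{\pm}_{\mathbb{A}_n}(\vec c)$, where $\natural$ is a diffeomorphism onto $\mathcal{M}^{\pm}_{\mathbb{A}_{n-m}}\times\mathcal{M}^{\pm}_{\mathbb{A}_m}$, and counts $\tfrac12\cdot2\cdot2$. You instead integrate the first formula over $d>0$. That is legitimate and slightly more economical: it needs only Tonelli and evenness, and no form on $\mathcal{M}^{\pm}(\vec c)$. However, the reason is not that $\{u+\ell>0\}$ has ``half the measure'' of $\mathbb{R}^2$. It is that $(u,\ell)\mapsto(-u,-\ell)$ preserves the integrand and swaps $\{u+\ell>0\}$ with $\{u+\ell<0\}$.

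Several points need tightening.

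\emph{Cutting along ideal edges is not well defined.} Matching two bi-infinite sides leaves a free translation parameter, namely the shear of the new edge. Assigning each cut edge's shear to one piece does reproduce $\natural$, but then Step~2 describes a different picture. Cut along the semi-ideal arcs themselves instead. A lift of each is the perpendicular from the cuff endpoint to the arch inside a fan triangle, i.e.\ that triangle's symmetry axis. So the half-triangles reglue to ideal triangles, and all fan shears restrict verbatim.

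\emph{Step~2.} The feet lie \emph{inside} $\mathbb{I}_i$ and $\mathbb{I}_j$, since they are bases of essential orthojectories (\cref{lem:ess-int-endpts}). Each new essential interval is a piece of $\mathbb{I}_i$ joined to a piece of $\mathbb{I}_j$; only the total length is preserved.

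\emph{Mapping class group.} The mapping class group of $\mathbb{A}_n$ is trivial, because the neck bounds a once-punctured disk. So no quotient issue arises.

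\emph{Spiral reversal.} This is the substantive gap: your $\mathbb{A}_1$ check concerns a zero-dimensional fibre. It does not control the form on $\mathcal{M}_{\mathbb{A}_m}(\cdot\,|\,\ell)$ with $\ell<0$ for $m\geq2$. The paper handles this by the convention $\Omega^{\mathrm{WP}}(\vec c\,|-d):=\Omega^{\mathrm{WP}}(\vec c\,|d)$ on the enhanced space.

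Your idea does suffice if you cut along all $n$ semi-ideal arcs at once. Set $t_i=s_{2i-1}+s_{2i}$; the computation in \cref{prop:1-crown-vol-form} is valid for every real $t_i$. So in fan coordinates the form on $\mathcal{T}_{\mathbb{A}_k}(\vec c\,|\,\ell)$ is $\pm\prod_i\Omega^{\mathrm{WP}}_{\mathbb{A}_1}(c_i|t_i)\,\mathrm{d}t_1\wedge\cdots\wedge\mathrm{d}t_{k-1}$, with $t\mapsto\Omega^{\mathrm{WP}}_{\mathbb{A}_1}(c|t)$ even. Meanwhile $S=\sum_i S_{\mathbb{A}_1}(c_i|\,|t_i|)$. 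Hence the fibre volume at signed neck $\ell$, in either spiralling's coordinates, is the $k$-fold convolution of the even functions $V_{\mathbb{A}_1}(c_i|\,|t|)$. The recursion for general $m$ then follows by associativity of convolution.
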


\begin{proof}
Consider two semi-ideal arcs on $\Sigma$ with endpoints on $\alpha_1,\alpha_{m+1} \subset \partial \Sigma$. Given a flippered surface $X=(\Sigma,h)$, there is a unique representative of each arc which is perpendicular to $\partial X$ at one of its endpoints and, if $p_1$ is geometrised as a flare, spirals towards the cuff of $X$ (in the arbitrary chosen direction). Upon cutting and regluing, we obtain two hyperbolic metrics $X'=(\Sigma',h')$ and $X''=(\Sigma'',h'')$. These metrics are (generically) incomplete, and we consider  the Nielsen extensions of their metric completions, which we also denote by $X',X''$. 

It is possible for spiraling directions to change after cutting and regluing (see \cref{fig:crownrecursion}). To see this, consider a triangulation $\triangle$ of $\Sigma=\AA_n$ where every arc joins a tine/flipper to the unique puncture on $\Sigma$. A change in spiraling happens precisely when the sum of the shearing parameters for a subsequence of adjacent arcs have a different sign to the total. Given this nicety, it is more convenient to work with \emph{enhanced} \cite{liu2009quantum, bonahon2007representations} or \emph{holed} \cite{fock1997dual, fock2007dual} moduli spaces $\mathcal{M}_\Sigma^{\pm}$, which parametrize pairs consisting of a surface $X\in\mathcal{M}_\Sigma$ and a choice of orientation for all cuffs. In particular, if $d>0$, the moduli space $\mathcal{M}^{\pm}_\Sigma(\vec{c}\;|d)$ has two components, which we denote by $\mathcal{M}_\Sigma(\vec{c}\;|d)$ and $\mathcal{M}_\Sigma(\vec{c}\;|-d)$,  with the orientation in agreement with the one in \cref{subsec:shears}. 

The process described in the first paragraph induces a map 
\begin{equation}
\natural: \mathcal{M}^{\pm}_\Sigma(\vec{c}\;|d) \to \mathcal{M}^{\pm}_{\Sigma''}(c_1,\dotsc,c_m)\times \mathcal{M}^{\pm}_{\Sigma'}(\vec{c}\,'),
\end{equation}
and we now describe it terms of shearing coordinates. Suppose that all strap lengths in $\vec{c}$ are strictly positive (degenerate cases are similar). Consider the puncture-double surface $\dot{\Sigma}(1,\dotsc,n)$ (see \cref{not:doubling}) and its ideal triangulation by $2n$ ideal arcs emanating from the interior puncture, and let $\{s_1,\dotsc,s_{2n}\}$ be the associated shearing coordinates. 
By \cref{eq:1-flip-crown-system}, the shears satisfy: 
\begin{equation}
\label{eq:n-crown-strap-length}
(1+e^{s_{2i-1}})(1+e^{-s_{2i}}) = \cosh^2(c_i/2)
\end{equation}
for $i=1,\dotsc,n$. Furthermore, $\sum_{j=1}^{2n}s_j=\pm d$, depending on the component of $\mathcal{M}^{\pm}_\Sigma(\vec{c}\;|d)$.

Consider the ideal triangulations of $\dot{\Sigma}'(m+1,\dotsc,n)$ and $\dot{\Sigma}''(1,\dotsc,m)$ given by the restrictions of the chosen ideal triangulation of $\dot{\Sigma}(1,\dotsc,n)$.
Then in the associated shearing coordinates, the map $\natural$ is given by
\begin{equation}
\natural(s_1,\dotsc,s_{2n}) =  (( s_{1},\dotsc,  s_{2m}),( s_{2m+1},\dotsc, s_{2n})).
\end{equation}
It follows that $\natural$ is a real-analytic embedding, and its image is given by the union of all subsets $\mathcal{M}_{\Sigma''}(c_1,\dotsc,c_m|d'')\times\mathcal{M}_{\Sigma'}(\vec{c}\,'|d')\subset \mathcal{M}_{\Sigma''}(c_1,\dotsc,c_m)\times\mathcal{M}_{\Sigma'}(\vec{c}\,')$  such that $d'+d''=  d$.

It also follows that $\natural$ satisfies
\begin{equation}
\Omega^{\text{WP}}_\Sigma(\vec{c}\;|d)  =  (\natural)^*\left(\Omega^{\text{WP}}_{\Sigma'}(\vec{c}\,'|d') \wedge \Omega^{\text{WP}}_{\Sigma''}(c_1,\dotsc,c_m|d'')\right)\wedge \mathrm{d}d'', 
\end{equation}
where we let $ \Omega^{\text{WP}}_\Sigma(\vec{c}\;|-d) := \Omega^{\text{WP}}_\Sigma(\vec{c}\;|d)$. Equivalently, 
\begin{equation}
\Omega^{\text{WP}}_\Sigma(\vec{c}\;|d)  =  (\natural)^*\left(\Omega^{\text{WP}}_{\Sigma'}(\vec{c}\,'|\; |d-\ell|) \wedge \Omega^{\text{WP}}_{\Sigma''}(c_1,\dotsc,c_m|\;|\ell|)\right)\wedge \mathrm{d}\ell.
\end{equation}

Since the geometric operation underpinning $\natural$ always cuts $\AA_n$ and reassembles it in such a way that a severed boundary arch is reglued to have the same (in fact unique) complete geodesic extension, the extremal orthojectories defining essential intervals remain unchanged for each arch (although the process cuts two of the essential intervals are somewhere in the interior before reassembling them). Therefore, $S(X)= S(X')+S(X'')$. Hence,
\begin{align*}
&V_\Sigma\bigl(\vec{c}\;|d\bigr) 
\\\smash{\mathllap{=}}\,&\int_{\mathcal{M}_\Sigma\bigl(\vec{c}\;|d\bigr)} e^{-S(X)}\Omega^{\text{WP}}_\Sigma\bigl(\vec{c}\;|d\bigr)
\\\smash{\mathllap{=}}\,& \int_{\mathcal{M}_{\Sigma'}(\vec{c}\,'|\;|d-\ell|)\times\mathcal{M}_{\Sigma''}(c_1,\dotsc,c_m|\;|\ell|)\times\RR} e^{-S(X')}\cdot e^{-S(X'')}
\\
&\quad\quad\quad\quad\quad\quad\quad\quad\quad\quad\quad\quad\quad\quad\cdot\Omega^{\text{WP}}_{\Sigma'}(\vec{c}\,'|\;|d-\ell|) \wedge \Omega^{\text{WP}}_{\Sigma''}(c_1,\dotsc,c_m|\;|\ell|)\wedge \mathrm{d}\ell
\\\smash{\mathllap{=}}\,& \int_{\RR} \left( \int_{\mathcal{M}_{\Sigma'}(\vec{c}\,'|\;|d-\ell|)} e^{-S(X')} \Omega^{\text{WP}}_{\Sigma'}(\vec{c}\,'|\;|d-\ell|) \right.\\
&\quad\quad\quad\quad\quad\quad\quad\quad\quad\quad\quad
\left. \cdot \int_{\mathcal{M}_{\Sigma''}(c_1,\dotsc,c_m|\;|\ell|)} e^{-S(X'')}  \Omega^{\text{WP}}_{\Sigma''}(c_1,\dotsc,c_m|\;|\ell|)\right) \mathrm{d}\ell
\\\smash{\mathllap{=}}\,& \int_\RR V_{\AA_{n-m}}\bigl(\vec{c}\,'|\;|d-\ell\bigr|)
\cdot 
V_{\mathbb{A}_m}\bigl(c_1,\ldots,c_m|\;|\ell|\bigr) \;\mathrm{d}\ell.
\end{align*}
For unfixed neck length case, note that the map 
\[\natural: \mathcal{M}^{\pm}_\Sigma(\vec{c}) \to \mathcal{M}^{\pm}_{\Sigma''}(c_1,\dotsc,c_m)\times \mathcal{M}^{\pm}_{\Sigma'}(\vec{c}\,')
\]
is a real-analytic diffeomorphism. Let $\Omega^{\text{WP},\pm}_\Sigma(\vec{c})$ be the top degree form on $\mathcal{M}^{\pm}_\Sigma(\vec{c})$ obtained as the pullback under the branched cover $\mathcal{M}^{\pm}_\Sigma(\vec{c}) \to \mathcal{M}_\Sigma(\vec{c})$. Then
\[
\Omega^{\text{WP},\pm}_\Sigma(\vec{c}) = (\natural)^*\left(\Omega^{\text{WP},\pm}_{\Sigma'}(\vec{c}\,') \wedge \Omega^{\text{WP},\pm}_{\Sigma''}(c_1,\dotsc,c_m)\right)
\]
away from a subset of measure zero, and therefore
\begin{align*}
&V_{\Sigma}(\vec{c})
\\\smash{\mathllap{=}}\,&\int_{\mathcal{M}_\Sigma(\vec{c})} e^{-S(X)}\Omega^{\text{WP}}_\Sigma(\vec{c})
\\\smash{\mathllap{=}}\,& \frac{1}{2}\int_{\mathcal{M}^{\pm}_\Sigma(\vec{c})} e^{-S(X)}\Omega^{\text{WP},\pm}_\Sigma(\vec{c})
\\\smash{\mathllap{=}}\,& \frac{1}{2} \int_{\mathcal{M}^{\pm}_{\Sigma'}(\vec{c}\,')\times \mathcal{M}^{\pm}_{\Sigma''}(c_1,\dotsc,c_m)} e^{-S(X')}\cdot e^{-S(X'')} \Omega^{\text{WP},\pm}_{\Sigma'}(\vec{c}\,') \wedge \Omega^{\text{WP},\pm}_{\Sigma''}(c_1,\dotsc,c_m)
\\\smash{\mathllap{=}}\,&  \frac{1}{2} \int_{\mathcal{M}^{\pm}_{\Sigma'}(\vec{c}\,')} e^{-S(X')}\Omega^{\text{WP},\pm}_{\Sigma'}(\vec{c}\,')\cdot \int_{\mathcal{M}^{\pm}_{\Sigma''}(c_1,\dotsc,c_m)}e^{-S(X'')} \Omega^{\text{WP},\pm}_{\Sigma''}(c_1,\dotsc,c_m)
\\\smash{\mathllap{=}}\,&  2 \int_{\mathcal{M}_{\Sigma'}(\vec{c}\,')} e^{-S(X')}\Omega^{\text{WP}}_{\Sigma'}(\vec{c}\,')\cdot \int_{\mathcal{M}_{\Sigma''}(c_1,\dotsc,c_m)}e^{-S(X'')} \Omega^{\text{WP}}_{\Sigma''}(c_1,\dotsc,c_m)
\\\smash{\mathllap{=}}\,& 2\cdot V_{\AA_{n-m}}\bigl(\vec{c}\,'\bigr) \cdot V_{\AA_{m}}\bigl(c_1,\ldots,c_m\bigr).
\end{align*}
\end{proof}

\subsection{Flippered $n$-gons and the Mehler-Fock transform}
\label{subsec:vol-n-gon-1-int}

The goal of this subsection is to provide a simple
expression for the Mirzakhani volumes $V_{\DD_n}(\vec{c})$ for $n\geqslant 3$ (\cref{thm:vol-n-gon-1-int}). To do that, we first express $V_{\DD_n}(\vec{c})$ as an iterated integral (\cref{eq:vol-flip-n-gon-recurs}) via the following steps:
\begin{itemize}
\item consider a maximal arc collection on $\DD_n$ (\cref{def:gen-triang}), and
\item apply the disk recursion to the decomposition $\DD_n = \DD_3 \cup\dotsc\cup \DD_3$ (\cref{thm:disk-recursion}), and
\item insert the explicit value of $V_{\DD_3}(\vec{c})$ (\cref{thm:vol-fli-triang}).
\end{itemize}
After that, we follow the strategy utilized in the crowned case to compute $V_{\DD_n}(\vec{0})$ in \cite[Theorem~6.1]{HT25}, which is to diagonalize the iterated integration with the Mehler-Fock transform (see the comments in \cite{MO402985} for the relevant discussion). 

For the diagonalization step, we will also require a new ingredient --- a product formula for the conical (Mehler) functions $P_{-1/2+\sqrt{-1}\xi}(x)$ \cite[Section~7.3]{MR350075}. We were unable to find an exact statement of the formula in the literature, and we provide it here together with a proof:

\begin{lemma}[a product formula for conical functions]
\label{prop:prod-formula-con-func}
For any $x,y\geqslant 1$ and $\xi \in \RR$,
\begin{equation}
\label{eq:prod-form-con-func}
\int_1^\infty \frac{P_{-1/2+\sqrt{-1}\xi}(z)\mathrm{d}z}{\sqrt{x^2+y^2+z^2+2xyz-1}} = \frac{\pi}{\cosh \pi\xi}\cdot P_{-1/2+\sqrt{-1}\xi}(x) P_{-1/2+\sqrt{-1}\xi}(y).
\end{equation}
\end{lemma}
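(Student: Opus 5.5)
The plan is to prove \cref{eq:prod-form-con-func} by turning both sides into Fourier cosine transforms in $\xi$ and comparing the kernels. This uses Mehler's two classical integral representations of conical functions. Write $x=\cosh a$, $y=\cosh b$, $z=\cosh c$ with $a,b,c\geqslant 0$. The two representations are
\begin{align*}
P_{-1/2+\sqrt{-1}\xi}(\cosh c) &= \frac{\sqrt2}{\pi}\int_0^{c}\frac{\cos(\xi t)\,\mathrm{d}t}{\sqrt{\cosh c-\cosh t}},\\
\frac{\pi}{\cosh\pi\xi}\,P_{-1/2+\sqrt{-1}\xi}(\cosh a) &= \sqrt2\int_0^{\infty}\frac{\cos(\xi t)\,\mathrm{d}t}{\sqrt{\cosh a+\cosh t}}.
\end{align*}
The second one is exactly what produces the factor $\pi/\cosh\pi\xi$ on the right-hand side of \cref{eq:prod-form-con-func}, which is why I expect it to be the natural tool.

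\emph{Step 1 (left-hand side).} Insert the first representation for $P_{-1/2+\sqrt{-1}\xi}(z)$ into the left-hand side. Then exchange the order of integration; this is justified by Tonelli applied to absolute values, since $|\cos|\leqslant 1$ and $D^{-1/2}\sim z^{-1}$ while the $t$-integral grows only like $\log z$. This gives
\[
\text{LHS}=\frac{\sqrt2}{\pi}\int_0^\infty \cos(\xi t)\,G(t)\,\mathrm{d}t,
\qquad
G(t):=\int_{\cosh t}^\infty\frac{\mathrm{d}z}{\sqrt{(z-\cosh t)(x^2+y^2+z^2+2xyz-1)}}.
\]

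\emph{Step 2 (right-hand side).} Use the second representation for the $x$-factor and the first for the $y$-factor. The right-hand side then becomes a product of two cosine transforms, namely of $u\mapsto \sqrt2/\sqrt{x+\cosh u}$ and of $v\mapsto (\sqrt2/\pi)\,\mathbf 1_{|v|<b}/\sqrt{y-\cosh v}$, each extended evenly to $\RR$. By the convolution theorem, this product is the cosine transform of their convolution, so
\[
\text{RHS}=\frac{\sqrt2}{\pi}\int_0^\infty\cos(\xi t)\,H(t)\,\mathrm{d}t,
\qquad
H(t):=\int_{-b}^{b}\frac{\mathrm{d}v}{\sqrt{(\cosh a+\cosh(t-v))(\cosh b-\cosh v)}},
\]
up to the normalisation of the even extensions, which I will track explicitly. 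By injectivity of the cosine transform on even integrable functions, it then suffices to prove the kernel identity $G(t)=H(t)$ for all $t\geqslant0$.

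\emph{Step 3 (kernel identity; the main obstacle).} Both $G$ and $H$ are complete elliptic integrals of the first kind: the radicand of $G$ is a cubic in $z$, and $H$ becomes one after the substitution $w=e^{v}$. The plan is to bring each to Legendre normal form and check that the moduli and prefactors agree. The roots of the cubic in $G$ are $\cosh t$ and $-xy\pm\sqrt{(x^2-1)(y^2-1)}=-\cosh(a\mp b)$, so $G$ involves the cross-ratio of $\{\cosh t,\,-\cosh(a+b),\,-\cosh(a-b),\,\infty\}$. Similarly, $H$ involves the cross-ratio of the four roots $\{e^{\pm b}\}\cup\{-e^{t\pm a}\}$. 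I would first check the identity in the degenerate case $t=0$ as a sanity test, and then verify that the two cross-ratios coincide, which by the addition formulas for $\cosh$ amounts to a symmetric-function identity. If that algebra becomes unwieldy, my fallback is a direct geometric argument. In that argument, $\frac{1}{\pi}\int_0^\pi P_\nu\bigl(xy+\sqrt{(x^2-1)(y^2-1)}\cos\varphi\bigr)\,\mathrm{d}\varphi=P_\nu(x)P_\nu(y)$ is the usual spherical-function product formula on $\HH$. The factor $\pi/\cosh\pi\xi$ together with the sign flip $xy\mapsto -xy$ would then be obtained by analytically continuing in $x$ to $-x$ and using the connection formula $P_\nu(-x)$ in terms of $P_\nu(x)$ and $Q_\nu(x)$, with a contour deformation that moves the finite $w$-range to $(1,\infty)$.
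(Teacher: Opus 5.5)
Your route is correct, and it differs from the paper's. The paper's proof evaluates no elliptic integral. It computes $V_{\mathbb{A}_2}(c_1,c_2|d)$ twice:
\begin{itemize}
\item by the disk recursion, cutting $\mathbb{A}_2$ into $\mathbb{A}_1$ and $\mathbb{D}_3$, which gives $\int_1^\infty D^{-1/2}\,V_{\mathbb{A}_1}(\ell|d)\,\mathrm{d}z$ with $z=\cosh\ell$ and $D=x^2+y^2+z^2+2xyz-1$;
\item by the crown recursion, which gives the convolution $V_{\mathbb{A}_1}(c_1|\cdot)*V_{\mathbb{A}_1}(c_2|\cdot)$.
\end{itemize}
It then Fourier transforms in $d$. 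In effect the paper uses your second (Fourier) representation for all three conical functions. A common factor $\pi/\cosh\pi\xi$ then appears on both sides and is divided out. The kernel identity it needs, namely the equality of the two expressions for $V_{\mathbb{A}_2}$, is supplied by geometry rather than computed.

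You instead use the compactly supported Mehler--Dirichlet kernel for $z$ and $y$. This avoids the extra factor and reduces everything to an explicit identity between complete elliptic integrals. What each approach buys:
\begin{itemize}
\item Yours makes the lemma independent of the Weil--Petersson and recursion machinery. Read backwards, it also gives an independent consistency check of the disk and crown recursions.
\item The paper's explains where the formula comes from, but it is only as solid as those two theorems.
\end{itemize}
The paper's appendix also gives a classical proof. It uses $\frac{1}{\sqrt{AB}}=\frac{2}{\pi}\int_0^{\pi/2}\frac{\mathrm{d}\phi}{A\sin^2\phi+B\cos^2\phi}$, Mehler's identity $\int_1^\infty\frac{P_{-1/2+\sqrt{-1}\xi}(z)\,\mathrm{d}z}{z+r}=\frac{\pi}{\cosh\pi\xi}P_{-1/2+\sqrt{-1}\xi}(r)$, and the spherical product formula. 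That is the clean version of your fallback: no continuation to $-x$ is needed, since $\psi\mapsto\pi-\psi$ absorbs the sign.

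Your kernel check does close once normalisations are tracked. The right-hand side is $\frac{1}{\pi}\int_0^\infty\cos(\xi t)H(t)\,\mathrm{d}t$, so you need $H=\sqrt2\,G$, not $G=H$. Write $T=\cosh t$ and $p=\cosh(a+b)$. The Legendre reductions give $G(t)=\frac{2}{\sqrt{T+p}}K(k)$. For $H$, the radicand in $w=e^{v}$ is a quartic (not a cubic) with roots $e^{\pm b},-e^{t\pm a}$, and the reduction gives $H(t)=\frac{2\sqrt2}{\sqrt{T+p}}K(k)$. Both have the same modulus $k^2=\frac{2\sinh a\sinh b}{T+p}$.

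Two small repairs are needed:
\begin{itemize}
\item In Step~1, the inner $t$-integral is $\frac{\pi}{\sqrt2}P_{-1/2}(z)=O(z^{-1/2}\log z)$, and you need this decay. The $O(\log z)$ bound you state would leave $\int^\infty z^{-1}\log z\,\mathrm{d}z$, which diverges, so it does not justify Tonelli.
\item The case $y=1$, where your $g$ degenerates to a point mass, should be obtained by continuity, using $|P_{-1/2+\sqrt{-1}\xi}(z)|/\sqrt{D}\leqslant P_{-1/2}(z)/(1+z)$.
\end{itemize}
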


\begin{proof}
Let $\Sigma = \AA_2$. Consider the essential arc on $\Sigma$ with endpoints on the arch $\alpha_1$. It decomposes $\Sigma$ into $\AA_1$ and $\DD_3$. Then by the disk recursion (\cref{thm:disk-recursion}) and \cref{thm:vol-fli-triang}, \cref{thm:vol-1-crown}, 
\begin{equation}
\label{eq:vol-2-crown-disk-rec}
\begin{split}
&{V}_{\Sigma}(\vec{c}\;|d) \\
\smash{\mathllap{=}}\,& \frac{1}{2}\int_0^\infty  {V}_{\AA_1}(\ell|d)\cdot{V}_{\DD_3}(c_1,c_2,\ell) \,\mathrm{d}\cosh\ell 
\\
\smash{\mathllap{=}}\,& \frac{1}{2}\int_0^\infty  \frac{1}{\sqrt{2\,\text{ch}\,\ell+2\,\text{ch}\, d}} \cdot \frac{2}{\sqrt{\text{ch}^2c_1+\text{ch}^2c_2+\text{ch}^2\ell+2\,\text{ch}\, c_1 \text{ch}\,c_2 \text{ch}\, \ell-1}}\,\mathrm{d}\cosh\ell.
\end{split}
\end{equation} 
Now by the crown recursion (\cref{thm:crown-rec}) and  \cref{thm:vol-1-crown}:
\begin{equation}
\label{eq:vol-2-crown-crown-rec}
\begin{split}
V_{\Sigma}(\vec{c}\;|d) &= \int_\RR V_{\AA_1}(c_1|\;|d-\ell|)V_{\AA_1}(c_1|\;|\ell|)\mathrm{d}\ell
\\& = \int_\RR \frac{\mathrm{d}\ell}{\sqrt{(2\cosh c_1+2\cosh(d-\ell))(2\cosh c_2 +2\cosh\ell)}}
\\& = \left(\frac{1}{\sqrt{2\cosh x+2\cosh c_1}}*\frac{1}{\sqrt{2\cosh x+2\cosh c_2}}\right)(d).
\end{split}
\end{equation}
By \cite[Equation~7.4.6]{MR350075},
\begin{equation}
\begin{split}
\mathcal{F} \left[\frac{1}{\sqrt{2\cosh x +2\cosh c}}\right](\xi) &=  \int_{\RR} \frac{e^{-\sqrt{-1}\xi x}}{\sqrt{2\cosh x+2\cosh c}}\mathrm{d} x
\\& =2\int_{0}^\infty \frac{\cos \xi x}{\sqrt{2\cosh x+2\cosh c}}\mathrm{d} x
\\& =\pi\cdot \frac{P_{-1/2+\sqrt{-1}\xi}(\cosh c)}{\cosh \pi\xi},
\end{split}
\end{equation}  
where $\mathcal{F}$ is the Fourier transform: $\mathcal{F}[f](\xi) = \int_{\RR}f(x)e^{-\sqrt{-1}\xi x}\mathrm{d}x$.
Applying the Fourier transform with respect to $d$ to both \cref{eq:vol-2-crown-disk-rec} and \cref{eq:vol-2-crown-crown-rec}, we then obtain an integral identity:
\begin{equation}
\begin{split}
&\frac{\pi}{\cosh \pi\xi}\int_0^\infty\frac{P_{-1/2+\sqrt{-1}\xi}(\cosh\ell)\mathrm{d}\cosh\ell}{\sqrt{\cosh^2c_1+\cosh^2c_2+\cosh^2\ell+2\cosh c_1\cosh c_2\cosh\ell-1}} 
\\&= \left(\frac{\pi}{\cosh \pi\xi}\right)^2 P_{-1/2+\sqrt{-1}\xi}(\cosh c_1)\cdot P_{-1/2+}(\cosh c_2),
\end{split}
\end{equation}
where in the right-hand side we used the convolution theorem. Dividing both sides by $\frac{\pi}{\cosh \pi\xi}$ and letting $x=\cosh c_1,y=\cosh c_2, z=\cosh \ell$ finishes the proof.
\end{proof}

\begin{remark}
The formula reduces to the classical Mehler's formula \cite[Equation~44]{HT25} when $y=1$. For a different proof of \cref{prop:prod-formula-con-func} that uses the Mehler's formula and another known identity for conical functions, see \cref{app:another-proof}.
\end{remark}

\begin{remark}
\cref{prop:prod-formula-con-func} has the same structure as \cite[Equation~44]{goncharov-sun}, where the conical Mehler functions in \cref{prop:prod-formula-con-func} are (integral transforms of) volumes for $\mathbb{A}_1$ (a trouser leg in \cite{goncharov-sun}, also referred to as half-pants in other texts) and formally corresponds to the $\mathcal{B}_T$-term in \cite[Equation~44]{goncharov-sun}. Similarly, our $\DD_3$ volume corresponds to their $\mathcal{E}_\tau$-term.  It is natural to wonder how this correspondence might fit in the growing theory of ``duality'' \cite{fock1997dual,fock2006moduli,goncharov2015geometry} between the two moduli spaces central to these two papers.
\end{remark}

Armed with \cref{prop:prod-formula-con-func}, we 
prove:

\begin{theorem}
\label{thm:vol-n-gon-1-int}
For $n\geqslant 3$,
\begin{equation}
{V}_{\DD_n}(c_1,\dotsc,c_n) = \pi^{n-2} \int_0^\infty \xi \sinh{2\pi\xi} \cdot \prod_{i=1}^n \frac{P_{-1/2+\sqrt{-1}\xi}(\cosh c_i)}{\cosh{\pi\xi}} \,\mathrm{d}\xi.
\end{equation}
\end{theorem}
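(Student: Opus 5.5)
We argue by induction on $n$, using the disk recursion (\cref{thm:disk-recursion}) to peel off one triangle at a time and the product formula \cref{prop:prod-formula-con-func} to diagonalise each step. Throughout we write
\[
p_\xi(t):=\frac{\pi\,P_{-1/2+\sqrt{-1}\xi}(t)}{\cosh\pi\xi}.
\]
In this notation the claim reads
\[
V_{\DD_n}(\vec c)=\pi^{-2}\int_0^\infty \xi\sinh 2\pi\xi\,\prod_{i=1}^n p_\xi(\cosh c_i)\,\mathrm{d}\xi.
\]

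\textbf{Base case $n=3$.} Here $V_{\DD_3}=2/\sqrt{D}$ by \cref{thm:vol-fli-triang}. The claim is then exactly the Mehler--Fock inversion of the product formula. \cref{prop:prod-formula-con-func} says that the Mehler--Fock transform in $z$ of $1/\sqrt{x^2+y^2+z^2+2xyz-1}$ equals $p_\xi(x)p_\xi(y)$. Recall the inversion pair
\[
\hat f(\xi)=\int_1^\infty f(z)P_{-1/2+\sqrt{-1}\xi}(z)\,\mathrm{d}z,\qquad f(z)=\int_0^\infty \xi\tanh(\pi\xi)\,P_{-1/2+\sqrt{-1}\xi}(z)\,\hat f(\xi)\,\mathrm{d}\xi.
\]
Applying it gives
\[
\frac{1}{\sqrt{D}}=\int_0^\infty \xi\tanh\pi\xi\;P_{-1/2+\sqrt{-1}\xi}(z)\,p_\xi(x)p_\xi(y)\,\mathrm{d}\xi.
\]
Since $\tanh\pi\xi=\sinh 2\pi\xi/(2\cosh^2\pi\xi)$, the factor $2$ in $2/\sqrt{D}$ combines with this to give exactly $\pi^{-2}\xi\sinh 2\pi\xi\, p_\xi(x)p_\xi(y)p_\xi(z)$. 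The main obstacle is justifying inversion. We must check that $1/\sqrt{D}$, as a function of $z$, lies in the class where Mehler--Fock inversion holds pointwise. This is fine: it decays like $1/z$ and is continuous, and the standard $L^2$ theory together with continuity gives the pointwise identity.

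\textbf{Inductive step.} Suppose the formula holds for $\DD_n$. Cut $\DD_{n+1}$ along the arc separating a triangle with straps $c_n,c_{n+1},\ell$ from a copy of $\DD_n$ with straps $c_1,\dots,c_{n-1},\ell$. By \cref{thm:disk-recursion},
\[
V_{\DD_{n+1}}=\tfrac12\int_1^\infty V_{\DD_n}(c_1,\dots,c_{n-1},\operatorname{arccosh}z)\,\frac{2}{\sqrt{x^2+y^2+z^2+2xyz-1}}\,\mathrm{d}z,
\]
with $x=\cosh c_n$ and $y=\cosh c_{n+1}$. We substitute the inductive formula for $V_{\DD_n}$ and exchange the $z$- and $\xi$-integrals. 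The inner integral is then exactly the left side of \cref{prop:prod-formula-con-func}, which replaces $p_\xi(z)$ by $p_\xi(x)p_\xi(y)$. This yields the formula for $n+1$.

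The difficulty here is again analytic: the Fubini exchange. As $\xi\to\infty$, $P_{-1/2+\sqrt{-1}\xi}(\cosh c)$ decays like $\xi^{-1/2}e^{-\xi c}$ times oscillation for $c>0$, so the integrand is dominated. For $c_i=0$ the bound is only $\xi^{-n}e^{2\pi\xi}e^{-n\pi\xi}$ times polynomial factors, which is absolutely integrable once $n\ge 3$. In the $z$-direction, $P_{-1/2+\sqrt{-1}\xi}(z)=O(z^{-1/2}\log z)$ and $1/\sqrt{D}\sim 1/z$, so the double integral converges absolutely. To avoid oscillation issues in the Fubini step, we would either bound $|P|\le P_{-1/2}(z)$ uniformly in $\xi$, or first integrate in $\xi$ against a Gaussian cutoff and remove it by dominated convergence.
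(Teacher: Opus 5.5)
Your proof is correct in structure and uses the same ingredients as the paper: the disk recursion along the zig-zag arc system (peeling one triangle at a time produces exactly that decomposition), the product formula \cref{prop:prod-formula-con-func}, and Mehler--Fock inversion. The difference is organizational.

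\textbf{Comparison with the paper.} The paper writes $V_{\DD_n}$ as $2\,T_{\cosh c_{n-1}}\circ\cdots\circ T_{\cosh c_3}$ applied to $1/\sqrt{D}$. It then transforms the whole iterated integral, so that each triangle contributes a multiplier $\frac{\pi}{\cosh\pi\xi}P_{-1/2+\sqrt{-1}\xi}(\cosh c_i)$, and inverts once for each $n$. You invert only at $n=3$ and then propagate with the forward product formula.

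Your route buys two things:
\begin{itemize}
\item Pointwise inversion is needed only for the explicit function $1/\sqrt{D}$, whose transform decays exponentially.
\item Your majorant, built from $|P_{-1/2+\sqrt{-1}\xi}(t)|\leqslant P_{-1/2}(t)$ and $D\geqslant(1+z)^2$, gives the fully separated bound
\[
\pi^{n-2}\prod_{i=1}^{n-1}P_{-1/2}(\cosh c_i)\cdot\frac{\xi\sinh 2\pi\xi}{\cosh^{n}\pi\xi}\cdot\frac{P_{-1/2}(z)}{1+z}.
\]
This is integrable for $n\geqslant 3$, so your Tonelli step is self-contained rather than resting on \cite[Appendix~B]{HT25}.
\end{itemize}
The paper's operator form, in exchange, makes the diagonalization visible at a glance.

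\textbf{A slip in the base case.} You quote \cref{prop:prod-formula-con-func} as saying the Mehler--Fock transform of $1/\sqrt{D}$ is $p_\xi(x)p_\xi(y)$. It is actually
\[
\frac{\pi}{\cosh\pi\xi}P_{-1/2+\sqrt{-1}\xi}(x)P_{-1/2+\sqrt{-1}\xi}(y)=\frac{\cosh\pi\xi}{\pi}\,p_\xi(x)p_\xi(y).
\]
As written, your displayed formula for $1/\sqrt{D}$ is off by a factor $\pi/\cosh\pi\xi$. Doubling it gives $\pi^{-1}\xi\sinh(2\pi\xi)\,p_\xi(x)p_\xi(y)p_\xi(z)/\cosh\pi\xi$, not the claimed integrand. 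With the correct constant the step works:
\[
2\,\xi\tanh(\pi\xi)\,P_{-1/2+\sqrt{-1}\xi}(z)\cdot\frac{\cosh\pi\xi}{\pi}\,p_\xi(x)p_\xi(y)=\pi^{-2}\,\xi\sinh(2\pi\xi)\,p_\xi(x)p_\xi(y)p_\xi(z).
\]
Your inductive-step identity $\int_1^\infty p_\xi(z)\,\mathrm{d}z/\sqrt{D}=p_\xi(x)p_\xi(y)$ is the correct one; it is $\frac{\pi}{\cosh\pi\xi}$ times the lemma. So only the base case needs fixing.

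\textbf{A wrong but harmless heuristic.} $P_{-1/2+\sqrt{-1}\xi}(\cosh c)$ does not carry a factor $e^{-\xi c}$. By the Mehler--Dirichlet integral it is $O(\xi^{-1/2})$ and oscillatory. This does not affect the argument: all the $\xi$-decay comes from $\cosh^{-n}\pi\xi$, and the uniform bound by $P_{-1/2}$ already justifies the exchange. No Gaussian cutoff is needed.
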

\begin{proof}
Consider a maximal arc collection on $\DD_n$ (\cref{def:gen-triang}). Any collection would work, but for concreteness consider the ``zig-zag'' collection, i.e. one that breaks down the boundary punctures into $\{q_1,q_2\}, \{q_3\},\dotsc,\{q_{n-2}\},\{q_{n-1},q_n\}$ as witnessed by the complementary regions (\cref{lem:gen-triang-compl-regions}).
Then by disk recursion (\cref{thm:disk-recursion}) applied to the chosen maximal arc collection and \cref{thm:vol-fli-triang},
\begin{equation}
\label{eq:vol-n-fon-big-int}
\begin{split}
&{V}_{\DD_n}(c_1,\dotsc,c_n) 
\\\smash{\mathllap{=}}\,& 2 \int_{(0,\infty)^{n-3}} \frac{\mathrm{d}\cosh\ell_1 \wedge \dotsc \wedge \mathrm{d}\cosh\ell_{n-3}}{\sqrt{D(c_1,c_2,\ell_1)D(\ell_1,\ell_2,c_3)\dotsc D(\ell_{n-4},\ell_{n-3},c_{n-2})D(\ell_{n-3},c_{n-1},c_{n})}}, 
\end{split}
\end{equation}
where $\ell_1,\dotsc,\ell_{n-3}$ are the hyperbolic lengths of the orthogeodesic representatives of the arcs.

For $a\geqslant 1$, consider the integral operator $T_a$ that acts on $f:[1,\infty)\to \RR$ by
\begin{equation}
    T_a[f](y) = \int_1^\infty \frac{f(x)}{\sqrt{x^2+y^2+a^2+2xya-1.}}\,\mathrm{d}x,
\end{equation}
provided that the integral exists for all $y\in[1,\infty).$ Then it follows from \cref{eq:vol-n-fon-big-int} and the Fubini–Tonelli theorem that
\begin{equation}
\label{eq:vol-flip-n-gon-recurs}
\begin{split}
&V_{\DD_n}(c_1,\dotsc,c_n) 
\\\smash{\mathllap{=}}\,& 2 \cdot T_{\text{ch}\, c_{n-1}}\circ \dotsc\circ T_{\text{ch} \, c_{3}}\left[\frac{1}{\sqrt{\text{ch}^2 c_1+\text{ch}^2 c_2+x^2+2\,\text{ch}\, c_1 \text{ch}\, c_2\cdot x-1}}\right](\text{ch}\,c_n).
\end{split}
\end{equation}
We follow the strategy as in the proof of \cite[Theorem~6.1]{HT25}, and use the Mehler-Fock transform \cite[Section~1.9]{MR2254107}: $\mathcal{MF}[f](\xi)=\int_1^\infty f(x)P_{-1/2+\sqrt{-1}\xi}(x)\mathrm{d}x$. If the Fubini–Tonelli theorem applies, then by \cref{prop:prod-formula-con-func},
\begin{equation}
\begin{split}
\mathcal{MF}[T_a f](\xi) &= \int_1^\infty \left(\int_1^\infty \frac{f(x)}{\sqrt{x^2+y^2+a^2+2xya-1}}\,\mathrm{d}x\right) \cdot P_{-1/2+\sqrt{-1}\xi} (y)\,\mathrm{d}y \\& = \int_1^\infty f(x) \left(\int_1^\infty \frac{P_{-1/2+\sqrt{-1}\xi }(y)\,\mathrm{d}y}{\sqrt{x^2+y^2+a^2+2xya-1}} \right) \mathrm{d}x  \\& = \frac{\pi}{\cosh{\pi\xi}}\cdot P_{-1/2+\sqrt{-1}\xi}(a) \mathcal{MF}[f](\xi).
\end{split}
\end{equation}
and more generally, for $a_1,\dotsc,a_k\geqslant 1$
\begin{equation}
\begin{split}
\mathcal{MF}[T_{a_k}\circ\dotsc \circ T_{a_1}f](\xi) = \frac{\pi^k}{\cosh^k \pi \xi} \left(\prod_{i=1}^k P_{-1/2+\sqrt{-1}\xi}(a_i) \right) \mathcal{MF}[f](\xi).
\end{split}
\end{equation}
We now justify that the Fubini--Tonelli theorem does apply to our setting as in \cref{eq:vol-flip-n-gon-recurs}. Note that 
\begin{equation}
\label{eq:bound-for-D}
\begin{split}
D(\alpha,\beta,\gamma) &= \cosh^2 \alpha + \cosh^2 \beta +\cosh^2 \gamma +2\cosh \alpha\cosh\beta\cosh\gamma-1
\\& \geqslant \cosh^2 \alpha + \cosh^2 \beta +1+2\cosh\alpha\cosh\beta-1
\\& = (\cosh \alpha+\cosh \beta)^2.
\end{split}
\end{equation}
Also, $D(\alpha,\beta,\gamma)\geqslant(1+\cosh\gamma)^2$. Then from \cref{eq:vol-n-fon-big-int},
\begin{equation}
\begin{split}
&{V}_{\DD_n}(c_1,\dotsc,c_n) 
\\\smash{\mathllap{\leqslant}}\,& 2 \int_{(0,\infty)^{n-3}} \frac{\mathrm{d}\,\text{ch}\,\ell_1 \wedge \dotsc \wedge \mathrm{d}\,\text{ch}\,\ell_{n-3}}{(1+\text{ch}\,\ell_1)(\text{ch}\,\ell_1+\text{ch}\,\ell_2)\dotsc (\text{ch}\,\ell_{n-4}+\text{ch}\,\ell_{n-3})(\text{ch}\,\ell_{n-3}+\text{ch}\,c_n)}. 
\end{split}
\end{equation}
Then the results of \cite[Appendix~B]{HT25} apply and the Fubini--Tonelli theorem holds, and therefore,
\begin{equation}
\begin{split}
&\mathcal{MF}\left(V_{\DD_n}(c_1,\dotsc,c_{n-1},\text{arccosh}\,y)\right)(\xi) 
\\\smash{\mathllap{=}}\,& 2\cdot\frac{\pi^{n-2}}{\cosh^{n-2}\pi\xi} \prod_{i=1}^{n-1} P_{-1/2+\sqrt{-1}\xi}(\cosh c_i).
\end{split}
\end{equation}
Then since inverse Mehler-Fock transform is given by \cite[Equation~1.9.11]{MR2254107}
\begin{equation*}
\mathcal{MF}^{-1}[F](x) = \int_0^\infty \xi\tanh\pi\xi\cdot F(\xi)P_{-1/2+\sqrt{-1}\xi}(x)\,\mathrm{d}\xi,
\end{equation*}
we have
\begin{equation}
\label{eq:vol-flip-n-gon-last}
\begin{split}
V_{\DD_n}(c_1,\dotsc,c_n) &= \mathcal{MF}^{-1}\left(2\cdot\frac{\pi^{n-2}}{\cosh^{n-2}\pi\xi} \prod_{i=1}^{n-1} P_{-1/2+\sqrt{-1}\xi}(\cosh c_i)\right)(\cosh c_n)
\\& = 2\pi^{n-2} \int_0^\infty \frac{\xi \sinh{\pi\xi}}{\cosh^{n-1}\pi\xi} \prod_{i=1}^n P_{-1/2+\sqrt{-1}\xi}(\cosh c_i)\, \mathrm{d}\xi
\\& = \pi^{n-2} \int_0^\infty \xi \sinh{2\pi\xi} \cdot \prod_{i=1}^n \frac{P_{-1/2+\sqrt{-1}\xi}(\cosh c_i)}{\cosh{\pi\xi}} \,\mathrm{d}\xi.
\end{split}
\end{equation}

\end{proof}

\begin{remark}
The proof of \cref{thm:vol-n-gon-1-int} gives a geometric meaning (in the sense of the steps outlined in the beginning of \cref{subsec:vol-n-gon-1-int}) to the integral obtained by a formal change of variables in \cite[Equation~40]{HT25}:
\begin{equation}
V_{\DD_n}(\vec{0}) = 2\int_{(1,\infty)^{n-3}} \frac{\mathrm{d}x_2\wedge\cdots\wedge \mathrm{d}x_{n-2}}{(1+x_2)(x_2+x_3)\dotsc(x_{n-3}+x_{n-2})(x_{n-2}+1)}. 
\end{equation}
It agrees with \cref{eq:vol-n-fon-big-int} if we let $x_i = \cosh \ell_{i-1}$ and $\vec{c}=\vec{0}$, and equals
\begin{equation}
\begin{split}
\frac{1}{2^{n-3}}\int_{(0,\infty)^{n-3}}& V_{\DD_3}(0,0,\ell_1)V_{\DD_3}(\ell_1,\ell_2,0)\cdots V_{\DD_3}(\ell_{n-4},\ell_{n-3},0)V_{\DD_3}(\ell_{n-3},0,0)
\\&\mathrm{d}\cosh\ell_1\wedge\cdots\wedge \mathrm{d}\cosh\ell_{n-3}.
\end{split}   
\end{equation}
\end{remark}

\begin{remark}
\label{rmk:vol-n-gon-1-int}
Compare \cref{eq:vol-flip-n-gon-last} with \cite[Equation~48]{HT25}: if all $c_i=0$ (i.e. the case of the crowned $n$-gons), then $P_{-1/2+\sqrt{-1}\xi}(\cosh c_i)=1$, and the expressions agree.
\end{remark}

\subsection{Flippered $n$-crowns and the Fourier transform}
\label{subsec:vol-n-crown-1-int}

We provide a non-recursive single integral expression for the Mirzakhani volume $V_{\AA_n}(\vec{c}\;|d)$. To do that, we
\begin{itemize}
    \item successively apply the crown recursion (\cref{thm:crown-rec}), and insert the
    \item explicit value of $V_{\AA_1}(c|d)$ (\cref{thm:vol-1-crown}),
\end{itemize}
to obtain a convolution expression for $V_{\AA_n}(\vec{c}\;|d)$ (\cref{eq:vol-n-crown-conv}). After that, we apply the Fourier transform, similar to the crowned case  $V_{\AA_n}(\vec{0}\;|d) $ in \cite[Theorem~4.5]{HT25}. 

\begin{remark}
The convolution step here also arises in the proof of \cite[Theorem~4.5]{HT25}, and is precisely the formula that one obtains after a consequence of crown recursions.
\end{remark}

\begin{theorem}
\label{thm:vol-n-crown-1-int}
For $n\geqslant 1$, 
\begin{equation}
V_{\AA_n} (c_1,\dotsc,c_n|d)  = \pi^{n-1}\int_0^\infty \cos(\omega d) \cdot\prod_{i=1}^n \frac{P_{-1/2+\sqrt{-1}\omega}(\cosh c_i)}{\cosh \pi\omega} \,\mathrm{d}\omega.
\end{equation}
\end{theorem}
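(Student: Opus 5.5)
The plan is to induct on $n$ via the crown recursion (\cref{thm:crown-rec}) with $m=1$. Peeling off one tine at a time expresses $V_{\AA_n}(\vec{c}\;|d)$ as an iterated convolution of $1$-crown volumes. Since $V_{\AA_1}(c\,|\,|\ell|)=1/\sqrt{2\cosh c+2\cosh\ell}$ (\cref{thm:vol-1-crown}) is already an even function of $\ell$, the absolute values in the recursion are harmless. Writing $g_c(x):=(2\cosh x+2\cosh c)^{-1/2}$, we get
\begin{equation}
\label{eq:vol-n-crown-conv}
V_{\AA_n}(c_1,\dotsc,c_n|d) = \bigl(g_{c_1}*g_{c_2}*\cdots*g_{c_n}\bigr)(d),
\end{equation}
where $*$ is convolution on $\RR$. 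The base case $n=1$ is \cref{thm:vol-1-crown} itself. The inductive step applies \cref{thm:crown-rec} to $\AA_n=\AA_{n-1}^\circ\cup\AA_1^\circ$, together with evenness of $V_{\AA_{n-1}}(\cdot\,|d)$ in $d$. That evenness in turn follows inductively, because a convolution of even functions is even.

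Next I would take Fourier transforms. As in the proof of \cref{prop:prod-formula-con-func}, \cite[Equation~7.4.6]{MR350075} gives
\[
\mathcal{F}[g_c](\omega)=\frac{\pi\,P_{-1/2+\sqrt{-1}\omega}(\cosh c)}{\cosh\pi\omega}.
\]
Each $g_c$ is positive, continuous and decays like $e^{-|x|/2}$, so it lies in $L^1(\RR)$. The convolution theorem then applies to the iterated convolution and yields
\[
\mathcal{F}\bigl[V_{\AA_n}(\vec{c}\,|\,\cdot\,)\bigr](\omega)=\pi^n\prod_{i=1}^n\frac{P_{-1/2+\sqrt{-1}\omega}(\cosh c_i)}{\cosh\pi\omega}.
\]

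Finally I would apply Fourier inversion. The right-hand side is even in $\omega$, because $P_{-1/2+\sqrt{-1}\omega}=P_{-1/2-\sqrt{-1}\omega}$. Hence
\[
V_{\AA_n}(\vec{c}\;|d)=\frac{1}{2\pi}\int_\RR e^{\sqrt{-1}\omega d}\,\mathcal{F}[V](\omega)\,\mathrm{d}\omega=\frac{1}{\pi}\int_0^\infty\cos(\omega d)\,\mathcal{F}[V](\omega)\,\mathrm{d}\omega,
\]
which gives exactly the stated prefactor $\pi^{n-1}$.

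The main obstacle is justifying pointwise inversion, which needs the transform to be in $L^1$. For $n\geqslant 2$ this is fine. The estimate $|P_{-1/2+\sqrt{-1}\omega}(\cosh c)|\leqslant P_{-1/2}(\cosh c)$ follows from the Laplace integral representation with a unimodular integrand factor. Combined with the factor $1/\cosh^n\pi\omega$, it gives exponential decay. For $n=1$ the transform also decays exponentially, since $P_{-1/2+\sqrt{-1}\omega}(\cosh c)/\cosh\pi\omega$ behaves like $e^{-\pi\omega}$ times a bounded oscillatory factor for $c>0$. Even more simply, this case is just \cite[7.4.6]{MR350075} read backwards via the cosine transform, since $g_c$ is continuous and of bounded variation. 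Continuity of $V_{\AA_n}$ in $d$, which makes inversion hold everywhere and not merely almost everywhere, comes from its being a convolution of $L^1$ functions with bounded continuous ones. One small point still needs checking: that the degenerate cases $c_i=0$ and $d=0$ are included. This follows by continuity, or directly, since every formula used above makes sense there.
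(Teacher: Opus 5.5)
Your proposal is correct and follows the paper's route. The crown recursion reduces $V_{\AA_n}(\vec{c}\;|d)$ to an iterated convolution of the $1$-crown volumes $(2\cosh x+2\cosh c_i)^{-1/2}$, whose Fourier transforms come from \cite[Equation~7.4.6]{MR350075}, and the convolution theorem plus Fourier inversion (using $P_{-1/2+\sqrt{-1}\omega}=P_{-1/2-\sqrt{-1}\omega}$) gives the stated formula.

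Your justification of pointwise inversion, via $|P_{-1/2+\sqrt{-1}\omega}(\cosh c)|\leqslant P_{-1/2}(\cosh c)$ and continuity of the convolution, is more careful than the paper's. That same bound already covers $n=1$, so you do not need a separate case.
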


\begin{proof}
For a fixed $\vec{c}$, formally extend the function $V_{\AA_n}(\vec{c}\;|d)$ from the domain $d\geqslant0$ to $d\in\RR$ as even function. Then \cref{thm:crown-rec} says that 
\begin{equation}
V_{\AA_n}\bigl(\vec{c}\;|d\bigr) 
=
\int_\RR V_{\AA_{n-m}}\bigl(\vec{c}\,'|d-\ell\bigr)
\cdot 
V_{\mathbb{A}_m}\bigl(c_1,\ldots,c_m|\ell\bigr) \;\mathrm{d}\ell,
\end{equation}
and more generally
\begin{equation}
V_{\AA_n}\bigl(\vec{c}\;|d\bigr) 
=
\int_{\RR^{n-1}} V_{\AA_{1}}\Bigl(c_1\Big|d-\sum_{i=2}^{n}\ell_{i}\Bigr)\cdot V_{\AA_1}(c_2|\ell_2)
\cdot \dotsc\cdot
V_{\mathbb{A}_1}(c_n|\ell_n) \;\mathrm{d}\ell_2\wedge\dotsc\wedge\mathrm{d}\ell_n.
\end{equation}
Therefore $V_{\AA_n}\bigl(\vec{c}\;|d\bigr)$ is a convolution, specifically:
\begin{equation}
V_{\AA_n}(c_1,\dotsc, c_n|d) = \left(V_{\AA_1}(c_1|\ell_1)*\dotsc*V_{\AA_1}(c_n|\ell_n)\right)(d)
\end{equation}
and by \cref{thm:vol-1-crown},
\begin{equation}
\label{eq:vol-n-crown-conv}
\begin{split}
V_{\AA_n}(c_1,\dotsc, c_n|d)
= \left(\frac{1}{\sqrt{2\cosh \ell_1+2\cosh c_1}}*\dotsc*\frac{1}{\sqrt{2\cosh \ell_n+2\cosh c_n}}\right)(d).
\end{split}
\end{equation}
Note that $\frac{1}{\sqrt{2\cosh \ell + 2\cosh c}} \in L^1(\RR)$. Then by the Young’s convolution inequality and the convolution theorem, 
\begin{equation}
\begin{split}
V_{\AA_n}(c_1,\dotsc, c_n|d) = \mathcal{F}^{-1} \left[\prod_{i=1}^n \mathcal{F} \left[\frac{1}{\sqrt{2\cosh\ell_i +2\cosh c_i}}\right](\omega)\right](d),
\end{split}
\end{equation}
where 
$\mathcal{F}[f](\omega) = \int_{\RR}f(x)e^{-\sqrt{-1}\omega x}\mathrm{d}x$.
Then, since $P_{-1/2+\sqrt{-1}\omega}(x)=P_{-1/2-\sqrt{-1}\omega}(x)$ \cite[Equation~7.4.6]{MR350075} and $\mathcal{F}^{-1}[f](x) = \frac{1}{2\pi} \int_\RR f(\omega)e^{\sqrt{-1}\omega x}\mathrm{d}\omega$, we conclude
\begin{equation}
\begin{split}
V_{\AA_n}(c_1,\dotsc, c_n|d)  &= \mathcal{F}^{-1} \left[\pi^n\prod_{i=1}^n \frac{P_{-1/2+\sqrt{-1}\omega}(\cosh c_i)}{\cosh \pi\omega} \right]
\\& = \frac{1}{2\pi}\cdot\pi^n\int_\RR  e^{\sqrt{-1}\omega d}\cdot\prod_{i=1}^n \frac{P_{-1/2+\sqrt{-1}\omega}(\cosh c_i)}{\cosh \pi\omega} \, \mathrm{d}\omega
\\& = \pi^{n-1}\int_0^\infty \cos(\omega d) \cdot\prod_{i=1}^n \frac{P_{-1/2+\sqrt{-1}\omega}(\cosh c_i)}{\cosh \pi\omega} \,\mathrm{d}\omega.
\end{split}
\end{equation}

\end{proof}

\begin{remark}
Compare the result of \cref{thm:vol-n-crown-1-int}
with the expression immediately below Equation 22 in \cite{HT25}: up to a scalar change of variables they differ by the insertion of  $\prod_{i=1}^n P_{-1/2+\sqrt{-1}\omega}(\cosh c_i) $ under the integral sign. This mirrors the observation for $n$-gons made in \cref{rmk:vol-n-gon-1-int}.
\end{remark}

\subsection{Flippered $n$-crowns without neck constraints}

We use the crown recursion (\cref{thm:crown-rec}) to prove:

\begin{corollary}
\label{cor:vol-n-crown-no-neck}
For $n\geqslant 1$,
\begin{equation*}
V_{\AA_n} (c_1,\dotsc,c_n)  = 2^{n-1} \prod_{i=1}^n \frac{K\left(\tanh(c_i/2)\right)}{\cosh(c_i/2)},
\end{equation*}
where the function $K(s):=\int_0^1\frac{\mathrm{d}t}{\sqrt{(1-t^2)(1-s^2t^2)}}$ here denotes the complete elliptic integral of the first kind.     
\end{corollary}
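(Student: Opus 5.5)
The plan is to reduce to the case $n=1$ via the unfixed-neck crown recursion in \cref{thm:crown-rec}. That result gives
\[
V_{\AA_n}(\vec{c}) = 2\, V_{\AA_{n-1}}(c_2,\dots,c_n)\, V_{\AA_1}(c_1).
\]
Inducting on $n$ then yields $V_{\AA_n}(c_1,\dots,c_n)=2^{n-1}\prod_{i=1}^n V_{\AA_1}(c_i)$. So the whole statement reduces to the single identity
\[
V_{\AA_1}(c)=\frac{K(\tanh(c/2))}{\cosh(c/2)}.
\]

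Next I compute $V_{\AA_1}(c)$ from the fixed-neck volume. There is only one neck, the neck curve of $\AA_1$ coincides with its cuff, and there is no twist, since the moduli space of $\AA_1$ with fixed $c$ is one-dimensional and parametrised by $d$. By the definition of $\Omega^{\mathrm{WP}}_\Sigma(\vec{b};\vec{c}\;|\vec{d})$ as a quotient by $\mathrm{d}d$, I therefore get
\[
V_{\AA_1}(c)=\int_0^\infty V_{\AA_1}(c|d)\,\mathrm{d}d=\int_0^\infty\frac{\mathrm{d}d}{\sqrt{2\cosh c+2\cosh d}},
\]
using \cref{thm:vol-1-crown}. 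A consistency check is that, for general $n$, integrating the formula of \cref{thm:vol-n-crown-1-int} over $d\in[0,\infty)$ picks out $\pi^{n-1}\cdot\pi\cdot\frac12$ times the integrand at $\omega=0$. This gives $\frac{\pi^n}{2}\prod_i P_{-1/2}(\cosh c_i)$, and it should match the product formula once the normalisation of the unfixed-neck measure (the factor $2$ coming from the $\pm$ orientation of the cuff, as in the crown recursion proof) is tracked. I would use this to double-check the constant $2^{n-1}$.

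The remaining step is the classical evaluation. With $C=\cosh c$, the substitution $u=\cosh(d/2)$ gives $2\cosh d=4u^2-2$, $\mathrm{d}d=2\,\mathrm{d}u/\sqrt{u^2-1}$, and $2C+2\cosh d=4u^2-4\sinh^2(c/2)\cdot 0+\dots$. More cleanly, $2\cosh c+2\cosh d=4(\cosh^2(c/2)+\sinh^2(d/2))$. Setting $v=\sinh(d/2)$ gives
\[
V_{\AA_1}(c)=\int_0^\infty\frac{\mathrm{d}v}{\sqrt{(1+v^2)(a^2+v^2)}},\qquad a=\cosh(c/2).
\]
This is a standard complete elliptic integral equal to $\frac1a K\bigl(\sqrt{1-1/a^2}\bigr)$. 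One way to see this is the substitution $v=a\tan\theta$, which gives $\frac1a\int_0^{\pi/2}\bigl(1-(1-a^{-2})\sin^2\theta\bigr)^{-1/2}\mathrm{d}\theta$. Since $\sqrt{1-1/\cosh^2(c/2)}=\tanh(c/2)$, the result is $K(\tanh(c/2))/\cosh(c/2)$, as required.

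Equivalently, $V_{\AA_1}(c)=\frac{\pi}{2}P_{-1/2}(\cosh c)$, and the identity follows from the Legendre–$K$ relation $P_{-1/2}(\cosh c)=\frac{2}{\pi\cosh(c/2)}K(\tanh(c/2))$. The main obstacle I expect is the bookkeeping of the factor $2$ between integrating over $d\in[0,\infty)$ and the $\pm$-enhanced moduli space in the crown recursion. I would fix it by checking $n=1$, $c=0$ against \cite{HT25}, where $K(0)=\pi/2$.
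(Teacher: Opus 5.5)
Your proposal is correct and follows essentially the paper's route: the unfixed-neck crown recursion reduces everything to $V_{\AA_1}(c)=\int_0^\infty V_{\AA_1}(c|d)\,\mathrm{d}d$, which is then evaluated as a complete elliptic integral (the paper substitutes $t=\tanh(d/2)$, while you use $v=\sinh(d/2)$ followed by a trigonometric substitution). Two small remarks: the substitution $v=a\tan\theta$ actually produces $\int_0^{\pi/2}(\cos^2\theta+a^2\sin^2\theta)^{-1/2}\,\mathrm{d}\theta$, so you need $\theta\mapsto\pi/2-\theta$ (or simply $v=\tan\theta$) to reach your displayed form; and the factor of $2$ you flag as the main obstacle is already fixed by the crown recursion as stated, with your Fourier-side check giving exactly $\frac{\pi^n}{2}\prod_i P_{-1/2}(\cosh c_i)=2^{n-1}\prod_i V_{\AA_1}(c_i)$ and no extra normalisation.
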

\begin{proof}
By \cref{thm:crown-rec},
\begin{equation*}
V_{\AA_n}\bigl(\vec{c}\bigr) 
= 
2\cdot V_{\AA_{n-m}}\bigl(\vec{c}\,'\bigr) \cdot V_{\AA_{m}}\bigl(c_1,\ldots,c_m\bigr),
\end{equation*}
hence 
\begin{equation*}
V_{\AA_n}(c_1,\dotsc,c_n) = 2^{n-1} \prod_{i=1}^n V_{\AA_1}(c_i),
\end{equation*}
and it suffices to compute $V_{\AA_1}(c)$. To this end, let $t=\tanh{d/2}$. Then $\mathrm{d}t=\frac{1}{2}\text{sech}^2(d/2)\mathrm{d}d$ and $\frac{\mathrm{d}t}{\sqrt{1-t^2}} = \frac{1}{2}\text{sech}(d/2)\mathrm{d}d$. Then 
\begin{align*}
V_{\AA_1}(c)=\int_0^\infty V_{\AA_1}(c|d)\,\mathrm{d}d&=\int_0^\infty \frac{\mathrm{d}d}{2\sqrt{\cosh^2(d/2)+\sinh^2{(c/2)}}}  \\&= \int_0^\infty \frac{\text{sech}(d/2) \,\mathrm{d}d}{2\sqrt{1+\sinh^2(c/2)\text{sech}^2(d/2)}}  \\& =
\int_0^1 \frac{\mathrm{d}t}{\sqrt{(1-t^2)\left(1+\sinh^2(c/2)(1-t^2)\right)}}  \\&= \frac{1}{\cosh(c/2)}\int_0^1 \frac{\mathrm{d}t}{\sqrt{(1-t^2)\left(1-\tanh^2(c/2)t^2\right)}}   \\&= \frac{K(\tanh(c/2))}{\cosh(c/2)}.
\end{align*}
\end{proof}

\begin{remark}
At present, the volumes $V_{\AA_n}(\vec{c})$ form the only infinite family of Mirzakhani volumes for which we have obtained closed-form expressions valid for arbitrary $\vec{c}$.
\end{remark}

\subsection{Finiteness of the Mirzakhani volumes}

Using the results of \cref{subsec:vol-n-gon-1-int} and \cref{subsec:vol-n-crown-1-int}, we show:

\begin{theorem}
\label{thm:finvolflipper}
The Mirzakhani volumes of the moduli spaces of flippered surfaces are finite. Moreover, if $c_i\geqslant c_i'$, then
\begin{equation}
\label{eq:finvolflipper-bound}
\begin{split}
V_\Sigma(\vec{b};c_1,\dotsc c_i,\dotsc,c_n|\vec{d}) 
& \leqslant 
V_\Sigma(\vec{b};c_1,\dotsc c_i',\dotsc,c_n|\vec{d}) \\
V_\Sigma(\vec{b};c_1,\dotsc c_i,\dotsc,c_n) 
& \leqslant 
V_\Sigma(\vec{b};c_1,\dotsc c_i',\dotsc,c_n).
\end{split}
\end{equation}
\end{theorem}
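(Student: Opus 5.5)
We will reduce finiteness and monotonicity for general $\Sigma$ to a pointwise comparison of the integrand $e^{-S}\Omega^{\mathrm{WP}}$ in orthogeodesic coordinates. Fix a maximal arc collection $\square$ on $\Sigma$ (\cref{def:gen-triang}). By \cref{prop:orthogeod-coord}, the arc lengths $\ell_\eta$, $\eta\in\square$, are global coordinates on $\mathcal{T}_\Sigma(\vec{b};\vec{c})$. If $\Sigma$ has necks, we instead use the mixed system of arcs $\square$ together with neck/interior curves $\Gamma$. These decompose any $X$ into flippered triangles $\DD_3$ and half-pants $\AA_1$.

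First, iterating \cref{prop:action-arch-arch} (and \cref{prop:action-arch-cuff}, or the neck additivity, for the $\AA_1$ pieces) writes $e^{-S(X)}$ as a product of $e^{-S}$ of the pieces times $\prod_{\eta}\sinh^2(\ell_\eta/2)$. Second, \cref{prop:vol-form-teich-fixedhol-orth-coord} writes the volume form as a product of $\Omega^{\mathrm{WP}}_{\DD_3}$, $\Omega^{\mathrm{WP}}_{\AA_1}$ and $\prod\coth(\ell_\eta/2)\,\mathrm{d}\ell_\eta$. Combining these, the integrand becomes
\[
\prod_{\DD_3} V_{\DD_3}(\cdot)\prod_{\AA_1}V_{\AA_1}(\cdot)\;\prod_{\eta\in\square}\tfrac12\,\mathrm{d}\cosh\ell_\eta,
\]
exactly as in the disk recursion. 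Alternatively, we can apply \cref{thm:disk-recursion} and \cref{prop:neck-recursion} iteratively down to the building blocks $\DD_3$ and $\AA_1$.

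Third, we check that the building blocks are monotone decreasing in every strap argument. This is immediate from the explicit formulas $V_{\DD_3}=2/\sqrt{D}$ with $D$ increasing in each $\cosh c_i$ (\cref{thm:vol-fli-triang}), and $V_{\AA_1}(c|d)=(2\cosh c+2\cosh d)^{-1/2}$ (\cref{thm:vol-1-crown}). A strap $c_i$ of $\Sigma$ appears as a strap of exactly one piece, so the integrand with $c_i$ is pointwise bounded by the integrand with $c_i'\le c_i$. Integrating over a fundamental domain of the mapping class group (the same domain in the $\ell$-coordinates, since these coordinates do not involve $\vec{c}$) then gives both inequalities in \cref{eq:finvolflipper-bound}.

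Finiteness then follows by monotonicity from the case $\vec{c}=\vec{0}$, i.e.\ crowned surfaces, whose Mirzakhani volumes are finite by \cite{HT25} (and \cite{chekhov2024}). This applies both to the fixed-neck and the free-neck versions. Alternatively, for the basic pieces we can read off finiteness directly from \cref{thm:vol-n-gon-1-int} and \cref{thm:vol-n-crown-1-int}, using $|P_{-1/2+\sqrt{-1}\xi}(x)|\le 1$ for $x\ge1$.

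The main obstacle is the mapping class group. When $\Sigma$ has genus or several interior punctures, $\mathrm{MCG}_\Sigma$ acts nontrivially on $\square$, so the moduli space is not simply a product of coordinate ranges. We will handle this by working on $\mathcal{T}_\Sigma(\vec{b};\vec{c})$ with a fundamental domain $F$ chosen independently of $\vec{c}$. One approach is to pull back a fundamental domain for $\mathcal{M}_{\dot\Sigma}(\vec b;\vec 0)$ via the clipping map and identify coordinates through $\square$. The key point is that $\vec{c}$ enters only through the boundary-strap arguments of the pieces, while the shape of the coordinate domain is determined by the marked topology. Once $F$ is identified as the same subset of $\RR^{\square}$ for all $\vec c$, the pointwise comparison of positive integrands suffices. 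We must also verify in the crowned limit that $\Omega^{\mathrm{WP}}$ varies continuously as $c_j\to0$, as remarked after \cref{eq:vol-form-teich-fixed-hol}, so that the bound by the $\vec c=\vec0$ integrand is legitimate.
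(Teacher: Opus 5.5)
Your pointwise comparison of the building blocks is correct. For $\DD_n$ and $\AA_n$, where the mapping class group is trivial, it is essentially the paper's argument: the paper uses the iterated disk-recursion integral of $2/\sqrt{D}$'s for $\DD_n$ and the convolution of $V_{\AA_1}$'s for $\AA_n$.

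The gap is the step you flag as the main obstacle, and your proposed resolution does not close it. The identification $\calT_\Sigma(\vec b;\vec c)\cong\RR^{\square}_{>0}\cong\calT_\Sigma(\vec b;\vec c\,')$ that keeps the $\ell_\eta$ fixed is not $\mathrm{MCG}_\Sigma$-equivariant. A mapping class $\phi$ acts by $\ell_\eta\mapsto\ell_{\phi^{-1}(\eta)}$. When $\phi^{-1}(\eta)\notin\square$, this length is a real-analytic function of $(\vec\ell,\vec b,\vec c)$ that genuinely depends on $\vec c$. In the doubled picture of \cref{prop:orthogeod-coord}, the straps are boundary lengths of pants, and lengths of curves transverse to the pants curves depend on them. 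So the action of $\mathrm{MCG}_\Sigma$ on $\RR^\square_{>0}$ varies with $\vec c$, and a fundamental domain for one $\vec c$ is in general not a fundamental domain for another.

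Pulling back a fundamental domain of $\mathcal{M}_{\dot\Sigma}(\vec b;\vec 0)$ by $\clip$ does not help. In orthogeodesic coordinates of $\dot\Sigma$, the image $\clip(\calT_\Sigma(\vec b;\vec c))$ is the slice where the arcs enclosing the doubled punctures have lengths $\vec c$. The slices of a fixed fundamental domain change with this slice parameter. Pointwise domination on a set that is a fundamental domain only for $\vec c$ gives no inequality between $V_\Sigma(\cdot;\vec c)$ and $V_\Sigma(\cdot;\vec c\,')$. Your finiteness argument, which compares with $\vec c=\vec 0$, inherits the same gap.

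The missing idea, which is what the paper uses, is to separate the straps from the part of the surface that carries the nontrivial mapping class group. Cut along all the necks first; they are $\mathrm{MCG}_\Sigma$-invariant. Iterating \cref{prop:neck-recursion} gives $V_\Sigma(\vec b;\vec c\;|\vec d)=V_{g,m+l}(\vec b,\vec d)\prod_{j=1}^l d_j\,V_{\AA_{a_j}}(\vec c_j|d_j)$. Here Mirzakhani's volume $V_{g,m+l}(\vec b,\vec d)\geqslant 0$ does not involve $\vec c$ at all; for $\AA_{a_1,a_2}$ one simply has $d\,V_{\AA_{a_1}}V_{\AA_{a_2}}$.

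Each crown factor has trivial mapping class group, so your pointwise argument is legitimate there. You can use a maximal arc collection inside the crown, all of whose arcs are separating. Alternatively, use the convolution $V_{\AA_n}(\vec c\;|d)=(f_{c_1}*\cdots*f_{c_n})(d)$ with $f_c(x)=(2\cosh x+2\cosh c)^{-1/2}$, which is decreasing in $c$.

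Monotonicity of the free-neck volume then follows by integrating against $\prod_j d_j\,\mathrm{d}d_j$. Finiteness follows from $V_{\AA_a}(\vec c\;|d)\leqslant V_{\AA_a}(\vec 0|d)$ together with finiteness of crowned volumes \cite{HT25}.

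In other words, your ``alternative'' of iterating neck and disk recursions is the right route. You must stop at the strap-free surface $\Sigma_{g,m+l}$ rather than reduce it to $\DD_3$'s and $\AA_1$'s. That reduction would need non-separating arcs, which the mapping class group does not fix.
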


\begin{proof}
We first consider the case $\Sigma = \DD_n$, then $\Sigma = \AA_n$, and then prove the theorem for all other surfaces.

If $\Sigma = \DD_n$, the finiteness of the volumes is shown in \cref{thm:vol-n-gon-1-int}. To show the inequality \cref{eq:finvolflipper-bound}, it suffices to use a slight improvement of the lower bound for $D$ in \cref{eq:bound-for-D} using the monotonicity of $\cosh x$.

If $\Sigma = \AA_n$, the finiteness of the volumes is shown in \cref{thm:vol-n-crown-1-int} and \cref{cor:vol-n-crown-no-neck}. The inequalities \cref{eq:finvolflipper-bound} follow from the inequality 
\begin{equation}
\label{eq:ineq-vol-A_1}
\frac{1}{\sqrt{2\cosh u_i +2\cosh c_i}} \leqslant \frac{1}{\sqrt{2\cosh u_i +2\cosh c'_i}}.
\end{equation}
If $\Sigma \neq \DD_n, \AA_n$, then the finiteness of the volumes follows from the neck recursion (\cref{prop:neck-recursion}), the inequality \cref{eq:ineq-vol-A_1} for $c_i'=0$ where $i=1,\dotsc,n$ and the finiteness of the Mirzakhani volumes of all crowned surfaces \cite[Theorem~1.5]{HT25}. The inequalities \cref{eq:finvolflipper-bound} follow from the inequality \cref{eq:ineq-vol-A_1}.
\end{proof}

\section{Further structure of the Mirzakhani volumes}
\label{sec:structure}

In this section, we
\begin{itemize}
    \item prove that the Mirzakhani volumes satisfy an analogue  (\cref{thm:vol-2pi}) of the string and dilaton equations (as paraphrased via \cite[Lemma~1-2]{do2009weil}),
    \item determine the structure of the coefficients in the Mirzakhani volume of general surface (\cref{thm:vol-str}), and
    \item show that the Mirzakhani volumes are periods (\cref{thm:vol-periods}).
\end{itemize}

\subsection{String and dilaton-type equations for Mirzakhani volumes}

In \cref{subsec:vol-n-gon-1-int} and \cref{subsec:vol-n-crown-1-int}, we derived single integral expressions for $V_{\DD_n}(\vec{c})$ and $V_{\AA_n}(\vec{c}\;|d)$. The strategies to obtain these results were similar in spirit: roughly speaking, each is a combination of a volume recursion and an integral transform. However, the exact choices made to realize these strategies differed: one used the disk recursion and the Mehler-Fock transform, while the other one used the crown recursion and the Fourier transform. Nevertheless, it is easy to see that the final formulas in \cref{thm:vol-n-gon-1-int} and \cref{thm:vol-n-crown-1-int} resemble each other. 

This lead us to observe an equation that relates $V_{\DD_n}(\vec{c})$ and $V_{\AA_n}(\vec{c}\;|d)$, which resembles an open strings version of \cite[Theorem~2]{do2009weil}, which Do and Norbury show is equivalent to a generalization of the dilaton equation proven by Witten in \cite{witten_conjecture}. Combining their result with the neck recursion shows that \textit{all} Mirzakhani volumes satisfy these dilaton-type equations. We further show that the volumes of the moduli spaces of fixed neck lengths satisfy a string-type equation (\cref{thm:vol-2pi}). 
\medskip

We start with the following lemma:

\begin{lemma}
\label{lem:analytic-continuation}
For $n\geqslant 1$, the integral 
\begin{equation}
V_{\AA_n} (c_1,\dotsc,c_n|d)  =  \pi^{n-1}\int_0^\infty \cos(\omega d) \cdot\prod_{i=1}^n \frac{P_{-1/2+\sqrt{-1}\omega}(\cosh c_i)}{\cosh \pi\omega} \,\mathrm{d}\omega
\end{equation}
extends holomorphically from $d\geqslant0$ to the strip $|\text{Im}\, d|<n\pi.$
\end{lemma}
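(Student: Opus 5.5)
We use the standard criterion that a parameter integral $\int_0^\infty F(\omega,d)\,\mathrm{d}\omega$ with $F$ entire in $d$ is holomorphic on a domain if it converges locally uniformly there. We therefore want a bound on the integrand, uniform for $d$ in compact subsets of $\{|\mathrm{Im}\,d|<n\pi\}$, by an integrable function of $\omega$. Then Morera's theorem, with Fubini, gives holomorphy; equivalently, Weierstrass' theorem applied to the truncated integrals $\int_0^R$, which are entire in $d$.

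First, for $d=x+\sqrt{-1}y$ we have $|\cos(\omega d)|\leqslant\cosh(\omega y)\leqslant e^{\omega|y|}$. Second, $(\cosh\pi\omega)^{-n}\leqslant 2^n e^{-n\pi\omega}$ for $\omega\geqslant0$. Third, we need a bound on the conical functions that is uniform in $\omega\geqslant0$, and the simplest is
\[
|P_{-1/2+\sqrt{-1}\omega}(\cosh c)|\leqslant P_{-1/2}(\cosh c).
\]
This follows from the Laplace--Mehler integral representation
\[
P_{-1/2+\sqrt{-1}\omega}(\cosh c)=\frac{\sqrt{2}}{\pi}\int_0^c\frac{\cos(\omega t)\,\mathrm{d}t}{\sqrt{\cosh c-\cosh t}},
\]
in which the kernel is positive and $|\cos(\omega t)|\leqslant1$. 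Alternatively, one can read it off from the Fourier representation already used in the proof of \cref{prop:prod-formula-con-func},
\[
\frac{\pi P_{-1/2+\sqrt{-1}\omega}(\cosh c)}{\cosh\pi\omega}=2\int_0^\infty\frac{\cos(\omega x)\,\mathrm{d}x}{\sqrt{2\cosh x+2\cosh c}}.
\]
This gives the bound $\bigl|\pi P_{-1/2+\sqrt{-1}\omega}(\cosh c)/\cosh\pi\omega\bigr|\leqslant 2V_{\AA_1}(c)$, a constant in $\omega$, but it discards the exponential decay in $\omega$. So we keep the decay from the $\cosh\pi\omega$ factors and use the Laplace--Mehler bound for $P$. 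For each fixed $c$, the quantity $P_{-1/2}(\cosh c)$ is finite.

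Combining the three bounds, on a compact set where $|\mathrm{Im}\,d|\leqslant n\pi-\varepsilon$, the integrand is dominated by
\[
C\,e^{\omega(n\pi-\varepsilon)}e^{-n\pi\omega}=C\,e^{-\varepsilon\omega},
\]
which is integrable on $[0,\infty)$. This yields locally uniform convergence, and hence a holomorphic function on the strip. This function agrees with $V_{\AA_n}(\vec{c}\,|\,d)$ for real $d\geqslant0$ by \cref{thm:vol-n-crown-1-int}, so it is the desired holomorphic extension.

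The main obstacle is the uniform-in-$\omega$ bound on the conical functions; all the other estimates are elementary. If the Laplace--Mehler route is inconvenient, a fallback is the asymptotic $P_{-1/2+\sqrt{-1}\omega}(\cosh c)=O(\omega^{-1/2})$ as $\omega\to\infty$ for fixed $c>0$, together with continuity in $\omega$ on compact sets. For $c=0$ the bound is trivial since $P\equiv1$.
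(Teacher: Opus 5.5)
Your proposal is correct and matches the paper's proof: the paper uses the same three estimates, namely $|\cos(\omega d)|\leqslant e^{|\mathrm{Im}\,d|\,\omega}$, the bound $|P_{-1/2+\sqrt{-1}\omega}(\cosh c_i)|\leqslant P_{-1/2}(\cosh c_i)$ (which it cites as \cite[Equation~7.4.1]{MR350075}, i.e.\ exactly the Laplace--Mehler representation you invoke), and the $e^{-n\pi\omega}$ decay of $\cosh^{-n}\pi\omega$. Your locally uniform domination by $C e^{-\varepsilon\omega}$, together with Morera/Weierstrass, simply spells out rigorously the paper's one-line conclusion that the integral is holomorphic on the strip.
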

\begin{proof}
Since for $c_i=0$, $P_{-1/2+\sqrt{-1}\omega}(\cosh c_i)=1$ \cite[Equation~7.3.13]{MR350075} and for $c_i>0$, $|P_{-1/2+\sqrt{-1}\omega}(\cosh c_i)| \leqslant P_{-1/2}(\cosh c_i)$  \cite[Equation~7.4.1]{MR350075}, for a fixed $\vec{c}$,
\begin{equation*}
    \left|\prod_{i=1}^n \frac{P_{-1/2+\sqrt{-1}\omega}(\cosh c_i)}{\cosh \pi\omega} \right| = O(e^{-n\pi\omega}).
\end{equation*}
Since $|\cos(\omega d)|\leqslant \cosh(\text{Im}\,d\cdot\omega) \leqslant\ e^{|\text{Im}\, d|\omega}$, the integral defines a holomorphic function when $|\text{Im}\,d| <n\pi$.
\end{proof}

Now we prove \cref{thm:vol-2pi}, assuming that
\begin{itemize}
\item if $\Sigma = \AA_n$, then $V_{\Sigma}(\vec{c}\;|d)$ is defined for $c_j\geqslant0, (j=1,\dotsc,n)$ and $|\text{Im}\, d|<n\pi$ (\cref{lem:analytic-continuation}), and \item if  $\Sigma\neq\DD_n,\AA_n$, the volume $V_{\Sigma}(\vec{b};\vec{c})$ (and $V_{\Sigma}(\vec{b};\vec{c}\;|\vec{d})$) is defined for 
\[b_i \in \CC, i=1,\dotsc,m \,\, \text{and}\,\, c_j\geqslant0, j=1,\dotsc,n \,\]
$\text{and}\, d_k>0, k=1,\dotsc,l$
via the neck recursion (\cref{prop:neck-recursion}).
\end{itemize}

\begin{theorem}
\label{thm:vol-2pi}
For $\Sigma = \Sigma_{g,m+1,\vec{a}}$ with $2g-2+m+l>0$, and $\vec{b}=(b_1,\dotsc,b_m)$,
\begin{equation}
\label{eq:vol-plug}
\begin{split}
V_{\Sigma}(\vec{b},2\pi\sqrt{-1};\vec{c}\;|\vec{d}) = &\sum_{i=1}^m \int_{0}^{b_i} b_i V_{\Sigma\cup p_{m+1}}(\vec{b};\vec{c}\;|\vec{d})\,\mathrm{d}b_i 
\\&+\sum_{j=1}^l d_j V_{\AA_{a_j}}(\vec{c}_j|d_j) \int_0^{d_j} \frac{V_{\Sigma\cup p_{m+1}}(\vec{b};\vec{c}\;|\vec{d})}{V_{\AA_{a_j}}(\vec{c}_j|d_j)} \mathrm{d}d_j
\end{split}
\end{equation}
For $\Sigma = \AA_n$ with $n\geqslant 3$,
\begin{equation}
\label{eq:vol-crown-der}
\frac{\partial V_{\Sigma}}{\partial d}\left(\vec{c}\;|2\pi\sqrt{-1}\right) = -\pi\sqrt{-1}V_{\Sigma\cup p_1}(\vec{c}).
\end{equation}
For $\Sigma = \Sigma_{g,m+1,\vec{a}}\neq\AA_n$ and $\vec{b}=(b_1,\dotsc,b_m)$,
\begin{align}
\label{eq:vol-der}
\frac{\partial V_\Sigma}{\partial b_{m+1}} \bigl(\vec{b},2\pi\sqrt{-1};\vec{c}\;|\vec{d}\;\bigr)
&=
2\pi \sqrt{-1} (2g-2+m+l)V_{\Sigma\cup p_{m+1}}\bigl(\vec{b};\vec{c}\;|\vec{d}\;\bigr)
\text{, and}\\
\frac{\partial V_\Sigma}{\partial b_{m+1}} \bigl(\vec{b},2\pi\sqrt{-1};\vec{c}\bigr)
&= 
2\pi \sqrt{-1} (2g-2+m+l)V_{\Sigma\cup p_{m+1}}\bigl(\vec{b};\vec{c}\bigr).
\end{align}
\end{theorem}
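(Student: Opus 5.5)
The plan is to reduce every identity either to the single-integral formulas of \cref{thm:vol-n-gon-1-int} and \cref{thm:vol-n-crown-1-int} (for $\AA_n$), or, via the neck recursion, to Mirzakhani's Weil--Petersson volumes, where Do--Norbury's identities \cite[Theorem~2]{do2009weil} apply. Here $\Sigma\cup p_{m+1}$ means the surface obtained by filling in the puncture $p_{m+1}$.

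\textbf{The crown case \cref{eq:vol-crown-der}.} By \cref{lem:analytic-continuation}, $V_{\AA_n}(\vec{c}\;|d)$ is holomorphic on the strip $|\mathrm{Im}\, d|<n\pi$. Since $n\geqslant 3$, this strip contains $d=2\pi\sqrt{-1}$, and the same $O(e^{-n\pi\omega})$ bound lets us differentiate under the integral sign:
\[
\partial_d V_{\AA_n}(\vec{c}\;|d)=-\pi^{n-1}\int_0^\infty \omega\sin(\omega d)\prod_{i=1}^n\frac{P_{-1/2+\sqrt{-1}\omega}(\cosh c_i)}{\cosh\pi\omega}\,\mathrm{d}\omega .
\]
At $d=2\pi\sqrt{-1}$ we have $\sin(2\pi\sqrt{-1}\,\omega)=\sqrt{-1}\sinh 2\pi\omega$. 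By \cref{thm:vol-n-gon-1-int}, the resulting integral equals $-\pi\sqrt{-1}\,V_{\DD_n}(\vec{c})$, and $\DD_n=\AA_n\cup p_1$.

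\textbf{General $\Sigma$ with fixed necks.} Apply \cref{prop:neck-recursion} to all $l$ necks simultaneously:
\[
V_\Sigma(\vec{b},b_{m+1};\vec{c}\;|\vec{d})=V_{g,m+1+l}(\vec{b},b_{m+1},\vec{d})\prod_{j=1}^l d_j\,V_{\AA_{a_j}}(\vec{c}_j|d_j).
\]
Here $\Sigma'=\Sigma_{g,m+1+l}$ has no arches, so its action vanishes and its volume form is the usual Weil--Petersson one (\cref{prop:WP-vol-form-scc}). Hence $V_{\Sigma'}$ is Mirzakhani's polynomial. This identity also serves as the definition of the continuation in $b_{m+1}\in\CC$.

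The hypothesis $2g-2+m+l>0$ says exactly that $\Sigma_{g,m+l}$, i.e.\ $\Sigma'$ with $p_{m+1}$ filled, is hyperbolic. The same factorization therefore applies to $\Sigma\cup p_{m+1}$ with $V_{g,m+l}$ in place of $V_{g,m+1+l}$. We then substitute Do--Norbury with $m+l$ remaining boundaries (the $b_i$ together with the $d_j$):
\[
V_{g,m+l+1}(\vec{L},2\pi\sqrt{-1})=\sum_k\int_0^{L_k}L_kV_{g,m+l}(\vec{L})\,\mathrm{d}L_k,\qquad \partial_{L_{m+l+1}}V_{g,m+l+1}(\vec{L},2\pi\sqrt{-1})=2\pi\sqrt{-1}\,(2g-2+m+l)\,V_{g,m+l}(\vec{L}).
\]

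For the terms with $L_k=b_i$, the crown factors are constants and pass through the integral, giving the first sum in \cref{eq:vol-plug}. For the terms with $L_k=d_j$, the factor $d_j V_{\AA_{a_j}}(\vec{c}_j|d_j)$ depends on the integration variable. This is why the statement divides $V_{\Sigma\cup p_{m+1}}$ by $V_{\AA_{a_j}}$ inside the integral: $V_{\Sigma\cup p_{m+1}}/V_{\AA_{a_j}}$ equals $d_jV_{g,m+l}$ times factors independent of $d_j$. Multiplying back by the outer $d_jV_{\AA_{a_j}}(\vec{c}_j|d_j)$ reproduces the Do--Norbury term times the full crown product. The fixed-neck derivative identity \cref{eq:vol-der} follows immediately, since the crown factors do not depend on $b_{m+1}$.

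\textbf{Unfixed necks.} Integrate the fixed-neck derivative identity over $\vec{d}\in(0,\infty)^l$ using the second formula of \cref{prop:neck-recursion}. The main obstacle is justifying the interchange of $\partial_{b_{m+1}}$ (at complex $b_{m+1}$) with these $d$-integrals. I would dominate $\partial_{b_{m+1}}V_{g,m+1+l}$ by a polynomial in $\vec{d}$, locally uniformly in $b_{m+1}$, and use the exponential decay of $V_{\AA_{a}}(\vec{c}|d)$ in $d$. That decay follows from the convolution representation \cref{eq:vol-n-crown-conv}, since each factor $1/\sqrt{2\cosh\ell+2\cosh c}$ decays like $e^{-\ell/2}$. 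Differentiation under the integral is then justified by dominated convergence.
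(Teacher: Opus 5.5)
Your proposal follows the paper's proof. The crown identity \cref{eq:vol-crown-der} comes from differentiating the single integral of \cref{thm:vol-n-crown-1-int} under the integral sign at $d=2\pi\sqrt{-1}$, with dominated convergence using $n\geqslant 3$, and then recognizing \cref{thm:vol-n-gon-1-int}. Everything else comes from the full neck factorization $V_{g,m+1+l}\prod_j d_jV_{\AA_{a_j}}(\vec{c}_j|d_j)$ together with Do--Norbury. Your bookkeeping for the $d_j$-terms in \cref{eq:vol-plug} is also correct.

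There is one omission. The derivative identities \cref{eq:vol-der} are asserted for every $\Sigma_{g,m+1,\vec{a}}\neq\AA_n$, not only under $2g-2+m+l>0$. They therefore include $(g,m,l)=(0,1,1)$ and $(0,0,2)$. In those cases $\Sigma_{g,m+l}=\Sigma_{0,2}$ is not hyperbolic and $V_{g,m+l}$ does not exist. The Do--Norbury derivative formula you invoke requires $2g-2+n>0$, so it is unavailable there. Your sentence that the fixed-neck identity ``follows immediately'' rests on the hypothesis you fixed in the previous paragraph, and so it does not cover these cases.

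The paper handles them separately, and you should add the same short argument. In both cases the neck factor is $V_{0,3}\equiv 1$. Hence $V_\Sigma$ (with fixed or unfixed necks) does not depend on $b_{m+1}$, so $\partial_{b_{m+1}}V_\Sigma=0$. This matches the right-hand side, whose coefficient is $2g-2+m+l=0$.

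For the unfixed-neck interchange you take a slightly different route, and both work. You dominate using the exponential decay of the convolution \cref{eq:vol-n-crown-conv}, which is self-contained. The paper instead bounds $V_{\AA_a}(\vec{c}|d)\leqslant V_{\AA_a}(\vec{0}|d)$ via \cref{thm:finvolflipper} and uses the explicit polynomial-over-hyperbolic-function form of the latter from \cite{HT25}.
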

\begin{proof}
We first prove \cref{eq:vol-plug}. If $l=0$, i.e. $\Sigma = \Sigma_{g,m+1}$, then $\vec{c}=\vec{d}=\varnothing$, and the result is due to Do-Norbury \cite[Theorem~2, Equation~2]{do2009weil}. If $l\geqslant 1$, then since $2g-2+m+l>0$, the surface $\Sigma_{g,m+l}$ admits a hyperbolic metric and by the neck recursion (\cref{prop:neck-recursion}) together with \cite[Theorem~2]{do2009weil}:
\begin{equation*}
\begin{split}
&V_{\Sigma}(\vec{b},2\pi\sqrt{-1};\vec{c}\;|\vec{d}) 
\\\smash{\mathllap{=}}\,& V_{g,m+1+l}(\vec{b},2\pi\sqrt{-1},\vec{d})\cdot \prod_{j=1}^l d_j\cdot V_{\AA_{a_j}}(\vec{c}_j|d_j)
\\\smash{\mathllap{=}}\,& \left(\sum_{i=1}^m \int_0^{b_i} b_i V_{g,m+l}(\vec{b},\vec{d})\,\mathrm{d}b_i +\sum_{j=1}^l \int_{0}^{d_j}d_jV_{g,m+l}(\vec{b},\vec{d})\,\mathrm{d}d_j \right)\cdot \prod_{k=1}^l d_k\cdot V_{\AA_{a_k}}(\vec{c}_k|d_k)
\\\smash{\mathllap{=}}\,& \sum_{i=1}^m \int_0^{b_i} b_i V_{g,m+l}(\vec{b},\vec{d})\cdot \prod_{k=1}^l d_k\cdot V_{\AA_{a_k}}(\vec{c}_k|d_k)\,\mathrm{d}b_i
\\& \phantom{\sum}+ \sum_{j=1}^l d_j V_{\AA_{a_j}}(\vec{c}_j|d_j) \int_0^{d_j} d_jV_{g,m+l}(\vec{b},\vec{d})\cdot \prod_{k=1, k\neq j}^l d_k\cdot V_{\AA_{a_k}}(\vec{c}_k|d_k)\,\mathrm{d}d_j
\\\smash{\mathllap{=}}\,& \sum_{i=1}^m \int_{0}^{b_i} b_i V_{\Sigma\cup p_{m+1}}(\vec{b};\vec{c}\;|\vec{d})\,\mathrm{d}b_i 
+\sum_{j=1}^l d_j V_{\AA_{a_j}}(\vec{c}_j|d_j) \int_0^{d_j} \frac{V_{\Sigma\cup p_{m+1}}(\vec{b};\vec{c}\;|\vec{d})}{V_{\AA_{a_j}}(\vec{c}_j|d_j)} \mathrm{d}d_j.
\end{split}
\end{equation*}
Next, we prove \cref{eq:vol-crown-der} and \cref{eq:vol-der}. If $l=0$, i.e. $\Sigma = \Sigma_{g,m+1}$, then $\vec{c}=\vec{d}=\varnothing$, and the result is again due to Do-Norbury \cite[Theorem~2, Equation~3]{do2009weil}. If $l=1$ and $g=m=0$, then $\Sigma = \AA_n$ and $\Sigma\cup p_1 = \DD_n$. By \cref{thm:vol-n-crown-1-int} and \cref{thm:vol-n-gon-1-int}, 
\begin{equation}
\label{eq:vol-crown-der-calc}
\begin{split}
\frac{\partial V_{\Sigma}}{\partial d}\left(\vec{c}\;|2\pi\sqrt{-1}\right) & = \frac{\partial}{\partial d}\Bigg|_{d=2\pi\sqrt{-1}} \pi^{n-1}\int_0^\infty \cos(\omega d) \cdot\prod_{i=1}^n \frac{P_{-1/2+\sqrt{-1}\omega}(\cosh c_i)}{\cosh \pi\omega} \,\mathrm{d}\omega
\\& = \pi^{n-1}\int_0^\infty \frac{\partial\cos(\omega d)}{\partial d}\Bigg|_{d=2\pi\sqrt{-1}}\cdot\prod_{i=1}^n \frac{P_{-1/2+\sqrt{-1}\omega}(\cosh c_i)}{\cosh \pi\omega} \,\mathrm{d}\omega
\\& = -\pi^{n-1} \int_0^\infty \omega \cdot \sin\left(2\pi\sqrt{-1}\omega\right) \cdot\prod_{i=1}^n \frac{P_{-1/2+\sqrt{-1}\omega}(\cosh c_i)}{\cosh \pi\omega} \,\mathrm{d}\omega
\\& = -\pi\sqrt{-1}\cdot V_{\DD_n}(\vec{c}).
\end{split}
\end{equation}
Let us justify taking the derivative under the integral sign in \cref{eq:vol-crown-der-calc}. Similarly to the bounds in the proof of \cref{lem:analytic-continuation}, the differentiated integrand in \cref{eq:vol-crown-der-calc} satisfies
\begin{equation}
\label{eq:diff-under-int-sign}
\left|\omega \cdot \sin\left(\omega d\right) \cdot\prod_{i=1}^n \frac{P_{-1/2+\sqrt{-1}\omega}(\cosh c_i)}{\cosh \pi\omega} \right| = O\left(\omega e^{-(n\pi-|\text{Im}\,d|)\omega}\right). 
\end{equation}
Since $n\geqslant 3$ and the derivative is evaluated at $d=2\pi\sqrt{-1}$, the function in \cref{eq:diff-under-int-sign}, in a small neighborhood of $2\pi\sqrt{-1}$, is dominated by an integrable function on $[0,\infty)$. Differentiation under the integral sign is therefore justified by the dominated convergence theorem, and this proves \cref{eq:vol-crown-der}. 

In the remaining cases, we have $l\geqslant 1$ and $2g-2+m+l\geqslant0$. We first consider the case $2g-2+m+l=0$, i.e. when $(g,m,l)=(0,1,1)$ or $(g,m,l)=(0,0,2)$. If $(g,m,l)=(0,1,1)$, then by the neck recursion (\cref{prop:neck-recursion}),
\begin{equation*}
\begin{split}
\frac{\partial V_\Sigma}{\partial b_{2}} \bigl(b_1,2\pi\sqrt{-1};\vec{c}\;|d\bigr) &= \frac{\partial}{\partial b_{2} }\Bigg|_{b_{2}=2\pi\sqrt{-1}} \, V_{0,3}(b_1,b_2,d)\cdot d\cdot V_{\AA_{a_1}}(\vec{c}\;|d)
\\& = \frac{\partial}{\partial b_{2} }\Bigg|_{b_{2}=2\pi\sqrt{-1}}  \, 1\cdot d\cdot V_{\AA_{a_1}}(\vec{c}\;|d)
\\& = 0,
\end{split}
\end{equation*}
which agrees with \cref{eq:vol-der}.
The case of unfixed neck lengths is identical.

If $(g,m,l)=(0,0,2)$, then by the neck recursion (\cref{prop:neck-recursion}),
\begin{equation*}
\begin{split}
\frac{\partial V_\Sigma}{\partial b_1}\left(2\pi\sqrt{-1};\vec{c}\;|d_1,d_2\right) &= \frac{\partial}{\partial b_{1} }\Bigg|_{b_{1}=2\pi\sqrt{-1}} \, V_{0,3}(b_1,d_1,d_2)\cdot \prod_{i=1}^2 d_i\cdot V_{\AA_{a_i}}(\vec{c}_i|d_i)
\\& = \frac{\partial}{\partial b_{1} }\Bigg|_{b_{1}=2\pi\sqrt{-1}} \, 1 \cdot \prod_{i=1}^2 d_i\cdot V_{\AA_{a_i}}(\vec{c}_i|d_i)
\\& = 0.
\end{split}
\end{equation*}
which agrees with \cref{eq:vol-der}.
The case of unfixed neck lengths is identical.

Suppose that $l\geqslant1$ and $2g-2+m+l\geqslant 1$. Then the surface $\Sigma_{g,m+l}$ admits a hyperbolic metric, and by the neck recursion (\cref{prop:neck-recursion}) together with \cite[Theorem~2]{do2009weil}:
\begin{equation}
\begin{split}
&\frac{\partial V_\Sigma}{\partial b_{m+1}} \bigl(\vec{b},2\pi\sqrt{-1};\vec{c}\;|\vec{d}\;\bigr)
\\\smash{\mathllap{=}}\,& \frac{\partial}{\partial b_{m+1} }\Bigg|_{b_{m+1}=2\pi\sqrt{-1}} \, V_{g,m+1+l}\bigl(\vec{b},b_{m+1},\vec{d}\;\bigr)\cdot \prod_{i=1}^l d_i\cdot V_{\AA_{a_i}}(\vec{c}_i|d_i)
\\\smash{\mathllap{=}}\,& 2\pi\sqrt{-1}(2g-2+m+l)V_{g,m+l}(\vec{b},\vec{d})
\cdot \prod_{i=1}^l d_i\cdot V_{\AA_{a_i}}(\vec{c}_i|d_i)
\\\smash{\mathllap{=}}\,& 2\pi \sqrt{-1} (2g-2+m+l)V_{\Sigma\cup p_{m+1}}\bigl(\vec{b};\vec{c}\;|\vec{d}\;\bigr).
\end{split}  
\end{equation}
Finally, for unfixed neck lengths, we use the fact that $V_{\AA_{a_i}}(\vec{c_i}|d_i)\leqslant V_{\AA_{a_i}}(\vec{0}|d_i)$ (\cref{thm:finvolflipper}) and that a polynomial divided by the hyperbolic sine/cosine is integrable over $[0,\infty)$ to justify the differentiation under the integral sign.
\end{proof}

\begin{remark}
We are presently unaware of and interested in a cohomology-based interpretation and proof of \cref{eq:vol-crown-der}.  
\end{remark}
\medskip

\subsection{Mirzakhani volumes of moduli spaces of general surfaces}

For $\Sigma = \Sigma_{g,m,\vec{a}}$, let $A_k \subset \{1,\dotsc,n\}$ be the set of boundary puncture indices on the $k$-th boundary component for $k=1,\dotsc,l$. In particular, $|A_k|=a_k$. Define the function $\Phi_{a_k}(\omega,\vec{c}_k)$ as
\begin{equation}
\Phi_{a_k}(\omega,\vec{c}_k) = \prod_{i\in A_k}\frac{P_{-1/2+ix}(\cosh c_i)}{\cosh \pi x}
\end{equation}
We prove the following composite theorem:

\begin{theorem}
\label{thm:vol-str}
(i) For any $\Sigma \neq \DD_n,\AA_n$ or $\AA_{a_1,a_2}$, the volume of $\mathcal{M}_\Sigma(\vec{b};\vec{c})$ satisfies
\[
V_{\Sigma}(\vec{b};\vec{c}) \in\QQ\left[\pi,b_1^2,\dotsc,b_m^2, \mathcal{M}\{\Phi_{a_k}\}(-2r-1)\right], \, 1\leqslant k\leqslant l,0\leqslant 2r \leqslant \dim_{\RR}\mathcal{M}_\Sigma(\vec{b};\vec{c}), 
\]
where $\mathcal{M}\{\cdot\}(s)$ is the Mellin transform.

(ii) For any $\Sigma$, the Taylor germ of $V_{\Sigma}(\vec{b};\vec{c})$ in $c$-variables has coefficients in 
\[
\QQ[b_1^2,\dotsc, b_m^2,\log 2, \zeta(r),\beta(r)],\quad 1\leqslant r \leqslant \dim_\RR\mathcal{M}_\Sigma(\vec{b};\vec{c}).
\]
\end{theorem}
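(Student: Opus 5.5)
For part (i), I will begin with the neck recursion (\cref{prop:neck-recursion}), applied to all $l$ necks at once. Since $\Sigma\neq\DD_n,\AA_n,\AA_{a_1,a_2}$, the surface $\Sigma_{g,m+l}$ obtained by chopping every crown is a genuine hyperbolic surface. This gives
\[
V_\Sigma(\vec b;\vec c)=\int_{(0,\infty)^l}V_{g,m+l}(\vec b,d_1,\dots,d_l)\prod_{k=1}^l d_k\,V_{\AA_{a_k}}(\vec c_k|d_k)\,\mathrm{d}d_1\cdots\mathrm{d}d_l .
\]
By Mirzakhani, $V_{g,m+l}$ is a polynomial in $\QQ[\pi^2,b_i^2,d_k^2]$. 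Its degree in each $d_k$ is at most $2(3g-3+m+l)$, which is bounded by $\dim_\RR\mathcal{M}_\Sigma(\vec b;\vec c)$ (I will check this using \cref{cor:top-teich-space-fix-hol}). The integral therefore splits into finitely many products of one-dimensional moments $\int_0^\infty d^{2r+1}V_{\AA_{a}}(\vec c|d)\,\mathrm{d}d$.

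The key step is to evaluate these moments using \cref{thm:vol-n-crown-1-int}: $V_{\AA_a}(\vec c|d)=\pi^{a-1}\int_0^\infty\cos(\omega d)\Phi_a(\omega,\vec c)\,\mathrm{d}\omega$. Formally, the moment $\int_0^\infty d^{2r+1}\cos(\omega d)\,\mathrm{d}d$ is a distribution (a derivative of $|\omega|^{-2}$). I will avoid this by exchanging the order of integration in a regularised way: insert $d^{s}$, use $\int_0^\infty d^{s-1}\cos(\omega d)\,\mathrm{d}d=\Gamma(s)\cos(\pi s/2)\omega^{-s}$, and continue analytically to $s=2r+2$. This yields a rational multiple of a power of $\pi$ (coming from $\Gamma(s)\cos(\pi s/2)$, possibly via a residue or limit) times the Mellin transform $\mathcal{M}\{\Phi_a\}(-2r-1)$.

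The main obstacle is justifying this step. The analytic continuation must be valid, and the Mellin transform of $\Phi_a$ must be defined at $-2r-1$. It is defined because $\Phi_a$ is even, smooth, and decays like $e^{-a\pi\omega}$, by the bounds in \cref{lem:analytic-continuation}. The clean route is to note that $V_{\AA_a}(\vec c|\cdot)$ extends to an even function analytic in the strip $|\mathrm{Im}\,d|<a\pi$, and to compute the Mellin transform in $d$ via Parseval, i.e. Mellin convolution. At $s=2r+2$ the factor $\cos(\pi s/2)$ is finite and $\Gamma$ is finite, but I expect a delicate point where the Mellin strip of $\omega^{-s}$ fails to overlap. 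The fallback is to integrate by parts $2r+1$ times in $\omega$, which makes the identity genuinely convergent; the boundary terms then vanish by evenness and smoothness of $\Phi_a$ at $\omega=0$.

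For part (ii), I will Taylor expand in $c$. Because $P_{-1/2+\sqrt{-1}\omega}(\cosh c)$ is even in $c$ with coefficients polynomial in $\omega^2$ (by the Legendre ODE, or Mehler's series), each $c$-coefficient of $\Phi_a$ is a polynomial in $\omega^2$ over $\QQ$ divided by $\cosh^a\pi\omega$. The Mellin values reduce to integrals $\int_0^\infty\omega^{2j}\,\mathrm{sech}^a(\pi\omega)\,\mathrm{d}\omega$ and their odd-moment analogues. These lie in $\QQ[\pi,\log2,\zeta(r),\beta(r)]$ by the computations of \cite{HT25}, with powers of $\pi$ absorbing the normalisations. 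The cases $\DD_n$, $\AA_n$ and $\AA_{a_1,a_2}$ are handled separately, in the same way, from \cref{thm:vol-n-gon-1-int} and \cref{thm:vol-n-crown-1-int}. The moment weight $\xi\sinh2\pi\xi$ produces the same family of integrals. Finally, the weight bound $r\leqslant\dim$ is tracked through the degree count above.
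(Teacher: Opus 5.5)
Your part (i), and the skeleton of the generic case of (ii), match the paper's argument: neck recursion, Mirzakhani's polynomial, the moments $\mu_{k,a}(\vec c)=\int_0^\infty d^{2k+1}V_{\AA_a}(\vec c\,|d)\,\mathrm{d}d$, the cosine representation of \cref{thm:vol-n-crown-1-int}, and the hypergeometric expansion of the conical functions. Your analytic continuation in $s$ is a valid replacement for the paper's Abel regularisation, and the obstruction you anticipate does not arise. On $0<\mathrm{Re}\,s<1$ both $\int_0^\infty d^{s-1}\cos(\omega d)\,\mathrm{d}d$ and $\int_0^\infty\omega^{-s}\Phi_a\,\mathrm{d}\omega$ converge, and $\int_0^\infty d^{s-1}V_{\AA_a}(\vec c\,|d)\,\mathrm{d}d$ is holomorphic on $\mathrm{Re}\,s>0$. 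The possible poles of $\mathcal{M}\{\Phi_a\}(1-s)$ at odd $s$ are cancelled by $\cos(\pi s/2)$. At $s=2r+2$ you get exactly $(-1)^{r+1}(2r+1)!\,\pi^{a-1}\mathcal{M}\{\Phi_a\}(-2r-1)$, with no residue, so the integration-by-parts fallback is unnecessary.

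The gaps are in (ii). First, write $\prod_i P_{-1/2+\sqrt{-1}\omega}(\cosh c_i)=\sum_{\boldsymbol{\alpha}}(-1)^{|\boldsymbol{\alpha}|}A_{\boldsymbol{\alpha}}(\omega^2)\boldsymbol{s}^{\boldsymbol{\alpha}}$, with $\boldsymbol{s}=(\sinh^2(c_i/2))_i$ and $A_{\boldsymbol{\alpha}}\in\QQ[x]$. The coefficient of $\boldsymbol{s}^{\boldsymbol{\alpha}}$ in $\mathcal{M}\{\Phi_a\}(-2r-1)$ is then a $\QQ$-combination of the values $\mathcal{M}\{\mathrm{sech}^a\pi\omega\}(2j-2r-1)$.
\begin{itemize}
\item For $j>r$ these are convergent even moments of $\mathrm{sech}^a(\pi\omega)$. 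They contribute only rational multiples of powers of $\pi$.
\item For $j\leqslant r$ they are values of the continuation at negative odd integers, not integrals at all. This is exactly where $\zeta(\text{odd})$ and $\beta(\text{even})$ enter.
\end{itemize}
Saying that everything ``reduces to $\int_0^\infty\omega^{2j}\mathrm{sech}^a(\pi\omega)\,\mathrm{d}\omega$ and odd-moment analogues'' and citing \cite{HT25} does not establish this. The paper proves it in \cref{lem:mellin-sechn-values}, using the recursion
\[
\mathcal{M}\{\mathrm{sech}^{n+2}\pi\omega\}(s)=\tfrac{n}{n+1}\mathcal{M}\{\mathrm{sech}^{n}\pi\omega\}(s)-\tfrac{(s-1)(s-2)}{n(n+1)\pi^2}\mathcal{M}\{\mathrm{sech}^{n}\pi\omega\}(s-2),
\]
which comes from $(\mathrm{sech}^n\pi\omega)''$. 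It combines this with $\mathcal{M}\{\mathrm{sech}\,\pi\omega\}(s)=2\Gamma(s)\pi^{-s}\beta(s)$, the analogous $\zeta(s-1)$ formula for $\mathrm{sech}^2$, and the functional equations of $\beta$ and $\zeta$. Alternatively, \cref{lem:mellin-vol-n-crown} at $\vec c=\vec 0$ gives $\mathcal{M}\{\mathrm{sech}^a\pi\omega\}(-2p-1)=\frac{(-1)^{p+1}}{(2p+1)!}\pi^{1-a}\mu_{p,a}(\vec 0)$. These moments can be computed from the closed form of $V_{\AA_a}(\vec 0|d)$ in \cite{HT25}, but you have to say so.

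The same lemma controls the weight. The index reaches $2k+a+1$, not $2k$. It stays within $\dim_\RR\mathcal{M}_\Sigma(\vec b;\vec c)=6g-6+2m+3l+n$ only because $a+1\leqslant l+n$. Your degree count alone does not give this.

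Second, $\AA_{a_1,a_2}$ is not handled ``in the same way''. There is no hyperbolic $\Sigma_{g,m+l}$ factor, and the neck recursion gives $V_{\AA_{a_1,a_2}}(\vec c,\vec c\,')=\int_0^\infty d\,V_{\AA_{a_1}}(\vec c\,|d)\,V_{\AA_{a_2}}(\vec c\,'|d)\,\mathrm{d}d$. This is a moment of a \emph{product} of two cosine transforms, not a Mellin value of a single $\Phi_a$. Your remark about the weight $\xi\sinh2\pi\xi$ only covers $\DD_n$.

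The missing idea, as in the paper, is to move the $\vec c$-dependence onto differential operators. Multiplication by $\omega^2$ in \cref{thm:vol-n-crown-1-int} becomes $-\partial_d^2$, so $V_{\AA_n}(\vec c\,|d)=\sum_{\boldsymbol{\alpha}}(-1)^{|\boldsymbol{\alpha}|}A_{\boldsymbol{\alpha}}(-\partial_d^2)V_{\AA_n}(\vec 0|d)\,\boldsymbol{s}^{\boldsymbol{\alpha}}$. Then insert the closed form $V_{\AA_n}(\vec 0|d)=Q_n(d)q^{1/2}/(1-(-1)^nq)$, $q=e^{-d}$, from \cite{HT25}. Partial fractions and integration by parts reduce everything to $\int_0^1(-\log q)^r(1\mp q)^{-1}\,\mathrm{d}q$, that is, to $r!\,\zeta(r+1)$ and $r!\,\eta(r+1)$. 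This case is where $\log 2=\eta(1)$ in the statement comes from. Your proposal puts $\log 2$ into the target ring without any mechanism that produces it.
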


As preparation for proving this theorem, for $k\geqslant0,n\geqslant1$, define 
\begin{equation}
\label{eq:mellin-vol-n-crown-def}
\mu_{k,n}(\vec{c}) = \int_0^\infty d^{2k+1}\cdot V_{\AA_n}(\vec{c}\;|d)\,\mathrm{d}d.
\end{equation}
We show:

\begin{lemma}
\label{lem:mellin-vol-n-crown}
\begin{equation}
\label{eq:mellin-vol-flip-n-crown}
\mu_{k,n}(\vec{c}) = (-1)^{k+1}(2k+1)!\pi^{n-1}\mathcal{M}\{\Phi_n(\omega,\vec{c})\}(-2k-1),
\end{equation}
where the Mellin transform here is defined by analytic continuation.
\end{lemma}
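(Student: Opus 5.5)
Starting from \cref{thm:vol-n-crown-1-int}, which writes $V_{\AA_n}(\vec{c}\;|d)=\pi^{n-1}\int_0^\infty\cos(\omega d)\,\Phi_n(\omega,\vec{c})\,\mathrm{d}\omega$ as a cosine transform, I would compute the odd moments $\mu_{k,n}$ by the standard identity relating Mellin transforms of a function and of its Fourier-cosine transform. Formally,
\[
\int_0^\infty d^{s-1}\int_0^\infty \cos(\omega d)\Phi_n(\omega)\,\mathrm{d}\omega\,\mathrm{d}d
=\Gamma(s)\cos\!\left(\tfrac{\pi s}{2}\right)\int_0^\infty \omega^{-s}\Phi_n(\omega)\,\mathrm{d}\omega
=\Gamma(s)\cos\!\left(\tfrac{\pi s}{2}\right)\mathcal{M}\{\Phi_n\}(1-s).
\]
Specialising to $s=2k+2$ gives $\Gamma(2k+2)\cos(\pi(k+1))=(-1)^{k+1}(2k+1)!$ times $\mathcal{M}\{\Phi_n\}(-2k-1)$, which is exactly \cref{eq:mellin-vol-flip-n-crown} after multiplying by $\pi^{n-1}$.

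The rigorous version is an analytic continuation argument. First, for $0<\operatorname{Re}s<1$ both sides converge absolutely or conditionally, and the Parseval-type exchange is justified by the rapid decay of $\Phi_n$. Specifically, $\Phi_n(\omega)=O(e^{-n\pi\omega})$ as in \cref{lem:analytic-continuation}, and $\Phi_n$ is smooth and even in $\omega$ at $0$ because $P_{-1/2+\sqrt{-1}\omega}=P_{-1/2-\sqrt{-1}\omega}$. Second, I would show that the left side $F(s):=\int_0^\infty d^{s-1}V_{\AA_n}(\vec{c}\;|d)\,\mathrm{d}d$ is holomorphic on $\operatorname{Re}s>0$, so that the identity continues to $s=2k+2$ where $F$ equals $\mu_{k,n}$. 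This requires $V_{\AA_n}(\vec{c}\;|d)$ to decay exponentially in $d$. To get that, I would shift the $\omega$-contour (writing $\cos$ via $e^{\pm\sqrt{-1}\omega d}$ over $\mathbb{R}$) into the strip where $\Phi_n$ is holomorphic; $1/\cosh\pi\omega$ has its first pole at $\omega=\sqrt{-1}/2$, which gives decay like $e^{-d/2}$. Alternatively, the explicit convolution formula \cref{eq:vol-n-crown-conv} of functions that are each $O(e^{-|\ell|/2})$ directly gives a bound of the form $O(d^{n-1}e^{-d/2})$.

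Third, the right side $\Gamma(s)\cos(\pi s/2)\mathcal{M}\{\Phi_n\}(1-s)$ must be continued meromorphically. Since $\Phi_n$ is even, smooth at $0$ and exponentially decaying, $\mathcal{M}\{\Phi_n\}(z)$ continues meromorphically to all $z$, with at most simple poles at $z=0,-2,-4,\dots$ coming from the even Taylor coefficients. At $z=-2k-1$ (odd) there is no pole, so $\mathcal{M}\{\Phi_n\}(-2k-1)$ is a finite value given by the analytic continuation. The continuation can be made explicit by subtracting Taylor polynomials, and equivalently equals the finite part $\int_0^\infty \omega^{-2k-2}\bigl(\Phi_n-\sum_{j\le k}\Phi_n^{(2j)}(0)\omega^{2j}/(2j)!\bigr)\mathrm{d}\omega$. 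At $s=2k+2$ the factor $\Gamma(s)\cos(\pi s/2)$ is finite and nonzero, so both sides are holomorphic there, and uniqueness of continuation gives the lemma.

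The main obstacle is the first step: justifying the interchange in the strip $0<\operatorname{Re}s<1$, where the $d$-integral is only conditionally convergent, and carefully matching the continuation across the poles at $s=1,3,\dots$, which are cancelled by zeros of $\cos$. I would handle this most cleanly by instead repeatedly integrating by parts in $\omega$, or by inserting a regulator $e^{-\epsilon d}$, computing $\int_0^\infty d^{2k+1}e^{-\epsilon d}\cos(\omega d)\,\mathrm{d}d=(2k+1)!\operatorname{Re}(\epsilon-\sqrt{-1}\omega)^{-2k-2}$, and letting $\epsilon\to0$ as a distributional limit equal to the finite-part kernel $(-1)^{k+1}(2k+1)!\,\omega^{-2k-2}$.
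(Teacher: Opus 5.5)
Your argument is correct, but its main line differs from the paper's. The paper (see \cref{app:abel}) works directly at $s=2k+2$. It inserts $e^{-\varepsilon d}$ and applies Fubini to get the kernel $(2k+1)!\,\mathrm{Re}(\varepsilon-\sqrt{-1}\omega)^{-2k-2}$. It then observes that each Taylor monomial $\omega^{2j}$ with $j\leqslant k$ integrates against $(\varepsilon-\sqrt{-1}\omega)^{-2k-2}$ to a purely imaginary number, so $T_{2k}\Phi_n$ can be subtracted at no cost. Dominated convergence as $\varepsilon\to0^+$ then gives the finite-part integral $(-1)^{k+1}(2k+1)!\int_0^\infty(\Phi_n-T_{2k}\Phi_n)\,\omega^{-2k-2}\,\mathrm{d}\omega$, which is identified with the continued Mellin transform by splitting at $\omega=1$. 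Your regulator fallback is essentially this argument.

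Your primary route instead establishes $\int_0^\infty d^{s-1}V_{\AA_n}(\vec{c}\;|d)\,\mathrm{d}d=\pi^{n-1}\Gamma(s)\cos(\pi s/2)\,\mathcal{M}\{\Phi_n\}(1-s)$ for real $0<s<1$. There, after regularisation, the $\omega$-integral is absolutely convergent with the bound $|\mathrm{Re}(\varepsilon-\sqrt{-1}\omega)^{-s}|\leqslant\omega^{-s}$, and no Taylor subtraction is needed. You then continue in $s$. The Taylor subtraction is absorbed into the standard meromorphic continuation of $\mathcal{M}\{\Phi_n\}$, and its poles at odd $s$ are cancelled by the zeros of $\cos(\pi s/2)$.

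This buys a more conceptual identity valid for every moment with $\mathrm{Re}\,s>0$, not just the odd-integer moments. The cost is one input the paper never needs: holomorphy of the left side on the half-plane, i.e. super-polynomial decay of $V_{\AA_n}(\vec{c}\;|d)$ in $d$. Your bound via \cref{eq:vol-n-crown-conv} supplies this, since each factor is at most $e^{-|\ell|/2}$. The paper's route only needs $\mu^{(\varepsilon)}\to\mu_{k,n}$, which follows by monotone convergence since $V_{\AA_n}\geqslant0$, and it produces the explicit finite-part formula directly.
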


\begin{proof}
By \cite[Equation~17.43.3]{MR2360010}
\begin{equation}
\mathcal{M}\{\cos x\}(s) = \Gamma(s)  \cos\left(\frac{\pi s}{2}\right),
\end{equation}
where $0<\text{Re}\, s<1$. Therefore,  $\mathcal{M}\{\cos ax\}(s) = a^{-s}\Gamma(s)  \cos\left(\frac{\pi s}{2}\right)$ for $a>0$. By analytic continuation, $\mathcal{M}\{\cos ax\}(s)$  extends to a meromorphic function on $\CC$, which is holomorphic except at zero and negative even integers, where it has simple poles. By \cref{thm:vol-n-crown-1-int},
\begin{equation}
\label{eq:mellin-vol-n-crown}
\begin{split}
\mu_{k,n}(\vec{c}) &= \int_0^\infty d^{2k+1}\cdot V_{\AA_n}(\vec{c}\;|d)\,\mathrm{d}d
\\&= \pi^{n-1}  \int_0^\infty d^{2k+1} \left(\int_0^\infty \cos (\omega d) \Phi_n(\omega,\vec{c})\,\mathrm{d}\omega\right)\mathrm{d}d
\\& = \pi^{n-1} \int_0^\infty \Phi_n(\omega,\vec{c}) \left(\int_0^\infty d^{2k+1}\cos(\omega d) \, \mathrm{d}d \right) \mathrm{d}\omega
\\& = \pi^{n-1} \int_0^\infty\Phi_n(\omega,\vec{c}) \cdot \mathcal{M}\{\cos(\omega d)\}(2k+2)\, \mathrm{d}\omega
\\& = \pi^{n-1}  \Gamma(2k+2)\cos(\pi(k+1)) \int_0^\infty \omega^{-2k-2}\Phi_n(\omega,\vec{c})\,\mathrm{d}\omega
\\& = (-1)^{k+1}(2k+1)!\pi^{n-1}  \mathcal{M}\{\Phi_n(\omega,\vec{c})\}(-2k-1).
\end{split}
\end{equation}
The inner integral in the third line of \cref{eq:mellin-vol-n-crown} does not converge, and the formal calculation justified by Abel regularization, see \cref{app:abel}.
\end{proof}

\begin{proof}[Proof of \cref{thm:vol-str}]

(i) By the neck recursion (\cref{prop:neck-recursion}):

\begin{equation}
\begin{split}
V_{\Sigma}(\vec{b};\vec{c}) = \int_{(0,\infty)^l} V_{\Sigma_{g,m+l}}(\vec{b},\vec{d}) \cdot \prod_{i=1}^l d_i\cdot V_{\AA_{a_i}}(\vec{c}_i|d_i).
\end{split}
\end{equation}
Write the polynomial $V_{\Sigma_{g,m+l}}(\vec{b},\vec{d})$ \cite{mirz_simp} as a finite sum
\begin{equation}
V_{\Sigma_{g,m+l}}(\vec{b},\vec{d}) = \sum_{\vec{k}\in\ZZ^l_+} \left( p_{\vec{k}}(\pi^2,\vec{b}) \cdot \prod_{i=1}^l d_i^{2k_i} \right) .
\end{equation}
Then by \cref{eq:mellin-vol-n-crown-def},
\begin{equation}
\label{eq:vol-str}
V_{\Sigma}(\vec{b};\vec{c}) = \sum_{\vec{k}\in\ZZ^l_+}  \left(  p_{\vec{k}}(\pi^2,\vec{b}) \cdot \prod_{i=1}^l \mu_{k_i,a_i}(\vec{c}_i)   \right).
\end{equation}
Combining it with \cref {lem:mellin-vol-n-crown} together with the bound (\cite{mirz_simp}, \cref{lem:gen-triang-cardinality}, \cref{prop:orthogeod-coord})
\[
2k_i \leqslant 6g-6+2(m+l)\leqslant 6g-6+2m+3l+n = \dim_{\RR}\mathcal{M}_\Sigma(\vec{b};\vec{c})
\]
furnishes part (i).
\medskip

(ii) a) We first consider the case $\Sigma \neq \DD_n,\AA_n$ or $\AA_{a_1,a_2}$. Using the hypergeometric representation of the Legendre function of the first kind \cite[Equation~7.3.6]{MR350075}
\[
P_{\nu}(z) = {}_2F_1\left(-\nu,\nu+1;1;\frac{1-z}{2}\right)
\]
with $\nu = -1/2+\sqrt{-1}\omega$ and  $z=\cosh c$, and expanding ${}_2F_1$ into series and simplifying the Pochhammer symbol, we have:
\begin{equation}
\begin{split}
\label{eq:conical-func-expansion}
    P_{-1/2+\sqrt{-1}\omega}(\cosh c) &= \sum_{n=0}^\infty  \frac{(-\nu)_n(\nu+1)_n}{(1)_n} \frac{\left(\frac{1-z}{2}\right)^n}{n!}
    \\& = \sum_{n=0}^\infty \frac{(-1)^n}{(n!)^2}\sinh^{2n}(c/2) \cdot \prod_{k=0}^{n-1}\left(\omega^2+\left(k+1/2\right)^2\right).
\end{split}    
\end{equation}
Let $s = \sinh^2(c/2)$ and $A_n(x) = \frac{1}{(n!)^2} \prod_{k=0}^{n-1}\left(x+\left(k+1/2\right)^2\right)$. Then 
\begin{equation}
\label{eq:conical-func-expan-short}
P_{-1/2+\sqrt{-1}\omega}(\cosh c) = \sum_{n=0}^\infty (-1)^n A_{n}(\omega^2)s^{n}.
\end{equation}
For a multi-index $\boldsymbol{\alpha} = (\alpha_1,\dotsc,\alpha_n)$, write
\begin{equation}
A_{\boldsymbol{\alpha}}(x) = \prod_{i=1}^n A_{\alpha_i}(x) = \sum_{j=0}^{|\boldsymbol{\alpha}|} a_{\boldsymbol{\alpha},j}x^{j}.
\end{equation}
Then for $\boldsymbol{s}=(s_1,\dotsc,s_n)$,
\begin{equation}
\label{eq:Phi-expansion}
\Phi_n(\omega,\vec{c}) = \sum_{\boldsymbol{\alpha}} (-1)^{|\boldsymbol{\alpha}|}A_{\boldsymbol{\alpha}}(\omega^2)\boldsymbol{s}^{\boldsymbol{\alpha}}\cdot\text{sech}^n(\pi\omega),
\end{equation}
and by \cref{lem:mellin-vol-n-crown},
\begin{equation}
\label{eq:mellin-vol-n-crown-series}
\mu_{k,n}(\vec{c}) = \sum_{\boldsymbol{\alpha}} (-1)^{k+|\boldsymbol{\alpha}|+1}(2k+1)!\left(\sum_{j=0}^{|\boldsymbol{\alpha}|} a_{\boldsymbol{\alpha},j}\pi^{n-1}\mathcal{M}\{\text{sech}^n{\pi\omega}\}(2j-2k-1) \right)\boldsymbol{s}^{\boldsymbol{\alpha}}.
\end{equation}
Now from \cref{lem:mellin-sechn-values}, \cref{eq:mellin-vol-n-crown-series}, \cref{eq:vol-str}, we conclude that for $\Sigma \neq \DD_n,\AA_n$ or $\AA_{a_1,a_2}$, the Taylor germ of $V_{\Sigma}(\vec{b};\vec{c})$ in $s_1,\dotsc,s_n$ (hence in $c_1,\dotsc,c_n$) has coefficients in 
\[
\QQ[\pi^2,b_1^2,\dotsc, b_m^2,\zeta(2r+1),\beta(2r)], \quad r\geqslant1.
\] 
Next, by \cref{lem:mellin-sechn-values} with $p=j-k-1$, the largest beta/zeta value that appears in \cref{eq:mellin-vol-n-crown-series} is at most
$\beta(2k + n + 1)/\zeta(2k + n + 1)$.
Together with \cite{mirz_simp}, \cref{lem:gen-triang-cardinality} and \cref{prop:orthogeod-coord}, it implies that
\begin{equation}
\begin{split}
2r+1 & \leqslant 2\max_{\vec{k}}\{\max_{i=1}^l\{k_i\}\}+\max_{i=1}^l\{a_i\} + 1 
\\&\leqslant 6g-6+2(m+l)+\max_{i=1}^l\{a_i\} + 1 
\\&\leqslant 6g-6+2m+3l+n
\\&= \dim_{\RR}\mathcal{M}_{\Sigma}(\vec{b};\vec{c}).
\end{split}
\end{equation}
\medskip

(ii) b) Let $\Sigma = \DD_n$. Using \cref{thm:vol-n-gon-1-int} and the expansion \cref{eq:Phi-expansion}, we obtain
\[
V_{\DD_n}(\vec{c}) = \pi^{n-2} \sum_{\boldsymbol{\alpha}} (-1)^{\boldsymbol{\alpha}} \left(\sum_{j=0}^{|\boldsymbol{\alpha}|}a_{\boldsymbol{\alpha},j}\int_0^\infty \frac{\xi^{2j+1}\sinh{2\pi\xi}}{\cosh^n\pi\xi}\mathrm{d}\xi\right) \boldsymbol{s}^{\boldsymbol{\alpha}}.
\]
Since
\[
\frac{\sinh 2\pi\xi}{\cosh^n\pi\xi} = - \frac{2}{(n-2)\pi} \frac{d}{d\xi}\text{sech}^{n-2}\pi\xi,
\]
we obtain via integration-by-parts that the coefficients of the Taylor germ of $V_{\DD_n}(\vec{c})$ in $s$-variables (hence in $c$-variables) are $\QQ$-linear combinations of
\[\pi^{n-3}\int_0^\infty \xi^{2j}\text{sech}^{n-2}\pi\xi\,\mathrm{d}\xi, \quad j\geqslant0,
\]
or, in other words, of $\pi^{n-3}\mathcal{M}\{\text{sech}^{n-2}\pi\xi\}(2j+1)$ for $j\geqslant 0$. Then by \cref{lem:mellin-sechn-values}, he Taylor germ of $V_{\DD_n}(\vec{c})$ has coefficients in $\QQ[\pi^2]$.
\medskip

(ii) c) Let $\Sigma = \AA_{a_1,a_2}$. By the neck recursion (\cref{prop:neck-recursion}),
\begin{equation}
V_{\AA_{a_1,a_2}}(\vec{c},\vec{c}\,') = \int_0^\infty d\cdot V_{\AA_{a_1}}(\vec{c}\;|d)\cdot V_{\AA_{a_2}}(\vec{c}\,'|d)\,\mathrm{d}d.
\end{equation}
From \cref{thm:vol-n-crown-1-int} observe that 
\begin{equation}
V_{\AA_n}(\vec{c}\;|d) = \sum_{\boldsymbol{\alpha}}(-1)^{|\boldsymbol{\alpha}|}A_{\boldsymbol{\alpha}}(-\partial^2_d) V_{\AA_n}(\vec{0}|d)\boldsymbol{s}^{\boldsymbol{\alpha}}.
\end{equation}
Therefore
\begin{align*}
&V_{\AA_{a_1,a_2}}(\vec{c},\vec{c}\,') 
\\\smash{\mathllap{=}}\,&  \sum_{\boldsymbol{\alpha},\boldsymbol{\beta}} (-1)^{|\boldsymbol{\alpha}|+|\boldsymbol{\beta}|}\left(\int_0^\infty d\cdot A_{\boldsymbol{\alpha}}(-\partial^2_d) V_{\AA_{a_1}}(\vec{0}|d)\cdot A_{\boldsymbol{\beta}}(-\partial^2_d) V_{\AA_{a_2}}(\vec{0}|d)\,\mathrm{d}d \right) \boldsymbol{s}^{\boldsymbol{\alpha}}(\boldsymbol{s}')^{\boldsymbol{\beta}}.
\end{align*}
The explicit expression for $V_{\AA_n}(\vec{0}|d)$ is derived in \cite[Theorem~1.3]{HT25}. Let $q=e^{-d}$, then one has
\[
V_{\AA_n}(\vec{0}|d) = \frac{Q_n(d)q^{1/2}}{1-(-1)^nq},
\]
where $Q_n(d) \in \QQ[\pi^2,d], \deg Q_n = n-1$. Applying $A_{\boldsymbol{\alpha}}(-\partial^2_d)$ produces a rational linear combination of terms
\[
d^\ell q^{1/2}\frac{R_\ell(q)}{(1+(-1)^nq)^N}
\]
with $R_\ell(q) \in \QQ[\pi^2,q]$. After multiplying the two factors, two ``$q^{1/2}$''s cancel the $q^{-1}$ from $\mathrm{d}d = -\frac{\mathrm{d}q}{q}$. Hence the coefficient becomes a finite rational linear combination of integrals
\[
\int_0^1 (-\log q)^r\frac{R(q)}{(1-(-1)^{a_1}q)^u(1-(-1)^{a_2}q)^v}\mathrm{d}q
.\]
Partial fractions and integration by parts reduce  these to 
\[
\int_0^1 \frac{(-\log q)^r}{1-q}\mathrm{d}q = r!\zeta(r+1)
\]
and 
\[
\int_0^1 \frac{(-\log q)^r}{1+q}\mathrm{d}q = r!\eta(r+1)
\]
with $\eta(1)=\log 2$. Thus the Taylor germ of $V_{\AA_{a_1,a_2}}(\vec{c},\vec{c}\,')$ has coefficients in 
\[\QQ[\pi^2,\log 2,\zeta(2r+1)],\] where $2r+1\leqslant a_1+a_2$.
\medskip

(ii) d) Finally, let $\Sigma = \AA_n$. By \cref{cor:vol-n-crown-no-neck}, it suffices to consider the Taylor expansion of $\frac{K(\tanh(c/2))}{\cosh(c/2)}$. Since $\frac{K(\tanh(c/2))}{\cosh(c/2)} = \frac{\pi}{2}P_{-1/2}(\cosh c)$  \cite[Equation~7.10.12]{MR350075} and  by \cref{eq:conical-func-expansion} the expansion of $P_{-1/2}(\cosh c)$ has rational coefficients, the Taylor germ of $V_{\AA_n}(\vec{c})$ has coefficients in $\QQ\pi^n$.
\end{proof}

\begin{remark}
The constant term of the Taylor germ in part (ii) of \cref{thm:vol-str} equals $V_\Sigma({\vec{b};\vec{0})}$ and this recovers Theorem~1.5 in \cite{HT25} (modulo positivity and homogeneity statements).
\end{remark}
\medskip

\subsection{Mirzakhani volumes are periods}

\begin{theorem}
\label{thm:vol-periods}
For any $\Sigma$, the volumes $V_\Sigma(\vec{b};\vec{c}\;|\vec{d})$ and $V_\Sigma(\vec{b};\vec{c})$ are periods in the sense of Kontsevich-Zagier when $b_i$, $\cosh c_j$ and $\cosh d_k$ are algebraic numbers. 
\end{theorem}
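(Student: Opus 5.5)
The plan is to write every volume as an absolutely convergent integral, over a semi-algebraic domain defined over $\overline{\QQ}$, of an algebraic function. Then we invoke the Kontsevich--Zagier definition: integrals of algebraic functions over domains given by polynomial inequalities with algebraic coefficients. We will also use the standard closure properties of the ring of periods: it is a $\overline{\QQ}$-algebra, and it is stable under integrating out variables when the result converges absolutely.

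\textbf{Building blocks.} By \cref{thm:vol-fli-triang}, $V_{\DD_3}(c_1,c_2,c_3)=2/\sqrt{D}$, where $D$ is a polynomial in $x_i=\cosh c_i$. By \cref{thm:vol-1-crown}, $V_{\AA_1}(c|d)=(2\cosh c+2\cosh d)^{-1/2}$. Both are algebraic functions of the $\cosh$ variables.

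\textbf{$\DD_n$.} Apply the disk recursion to a maximal arc collection, as in \cref{eq:vol-n-fon-big-int}. Substituting $x_j=\cosh\ell_j$ gives $V_{\DD_n}(\vec c)$ as an integral over $(1,\infty)^{n-3}$ of $2\prod D^{-1/2}$. This integrand is algebraic in the $x_j$ and in the algebraic numbers $\cosh c_i$. Absolute convergence follows from the bound \cref{eq:bound-for-D} and \cref{thm:finvolflipper}. So $V_{\DD_n}(\vec c)$ is a period.

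\textbf{$\AA_n$ with fixed neck.} The crown recursion and \cref{eq:vol-n-crown-conv} give $V_{\AA_n}(\vec c|d)$ as an $(n-1)$-fold convolution of the functions $(2\cosh\ell_i+2\cosh c_i)^{-1/2}$.
\begin{itemize}
\item Substitute $u_i=e^{\ell_i}$, so that $\mathrm{d}\ell_i=\mathrm{d}u_i/u_i$ and $\cosh\ell_i=(u_i+u_i^{-1})/2$.
\item The constraint $\sum\ell_i=d$ becomes $\prod u_i=e^d$.
\item Here lies the main obstacle: $e^d$ is algebraic only when $d$ is, not merely when $\cosh d$ is. But $\cosh d$ algebraic implies $e^d=\cosh d+\sqrt{\cosh^2d-1}$ is algebraic. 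So the last variable is eliminated via $u_n=e^d/\prod_{i<n}u_i$, and the integrand stays algebraic with algebraic coefficients.
\end{itemize}

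\textbf{$\AA_n$ with free neck.} \cref{cor:vol-n-crown-no-neck} expresses $V_{\AA_n}(\vec c)$ through $K(\tanh(c_i/2))/\cosh(c_i/2)$. With $t=\cosh c$, we have $\tanh^2(c/2)=(t-1)/(t+1)$, which is algebraic. The defining integral of $K$ is an algebraic integral over $[0,1]$, so these are periods.

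\textbf{General $\Sigma$.}
\begin{itemize}
\item With fixed necks, the neck recursion gives $V_\Sigma(\vec b;\vec c|\vec d)=V_{g,m+l}(\vec b,\vec d)\prod_k d_kV_{\AA_{a_k}}(\vec c_k|d_k)$.
\item Mirzakhani's polynomial $V_{g,m+l}$ lies in $\QQ[\pi^2,b_i^2,d_k^2]$.
\item This creates a second obstacle: $d_k$ itself need not be algebraic, even if $\cosh d_k$ is. Here $d_k=\log(e^{d_k})$ with $e^{d_k}$ algebraic, and logarithms of algebraic numbers are periods (e.g. $\int_1^{a}\mathrm{d}x/x$). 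Since $\pi$ is also a period, the product is a period.
\item With free necks, $V_\Sigma(\vec b;\vec c)=\int_{(0,\infty)^l}V_{g,m+l}(\vec b,\vec d)\prod d_kV_{\AA_{a_k}}(\vec c_k|d_k)\,\mathrm{d}\vec d$.
\item Substitute $e^{d_k}=v_k$ and combine with the $u$-variables above. Write each $d_k=\int_1^{v_k}\mathrm{d}w/w$ as an extra integration variable, so that powers $d_k^{2j+1}$ become iterated integrals over a semi-algebraic simplex-like region.
\item The whole volume is then a single absolutely convergent integral of a rational-times-algebraic form over a semi-algebraic domain. Absolute convergence follows from \cref{thm:finvolflipper}, since the integrand is positive.
\item The same treatment applies to $\AA_{a_1,a_2}$ via the neck recursion.
\end{itemize}

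I expect the main care to be needed in verifying that all domains are cut out by polynomial inequalities over $\overline{\QQ}$ after these substitutions, which the $\log$-as-integral trick handles.
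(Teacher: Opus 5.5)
Your proposal is correct and follows essentially the same route as the paper: $\DD_n$ via the iterated disk-recursion integral in the variables $\cosh\ell_i$, and fixed-neck $\AA_n$ via exponential substitutions in the convolution formula, using that $e^{d}$ is algebraic whenever $\cosh d$ is. The remaining cases also match: free-neck $\AA_n$ via the elliptic-integral product, and general $\Sigma$ via the neck recursion with Mirzakhani's polynomial, logarithms of algebraic numbers as periods, and powers of $\log$ written as iterated integrals over a simplex (the paper phrases the free-neck step as showing each moment $\mu_{k,n}(\vec{c})$ is a period, which is exactly your monomial-by-monomial treatment).
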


\begin{proof}
For $\Sigma = \DD_n$, the integral \cref{eq:vol-n-fon-big-int} is a period on the nose by letting $t_i = \cosh\ell_i$. Indeed, the domain of integration in $\RR^{n-3}$ is given by the inequalities $t_i\geqslant 1$, and the associated $(n-3)$-form is algebraic, since it is given by $f_{n-3}(t_1,\dotsc,t_{n-3})\mathrm{d}t_1\wedge\cdots\wedge \mathrm{d}t_{n-3}$ with $P_{n-3}f_{n-3}^2-1=0$, where $P_{n-3}$ is an explicit polynomial with algebraic coefficients.  

For $\Sigma = \AA_n$, make the following change of variables in \cref{eq:vol-n-crown-conv}: $q=e^{-d}, x_i=e^{\ell_i}, t_i=e^{c_i}$ for $i=1,\dotsc,n-1$, and let $X=x_1\cdots x_{n-1}$. Then we compute \[\frac{\mathrm{d}\ell_i}{\sqrt{2\cosh\ell_i+2\cosh c_i}} = \frac{\mathrm{d}x_i}{\sqrt{x_i}\sqrt{(x_i+t_i)(x_i+t_i^{-1})}}\] and \[\frac{1}{\sqrt{2\cosh(d-\sum_{i=1}^{n-1}\ell_i)+2\cosh c_n}} = \frac{\sqrt{qX}}{\sqrt{(1+t_nqX)(1+t_n^{-1}qX)}},\] and therefore 
\begin{equation}
\label{eq:vol-n-crown-period}
\begin{split}
V_{\AA_n}(\vec{c}\;|d) &= \int_{\RR^{n-1}}\frac{\mathrm{d}\ell_1\wedge\cdots\wedge\mathrm{d}\ell_{n-1}}{\prod_{i=1}^{n-1}\sqrt{2\cosh\ell_i+2\cosh c_i}\cdot\sqrt{2\cosh(d-\sum_{i=1}^{n-1}\ell_i)+2\cosh c_n}}
\\&=\sqrt{q}\int_{(0,\infty)^{n-1}} \frac{\mathrm{d}x_1\wedge\cdots\wedge\mathrm{d}x_{n-1}}{\prod_{i=1}^{n-1} \sqrt{(x_i+t_i)(x_i+t_i^{-1})}\cdot\sqrt{(1+t_nqX)(1+t_n^{-1}qX)}},
\end{split}
\end{equation}
which is a period, since $\cosh x \in \bar{\QQ}$ if and only if $e^x \in \bar{\QQ}$. The volume $V_{\AA_n}(\vec{c})$ is a period by \cref{cor:vol-n-crown-no-neck}.

For $\Sigma = \AA_{a_1,a_2}$, the volume $V_\Sigma(\vec{c},\vec{c}\,'|d)$ is a period by the neck recursion (\cref{prop:neck-recursion}) combined with the fact that periods form a ring, that the logarithms of algebraic numbers are periods and because $V_{\AA_n}(\vec{c}\;|d)$ is a period. For $V_\Sigma(\vec{c},\vec{c}\,')$, rewrite \cref{eq:vol-n-crown-period} as $V_{\AA_n}(\vec{c}\;|d) = \sqrt{q}\int_{\vec{x}>0}\frac{\mathrm{d}\vec{x}}{E(\vec{x},\vec{t},q)}$, then
\begin{equation*}
\begin{split}
V_{\AA_{a_1,a_2}}(\vec{c},\vec{c}\,') &= \int_1^\infty \log q\left(\int_{\vec{x}>0,\vec{x}'>0}\frac{\mathrm{d}\vec{x}\wedge\mathrm{d}\vec{x}'}{E(\vec{x},\vec{t},q)E(\vec{x}',\vec{t}',q)}\right)\mathrm{d}q 
\\& = \int_{\mathcal{D}} \frac{\mathrm{d}q\wedge\mathrm{d}u\wedge\mathrm{d}\vec{x}\wedge\mathrm{d}\vec{x}'}{uE(\vec{x},\vec{t},q)E(\vec{x}',\vec{t}',q)},
\end{split}
\end{equation*}
where $\mathcal{D} = \{q>1,\quad 1<u<q, \quad x_i>0, \quad x'_i>0\}$. Hence it is a period.

For $\Sigma \neq \DD_n, \AA_n, \AA_{a_1,a_2}$, the volume $V_\Sigma(\vec{b};\vec{c}\;|\vec{d})$ is a period by the same argument as for $V_\Sigma(\vec{c},\vec{c}\,'|d)$ and because $\pi$ is a period. For $V_\Sigma(\vec{b};\vec{c})$, by \cref{eq:vol-str} it suffices to show that $\mu_{k,n}(\vec{c})$ is a period. Writing $(\log q)^N = N! \int_{1<u_1<\dotsc<u_N<q}\frac{\mathrm{d}\vec{u}}{u_1\cdots u_N}$, for $N=2k+1$ we have
\begin{equation*}
\begin{split}
\mu_{k,n}(\vec{c}) &= \int_{1}^\infty(\log q)^N\left(\sqrt{q}\int_{\vec{x}>0}\frac{\mathrm{d}\vec{x}}{E(\vec{x},\vec{t},q)}\right) \frac{\mathrm{d}q}{q}
\\& = N! \int_{\mathcal{D}_N} \frac{\mathrm{d}q\wedge \mathrm{d}\vec{u}\wedge \mathrm{d}\vec{x}}{\sqrt{q}u_1\cdots u_NE(\vec{x},\vec{t},q) },
\end{split}
\end{equation*}
where $\mathcal{D}_N = \{q>1,\quad 1<u_1<\dotsc <u_N< q, \quad x_i>0\}$. Hence it is also a period.
\end{proof}

\section{Closed formulae}
\label{sec:explicit-volumes}

In this section, we:
\begin{itemize}
    \item compute the volume $V_{\AA_2}(c_1,c_2|d)$ of the moduli space of flippered $2$-crowns (\cref{thm:vol-flip-2-crown}),
    \item compute the volume $V_{\DD_4}(c_1,c_2,c_3,c_4)$ of the moduli space of flippered quadrilaterals  (\cref{thm:vol-flip-quadr}),
    \item compute the volume $V_{\DD_5}(c_1,c_2,0,0,0)$ of the moduli space of flippered pentagons (\cref{prop:vol-5-gon-2-flip}),
    \item compute the volume $V_{\AA_3}(c_1,0,0|d)$ of the moduli space of flippered $3$-crowns (\cref{prop:vol-3-crown-formula}), and 
    \item compute the volume $V_{\AA_{1,1}}(c,c)$ of the moduli space of flippered $(1,1)$-annuli (\cref{prop:vol-flip-(1-1)-ann}). 
    
\end{itemize}

\subsection{Volumes of the moduli spaces of flippered 2-crowns}

\begin{theorem}
\label{thm:vol-flip-2-crown}
The Mirzakhani volume of the moduli space of flippered 2-crowns with strap lengths $c_1,c_2$ and neck length $d$ is as follows.

Let $\xi_1=\cosh(c_1+c_2), \xi_2=\cosh(c_1-c_2), \xi_3=\cosh d$. Suppose $\xi_i>\xi_j>\xi_k$, where $\{i,j,k\}=\{1,2,3\}$.
Then as a function of $\xi_1,\xi_2,\xi_3$, the Mirzakhani volume $V_{\AA_2}$ is the following elliptic integral of the first kind:
\begin{equation}
\begin{split}
&V_{\AA_2}(\xi_1,\xi_2|\xi_3)  = \sqrt{\frac{2}{\xi_i-\xi_k}}F\left(\arcsin\sqrt{\frac{\xi_i-\xi_k}{\xi_i+1}},\sqrt{\frac{\xi_i-\xi_j}{\xi_i-\xi_k}}
\right). 
\end{split}
\end{equation}
Suppose $\xi_i>\xi_j=\xi_k$. Then
\begin{equation}
\begin{split}
{V}_{\AA_2}(\xi_1,\xi_2|\xi_3)=\sqrt{\frac{2}{\xi_i-\xi_j}}\text{arccoth}\sqrt{\frac{\xi_i+1}{\xi_i-\xi_j}}.
\end{split}
\end{equation}
Suppose $\xi_i=\xi_j>\xi_k$. Then
\begin{equation}
\begin{split}
{V}_{\AA_2}(\xi_1,\xi_2|\xi_3)=\sqrt{\frac{2}{\xi_i-\xi_k}}\text{arccot} \sqrt{\frac{1+\xi_k}{\xi_i-\xi_k}}.
\end{split}
\end{equation}
Suppose $\xi_i=\xi_j=\xi_k$. Then
\begin{equation}
\begin{split}
{V}_{\AA_2}(\xi_1,\xi_2|\xi_3)=\sqrt{\frac{2}{1+\xi_1}}.
\end{split}
\end{equation}
\end{theorem}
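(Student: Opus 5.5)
We will start from the disk-recursion representation already obtained in the proof of \cref{prop:prod-formula-con-func}, namely \cref{eq:vol-2-crown-disk-rec}. Substituting $z=\cosh\ell$, it reads
\[
V_{\AA_2}(c_1,c_2|d)=\int_1^\infty\frac{\mathrm{d}z}{\sqrt{2(z+\cosh d)}\,\sqrt{z^2+2\cosh c_1\cosh c_2\, z+\cosh^2c_1+\cosh^2c_2-1}}.
\]
The key algebraic observation is that the quadratic under the second root factors over the variables of the statement. Its discriminant is $4(\cosh^2c_1-1)(\cosh^2c_2-1)=4\sinh^2c_1\sinh^2c_2$, so its roots are $-\cosh c_1\cosh c_2\pm\sinh c_1\sinh c_2$, that is $-\xi_1$ and $-\xi_2$. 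The integral therefore becomes
\[
V_{\AA_2}=\frac{1}{\sqrt2}\int_1^\infty\frac{\mathrm{d}z}{\sqrt{(z+\xi_1)(z+\xi_2)(z+\xi_3)}},
\]
which is symmetric in $\xi_1,\xi_2,\xi_3$. This symmetry explains why the answer depends only on the ordering $\xi_i\geqslant\xi_j\geqslant\xi_k$. The integrand is a cubic elliptic differential with all three roots $-\xi_\bullet\leqslant -1$ to the left of the integration range.

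In the generic case $\xi_i>\xi_j>\xi_k$ we reduce to Legendre normal form with the substitution $z+\xi_k=(\xi_i-\xi_k)/\sin^2\theta$. Then $z+\xi_i=(\xi_i-\xi_k)\cos^2\theta/\sin^2\theta$ and $z+\xi_j=(\xi_i-\xi_k)(1-k^2\sin^2\theta)/\sin^2\theta$ with $k^2=(\xi_i-\xi_j)/(\xi_i-\xi_k)$. Also $\mathrm{d}z=-2(\xi_i-\xi_k)\cos\theta\,\sin^{-3}\theta\,\mathrm{d}\theta$. The integrand collapses to
\[
\frac{2}{\sqrt{\xi_i-\xi_k}}\,\frac{\mathrm{d}\theta}{\sqrt{1-k^2\sin^2\theta}}.
\]
The endpoint $z=\infty$ corresponds to $\theta=0$, and $z=1$ corresponds to $\sin^2\theta=(\xi_i-\xi_k)/(1+\xi_k)$. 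Combined with the prefactor $1/\sqrt2$, this gives $\sqrt{2/(\xi_i-\xi_k)}\,F(\varphi,k)$.

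The main obstacle I anticipate is matching the amplitude in the statement, which has $\xi_i+1$ in the denominator rather than $\xi_k+1$. I would first try the alternative substitution $z+\xi_i=(\xi_i-\xi_k)\cot^2\theta$, equivalently centring at the largest root. This yields the same differential with the same modulus, but with $\sin^2\varphi=(\xi_i-\xi_k)/(\xi_i+1)$ at $z=1$, which is exactly the stated amplitude; I would check the bookkeeping carefully here. Note that $(\xi_i-\xi_k)/(\xi_i+1)<1$ automatically.

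For the degenerate cases I would not specialise $F$ but integrate directly, since the integral is elementary when two roots coincide. For $\xi_i>\xi_j=\xi_k$ the integrand is $(z+\xi_j)^{-1}(z+\xi_i)^{-1/2}$. Substituting $w=\sqrt{z+\xi_i}$ gives $\int 2\,\mathrm{d}w/(w^2-(\xi_i-\xi_j))$, which evaluates to the arccoth expression with lower limit $\sqrt{1+\xi_i}$. For $\xi_i=\xi_j>\xi_k$ the same substitution centred at $\xi_k$ gives $\int 2\,\mathrm{d}w/(w^2+\xi_i-\xi_k)$, an arctangent that evaluates to the arccot formula. When all three are equal, $\int_1^\infty (z+\xi)^{-3/2}\,\mathrm{d}z=2/\sqrt{1+\xi}$, which gives $\sqrt{2/(1+\xi_1)}$. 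These cases can serve as consistency checks on the generic formula, since $F(\varphi,1)=\operatorname{arctanh}\sin\varphi$ and $F(\varphi,0)=\varphi$.
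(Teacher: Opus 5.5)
Your reduction to $\frac{1}{\sqrt2}\int_1^\infty \mathrm{d}z/\sqrt{(z+\xi_1)(z+\xi_2)(z+\xi_3)}$ is exactly the paper's (via \cref{eq:alg-id-cosh}). Your three degenerate evaluations are also correct and agree with the paper's direct computations. The generic case, however, does not go through as written, because you have swapped the roles of $\xi_i$ and $\xi_k$.

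With $z+\xi_k=(\xi_i-\xi_k)/\sin^2\theta$ one actually gets $z+\xi_i=(\xi_i-\xi_k)(1+\sin^2\theta)/\sin^2\theta$ and $z+\xi_j=(\xi_i-\xi_k)/\sin^2\theta+(\xi_j-\xi_k)$. So the integrand does not collapse to Legendre form. Moreover, the would-be amplitude $\sin^2\theta=(\xi_i-\xi_k)/(1+\xi_k)$ exceeds $1$ whenever $\xi_i-\xi_k>1+\xi_k$ (e.g.\ $c_1=c_2=2$, $\cosh d=2$), so this is not a valid change of variables. Your proposed repair $z+\xi_i=(\xi_i-\xi_k)\cot^2\theta$ fails too. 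It turns the differential into $\frac{2}{\sqrt{\xi_i-\xi_k}}\,\mathrm{d}\theta/\sqrt{\cos2\theta\,(\cos^2\theta-k^2\sin^2\theta)}$, and it sends $z=1$ to $\tan^2\theta=(\xi_i-\xi_k)/(1+\xi_i)$ rather than to $\sin^2\theta$. As submitted, then, the main formula is not established.

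The fix is a one-line change. Centre the substitution at $-\xi_i$, the root of the cubic farthest from $[1,\infty)$, by setting $z+\xi_i=(\xi_i-\xi_k)/\sin^2\theta$. Then $z+\xi_k=(\xi_i-\xi_k)\cot^2\theta$ and $z+\xi_j=(\xi_i-\xi_k)(1-k^2\sin^2\theta)/\sin^2\theta$ with $k^2=(\xi_i-\xi_j)/(\xi_i-\xi_k)$. The differential becomes $\frac{2}{\sqrt{\xi_i-\xi_k}}\,\mathrm{d}\theta/\sqrt{1-k^2\sin^2\theta}$. The endpoint $z=\infty$ goes to $\theta=0$, and $z=1$ goes to $\sin^2\theta=(\xi_i-\xi_k)/(1+\xi_i)$, which gives the stated formula immediately.

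Once corrected, your route is genuinely different from, and a bit shorter than, the paper's. The paper uses the tabulated antiderivative $\sqrt{2/(\xi_i-\xi_k)}\,F\bigl(\arcsin\sqrt{(t+\xi_k)/(t+\xi_j)},k\bigr)$, which equals $F(\tfrac{\pi}{2},k)$ rather than $0$ at $t=\infty$. It therefore obtains $F(\tfrac{\pi}{2},k)-F\bigl(\arcsin\sqrt{(1+\xi_k)/(1+\xi_j)},k\bigr)$ and needs the addition theorem (\cref{app:add-formula}) to collapse this to the single amplitude $\arcsin\sqrt{(\xi_i-\xi_k)/(\xi_i+1)}$. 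Choosing the substitution normalized at $z=\infty$ makes the addition theorem unnecessary. Your consistency checks $F(\varphi,1)=\mathrm{arctanh}\sin\varphi$ and $F(\varphi,0)=\varphi$ then correctly recover the arccoth and arccot cases.
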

\begin{proof}
By \cref{eq:vol-2-crown-disk-rec},
\begin{equation*}
\begin{split}
&{V}_{\AA_2}(c_1,c_2|d) 
\\
\smash{\mathllap{=}}\,& \frac{1}{2}\int_0^\infty  \frac{1}{\sqrt{2\,\text{ch}\,\ell+2\,\text{ch}\, d}} \cdot \frac{2}{\sqrt{\text{ch}^2c_1+\text{ch}^2c_2+\text{ch}^2\ell+2\,\text{ch}\, c_1 \text{ch}\,c_2 \text{ch}\, \ell-1}}\,\mathrm{d}\cosh\ell.
\end{split}
\end{equation*} 
Let $t=\cosh \ell$. By \cref{eq:alg-id-cosh}, we can write
\begin{equation*}
\begin{split}
{V}_{\AA_2}(\xi_1,\xi_2|\xi_3)  = 
\frac{1}{\sqrt{2}}\int_1^\infty \frac{\mathrm{d}t }{\sqrt{(t+\xi_1)(t+\xi_2)(t+\xi_3)}}.
\end{split}
\end{equation*} 
Suppose $\xi_i>\xi_j>\xi_k$. By \cite[Equation~237.00]{EllipticIntegrals} (or \cite[Equation~3.131(7)]{MR2360010}) and the addition formula for elliptic integrals of the first kind \cite[Equation~116.01]{EllipticIntegrals} (for more details, see \cref{app:add-formula}), we have: 
\begin{equation}
\label{eq:vol-2-crown-gen-case}
\begin{split}
{V}_{\AA_2}(\xi_1,\xi_2|\xi_3) &=\frac{1}{\sqrt{2}}\int_1^\infty \frac{\mathrm{d}t }{\sqrt{(t+\xi_1)(t+\xi_2)(t+\xi_3)}} \\&= \sqrt{\frac{2}{\xi_i-\xi_k}}F\left(\arcsin\sqrt{\frac{t+\xi_k}{t+\xi_j}},\sqrt{\frac{\xi_i-\xi_j}{\xi_i-\xi_k}}
\right)\Bigg|_1^\infty
\\& = \sqrt{\frac{2}{\xi_i-\xi_k}}F\left(\arcsin\sqrt{\frac{\xi_i-\xi_k}{\xi_i+1}},\sqrt{\frac{\xi_i-\xi_j}{\xi_i-\xi_k}}
\right). 
\end{split}
\end{equation}
Next, suppose $\xi_i>\xi_j=\xi_k$. Then 
\begin{equation}
\begin{split}
{V}_{\AA_2}(\xi_1,\xi_2|\xi_3) &= \frac{1}{\sqrt{2}}\int_1^\infty \frac{\mathrm{d}t }{(t+\xi_j)\sqrt{(t+\xi_i)}} \\&= \frac{1}{\sqrt{2}}\int_{1+\xi_i}^\infty \frac{\mathrm{d}y}{(y+\xi_j-\xi_i)\sqrt{y}} \\& = -\sqrt{\frac{2}{\xi_i-\xi_j}} \text{arccoth} \sqrt{\frac{y}{\xi_i-\xi_j}}\Bigg|_{1+\xi_i}^\infty
\\& = \sqrt{\frac{2}{\xi_i-\xi_j}}\text{arccoth}\sqrt{\frac{\xi_i+1}{\xi_i-\xi_j}}.
\end{split}
\end{equation}

Suppose $\xi_i=\xi_j>\xi_k$. Then
\begin{equation}
\begin{split}
{V}_{\AA_2}(\xi_1,\xi_2|\xi_3) & = \frac{1}{\sqrt{2}}\int_1^\infty \frac{\mathrm{d}t }{(t+\xi_i)\sqrt{(t+\xi_k)}} \\&=\frac{1}{\sqrt{2}} \int_{1+\xi_k}^\infty \frac{\mathrm{d}y}{(y+\xi_i-\xi_k)\sqrt{y}} 
\\&= \sqrt{\frac{2}{\xi_i-\xi_k}} \arctan \sqrt{\frac{y}{\xi_i-\xi_k}}\Bigg|_{1+\xi_k}^\infty
\\& = \sqrt{\frac{2}{\xi_i-\xi_k}} \text{arccot} \sqrt{\frac{1+\xi_k}{\xi_i-\xi_k}}.
\end{split}
\end{equation}

Finally, suppose $\xi_i=\xi_j=\xi_k$. Then
\begin{equation}
\begin{split}
{V}_{\AA_2}(\xi_1,\xi_2|\xi_3) = \frac{1}{\sqrt{2}}\int_1^\infty \frac{\mathrm{d}t }{(t+\xi_1)^{3/2}} &= -\sqrt{\frac{2}{t+\xi_1}}\Bigg|_1^\infty 
= \sqrt{\frac{2}{1+\xi_1}}.
\end{split}
\end{equation}
\end{proof}

\subsection{Volumes of the moduli spaces of flippered quadrilaterals}

\begin{theorem}
\label{thm:vol-flip-quadr}
The Mirzakhani volume of the moduli space of flippered quadrilaterals with strap lengths $c_1,c_2,c_3,c_4$ (listed counterclockwise) is as follows.

Let $\xi_1=\cosh(c_1+c_2), \xi_2=\cosh(c_1-c_2), \xi_3=\cosh (c_3+c_4),\xi_4=\cosh(c_3-c_4)$. Suppose $\xi_i>\xi_j>\xi_k>\xi_\ell$, where $\{i,j,k,\ell\}=\{1,2,3,4\}$. Then as a function of $\xi_1,\xi_2,\xi_3,\xi_4$, the Mirzakhani volume ${V}_{\DD_4}$ is the following elliptic integral of the first kind:
\begin{equation}
\begin{split}
&{V}_{\DD_4}(\xi_1,\xi_2,\xi_3,\xi_4)\\
\smash{\mathllap{=}}\,& \frac{4}{\sqrt{(\xi_i-\xi_k)(\xi_j-\xi_\ell)}}
\\& \cdot
F\left(\arcsin\frac{\sqrt{(\xi_i-\xi_k)(\xi_j-\xi_\ell)}}{\sqrt{(1+\xi_i)(1+\xi_j)}+\sqrt{(1+\xi_k)(1+\xi_\ell)}},\sqrt{\frac{(\xi_i-\xi_\ell)(\xi_j-\xi_k)}{(\xi_i-\xi_k)(\xi_j-\xi_\ell)}}\right).
\end{split}
\end{equation}
Suppose $\xi_i>\xi_j>\xi_k=\xi_\ell$. Then 
\begin{equation}
\label{eq:vol-quadr-3-values}
\begin{split}
&V_{\DD_4}(\xi_1,\xi_2,\xi_3,\xi_4)= \frac{4}{\sqrt{(\xi_i-\xi_k)(\xi_j-\xi_k)}} \log\left(\frac{\sqrt{(\xi_i-\xi_k)(1+\xi_j)}+\sqrt{(\xi_j-\xi_k)(1+\xi_i)}}{(\sqrt{\xi_i-\xi_k}+\sqrt{\xi_j-\xi_k})\sqrt{1+\xi_k}}\right). 
\end{split}
\end{equation}
Suppose $\xi_i>\xi_j=\xi_k>\xi_\ell$. Then
\begin{equation}
\begin{split}
&V_{\DD_4}(\xi_1,\xi_2,\xi_3,\xi_4) =  \frac{4}{\sqrt{(\xi_i-\xi_j)(\xi_j-\xi_\ell)}} \left(\arctan\sqrt{\frac{\xi_i-\xi_j}{\xi_j-\xi_\ell}}-\arctan\sqrt{\frac{(\xi_i-\xi_j)(1+\xi_\ell)}{(\xi_j-\xi_\ell)(1+\xi_i)}} \right). 
\end{split}
\end{equation}
Suppose $\xi_i=\xi_j>\xi_k>\xi_\ell$. Then 
\begin{equation}
\begin{split}
V_{\DD_4}(\xi_1,\xi_2,\xi_3,\xi_4) =\frac{4}{\sqrt{(\xi_i-\xi_k)(\xi_i-\xi_\ell)}} \log\left(\frac{(\sqrt{\xi_i-\xi_k}+\sqrt{\xi_i-\xi_\ell})\sqrt{1+\xi_i}}{\sqrt{(\xi_i-\xi_k)(1+\xi_\ell)}+\sqrt{(\xi_i-\xi_\ell)(1+\xi_k)}}\right). 
\end{split}
\end{equation}
Suppose $\xi_i=\xi_j>\xi_k=\xi_\ell$. Then
\begin{equation}
\label{eq:vol-flip-quadr-2-values}
\begin{split}
{V}_{\DD_4}(\xi_1,\xi_2,\xi_3,\xi_4)= \frac{2}{\xi_i-\xi_k}\log \left(\frac{1+\xi_i}{1+\xi_k}\right).
\end{split}
\end{equation}
Suppose $\xi_i>\xi_j=\xi_k=\xi_\ell$. Then ${V}_{\DD_4}(\xi_1,\xi_2,\xi_3,\xi_4) = \frac{4}{\xi_i-\xi_j}\left(\sqrt{\frac{1+\xi_i}{1+\xi_j}}-1\right)$.

\noindent
Suppose $\xi_i=\xi_j=\xi_k>\xi_\ell$. Then ${V}_{\DD_4}(\xi_1,\xi_2,\xi_3,\xi_4) = \frac{4}{\xi_i-\xi_\ell}\left(1-\sqrt{\frac{1+\xi_\ell}{1+\xi_i}}\right)$.

\noindent
Suppose $\xi_i=\xi_j=\xi_k=\xi_\ell$. Then ${V}_{\DD_4}(\xi_1,\xi_2,\xi_3,\xi_4) = \frac{2}{1+\xi_1}$.
\end{theorem}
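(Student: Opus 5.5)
The plan is to mirror the proof of \cref{thm:vol-flip-2-crown}, replacing the $\mathbb{A}_1\cup\mathbb{D}_3$ decomposition by $\mathbb{D}_3\cup\mathbb{D}_3$. Take the essential arc $\eta$ on $\mathbb{D}_4$ separating $\{q_1,q_2\}$ from $\{q_3,q_4\}$. Apply the disk recursion (\cref{thm:disk-recursion}) together with \cref{thm:vol-fli-triang}; this is the case $n=4$ of \cref{eq:vol-n-fon-big-int}, and gives
\[
V_{\DD_4}(c_1,c_2,c_3,c_4)=2\int_1^\infty\frac{\mathrm{d}t}{\sqrt{D(c_1,c_2,t)\,D(c_3,c_4,t)}},\qquad t=\cosh\ell .
\]
Next I would use the algebraic identity already invoked for 2-crowns (\cref{eq:alg-id-cosh}):
\[
\cosh^2c_1+\cosh^2c_2+t^2+2t\cosh c_1\cosh c_2-1=(t+\cosh(c_1+c_2))(t+\cosh(c_1-c_2)).
\]
This holds because the left side is a monic quadratic in $t$ with sum of roots $-2\cosh c_1\cosh c_2$ and product $\cosh^2c_1+\cosh^2c_2-1=\cosh(c_1+c_2)\cosh(c_1-c_2)$. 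It turns the integral into
\[
V_{\DD_4}=2\int_1^\infty\frac{\mathrm{d}t}{\sqrt{(t+\xi_1)(t+\xi_2)(t+\xi_3)(t+\xi_4)}},
\]
which is symmetric in $\xi_1,\dots,\xi_4$. We may therefore relabel the $\xi$'s in decreasing order $\xi_i\ge\xi_j\ge\xi_k\ge\xi_\ell$.

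In the generic case $\xi_i>\xi_j>\xi_k>\xi_\ell$, all roots $-\xi$ lie below $-1$, so the integral over $[1,\infty)$ is of the form $\int_{y}^{\infty}\mathrm{d}t/\sqrt{(t-a)(t-b)(t-c)(t-d)}$ with $y>a>b>c>d$. I would take the corresponding tabulated formula from Byrd--Friedman (the $\int_y^\infty$ entry for a quartic with four real roots, the 25x series). It gives $\frac{2}{\sqrt{(\xi_i-\xi_k)(\xi_j-\xi_\ell)}}F(\varphi,k)$ with modulus $k^2=\frac{(\xi_i-\xi_\ell)(\xi_j-\xi_k)}{(\xi_i-\xi_k)(\xi_j-\xi_\ell)}$ and $\varphi$ defined by a square-root expression in $(1+\xi)$'s. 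The main obstacle is converting that amplitude into the stated one,
\[
\sin\varphi=\frac{\sqrt{(\xi_i-\xi_k)(\xi_j-\xi_\ell)}}{\sqrt{(1+\xi_i)(1+\xi_j)}+\sqrt{(1+\xi_k)(1+\xi_\ell)}} .
\]
I would first try the approach used for the 2-crown (\cref{app:add-formula}): write the antiderivative as $F(\psi(t),k)$, evaluate at $t=\infty$ and $t=1$, and combine $F(\psi(\infty),k)-F(\psi(1),k)$ via the addition formula (Byrd--Friedman 116.01). The amplitude should simplify to the displayed one through the factorization $\bigl(\sqrt{(1+\xi_i)(1+\xi_j)}+\sqrt{(1+\xi_k)(1+\xi_\ell)}\bigr)\bigl(\sqrt{(1+\xi_i)(1+\xi_j)}-\sqrt{(1+\xi_k)(1+\xi_\ell)}\bigr)=(1+\xi_i)(1+\xi_j)-(1+\xi_k)(1+\xi_\ell)$. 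The stated prefactor is $4$ rather than $2$ since it includes the outer factor $2$. As a fallback for this step, I would check the identity by differentiating both sides in each $\xi$, or by comparing a degenerate limit.

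The degenerate cases need no elliptic integrals. Either a root coincidence turns the quartic root into $(t+\xi)\sqrt{(t+\xi')(t+\xi'')}$ or $(t+\xi)^{3/2}\sqrt{t+\xi'}$ or similar, or one can take limits of the generic formula. I would compute them directly.

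For $\xi_k=\xi_\ell$ (and symmetrically $\xi_i=\xi_j$), substitute $u=\sqrt{(t+\xi_i)/(t+\xi_j)}$ or an Euler substitution. This yields an $\operatorname{artanh}$, which can be rewritten as the stated logarithms.

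For $\xi_j=\xi_k$, the same substitution with the sign reversed produces the difference of arctangents.

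For $\xi_i=\xi_j>\xi_k=\xi_\ell$, the integral becomes $2\int_1^\infty \mathrm{d}t/((t+\xi_i)(t+\xi_k))$, which is elementary and gives $\frac{2}{\xi_i-\xi_k}\log\frac{1+\xi_i}{1+\xi_k}$.

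For three equal values, the integrand is $(t+a)^{-3/2}(t+b)^{-1/2}$. The antiderivative is $\frac{2}{a-b}\sqrt{(t+b)/(t+a)}$ up to sign, which gives the two stated formulas.

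For all four equal, $2\int_1^\infty (t+\xi)^{-2}\,\mathrm{d}t=\frac{2}{1+\xi}$.

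As consistency checks, the case $\xi_i=\xi_j,\ \xi_k=\xi_\ell$ matches the table entry $V_{\DD_4}(c_1,c_2,0,0)$, and all four equal matches $V_{\DD_4}(c,c,0,0)$.
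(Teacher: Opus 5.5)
Your proposal follows the paper's proof. Both use the disk recursion along the arc separating $\{q_1,q_2\}$ from $\{q_3,q_4\}$, then the factorization \cref{eq:alg-id-cosh} to reduce to $2\int_1^\infty \mathrm{d}t/\sqrt{(t+\xi_1)(t+\xi_2)(t+\xi_3)(t+\xi_4)}$, then the Byrd--Friedman antiderivative evaluated at $t=1$ and $t=\infty$ and merged via the addition formula (worked out in \cref{app:add-formula}), and finally elementary integrations for the degenerate cases. One small correction to your consistency checks: with the labeling as given, $V_{\DD_4}(c_1,c_2,0,0)$ and $V_{\DD_4}(c,c,0,0)$ fall into the cases $\xi_i>\xi_j>\xi_k=\xi_\ell$ and $\xi_i>\xi_j=\xi_k=\xi_\ell$, which do agree with the table. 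They land in the two-value and all-equal cases only after cyclically relabeling the straps to $(c_2,0,0,c_1)$, which is equivalent to cutting along the other diagonal.
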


\begin{proof}
Consider the essential arc on $\DD_4$ that separates boundary punctures $q_1,q_2$ from $q_3,q_4$. Then by \cref{thm:vol-fli-triang}, the disk recursion (\cref{thm:disk-recursion}) and \cref{eq:alg-id-cosh}:
\begin{equation*}
\begin{split}
{V}_{\DD_4}(c_1,c_2,c_3,c_4) &= \frac{1}{2} \int_0^\infty {V}_{\DD_3}(c_1,c_2,\ell)\cdot {V}_{\DD_3}(c_3,c_4,\ell) \,\mathrm{d}\cosh \ell
\\& = \frac{1}{2} \int_0^\infty \frac{2}{\sqrt{{D}(c_1,c_2,\ell)}}\cdot \frac{2}{\sqrt{{D}(c_3,c_4,\ell)}} \,\mathrm{d}\cosh\ell
\\& = 2 \int_1^\infty \frac{\mathrm{d}t}{\sqrt{(t+\xi_1)(t+\xi_2)(t+\xi_3)(t+\xi_4)}},
\end{split}
\end{equation*}
where $t=\cosh\ell$.
By \cite[Equation~258.00]{EllipticIntegrals} (or \cite[Equation~3.147(8)]{MR2360010}) and the addition formula for elliptic integrals of the first kind \cite[Equation~116.01]{EllipticIntegrals} (for more details, see \cref{app:add-formula}), we have: 
\begin{equation}
\label{eq:vol-4-gon-gen-case}
\begin{split}
&{V}_{\DD_4}(\xi_1,\xi_2,\xi_3,\xi_4) \\\smash{\mathllap{=}}\,&  2\int_1^\infty \frac{\mathrm{d}t}{\sqrt{(t+\xi_1)(t+\xi_2)(t+\xi_3)(t+\xi_4)}}
\\\smash{\mathllap{=}}\,&  \frac{4}{\sqrt{(\xi_i-\xi_k)(\xi_j-\xi_\ell)}} F\left(\arcsin\sqrt{\frac{(t+\xi_\ell)(\xi_i-\xi_k)}{(t+\xi_k)(\xi_i-\xi_\ell)}},\sqrt{\frac{(\xi_i-\xi_\ell)(\xi_j-\xi_k)}{(\xi_i-\xi_k)(\xi_j-\xi_\ell)}}\right) \Bigg|_1^\infty
\\\smash{\mathllap{=}}\,& \frac{4}{\sqrt{(\xi_i-\xi_k)(\xi_j-\xi_\ell)}} \\& \cdot F\left(\arcsin\frac{\sqrt{(\xi_i-\xi_k)(\xi_j-\xi_\ell)}}{\sqrt{(1+\xi_i)(1+\xi_j)}+\sqrt{(1+\xi_k)(1+\xi_\ell)}},\sqrt{\frac{(\xi_i-\xi_\ell)(\xi_j-\xi_k)}{(\xi_i-\xi_k)(\xi_j-\xi_\ell)}}\right).
\end{split}
\end{equation}
Suppose $\xi_i>\xi_j>\xi_k=\xi_\ell$. Then
\begin{equation*}
\begin{split}
&{V}_{\DD_4}(\xi_1,\xi_2,\xi_3,\xi_4) \\\smash{\mathllap{=}}\,&   2\int_1^\infty \frac{\mathrm{d}t}{(t+\xi_k)\sqrt{(t+\xi_i)(t+\xi_j)}} \\\smash{\mathllap{=}}\,& 2\int_{1+\xi_k}^\infty \frac{\mathrm{d}y}{y\sqrt{(y+\xi_i-\xi_k)(y+\xi_j-\xi_k)}}
\\\smash{\mathllap{=}}\,&  \frac{4}{\sqrt{(\xi_i-\xi_k)(\xi_j-\xi_k)}} \log\left(\frac{\sqrt{y}}{\sqrt{(\xi_i-\xi_k)(y+\xi_j-\xi_k)}+\sqrt{(\xi_j-\xi_k)(y+\xi_i-\xi_k)}}\right)\Bigg|_{1+\xi_k}^\infty
\\\smash{\mathllap{=}}\,&  \frac{4}{\sqrt{(\xi_i-\xi_k)(\xi_j-\xi_k)}} \log\left(\frac{\sqrt{(\xi_i-\xi_k)(1+\xi_j)}+\sqrt{(\xi_j-\xi_k)(1+\xi_i)}}{(\sqrt{\xi_i-\xi_k}+\sqrt{\xi_j-\xi_k})\sqrt{1+\xi_k}}\right).
\end{split}
\end{equation*}
Suppose $\xi_i>\xi_j=\xi_k>\xi_\ell$. Then
\begin{equation*}
\begin{split}
&{V}_{\DD_4}(\xi_1,\xi_2,\xi_3,\xi_4) \\\smash{\mathllap{=}}\,&   2\int_1^\infty \frac{\mathrm{d}t}{(t+\xi_j)\sqrt{(t+\xi_i)(t+\xi_\ell)}} \\\smash{\mathllap{=}}\,& 2\int_{1+\xi_j}^\infty \frac{\mathrm{d}y}{y\sqrt{(y+\xi_i-\xi_j)(y+\xi_\ell-\xi_j)}}
\\\smash{\mathllap{=}}\,&  \frac{4}{\sqrt{(\xi_i-\xi_j)(\xi_j-\xi_\ell)}} \arctan \sqrt{\frac{(\xi_i-\xi_j)(y+\xi_\ell-\xi_j)}{(\xi_j-\xi_\ell)(y+\xi_i-\xi_j)}}\Bigg|_{1+\xi_j}^\infty
\\\smash{\mathllap{=}}\,&  \frac{4}{\sqrt{(\xi_i-\xi_j)(\xi_j-\xi_\ell)}} \left(\arctan\sqrt{\frac{\xi_i-\xi_j}{\xi_j-\xi_\ell}}-\arctan\sqrt{\frac{(\xi_i-\xi_j)(1+\xi_\ell)}{(\xi_j-\xi_\ell)(1+\xi_i)}} \right).
\end{split}
\end{equation*}
Suppose $\xi_i=\xi_j>\xi_k>\xi_\ell$. Then
\begin{equation*}
\begin{split}
&{V}_{\DD_4}(\xi_1,\xi_2,\xi_3,\xi_4) \\\smash{\mathllap{=}}\,&   2\int_1^\infty \frac{\mathrm{d}t}{(t+\xi_i)\sqrt{(t+\xi_k)(t+\xi_\ell)}} \\\smash{\mathllap{=}}\,& 2\int_{1+\xi_i}^\infty \frac{\mathrm{d}y}{y\sqrt{(y+\xi_k-\xi_i)(y+\xi_\ell-\xi_i)}}
\\\smash{\mathllap{=}}\,&  \frac{4}{\sqrt{(\xi_i-\xi_k)(\xi_i-\xi_\ell)}} \log\left(\frac{\sqrt{(\xi_i-\xi_k)(1+\xi_\ell)}+\sqrt{(\xi_i-\xi_\ell)(1+\xi_k)}}{\sqrt{y}}\right)\Bigg|_{1+\xi_i}^\infty
\\\smash{\mathllap{=}}\,&  \frac{4}{\sqrt{(\xi_i-\xi_k)(\xi_i-\xi_\ell)}} \log\left(\frac{(\sqrt{\xi_i-\xi_k}+\sqrt{\xi_i-\xi_\ell})\sqrt{1+\xi_i}}{\sqrt{(\xi_i-\xi_k)(1+\xi_\ell)}+\sqrt{(\xi_i-\xi_\ell)(1+\xi_k)}}\right).
\end{split}
\end{equation*}
Suppose $\xi_i=\xi_j>\xi_k=\xi_\ell$.
Then 
\begin{equation*}
\begin{split}
{V}_{\DD_4}(\xi_1,\xi_2,\xi_3,\xi_4) &= 2\int_1^\infty \frac{\mathrm{d}t}{(t+\xi_i)(t+\xi_k)} \\& = \frac{2\log\frac{t+\xi_k}{t+\xi_i}}{\xi_i-\xi_k}\Bigg|_1^\infty
\\& = \frac{2\log\frac{1+\xi_i}{1+\xi_k}}{\xi_i-\xi_k}.
\end{split}
\end{equation*}
Suppose $\xi_i>\xi_j=\xi_k=\xi_\ell$. Then
\begin{equation*}
\begin{split}
{V}_{\DD_4}(\xi_1,\xi_2,\xi_3,\xi_4) &= 2\int_1^\infty \frac{\mathrm{d}t}{(t+\xi_j)^{3/2}\sqrt{t+\xi_i}} \\&= -\frac{4}{\xi_i-\xi_j}\sqrt{\frac{t+\xi_i}{t+\xi_j}}\Bigg|_1^\infty
\\& = \frac{4}{\xi_i-\xi_j}\left(\sqrt{\frac{1+\xi_i}{1+\xi_j}}-1\right).
\end{split}
\end{equation*}
Suppose $\xi_i=\xi_j=\xi_k>\xi_\ell$. Then
\begin{equation*}
\begin{split}
{V}_{\DD_4}(\xi_1,\xi_2,\xi_3,\xi_4)& = 2\int_1^\infty \frac{\mathrm{d}t}{(t+\xi_i)^{3/2}\sqrt{t+\xi_\ell}} \\&= \frac{4}{\xi_i-\xi_\ell}\sqrt{\frac{t+\xi_\ell}{t+\xi_i}}\Bigg|_1^\infty
\\& = \frac{4}{\xi_i-\xi_\ell}\left(1-\sqrt{\frac{1+\xi_\ell}{1+\xi_i}}\right).
\end{split}
\end{equation*}
Finally, if $\xi_i=\xi_j=\xi_k=\xi_\ell$, then
\begin{equation*}
\begin{split}
{V}_{\DD_4}(\xi_1,\xi_2,\xi_3,\xi_4) = 2 \int_1^\infty \frac{\mathrm{d}x}{(x+\xi_1)^2} = -\frac{2}{x+\xi_1}\Big|_1^\infty
 = \frac{2}{1+\xi_1}.
\end{split}
\end{equation*}
\end{proof}

\subsection{Volumes of the moduli spaces of flippered pentagons with at most two straps of non-zero length}

We will need the following lemma.
\begin{lemma}
\label{lem:logx/x-int}
For any $a,c \in \RR$ and $b>0$:
\begin{equation}
\label{eq:logx/x-int}
\int \frac{\log(\frac{x+a}{b})}{x+c} \mathrm{d}x = \begin{cases} \text{Li}_2\left(\frac{c-a}{x+c}\right) + \frac{1}{2}\log^2(\frac{x+c}{b})+C, & \text{if } x+a>0, x+c > 0 \\ -\text{Li}_2\left(\frac{x+c}{c-a}\right) + \log(\frac{a-c}{b})\cdot\log(-x-c)+C', & \text{if } x+a>0, x+c<0 \end{cases}
\end{equation}
Furthermore, if $b=a-c$, letting $C-C'=\frac{\pi^2}{6}$ yields a continuous antiderivative on $x+a>0$.
\end{lemma}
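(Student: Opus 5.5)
The plan is to verify both antiderivatives by direct differentiation, using only $\frac{\mathrm{d}}{\mathrm{d}u}\mathrm{Li}_2(u) = -\frac{\log(1-u)}{u}$. Then we fix the constant by comparing values as $x+c\to 0$.

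For the first case ($x+a>0$, $x+c>0$), put $u=\frac{c-a}{x+c}$. Then $1-u=\frac{x+a}{x+c}>0$ and $\frac{\mathrm{d}u}{\mathrm{d}x}=-\frac{c-a}{(x+c)^2}$. Hence
\[
\frac{\mathrm{d}}{\mathrm{d}x}\mathrm{Li}_2(u)=-\frac{\log\frac{x+a}{x+c}}{u}\cdot\Bigl(-\frac{c-a}{(x+c)^2}\Bigr)=\frac{\log(x+a)-\log(x+c)}{x+c}.
\]
We also have
\[
\frac{\mathrm{d}}{\mathrm{d}x}\tfrac12\log^2\tfrac{x+c}{b}=\frac{\log(x+c)-\log b}{x+c}.
\]
Summing the two derivatives gives $\frac{\log((x+a)/b)}{x+c}$. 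The degenerate case $a=c$ needs no separate argument, since $\mathrm{Li}_2(0)=0$. The function $\mathrm{Li}_2$ is evaluated at $u<1$, where the principal branch is real-analytic, so this step is routine.

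For the second case ($x+a>0>x+c$, which forces $a>c$), put $v=\frac{x+c}{c-a}\in(0,1)$. Then $1-v=\frac{x+a}{a-c}$ and $v'=\frac{1}{c-a}$. So
\[
-\frac{\mathrm{d}}{\mathrm{d}x}\mathrm{Li}_2(v)=\frac{\log\frac{x+a}{a-c}}{v}\cdot\frac{1}{c-a}=\frac{\log(x+a)-\log(a-c)}{x+c}.
\]
The second term contributes $\frac{\log\frac{a-c}{b}}{x+c}$, because $\frac{\mathrm{d}}{\mathrm{d}x}\log(-x-c)=\frac{1}{x+c}$. Adding these gives the integrand again.

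For the gluing claim when $b=a-c$, the second-case antiderivative becomes $-\mathrm{Li}_2\bigl(\frac{x+c}{c-a}\bigr)+C'$, because the $\log(-x-c)$ term vanishes. As $x+c\to0^-$ this tends to $C'$, using $\mathrm{Li}_2(0)=0$.

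On the first side, as $x+c\to 0^+$ we have $u\to-\infty$, and this is the step needing care. I would use the inversion formula
\[
\mathrm{Li}_2(u)+\mathrm{Li}_2(1/u)=-\tfrac{\pi^2}{6}-\tfrac12\log^2(-u),\qquad u<0.
\]
With $-u=\frac{a-c}{x+c}=\frac{b}{x+c}$, we get $\log^2(-u)=\log^2\frac{x+c}{b}$. The $\log^2$ terms therefore cancel exactly, and the first-case expression equals $-\mathrm{Li}_2\bigl(\frac{x+c}{c-a}\bigr)-\frac{\pi^2}{6}+C$. This tends to $C-\frac{\pi^2}{6}$.

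Continuity at $x=-c$ thus holds precisely when $C-C'=\frac{\pi^2}{6}$. With this choice the two expressions are in fact the same analytic function $-\mathrm{Li}_2\bigl(\frac{x+c}{c-a}\bigr)+C'$ across the point, and the integrand's singularity there is integrable, so the glued function is a genuine antiderivative on $x+a>0$.
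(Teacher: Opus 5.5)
Your proof is correct and follows the paper's route. You check each branch by differentiating with $\mathrm{Li}_2'(u)=-\log(1-u)/u$, then fix $C-C'$ at $x=-c$ from the behaviour of $\mathrm{Li}_2$ at $-\infty$. Using the exact inversion identity instead of the paper's asymptotic $\mathrm{Li}_2(u)=-\frac{\pi^2}{6}-\frac12\log^2(-u)+o(1)$ is a slight sharpening: it shows both branches are the same analytic function $-\mathrm{Li}_2\bigl(\frac{x+c}{c-a}\bigr)+C'$. Also, the singularity at $x=-c$ is in fact removable, not just integrable, since $\log\frac{x+a}{a-c}$ vanishes there.
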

\begin{proof}
The first part is a straightforward calculation using the derivative of the dilogarithm:
\[\text{Li}'_2(x) = -\frac{\log(1-x)}{x}.\]
The second part follows from the asymptotics of the dilogaritm as $x\to -\infty$:
\begin{equation}
\label{eq:dilog-asympt}
\text{Li}_2(x) = -\frac{\pi^2}{6}-\frac{\log^2(-x)}{2}+o(1),
\end{equation}
which itself is a corollary of the reflection properties of the dilogarithm (\cite{MR2290758}).
\end{proof}

\begin{proposition}
\label{prop:vol-5-gon-2-flip}
\begin{equation}
\begin{split}
&V_{\DD_5}(c_1,c_2,0,0,0)
=\frac{2\mathrm{Li}_2(1)+\log^2\frac{\cosh c_1+1}{\cosh c_2+1}+2\sum_{i=1,2}\mathrm{Li}_2\left(\frac{\cosh c_i-1}{\cosh c_i+1}\right)}{\cosh c_1+\cosh c_2}.
\end{split}
\end{equation}
\end{proposition}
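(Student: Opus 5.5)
We reduce $V_{\DD_5}(c_1,c_2,0,0,0)$ to a one-dimensional integral of elementary functions and then integrate with \cref{lem:logx/x-int}. Take the essential arc $\eta$ on $\DD_5$ separating the two flippered punctures $q_1,q_2$ from $q_3,q_4,q_5$. This cuts $\DD_5$ into a $\DD_3$ with straps $(c_1,c_2,\ell)$ and a $\DD_4$ with straps $(0,0,0,\ell)$. The disk recursion (\cref{thm:disk-recursion}) then gives
\[
V_{\DD_5}(c_1,c_2,0,0,0)=\frac12\int_0^\infty V_{\DD_3}(c_1,c_2,\ell)\,V_{\DD_4}(\ell,0,0,0)\,\mathrm{d}\cosh\ell .
\]

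We evaluate the two factors separately.

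\textbf{The $\DD_4$ factor.} Take $\xi$-values $\xi_1=\xi_2=\cosh\ell$ and $\xi_3=\xi_4=1$ in \cref{thm:vol-flip-quadr}. Case \cref{eq:vol-flip-quadr-2-values} gives
\[
V_{\DD_4}(\ell,0,0,0)=\frac{2}{t-1}\log\frac{1+t}{2}, \qquad t=\cosh\ell .
\]
This agrees with the table entry $V_\Sigma(c_1,c_2,0,0)$ at $c_2=0$.

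\textbf{The $\DD_3$ factor.} The identity used earlier (\cref{eq:alg-id-cosh}) factors $D(c_1,c_2,\ell)=(t+\cosh(c_1+c_2))(t+\cosh(c_1-c_2))$. Hence \cref{thm:vol-fli-triang} gives
\[
V_{\DD_3}=\frac{2}{\sqrt{(t+\xi_1)(t+\xi_2)}}, \qquad \xi_{1}=\cosh(c_1+c_2),\ \xi_2=\cosh(c_1-c_2).
\]

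Combining the two factors yields
\[
V_{\DD_5}=2\int_1^\infty\frac{\log\frac{1+t}{2}}{(t-1)\sqrt{(t+\xi_1)(t+\xi_2)}}\,\mathrm{d}t .
\]
The square root is the obstacle to applying \cref{lem:logx/x-int} directly, so we rationalize it first. The cleanest route is probably not this order of cutting. Instead, cut $\DD_5$ so that the $\DD_3$ carries the single flipper $c_1$ together with two tines, i.e. split off $\{q_1,q_5\}$. The $\DD_3$ factor is then $V_{\DD_3}(c_1,0,\ell)=\frac{2}{\cosh c_1+\cosh\ell}$ (from $D=(C_1+C_3)^2$ when $C_2=1$). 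The complementary $\DD_4(\ell,c_2,0,0)$ factor is given by the table formula $\frac{2\log((1+\cosh\ell)/(1+\cosh c_2))}{\cosh\ell-\cosh c_2}$. This produces the fully rational-log integrand
\[
V_{\DD_5}=2\int_1^\infty\frac{\log\frac{1+t}{1+C_2}}{(t+C_1)(t-C_2)}\,\mathrm{d}t,\qquad C_i=\cosh c_i .
\]
This is the decomposition we commit to.

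Next, use the partial fraction decomposition
\[
\frac{1}{(t+C_1)(t-C_2)}=\frac{1}{C_1+C_2}\Bigl(\frac{1}{t-C_2}-\frac{1}{t+C_1}\Bigr).
\]
Apply \cref{lem:logx/x-int} to each term with $a=1$ and $b=1+C_2$:
\begin{itemize}
\item For the $\frac{1}{t+C_1}$ term, take $c=C_1$. The first branch applies throughout, since $t+C_1>0$.
\item For the $\frac{1}{t-C_2}$ term, take $c=-C_2$. Here $b=a-c$ holds exactly, and $t-C_2$ changes sign at $t=C_2\ge1$. So we must use the continuous antiderivative glued with $C-C'=\pi^2/6$ from the second part of the lemma. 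This is the step requiring care, and it is what produces the $2\mathrm{Li}_2(1)=\pi^2/3$ term.
\end{itemize}
The integrand itself is regular at $t=C_2$, because the logarithm vanishes there.

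Finally, evaluate the boundary terms. At $t\to\infty$ the leading $\frac12\log^2$ contributions of the two pieces must cancel, and the $\mathrm{Li}_2\bigl(\frac{c-a}{t+c}\bigr)$ terms tend to $0$. At $t=1$ the arguments become $\mathrm{Li}_2\bigl(\frac{C_1-1}{C_1+1}\bigr)$ and $\mathrm{Li}_2\bigl(\frac{C_2+1}{C_2-1}\bigr)$-type terms. We convert the latter into $\mathrm{Li}_2\bigl(\frac{C_2-1}{C_2+1}\bigr)$ with the inversion formula, using the lemma's branch choice. Collecting the $\log^2$ terms should give $\log^2\frac{C_1+1}{C_2+1}$, and dividing by $C_1+C_2$ with the prefactor $2$ matches the claim.

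The main obstacle is bookkeeping this branch and inversion step consistently, especially when $c_2=0$ (i.e. $C_2=1$), where the sign change sits at the endpoint. We would check the final formula against $c_1=c_2=0$, where it should reduce to the known crowned pentagon volume $V_{\DD_5}(\vec 0)$ from \cite{HT25}.
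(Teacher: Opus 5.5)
Your proposal is correct and follows essentially the paper's argument. You use the same decomposition into a $\DD_3$ carrying one flipper and a tine and a $\DD_4$ with two adjacent nonzero straps; the paper puts $c_1$ in the $\DD_4$ and you put $c_2$, which is immaterial by symmetry. You then reach the same partial-fraction integrand and integrate it with \cref{lem:logx/x-int}.

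The only cosmetic difference is in how the integral is finished. The paper substitutes $s=(1+t)/(1+\cosh c_1)$ and uses the antiderivative $-\mathrm{Li}_2(1-s)$, which is smooth across $s=1$. So in the paper the $\pi^2/3$ comes from the $s\to\infty$ asymptotics rather than from your gluing constant. Also, no inversion formula is needed in your version: evaluating the lemma's second branch at $t=1$ already gives $-\mathrm{Li}_2\bigl(\tfrac{\cosh c_2-1}{\cosh c_2+1}\bigr)$.
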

\begin{proof}
Consider the essential arc on $\DD_5$ that separates boundary punctures $q_1$ and $q_2$, such that the component containing $q_1$ is homeomorphic to $\DD_4$.
Then by the disk recursion (\cref{thm:disk-recursion}) and \cref{eq:vol-flip-quadr-2-values} in \cref{thm:vol-flip-quadr}, 
\begin{equation*}
\begin{split}
{V}_{\DD_5}(c_1,c_2,0,0,0)&= \frac{1}{2} \int_0^\infty \frac{2\log \left(\frac{1+\text{ch}\, c_1}{1+\text{ch}\,\ell}\right)}{(\text{ch}\, c_1-\text{ch}\,\ell)}\cdot\frac{2}{\text{ch}\, c_2+\text{ch}\,\ell}\,\mathrm{d}\,\text{ch}\, \ell \\& = \frac{2}{\text{ch}\, c_1+\text{ch}\, c_2}\int_1^\infty \log \left(\frac{1+t}{1+\text{ch}\, c_1}\right)  \left(\frac{1}{t-\text{ch}\, c_1}-\frac{1}{t+\text{ch}\,c_2}\right) \mathrm{d}t,
\end{split}
\end{equation*}
where $t = \text{ch}\,\ell$. Let $s=\frac{1+t}{1+\text{ch}\, c_1}$. Then $t=(1+\text{ch}\, c_1)s-1$, and $\mathrm{d}t=(1+\text{ch}\, c_1)\mathrm{d}s$. Furthermore, by \cref{lem:logx/x-int},
\begin{equation}
\label{eq:vol-flip-5-gon-calc}
\begin{split}
&{V}_{\DD_5}(c_1,c_2,0,0,0)
\\\smash{\mathllap{=}}\,&  \frac{2}{\text{ch}\,  c_1+\text{ch}\,  c_2} \int_{\frac{2}{1+\text{ch}\,  c_1}}^\infty \log s \cdot \left(\frac{1}{s-1}-\frac{1}{s+\frac{\text{ch}\, c_2-1}{\text{ch}\, c_1+1}}\right) \mathrm{d}s
\\\smash{\mathllap{=}}\,&  \frac{-2\text{Li}_2(1-s)-2\text{Li}_2\left(\frac{1}{\frac{\text{ch}\, c_1+1}{\text{ch}\,  c_2-1}s+1}\right)-\log^2\left(s+\frac{\text{ch}\,  c_2-1}{\text{ch}\,  c_1+1}\right)\Bigg|_{\frac{2}{1+\text{ch}\,  c_1}}^\infty}{\text{ch}\,  c_1+\text{ch}\, c_2} .
\end{split}
\end{equation}
As $s\to \infty$, $\text{Li}_2(1-s) = -\frac{\pi^2}{6}-\frac{1}{2}\log^2(s-1)+o(1)$ by \cref{eq:dilog-asympt}, and 
\begin{equation}
\label{eq:lim-log^2}
\log^2(s+\alpha)-\log^2(s+\beta)\approx\frac{2(\alpha-\beta)\log(s)}{s+\beta}\to 0
\end{equation}
for every $\alpha,\beta\in\RR$,  hence the upper limit of integration in \cref{eq:vol-flip-5-gon-calc} is $\frac{\pi^2/3}{\cosh c_1+\cosh c_2}$. As $s\to \frac{2}{1+\cosh c_1}$, the lower limit of integration in \cref{eq:vol-flip-5-gon-calc} equals
\[
\frac{1}{\text{ch}\, c_1+\text{ch}\, c_2}\left(-2\text{Li}_2\left(\frac{\text{ch}\, c_1-1}{\text{ch}\, c_1+1}\right)-2\text{Li}_2\left(\frac{\text{ch}\, c_2-1}{\text{ch}\, c_2+1}\right)-\log^2\left(\frac{\text{ch}\, c_2+1}{\text{ch}\, c_1+1}\right)\right).
\]
After subtraction, we obtain that ${V}_{\DD_5}(c_1,c_2,0,0,0)$ equals 
\[
\frac{2\text{Li}_2(1)+\log^2\left(\frac{\text{ch}\, c_1+1}{\text{ch}\, c_2+1}\right)+2\text{Li}_2\left(\frac{\text{ch}\, c_1-1}{\text{ch}\, c_1+1}\right)+2\text{Li}_2\left(\frac{\text{ch}\,c_2-1}{\text{ch}\, c_2+1}\right)}{\text{ch}\,c_1+\text{ch}\,c_2}.
\]
\end{proof}
\begin{remark}
Observe that if $c_1=c_2=0$, then ${V}_{\DD_5} = \frac{\pi^2}{6}$, which agrees with \cite[Example~6.2.1]{chekhov2024}.
\end{remark}

\begin{proposition}
\label{prop:vol-3-crown-formula}
\begin{equation}
\begin{split}
&{V}_{\AA_{3}}(c,0,0|d)\cdot\sqrt{2(\text{ch} c+\text{ch} d)}
\\\smash{\mathllap{=}}\,&  2\text{Li}_2(1)+\text{Li}_2\left(\frac{\sqrt{\text{ch} d+\text{ch} c}-\sqrt{\text{ch} d+1}}{\sqrt{\text{ch} d+\text{ch} c}+\sqrt{\text{ch} d-1}}\right)+\text{Li}_2\left(\frac{\sqrt{\text{ch} d+\text{ch} c}-\sqrt{\text{ch} d-1}}{\sqrt{\text{ch} d+\text{ch} c}+\sqrt{\text{ch} d+1}}\right)
\\&\phantom{2\text{Li}_2(1)}+\text{Li}_2\left(\frac{\sqrt{\text{ch} d+\text{ch} c}-\sqrt{\text{ch} d+1}}{\sqrt{\text{ch} d+\text{ch} c}-\sqrt{\text{ch} d-1}}\right)+\text{Li}_2\left(\frac{\sqrt{\text{ch} d+\text{ch} c}+\sqrt{\text{ch} d-1}}{\sqrt{\text{ch} d+\text{ch} c}+\sqrt{\text{ch} d+1}}\right)
\\&\phantom{2\text{Li}_2(1)}+\frac{1}{2}\log^2\left(\frac{\sqrt{\text{ch} d+\text{ch} c}+\sqrt{\text{ch} d+1}}{\sqrt{\text{ch} d+\text{ch} c}+\sqrt{\text{ch} d-1}}\right)+\frac{1}{2}\log^2\left(\frac{\sqrt{\text{ch} d+\text{ch} c}+\sqrt{\text{ch} d+1}}{\sqrt{\text{ch} d+\text{ch} c}-\sqrt{\text{ch} d-1}}\right).
\end{split}
\end{equation}

\end{proposition}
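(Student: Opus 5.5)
The approach is to reduce $V_{\AA_3}(c,0,0|d)$ to a one-dimensional integral, via the disk recursion, whose integrand is an explicit $\AA_2$ volume. Take the essential arc on $\AA_3$ with both endpoints on the arch $\alpha_1$ (between the flippered puncture and a tine) that cuts off a $\DD_3$ containing the two tines. Then $\AA_3\setminus\eta=\AA_2\sqcup\DD_3$. \cref{thm:disk-recursion} together with $V_{\DD_3}(0,0,\ell)=\frac{2}{1+\cosh\ell}$ gives
\[
V_{\AA_3}(c,0,0|d)=\int_1^\infty \frac{V_{\AA_2}(c,\operatorname{arccosh} t\,|\,d)}{1+t}\,\mathrm{d}t .
\]
This arc choice is preferable to cutting off a $\DD_3$ with straps $(c,0,\ell)$, because the flipper then stays on the $\AA_2$ piece, where $V_{\AA_2}(c,\ell|d)$ is known in elementary form.

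Next, substitute the integral representation from the proof of \cref{thm:vol-flip-2-crown}:
\[
V_{\AA_2}(c,\ell|d)=\frac{1}{\sqrt2}\int_1^\infty\frac{\mathrm{d}u}{\sqrt{(u+\cosh(c+\ell))(u+\cosh(c-\ell))(u+\cosh d)}}.
\]
Then swap the order of integration (Tonelli applies, since everything is positive). The key point is that $(u+\cosh(c+\ell))(u+\cosh(c-\ell))=u^2+2u\cosh c\cosh\ell+\cosh^2 c+\cosh^2\ell-1$ is a quadratic in $t=\cosh\ell$. So the inner $t$-integral $\int_1^\infty \frac{\mathrm{d}t}{(1+t)\sqrt{Q_u(t)}}$ is elementary: it equals a logarithm divided by $\sqrt{Q_u(-1)}=\sqrt{(u-1)^2+\ldots}$. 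In fact $Q_u(-1)=(u-\cosh c)^2$ up to checking, i.e. $Q_u(-1)=u^2-2u\cosh c+\cosh^2c$, so $\sqrt{Q_u(-1)}=|u-\cosh c|$. The $t$-integral therefore has the form $\frac{1}{|u-\cosh c|}\log(\text{algebraic in }u)$.

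The remaining integral is then
\[
\frac{1}{\sqrt2}\int_1^\infty \frac{\log R(u)}{|u-\cosh c|\sqrt{u+\cosh d}}\,\mathrm{d}u,
\]
and the factor $\sqrt{u+\cosh d}$ is rationalised by $w=\sqrt{u+\cosh d}$. This produces denominators $w^2-(\cosh d+\cosh c)=(w-\sqrt{\operatorname{ch} d+\operatorname{ch} c})(w+\sqrt{\operatorname{ch} d+\operatorname{ch} c})$. Partial fractions give the overall prefactor $1/\sqrt{2(\operatorname{ch} c+\operatorname{ch} d)}$ and linear denominators $w\pm\sqrt{\operatorname{ch} d+\operatorname{ch} c}$. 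The lower limit $u=1$ becomes $w=\sqrt{\operatorname{ch} d+1}$, and $\sqrt{\operatorname{ch} d-1}$ should emerge from the logarithm's argument, since $R(u)$ involves $\sqrt{u^2-1}$ and related quantities. I expect this to be achievable after a further rational substitution that makes the log argument a product of linear factors in $w$. Each term $\int \log(w+a)/(w+b)\,\mathrm{d}w$ is then evaluated by \cref{lem:logx/x-int}, producing the four $\mathrm{Li}_2$ terms with arguments of the form $\frac{\sqrt{\operatorname{ch} d+\operatorname{ch} c}\pm\sqrt{\operatorname{ch} d\pm1}}{\sqrt{\operatorname{ch} d+\operatorname{ch} c}\pm\sqrt{\operatorname{ch} d\pm 1}}$, together with the $\frac12\log^2$ terms.

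The main obstacle is this last step. First, the log argument $R(u)$ has to be found in a form that factors linearly after a single substitution; if $w$ does not work, I would try an Euler substitution making both $\sqrt{u^2-1}$ and $\sqrt{u+\cosh d}$ rational. Second, the boundary terms have to be assembled carefully. The upper limit requires the dilogarithm asymptotic \eqref{eq:dilog-asympt} and cancellation of the divergent $\log^2$ pieces, as in \eqref{eq:lim-log^2}, and this is where the constant $2\mathrm{Li}_2(1)=\pi^2/3$ should appear. Branch choices across the singular point $u=\cosh c$ must be handled with the continuity statement of \cref{lem:logx/x-int}. As a check, I would verify the result at $c=0$ against \cite{HT25}, and at $d\to0$ numerically against the Fourier formula in \cref{thm:vol-n-crown-1-int}.
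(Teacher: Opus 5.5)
Your strategy works, and once the inner integral is actually computed it turns into the paper's argument. Two things need correcting.

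\textbf{The arc.} An essential arc with both endpoints on $\alpha_1$ cannot cut off the two tines. One of the two boundary segments between its endpoints is a puncture-free subarc of $\alpha_1$, so the arc must encircle the interior puncture. It therefore splits $\AA_3$ into $\AA_1\sqcup\DD_4$, which is exactly the paper's decomposition. The arc that gives $\AA_2\sqcup\DD_3$ joins the two distinct arches flanking the flippered puncture, one of which is your $\alpha_1$. With that arc, your identity $V_{\AA_3}(c,0,0|d)=\int_1^\infty V_{\AA_2}(c,\operatorname{arccosh}t\,|\,d)\,\frac{\mathrm{d}t}{1+t}$ is correct.

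\textbf{The ``main obstacle''.} The step you flag is not an obstacle. Write $Q_u(t)=(t+1)^2+2(u\cosh c-1)(t+1)+(u-\cosh c)^2$ and substitute $t+1=1/y$. This gives
\[
\int_1^\infty\frac{\mathrm{d}t}{(1+t)\sqrt{Q_u(t)}}=\frac{1}{u-\cosh c}\,\log\frac{1+u}{1+\cosh c},
\]
valid on both sides of $u=\cosh c$, where the singularity is removable. So $R(u)=\frac{1+u}{1+\cosh c}$ is rational, no $\sqrt{u^2-1}$ ever appears, and no Euler substitution is needed.

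This inner integral is $\tfrac12V_{\DD_4}(c,0,0,\operatorname{arccosh}u)$, evaluated across the diagonal that separates the two tines from the other two straps. By Tonelli, your double integral collapses to
\[
V_{\AA_3}(c,0,0|d)=\frac{1}{\sqrt2}\int_1^\infty\frac{\log\frac{1+u}{1+\cosh c}}{(u-\cosh c)\sqrt{u+\cosh d}}\,\mathrm{d}u,
\]
which is exactly the integral the paper starts from.

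\textbf{Comparison with the paper.} Both proofs integrate over the same three-piece decomposition $\AA_1\cup\DD_3\cup\DD_3$, in opposite orders. The paper cuts off $\AA_1$ first and quotes the closed form \cref{eq:vol-flip-quadr-2-values} for the quadrilateral; that closed form is obtained across the other diagonal, where the integrand is rational, so the paper's route is shorter. Your order re-derives that special case of \cref{thm:vol-flip-quadr}. This makes your argument self-contained, but it buys nothing else.

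\textbf{From here on the two arguments coincide.} The substitution $w=\sqrt{u+\cosh d}$ does two things:
\begin{itemize}
\item it turns $1+u$ into $(w-\sqrt{\cosh d-1})(w+\sqrt{\cosh d-1})$, which is where $\sqrt{\cosh d-1}$ comes from;
\item it turns $u-\cosh c$ into $w^2-(\cosh d+\cosh c)$.
\end{itemize}
Normalise each logarithm so that it vanishes at $w=\sqrt{\cosh d+\cosh c}$, split into four integrals, and apply \cref{lem:logx/x-int}. With the paper's normalisation $C=0$, $C'=-\pi^2/6$, the upper limit contributes $0$. The constant $2\,\mathrm{Li}_2(1)$ enters through the lower limit $w=\sqrt{\cosh d+1}$, which lies below the pole when $c>0$; it does not come from the upper limit as you predicted.
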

\begin{proof}
Consider an essential arc on $\AA_3$ that decomposes it into $\AA_1$ and $\DD_4$. Then by the disk recursion (\cref{thm:disk-recursion}), \cref{eq:vol-flip-quadr-2-values} in \cref{thm:vol-flip-quadr} and \cref{thm:vol-1-crown},  the volume equals
\begin{equation*}
\begin{split}
{V}_{\AA_{3}}(c,0,0|d)
= \frac{1}{2} \int_0^\infty \frac{1}{\sqrt{2(\cosh \ell+\cosh d)}}\cdot \frac{2\log \left(\frac{1+\cosh \ell}{1+\cosh c}\right)}{\cosh \ell-\cosh c} \, \mathrm{d}
\cosh\ell
\end{split}
\end{equation*}
Let $s=\sqrt{\cosh\ell+\cosh d}$. Then $\cosh\ell=s^2-\cosh d$ and $\mathrm{d}\cosh\ell=2s\,\mathrm{d}s$. Then
\begin{equation}
\label{eq:vol-3-crown-calc}
\begin{split}
&{V}_{\AA_{3}}(c,0,0|d)  \cdot \sqrt{2(\text{ch}\,c+\text{ch}\,d)}
\\\smash{\mathllap{=}}\,& 2\sqrt{\text{ch}\,c+\text{ch}\,d}\int_{\sqrt{1+\text{ch}\,d}}^\infty \frac{\log\left(\frac{s^2-\text{ch}\,d+1}{1+\text{ch}\,c}\right)}{s^2-\text{ch}\,d -\text{ch}\,c} \mathrm{d}s
\\\smash{\mathllap{=}}\,&  \int_{\sqrt{1+\text{ch}d}}^\infty \left(\log\left(\frac{s+\sqrt{\text{ch}d-1}}{\sqrt{\text{ch}d+\text{ch}c}+\sqrt{\text{ch}d-1}}\right)+\log\left(\frac{s-\sqrt{\text{ch}d-1}}{\sqrt{\text{ch}d+\text{ch}c}-\sqrt{\text{ch}d-1}}\right)\right)
\\& \phantom{ \int_{\sqrt{1+\text{ch}d}}^\infty} \left(\frac{1}{s-\sqrt{\text{ch}d+\text{ch}c}}-\frac{1}{s+\sqrt{\text{ch}d+\text{ch}c}}\right) \mathrm{d}s
\\\smash{\mathllap{=}}\,& 
\int_{\sqrt{1+\text{ch}d}}^\infty \frac{\log\left(\frac{s+\sqrt{\text{ch}d-1}}{\sqrt{\text{ch}d+\text{ch}c}+\sqrt{\text{ch}d-1}}\right)}{s-\sqrt{\text{ch}d+\text{ch}c}} \mathrm{d}s - \int_{\sqrt{1+\text{ch}d}}^\infty \frac{\log\left(\frac{s+\sqrt{\text{ch}d-1}}{\sqrt{\text{ch}d+\text{ch}c}+\sqrt{\text{ch}d-1}}\right)}{s+\sqrt{\text{ch}d+\text{ch}c}}\mathrm{d}s \\& +\int_{\sqrt{1+\text{ch}d}}^\infty \frac{\log\left(\frac{s-\sqrt{\text{ch}d-1}}{\sqrt{\text{ch}d+\text{ch}c}-\sqrt{\text{ch}d-1}}\right)}{s-\sqrt{\text{ch}d+\text{ch}c}} \mathrm{d}s-\int_{\sqrt{1+\text{ch}d}}^\infty \frac{\log\left(\frac{s-\sqrt{\text{ch}d-1}}{\sqrt{\text{ch}d+\text{ch}c}-\sqrt{\text{ch}d-1}}\right)}{s+\sqrt{\text{ch}d+\text{ch}c}} \mathrm{d}s
\end{split}
\end{equation}
Applying \cref{lem:logx/x-int} with $C=0, C'=-\frac{\pi^2}{6}$, we find that the upper limit of integrations in \cref{eq:vol-3-crown-calc} is 
\begin{equation}
\begin{split}
\lim_{s\to\infty}\Biggl(&\frac{1}{2}\log^2\left(\frac{s-\sqrt{\text{ch}d+\text{ch}c}}{\sqrt{\text{ch}d+\text{ch}c}+\sqrt{\text{ch}d-1}}\right)-\frac{1}{2}\log^2\left(\frac{s+\sqrt{\text{ch}d+\text{ch}c}}{\sqrt{\text{ch}d+\text{ch}c}+\sqrt{\text{ch}d-1}}\right)
\\&+\frac{1}{2}\log^2\left(\frac{s-\sqrt{\text{ch}d+\text{ch}c}}{\sqrt{\text{ch}d+\text{ch}c}-\sqrt{\text{ch}d-1}}\right)-\frac{1}{2}\log^2\left(\frac{s+\sqrt{\text{ch}d+\text{ch}c}}{\sqrt{\text{ch}d+\text{ch}c}-\sqrt{\text{ch}d-1}}\right)\Biggr)
\end{split}
\end{equation}
which, similarly to \cref{eq:lim-log^2}, equals $0$.
Then by evaluating the antiderivative given by  \cref{lem:logx/x-int} at $\sqrt{1+\text{ch}\,d}$, we find that ${V}_{\AA_{3}}(c,0,0|d) \cdot \sqrt{2(\text{ch}\,c+\text{ch}\,d)}$ equals
\begin{equation}
\begin{split}
&\frac{\pi^2}{3}+\text{Li}_2\left(\frac{\sqrt{\text{ch}d+1}-\sqrt{\text{ch}d+\text{ch}c}}{-\sqrt{\text{ch}d+\text{ch}c}-\sqrt{\text{ch}d-1}}\right)+\text{Li}_2\left(\frac{\sqrt{\text{ch}d+\text{ch}c}-\sqrt{\text{ch}d-1}}{\sqrt{\text{ch}d+1}+\sqrt{\text{ch}d+\text{ch}c}}\right)
\\&\phantom{\frac{\pi^2}{3}}+\text{Li}_2\left(\frac{\sqrt{\text{ch}d+1}-\sqrt{\text{ch}d+\text{ch}c}}{-\sqrt{\text{ch}d+\text{ch}c}+\sqrt{\text{ch}d-1}}\right)+\text{Li}_2\left(\frac{\sqrt{\text{ch}d+\text{ch}c}+\sqrt{\text{ch}d-1}}{\sqrt{\text{ch}d+1}+\sqrt{\text{ch}d+\text{ch}c}}\right)
\\&\phantom{\frac{\pi^2}{3}}+\frac{1}{2}\log^2\left(\frac{\sqrt{\text{ch}d+1}+\sqrt{\text{ch}d+\text{ch}c}}{\sqrt{\text{ch}d+\text{ch}c}+\sqrt{\text{ch}d-1}}\right)+\frac{1}{2}\log^2\left(\frac{\sqrt{\text{ch}d+1}+\sqrt{\text{ch}d+\text{ch}c}}{\sqrt{\text{ch}d+\text{ch}c}-\sqrt{\text{ch}d-1}}\right).
\end{split}
\end{equation}
\end{proof}

\begin{remark}
If $c=0$, one can check that ${V}_{\AA_3}(0|d)=\frac{d^2+\pi^2}{4\cosh d/2}$, which agrees with \cite[Theorem~1.3]{HT25}.
\end{remark}

\subsection{Volumes of moduli spaces of flippered (1,1)-annuli with equal strap lengths}
\begin{proposition}
\label{prop:vol-flip-(1-1)-ann}
The volume of the moduli space $\mathcal{M}_{\AA_{1,1}}(c,c)$ of flippered $(1,1)$-annuli with straps of length $c,c$ is
\[
V_{\AA_{1,1}}(c,c)  =  \frac{\text{Li}_2(-e^{-c})-\text{Li}_2(-e^c)}{2\sinh c}.
\]
\end{proposition}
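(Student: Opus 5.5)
The plan is to reduce the computation to a one-dimensional integral via the neck recursion, and then evaluate that integral with the same $q=e^{-d}$ expansion used in part (ii)\,c) of the proof of \cref{thm:vol-str}.

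Cut $\AA_{1,1}$ along its unique simple closed geodesic, which is the common neck of both crowns. This splits the surface into two flippered $1$-crowns $\AA_1$, each with strap length $c$ and neck length $d$. As in part (ii)\,c) of \cref{thm:vol-str}, the neck recursion (\cref{prop:neck-recursion}) gives
\[
V_{\AA_{1,1}}(c,c)=\int_0^\infty d\cdot V_{\AA_1}(c|d)^2\,\mathrm{d}d .
\]
Inserting \cref{thm:vol-1-crown}, namely $V_{\AA_1}(c|d)=(2\cosh c+2\cosh d)^{-1/2}$, this becomes
\[
V_{\AA_{1,1}}(c,c)=\int_0^\infty \frac{d}{2(\cosh d+\cosh c)}\,\mathrm{d}d .
\]
The integral converges because the integrand decays like $d\,e^{-d}$.

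To evaluate it, set $q=e^{-d}$, $a=e^{c}$ and $b=e^{-c}$. Then
\[
\frac{1}{2(\cosh d+\cosh c)}=\frac{q}{(1+aq)(1+bq)}=\frac{1}{2\sinh c}\left(\frac{aq}{1+aq}-\frac{bq}{1+bq}\right),
\]
where the last equality is a partial-fraction decomposition using $ab=1$; it can be verified by putting the right-hand side over a common denominator. The problem therefore reduces to the single integral
\[
I(u):=\int_0^\infty \frac{x\,u e^{-x}}{1+u e^{-x}}\,\mathrm{d}x ,\qquad u>0 .
\]

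For $0<u<1$, expand the geometric series and integrate term by term, using $\int_0^\infty x e^{-kx}\,\mathrm{d}x=1/k^2$:
\[
I(u)=\sum_{k\geqslant1}(-1)^{k-1}\frac{u^k}{k^2}=-\mathrm{Li}_2(-u).
\]
Both $I(u)$ and $-\mathrm{Li}_2(-u)$ are real-analytic for $u>0$, since $-u$ stays off the branch cut $[1,\infty)$ of the dilogarithm. The identity therefore extends to all $u>0$ by analytic continuation; alternatively, one can check that both sides have the same derivative in $u$.

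Combining the pieces,
\[
V_{\AA_{1,1}}(c,c)=\frac{1}{2\sinh c}\bigl(I(e^c)-I(e^{-c})\bigr)=\frac{\mathrm{Li}_2(-e^{-c})-\mathrm{Li}_2(-e^{c})}{2\sinh c},
\]
which is the claimed formula.

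The only delicate step is justifying $I(u)=-\mathrm{Li}_2(-u)$ for $u=e^c>1$, where the series diverges. I would handle it by analytic continuation in $u$, noting that $I(u)$ is finite for every $u>0$ because the integrand decays like $x e^{-x}$. It is also worth checking the limit $c\to0$: the right-hand side tends to $-\mathrm{Li}_2'(-1)=\log 2$, which agrees with $\int_0^\infty \frac{d}{2(1+\cosh d)}\,\mathrm{d}d=\log 2$.
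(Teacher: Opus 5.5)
Your proof is correct and follows the paper's route. The neck recursion together with $V_{\AA_1}(c|d)=(2\cosh c+2\cosh d)^{-1/2}$ reduces everything to $\int_0^\infty \frac{d}{2\cosh c+2\cosh d}\,\mathrm{d}d$. The only difference is the final calculus step: the paper writes down the explicit antiderivative $\frac{1}{2\sinh c}\bigl(d\log\frac{1+e^{d+c}}{1+e^{d-c}}+\mathrm{Li}_2(-e^{d+c})-\mathrm{Li}_2(-e^{d-c})\bigr)$ and takes its limit as $d\to\infty$ using $\mathrm{Li}_2(-e^t)=-\frac{\pi^2}{6}-\frac{t^2}{2}+o(1)$, whereas your partial fractions plus series expansion (with analytic continuation or the derivative check) is an equally valid evaluation.

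One cosmetic slip in your sanity check: the $c\to0$ limit is $\mathrm{Li}_2'(-1)=\log 2$, not $-\mathrm{Li}_2'(-1)$, although the value $\log 2$ you state is right.
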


\begin{proof}
By \cref{thm:vol-1-crown} and the neck recursion (\cref{prop:neck-recursion}), 
\begin{equation*}
\begin{split}
V_{\AA_{1,1}}(c,c) &= \int_0^\infty \left(V_{\AA_1}(c|d)\right)^2 \cdot d\,\, \mathrm{d}d = \int_0^\infty \frac{ d}{2\cosh c + 2\cosh d} \, \mathrm{d}d \\& =  \frac{1}{2\sinh{c}}\left( d\cdot\log\left(\frac{1+e^{d+c}}{1+e^{d-c}}\right)+\text{Li}_2\left(-e^{d+c}\right)-\text{Li}_2\left(-e^{d-c}\right)\right)\Bigg|_0^\infty \\&= \frac{1}{2\sinh c}\left(\text{Li}_2(-e^{-c})-\text{Li}_2(-e^c)\right).
\end{split}
\end{equation*}
In the last line we have used that 
\begin{equation}
\label{eq:dilog-asymp}
\text{Li}_2(-e^t) = -\frac{\pi^2}{6}-\frac{t^2}{2}+o(1)
\end{equation}
as $t\to\infty$, see \cref{eq:dilog-asympt}.

\end{proof}

\newpage
\appendix

\section{An algebraic identity}
\label{app:identity}

\begin{lemma}
\label{lem:identity}
For every $z_1,z_2,z_3\geqslant1$, the following identity holds:
\begin{equation}
\label{eq:unexpected-identity}
\prod_{i=1}^{3}
\left(z_i z_{i+1}+z_{i+2}+\sqrt{D}\right) = \frac{1}{2} \prod_{i=1}^{3}(z_i+1)\cdot \left(z_1+z_2+z_3-1+\sqrt{D}\right)^2,
\end{equation}
where $D = z_1^2+z_2^2+z_3^2+2z_1z_2z_3-1,$ and the product index is taken $3$-cyclically.
\end{lemma}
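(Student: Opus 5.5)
The plan is to treat \cref{eq:unexpected-identity} as an identity in the quadratic extension $\QQ(z_1,z_2,z_3)[S]/(S^2-D)$. Here $S$ plays the role of $\sqrt{D}$. Every element of this ring has a unique form $A+BS$ with $A,B$ polynomials in $z_1,z_2,z_3$. So the identity reduces to two polynomial identities, one for the rational part and one for the $S$-part. If we prove it in this formal ring, it holds for either choice of sign of $\sqrt{D}$, and in particular for $z_i\geqslant 1$, where $D\geqslant 0$ by \cref{eq:bound-for-D}.

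\textbf{Step 1 (expansion).} Write $p_1=z_1z_2+z_3$, $p_2=z_2z_3+z_1$, $p_3=z_3z_1+z_2$, and use $S^2=D$, $S^3=DS$. The left-hand side becomes
\[
\bigl(p_1p_2p_3 + D(p_1+p_2+p_3)\bigr) + \bigl(p_1p_2+p_2p_3+p_3p_1 + D\bigr)S .
\]
Put $\sigma=z_1+z_2+z_3$ and $\Pi=(z_1+1)(z_2+1)(z_3+1)$. The right-hand side becomes
\[
\tfrac12\Pi\bigl((\sigma-1)^2+D\bigr) + \Pi(\sigma-1)S .
\]

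\textbf{Step 2 (the $S$-coefficients).} We must show $p_1p_2+p_2p_3+p_3p_1+D=\Pi(\sigma-1)$. Both sides are symmetric polynomials of degree $4$. I would compare them in elementary symmetric functions $e_1,e_2,e_3$, or simply by comparing monomials $z_1^az_2^bz_3^c$ up to symmetry. There are only a handful of monomial types, for instance $z_1^2z_2z_3$, $z_1^2z_2^2$, $z_1^2z_2$ and $z_1^2$.

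\textbf{Step 3 (the rational parts).} We must show $p_1p_2p_3+D(p_1+p_2+p_3)=\tfrac12\Pi\bigl((\sigma-1)^2+D\bigr)$. This is a symmetric identity of degree $6$, and it is the main obstacle because the bookkeeping is heavier. To tame it, I would first factor and simplify: note $p_1+p_2+p_3=e_2+e_1$, and $\Pi=1+e_1+e_2+e_3$. Then I would rewrite everything in $e_1,e_2,e_3$, using
\[
D=e_1^2-2e_2+2e_3-1 .
\]
That leaves a finite comparison of polynomials in the $e_i$.

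\textbf{Independent check.} As an alternative route, or as a cross-check, I would use the parametrisation from the proof of \cref{prop:vol-form-triang}. There $z_i=1+2X_i(1+X_{i+1})$ and $\sqrt{D}=2\bigl((1+X_1)(1+X_2)(1+X_3)+X_1X_2X_3\bigr)$, with $X_i>0$; this parametrisation covers all $z_i\geqslant1$, because the flippered triangle exists for every triple of strap lengths. In these coordinates $\sqrt{D}$ is a polynomial, so the identity becomes a polynomial identity in $X_1,X_2,X_3$. I expect each factor $z_iz_{i+1}+z_{i+2}+\sqrt{D}$ to factor into simple pieces such as $(1+X_\bullet)$ and $(1+X_\bullet+X_\bullet X_\bullet)$, which would make this check quick. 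By \cref{eq:vol-form-flip-triang}, the expression $\tfrac12\bigl(z_1+z_2+z_3-1+\sqrt{D}\bigr)$ should equal $2(1+X_1)(1+X_2)(1+X_3)$; we have $z_i+1=2(1+X_i+X_iX_{i+1})$.
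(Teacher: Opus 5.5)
Your proposal is correct and is essentially the paper's proof. The paper also writes the left side as $(B_3+B_1D)+(B_2+D)\sqrt{D}$, where $B_k$ are the elementary symmetric functions of $A_i=z_iz_{i+1}+z_{i+2}$, and checks in $e_1,e_2,e_3$ that $B_2+D=(e_1-1)(1+e_1+e_2+e_3)$ and $B_3+B_1D=(e_1^2-e_1-e_2+e_3)(1+e_1+e_2+e_3)$; these are exactly your Steps~2 and~3, since $\Pi=1+e_1+e_2+e_3$ and $\tfrac12\bigl((e_1-1)^2+D\bigr)=e_1^2-e_1-e_2+e_3$.

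Your shear-coordinate cross-check also works: each factor equals $4(1+X_{i+1})(1+X_{i+2})(1+X_i+X_iX_{i+1})$, so both sides equal $64\prod_i(1+X_i)^2(1+X_i+X_iX_{i+1})$. Allowing $X_i\geqslant0$ so that $z_i=1$ is reached, this would stand on its own as a shorter proof.
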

\begin{proof}
Let
\[
e_1=z_1+z_2+z_3, \quad e_2=z_1z_2+z_2z_3+z_3z_1, \quad e_3=z_1z_2z_3
\]
be the elementary symmetric polynomials. Then
\[
D = e_1^2-2e_2+2e_3-1.
\]
For $1\leqslant i\leqslant 3$, define $A_i = z_iz_{i+1}+z_{i+2}$, where the index is taken $3$-cyclically.
Then we compute
\[
B_1 := A_1+A_2+A_3=e_2+e_1,
\]
\[
B_2 := A_1A_2+A_2A_3+A_3A_1= e_1e_3 + (e_1e_2-3e_3)+e_2,
\]
\[
B_3 := A_1A_2A_3= e_3^2 + (e_1^2-2e_2) e_3 + (e_2^2-2e_1e_3) +e_3.
\]
Now expand the left‑hand side of \cref{eq:unexpected-identity}: 
\begin{equation*}
\begin{split}
\prod_{i=1}^{3}
\left(A_i+\sqrt{D}\right)
&= B_3+B_2 \sqrt{D}+B_1\left(\sqrt{D}\right)^2+\left(\sqrt{D}\right)^3
\\&=(B_3+B_1D)+(B_2+D)\sqrt{D}. 
\end{split}
\end{equation*}
Next, we factorize
\begin{equation*}
\begin{split}
B_3+B_1D & = e_3^2 + (e_1^2-2e_2) e_3 + (e_2^2-2e_1e_3) +e_3+(e_2+e_1)(e_1^2-2e_2+2e_3-1)
\\& = e_3^2+(e_1^2+1)e_3-(e_2^2+(1+2e_1-e_1^2)e_2+(1+e_1)(e_1-e_1^2)),
\end{split}
\end{equation*}
and observing that the bracket $(e_2^2+(1+2e_1-e_1^2)e_2+(1+e_1)(e_1-e_1^2))$ factorizes as $(e_2+1+e_1)(e_2+e_1-e_1^2)$, we obtain
\begin{equation*}
\begin{split}
B_3+B_1D & = e_3^2+(e_1^2+1)e_3+(e_1^2-e_1-e_2)(e_1+e_2+1)
\\& = \left(e_3+e_1^2-e_1-e_2\right)(e_3+e_1+e_2+1),
\end{split}
\end{equation*}
and we factorize 
\begin{equation*}
\begin{split}
B_2+D & = (e_1e_3+e_1e_2-3e_3+e_2) + (e_1^2-2e_2+2e_3-1)
\\& = (e_1^2-1)+(e_1-1)e_2+(e_1-1)e_3
\\& = (e_1-1)(e_1+e_2+e_3+1).
\end{split}
\end{equation*}
Therefore, the left‑hand side of  \cref{eq:unexpected-identity} equals 
\begin{equation*}
\begin{split}
&\prod_{i=1}^{3}
\left(A_i+\sqrt{D}\right)
\\\smash{\mathllap{=}}\,&(B_3+B_1D)+(B_2+D)\sqrt{D} 
\\\smash{\mathllap{=}}\,&  (e_1+e_2+e_3+1)\left((e_1-1)e_1-e_2+e_3\right)+(e_1+e_2+e_3+1)(e_1-1)\sqrt{D}
\\\smash{\mathllap{=}}\,&  \frac{1}{2}(e_1+e_2+e_3+1)\left(e_1-1+\sqrt{D}\right)^2,
\end{split}
\end{equation*}
which matches the right‑hand side of  \cref{eq:unexpected-identity}.
\end{proof}

\section{Explicit calculations for $\mathbb{D}_3$ and $\mathbb{A}_1$}
\label{app:D3-A1-action}

\subsection{Chekhov action for flippered triangles}
\label{subsec:action-triang}

We now compute the Chekhov action for flippered triangles. The moduli spaces $\mathcal{M}_{\mathbb{D}_3}(\vec{c})$ of such triangles are singletons, but we will show in \cref{subsec:vol-form-flip-triang} that neither the action nor the volume form is constant with respect to $\vec{c}=(c_1,c_2,c_3)$, and the final volume expressions are non-constant (see \cref{subsec:vol-flip-triang}).\medskip

\begin{proposition}
\label{prop:action-triang}
The Chekhov action for the flippered triangle with strap lengths $\vec{c}=(c_1, c_2, c_3)$, is equal to
\begin{equation}
\label{eq: triangle with 3 flippers}
e^{-S}
= 
\frac{
4
}
{\cosh c_1+\cosh c_2+\cosh c_3-1+\sqrt{D}
},
\end{equation}
where 
\[
D = \cosh^2 c_1 + \cosh^2 c_2 + \cosh^2 c_3 +2\cosh c_1 \cosh c_2 \cosh c_3 -1.
\]
\end{proposition}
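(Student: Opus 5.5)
The plan is to compute $e^{-S}$ directly from the definition $S = \frac{1}{2}\sum_i \ell(\mathbb{I}_i) + \sum_i \log\cosh(c_i/2)$. I will work with the three arches of the flippered triangle, each joining two flippers. The cleanest route goes through \cref{cor:action-clip} together with the known crowned action for the clipped surface, $\clip(X)\in\mathcal{T}_{\dot{\DD}_3(1,2,3)}^\varnothing$, which is an ideal hexagon.

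The recipe is as follows. By \cref{prop: essential intervals}, $e^{-S(\clip X)}=\prod_{i=1}^{6}\lambda_i^{-1}$ for the lambda lengths of the hexagon sides, decorated with unit horocycles. Equivalently, by Penner's formula, it can be expressed in shear coordinates.

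First, I use the shears $s_1,s_2,s_3$ of the triangulation in \cref{fig:flip-triang-flip-clip}. With unit horocycle decorations, the crowned action of a triangulated ideal polygon is a product over the triangles and quadrilaterals. Concretely, I expect
\[
e^{S(\clip X)}=(1+X_1)(1+X_2)(1+X_3)+X_1X_2X_3,
\]
up to a monomial factor, where $X_i=e^{s_i}$. I will verify this by iterating \cref{prop:action-arch-arch} on the three outer clipped triangles $\DD_3(c_i,0,0)$. This step also yields the auxiliary fact $e^{S(\DD_3(c,0,0))}=\cosh^2(c/2)$, which is already used in the proof of \cref{cor:action-clip}. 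The only honest geometric input there is the half-ideal-triangle horocycle computation.

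Second, I combine the clip formula
\[
e^{-S(X)}=e^{-S(\clip X)}\prod_i\coth^2(c_i/2)
\]
with the relations \cref{eq:exp-shears-flip-triang}, namely $X_i(1+X_{i+1})=\sinh^2(c_i/2)$, to express everything in $C_i=\cosh c_i$. The key algebraic step is the identity
\[
D=4\bigl((1+X_1)(1+X_2)(1+X_3)+X_1X_2X_3\bigr)^2,
\]
already noted in \cref{prop:vol-form-triang}. Alongside it I need the product identity of \cref{lem:identity}, which is clearly designed to convert $\prod_i\cosh^2(c_i/2)/\sinh^2(c_i/2)$-type expressions into $\bigl(C_1+C_2+C_3-1+\sqrt D\bigr)^2$. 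Solving the cyclic system for the $X_i$ gives quantities of the form $X_i=\bigl(C_iC_{i+1}+C_{i+2}+\sqrt D\bigr)/(\cdots)$. Feeding these into \cref{lem:identity} should collapse the result to $e^{S}=\frac14\bigl(C_1+C_2+C_3-1+\sqrt D\bigr)$.

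The main obstacle is getting the crowned action of the hexagon in shear coordinates correct. This includes the normalization that the unit horocycles give $h_i=1$, and the correct treatment of the two tines produced from each clipped flipper. If that expression proves awkward, I will fall back on a direct universal-cover computation. In that approach I place the three arches in $\HH$, compute the distances between the feet of the orthojectories bookending each $\mathbb{I}_i$ by the tri-rectangle formula as in \cref{prop:action-arch-arch}, and then simplify with \cref{lem:identity}.

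As a sanity check, the final formula should reproduce the special cases listed in the example of \cref{defn: action for flippered surface}: $c_1=c_2=c_3=0$ gives $e^S=1$, and $c_2=c_3=0$ gives $\cosh^2(c_1/2)$.
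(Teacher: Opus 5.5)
Your main route does not close as written, because its key input is false. The crowned action of the clipped hexagon is not $(1+X_1)(1+X_2)(1+X_3)+X_1X_2X_3$ up to a monomial. That polynomial is $\sqrt D/2$, the denominator of the volume form in \cref{prop:vol-form-triang}, not the action.

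To get the action, use Penner's $h$-length relations with unit horocycles. Let $A,B,C$ be the $\lambda$-lengths of the three diagonals. At the three tines off the inner triangle, the relations force the two adjacent side $\lambda$-lengths to multiply to $A$, $B$ or $C$, so $e^{S(\clip X)}=ABC$. At the other three tines they express $AC/B$, $AB/C$, $BC/A$ as trinomials in the shears. In the conventions of \cref{eq:exp-shears-flip-triang} this yields
\[
e^{S(\clip X)}=\prod_{i=1}^{3}\bigl(1+X_{i+1}+X_i^{-1}\bigr)=\prod_{i=1}^{3}\frac{\cosh^2(c_i/2)}{X_i}.
\]
At the regular hexagon ($X_i=1$, all side $\lambda$-lengths $\sqrt3$) this equals $27$, whereas your expression gives $9$. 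Along $X_i=t$ the ratio of the two is $(1+t+t^2)^2/\bigl(t^3(1+2t)\bigr)$, which is not a monomial. Your proposed verification cannot find this formula either: iterating \cref{prop:action-arch-arch} on the outer triangles only relates $S(\clip X)$ to $S(X)$, which is the unknown. The final step (solving for $X_i$ and feeding into \cref{lem:identity}) therefore has nothing correct to act on.

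With the correct formula your route works, and it is shorter than the paper's. Multiply by $\prod_i\tanh^2(c_i/2)=\prod_i X_i(1+X_{i+1})/\cosh^2(c_i/2)$ to get $e^{S(X)}=(1+X_1)(1+X_2)(1+X_3)$. Since $C_i=1+2X_i(1+X_{i+1})$ and $\sqrt D=2\bigl((1+X_1)(1+X_2)(1+X_3)+X_1X_2X_3\bigr)$, expanding gives $C_1+C_2+C_3-1+\sqrt D=4(1+X_1)(1+X_2)(1+X_3)$ directly, with no need for \cref{lem:identity}.

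The paper instead computes $\ell(\mathbb{I}_1)=\log\frac{C_3+C_1C_2+\sqrt D}{(C_1+1)(C_2+1)}$ (and cyclically) from the right-angled hexagon formula and the tri-rectangle relation. It then needs \cref{lem:identity} to collapse $\prod_i(C_iC_{i+1}+C_{i+2}+\sqrt D)$; that is essentially your fallback.

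You are right that \cref{cor:action-clip} is circular as proved, since it invokes \cref{cor:action-flip-triang}. An independent derivation of $e^{S(\DD_3(c,0,0))}=\cosh^2(c/2)$ goes as follows. Nielsen cut an ideal quadrilateral $Q$ along its strap and apply \cref{prop:action-arch-arch}, using $e^{S(Q)}=4\cosh^2(s/2)$ (the same unit-horocycle computation) and $e^{s}=\sinh^2(c/2)$. Cases with some $c_i=0$, where $\clip X$ is not a hexagon, then follow by continuity.
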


\begin{proof}
By \cref{defn: action for flippered surface}, we have 
\begin{equation}
\label{eq:flip-triang-action}
S = \frac{1}{2}\left(\ell(\mathbb{I}_1)+\ell(\mathbb{I}_2)+\ell(\mathbb{I}_3)\right)+\sum_{i=1}^3 \log \cosh (c_i/2).
\end{equation}

\begin{figure}[H]
    \centering
    \begin{tikzpicture}[scale=1.2]
\usetikzlibrary{decorations.pathreplacing}

  \draw[very thick]
    (0,2) ++(-30:1) arc[start angle=-30, end angle=-150, radius=1];

  \draw[very thick]
    (-2,0) ++(-80:1.2) arc[start angle=-80, end angle=70, radius=1.2];

  \draw[very thick]
    (2,0) ++(120:0.9) arc[start angle=120, end angle=250, radius=0.9];


\draw[thick, blue] (-1.1,0.83) to [out=40,in=-130] (-0.65,1.25);


\draw[thick, blue] (1.37,0.66) to [out=140,in=-50] (0.74,1.32);


\draw[thick, blue] (-0.8,0) to [out=0,in=180] (1.1,0);

\draw[thick, blue] (-1.79,-1.19) to [out=10,in=-100] (-0.1,1);

\draw[thick, blue] (1.7,-0.85) to [out=160,in=-75] (0.25,1.05);


\draw [decorate,decoration={brace,amplitude=4pt}]
    (-0.1,1) -- (0.25,1.05) node [midway,yshift=10pt] {$\mathbb{I}_1$};

    (-0.1,1) -- (-0.65,1.25);

    (0.74,1.32) -- (0.25,1.05);

\draw (-1,1.15) node {$c_1$};

\draw (1.2,1.05) node {$c_2$};

\draw (0.1,0.13) node {$c_3$};



\end{tikzpicture}
    \caption{Essential interval in a flippered triangle.}
    \label{fig:flip-triang-ess-int}
\end{figure}

From \cref{fig:flip-triang-ess-int}, we compute $\ell(\mathbb{I}_1)$ using a right-angled hexagon formula \cite[Equation 2.6.10]{MR4554426} and the tri-rectangle relation \cite[Theorem 2.3.1]{Buser}:
\begin{equation}
\label{eq:ess-int-flip-triang}
\ell(\mathbb{I}_1) = \text{arccosh}\left(\frac{\cosh c_3+\cosh c_1 \cosh c_2}{\sinh c_1 \sinh c_2}\right) - \text{arcsinh}\left(\frac{1}{\sinh c_1}\right)-\text{arcsinh}\left(\frac{1}{\sinh c_2}\right).
\end{equation} 
Rewriting the summands in \cref{eq:ess-int-flip-triang} as logarithms, we obtain:
\begin{equation}
\label{eq:ess-int-flip-triang-log}
\begin{split}
\ell(\mathbb{I}_1) &= \log \frac{\cosh c_3+\cosh c_1 \cosh c_2+\sqrt{D}}{\sinh c_1\sinh c_2} - \log\frac{\cosh c_1+1}{\sinh c_1}-\log\frac{\cosh c_2+1}{\sinh c_2} 
\\&= \log \frac{\cosh c_3+\cosh c_1 \cosh c_2+\sqrt{D}}{(\cosh c_1+1)(\cosh c_2+1)}. 
\end{split}
\end{equation}
Then, by \cref{eq:flip-triang-action} and \cref{eq:ess-int-flip-triang-log},
\begin{equation}
e^{-S}
= 
\frac{
(\cosh c_1 +1)(\cosh c_2 +1)(\cosh c_3 +1)
}
{\prod_{i=1}^{3}
(\cosh c_i \cosh c_{i+1}+\cosh c_{i+2}+\sqrt{D})^{\frac{1}{2}}
}\cdot\frac{1}{\prod_{i=1}^3\cosh(c_i/2)},
\end{equation}
where the product index is taken $3$-cyclically. To obtain the final expression  of the action in \cref{eq: triangle with 3 flippers}, we apply the algebraic identity of \cref{lem:identity}:
\begin{equation}
\begin{split}
e^{-S}
& = 
\frac{
(\cosh c_1 +1)(\cosh c_2 +1)(\cosh c_3 +1)
}
{\prod_{i=1}^{3}
(\cosh c_i \cosh c_{i+1}+\cosh c_{i+2}+\sqrt{D})^{\frac{1}{2}}
} \cdot\frac{1}{\prod_{i=1}^3\cosh(c_i/2)},
\\& 
= \frac{
\sqrt{2(\cosh c_1+1)(\cosh c_2+1)(\cosh c_3+1)}
}
{\cosh c_1+\cosh c_2+\cosh c_3-1+\sqrt{D}
} \cdot\frac{1}{\prod_{i=1}^3\cosh(c_i/2)},
\\& 
= \frac{
4
}
{\cosh c_1+\cosh c_2+\cosh c_3-1+\sqrt{D}
}.
\end{split}
\end{equation}
\end{proof}
The following corollary is immediate.
\begin{corollary}
\label{cor:action-flip-triang}
The Chekhov's action for the triangle with three flippers, whose straps have lengths $c_1$, $c_2$ and $c_3=0$, satisfies  
\begin{equation}
e^{-S} = \frac{2
}{
\cosh c_1+\cosh c_2
}.    
\end{equation}
The Chekhov's action for the triangle with three flippers, whose straps have lengths $c_1$ and $c_2=c_3=0$, satisfies
\begin{equation}
e^{-S} = \frac{1}{\cosh^2 \!\big(\frac{c_1}{2}\big)}.
\end{equation}
\end{corollary}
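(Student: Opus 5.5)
The plan is to specialize the general formula of \cref{prop:action-triang},
\[
e^{-S}=\frac{4}{\cosh c_1+\cosh c_2+\cosh c_3-1+\sqrt{D}},
\]
directly to the two degenerate strap configurations. Everything reduces to noticing that the discriminant $D$ becomes a perfect square once one strap length vanishes.

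For the first claim, set $c_3=0$, so $\cosh c_3=1$. Then
\[
D=\cosh^2 c_1+\cosh^2 c_2+1+2\cosh c_1\cosh c_2-1=(\cosh c_1+\cosh c_2)^2 .
\]
Since $\cosh c_1+\cosh c_2\geqslant 2>0$, the positive square root is $\sqrt{D}=\cosh c_1+\cosh c_2$. The denominator then becomes
\[
\cosh c_1+\cosh c_2+1-1+(\cosh c_1+\cosh c_2)=2(\cosh c_1+\cosh c_2),
\]
which gives $e^{-S}=2/(\cosh c_1+\cosh c_2)$.

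For the second claim, additionally set $c_2=0$ in the first formula. This gives $e^{-S}=2/(1+\cosh c_1)$, and the half-angle identity $1+\cosh c_1=2\cosh^2(c_1/2)$ yields $e^{-S}=1/\cosh^2(c_1/2)$.

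The only point requiring care is whether \cref{prop:action-triang} legitimately applies when some $c_i=0$. Its proof passes through terms such as $\operatorname{arcsinh}(1/\sinh c_i)$ and a hexagon formula with $\sinh c_i$ in the denominator, and these degenerate when a flipper collapses to a tine. I would handle this in one of two ways.

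First, one can argue by continuity. The essential intervals are bookended by orthojectories to the far ideal endpoints (\cref{lem:ess-int-endpts}), and these vary continuously as a strap shrinks to a tine. So $S$ is continuous in $\vec c\in[0,\infty)^3$, as is the right-hand side of \cref{prop:action-triang}, and the identity extends to the boundary by taking limits.

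Alternatively, one can check directly in the logarithmic form \cref{eq:ess-int-flip-triang-log}, which has no singularity at $c_i=0$. This expression remains the correct length of $\mathbb{I}_1$ in the tine case, by the half-ideal-triangle argument used in \cref{prop: essential intervals}.

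I expect this justification of the boundary case to be the only nontrivial step. The algebra itself is immediate.
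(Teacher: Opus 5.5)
Your proposal is correct and matches the paper, which simply declares the corollary immediate from \cref{prop:action-triang}: setting $c_3=0$ makes $D=(\cosh c_1+\cosh c_2)^2$, and the second formula then follows from $1+\cosh c_1=2\cosh^2(c_1/2)$. Your justification of the $c_i=0$ boundary case, by continuity or by the regular logarithmic form of $\ell(\mathbb{I}_1)$, addresses a point the paper leaves implicit here, though it uses the same continuity argument for the $c=0$ case of \cref{prop:1-crown-action}.
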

\medskip

\subsection{Chekhov action for flippered 1-crowns}
\label{sec:1crown}

\begin{proposition}
\label{prop:1-crown-action}
The Chekhov action for the 1-crown with neck length $d$ and the flipper with strap length $c$ equals
\begin{equation}
e^{-S}
=
\frac{1}{\sqrt{\frac{\cosh c +\cosh d}{2}}+\cosh d/2} .
\end{equation}
\end{proposition}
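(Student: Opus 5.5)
The plan is to compute the action directly from the definition. Since the flippered $1$-crown $\AA_1$ has a single arch $\alpha_1$ and a single strap $\sigma_1$ of length $c$, \cref{defn: action for flippered surface} gives $S=\tfrac12\ell(\mathbb{I}_1)+\log\cosh(c/2)$. So it suffices to compute $\ell(\mathbb{I}_1)$ in terms of $c$ and $d$.

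\textbf{Step 1: locate $\mathbb{I}_1$ in the universal cover.} By \cref{lem:ess-int-endpts}, the lift $\tilde{\mathbb{I}}_1\subset\tilde\alpha_1$ is bookended by the orthogonal projections onto $\tilde\alpha_1$ of the far ideal endpoints $\tilde p',\tilde p''$ of the two adjacent lifts $\tilde\alpha',\tilde\alpha''$. These adjacent lifts are the translates of $\tilde\alpha_1$ under the neck holonomy $g^{\pm1}$, which is hyperbolic of translation length $d$. They are adjacent to $\tilde\alpha_1$ across the flipper. Concretely, I would cut $\AA_1$ along the orthogeodesic $\eta$ from $\alpha_1$ to the neck $\nu$. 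Equivalently, I would work in the Nielsen-extended picture, a right-angled pentagon with one ideal vertex (cf.\ \cref{fig:1-tine-crow-ess-int}): the strap $\sigma$, half-arches, half of $\eta$, and half the neck $d/2$.

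\textbf{Step 2: trigonometry.} Set $h=\ell(\eta)$. Doubling along the arch produces a pair of pants with cuffs $d,d,2c$, and $\eta$ doubles to the seam between the two $d$-cuffs. The right-angled hexagon formula then gives
\[
\cosh h=\frac{\cosh c+\cosh^2(d/2)}{\sinh^2(d/2)}.
\]
Next I would project $\tilde p'$ onto $\tilde\alpha_1$. The relevant quadrilateral has one ideal vertex and contains the segment $\eta$ and a half-neck segment. A trirectangle or limit-quadrilateral formula (\cite[Theorem~2.3.1]{Buser}, as in \cref{eq:ess-int-flip-triang}) expresses the distance along $\tilde\alpha_1$ from the foot of $\tilde\eta$ to the projection of $\tilde p'$ as an $\operatorname{arcsinh}$ or $\log\coth$ of the quantities $h$, $d$, $c$. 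By the symmetry of $\AA_1$ under reflection through $\eta$, I expect
\[
\ell(\mathbb{I}_1)=2\,\mathrm{dist}.
\]
This distance is a signed quantity; it may be negative when the bookending orthojectories cross, as in the remark after \cref{prop: essential intervals}, and I would carry the sign along.

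\textbf{Step 3: simplify.} I would rewrite the $\operatorname{arcsinh}/\operatorname{arccosh}$ terms as logarithms, as in \cref{eq:ess-int-flip-triang-log}. Then $e^{-S}=e^{-\ell(\mathbb{I}_1)/2}/\cosh(c/2)$, and I expect this to reduce to $\bigl(\sqrt{(\cosh c+\cosh d)/2}+\cosh(d/2)\bigr)^{-1}$. The key identity is $\sinh^2(d/2)+\cosh^2(c/2)=(\cosh c+\cosh d)/2$, which appeared already in \cref{prop:1-crown-vol-form}.

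As a consistency check, I would verify that the result reduces to $1/(2\cosh(d/2))$ at $c=0$, matching the example following \cref{defn: action for flippered surface}.

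An alternative, perhaps cleaner, route to the same formula is \cref{cor:action-clip}: clip the flipper to get the crowned $\dot\AA_1=\AA_2$ with tines. Its action is known from Chekhov or \cite{HT25} in shears via \cref{prop: essential intervals}, and I would then use the shear relation \cref{eq:1-flip-crown-system} with $X_1X_2=e^d$. Expressing the arch lambda lengths in $X_1,X_2$ and using $(X_1X_2+2X_1+1)^2=(e^d-1)^2+4e^d\cosh^2(c/2)$ should produce the square root.

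\textbf{Main obstacle.} The hardest step is getting the geometry and signs of Step 2 right. In the $1$-tined case the bookending orthojectories are not embedded, so the projection must be computed carefully in the universal cover, identifying which translate of $\tilde\alpha_1$ is adjacent through the flipper rather than through the neck. If this becomes messy, I would fall back on the shear computation via \cref{cor:action-clip}, which is purely algebraic.
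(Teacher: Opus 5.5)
Your fallback route is exactly the paper's proof. The paper clips the flipper and uses Chekhov's formula $e^{-S(\clip(X))}=1/(4\cosh(s_1/2)\cosh(s_2/2))$, together with $s_1+s_2=d$ and \cref{eq:1-flip-crown-system}. It then solves $u^2+2\cosh(d/2)\,u+1=\cosh^2(c/2)$ for $u=e^{(s_1-s_2)/2}$, which produces the same square root as your identity. Finally it multiplies by $\coth^2(c/2)$ via \cref{cor:action-clip} and treats $c=0$ by continuity.

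Your main route is genuinely different and does work, but Step~2 needs two repairs.

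First, the doubled $\eta$ is a seam of length $2h$. So the hexagon formula gives $\cosh 2h$, not $\cosh h$; equivalently, $\sinh h\,\sinh(d/2)=\cosh(c/2)$.

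Second, there is no single quadrilateral with an ideal vertex that contains both $\eta$ and a half-neck. As in \cref{eq:ess-int-flip-triang}, you need two figures.
\begin{itemize}
\item The reflection symmetry fixes $\eta$ and the perpendicular $w$ from the neck to the midpoint of the strap. These cut the core into two right-angled pentagons with consecutive sides $h,\,d/2,\,w,\,c/2,\,x_0$, where $x_0$ is the arch segment from the foot of $\eta$ to the foot of the strap. The opposite-side relation gives $\cosh(d/2)=\sinh(c/2)\sinh x_0$. Hence $e^{x_0}\sinh(c/2)=\cosh(d/2)+\sqrt{(\cosh c+\cosh d)/2}$, and $h$ is not needed at all.
\item The projection of $\tilde p''$ lies between these two feet, at distance $\operatorname{arcsinh}(1/\sinh c)$ from the strap foot. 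This comes from the trirectangle with one ideal vertex bounded by the strap, the adjacent arch, $\xi''$ and $\tilde\alpha_1$. Note that $e^{-\operatorname{arcsinh}(1/\sinh c)}=\tanh(c/2)$.
\end{itemize}
Combining these, $\ell(\mathbb{I}_1)=2x_0-2\operatorname{arcsinh}(1/\sinh c)>0$, so there is no sign issue. Moreover $e^{\ell(\mathbb{I}_1)/2}\cosh(c/2)=e^{x_0}\sinh(c/2)$, which is exactly the claim.

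A check that also covers $c=0$: take $\tilde\nu$ to be the imaginary axis, holonomy $z\mapsto e^{d}z$, and $\tilde\alpha_1$ with endpoints $1$ and $e^{2t}$. Then $\cosh(c/2)=\sinh(d/2)/\sinh t$ and $e^{\ell(\mathbb{I}_1)/2}=\sinh(d/2+t)/\sinh(d/2)$, with $c=0$ corresponding to $t=d/2$.

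As for what each route buys: your direct route is self-contained. It uses only the definition, \cref{lem:ess-int-endpts} and two standard trigonometric relations, and it mirrors the paper's own $\DD_3$ computation. The paper instead imports Chekhov's closed form for the crowned $\AA_2$ and relies on \cref{cor:action-clip}, hence on the arch-to-arch cut theorem and the $\DD_3$ action. In exchange, the paper's computation takes place in the same shear coordinates used for $\Omega^{\mathrm{WP}}_{\AA_1}$ in \cref{prop:1-crown-vol-form}. The needed square root has already appeared there, so combining action and volume form is immediate.
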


\begin{proof}
Suppose that the strap $\sigma_1$ of a flippered $1$-crown $X$ has positive length, i.e. that $c>0$. Consider the flipper-clipped surface $\text{Clip}(X)$. Triangulating it by two ideal arcs emanating from the interior puncture, the associated shearing coordinates $\{s_1,s_2\}$ satisfy: $s_1+s_2=d$, and by \cref{eq:1-flip-crown-system},
\begin{equation}
\label{eq:eq:1-flip-crown-system}
(1+e^{s_1})(1+e^{-s_2}) = \cosh^2(c/2).
\end{equation}
By \cite[Equation~5.1]{chekhov2024}, the Chekhov action equals
\[
e^{-S(\text{Clip}(X))}  = \frac{1}{4\cosh(s_1/2)\cosh(s_2/2)},
\]
and we express it in $c,d$. Let $u=e^{(s_1-s_2)/2}$, then \cref{eq:eq:1-flip-crown-system} becomes
\[
(1+e^{d/2}u)(1+e^{-d/2}u) = u^2+2\cosh(d/2)u+1=\cosh^2(c/2),
\]
whose positive root is $u=\sqrt{\frac{\cosh c+\cosh d}{2}}-\cosh d/2$. Meanwhile, 
\[
(1+e^{s_1})(1+e^{-s_2}) = 4 u \cosh(s_1/2)\cosh(s_2/2), 
\]
hence 
\[
e^{-S(\text{Clip}(X))}  = \frac{u}{\cosh^2(c/2)} = \frac{\tanh^2(c/2)}{\sqrt{\frac{\cosh c+\cosh d}{2}}+\cosh d/2}.
\]
Finally, by \cref{cor:action-clip}:
\[
e^{-S(X)} = e^{-S(\text{Clip}(X))} \cdot \coth^2(c/2) = \frac{1}{\sqrt{\frac{\cosh c +\cosh d}{2}}+\cosh d/2}.
\]
The case $c=0$ is obtained by letting $c\to0$ and making use of the continuity of the action.
\end{proof}

\section{Generalities about the volume forms}
\label{app:codimvolume}

Following \cite{MR350769} with minor modifications, we state a few well-known results on the volume forms on orientable smooth manifolds and their submanifolds. In \cref{subsec:flippervolform}, we apply them in our particular setting, to define the volume form $\Omega^{\mathrm{WP}}_{\Sigma}(\vec{b};\vec{c})$ on the Teichm\"uller space of flippered hyperbolic surfaces with fixed holonomy $\mathcal{T}(\vec{b};\vec{c})$. 

\subsection{Submersions and volume forms}
\label{subsec:submersions}
Let $M$ and $N$ be smooth (orientable) manifolds of dimensions $m$ and $n$, respectively. Let $f:M\to N$ be a surjective submersion. Fix $x\in M$, and set $y = f(x)$. Then the fiber $f^{-1}(y)$ is a smooth submanifold of $M$ of dimension $m-n$. Let $j: f^{-1}(y) \to M$ denote the canonical injection. Consider the covectors $\omega_x \in \Lambda^m(T^*_xM), \mu_y\in\Lambda^n (T^*_yN)$, so that $\mu_y$ does not vanish. Then 
\begin{lemma}[{\cite[16.21.7]{MR350769}}] 
There is a unique $(m-n)$-covector 
$$\sigma_x \in \Lambda^{m-n}(T^*_x(f^{-1}(y)))$$ such that 
\begin{equation*}
\omega_x = (f^*\mu_y) \wedge \sigma_x',    
\end{equation*}
where $\sigma_x' \in \Lambda^{m-n}(T^*_xM)$ is any $(m-n)$-covector for which $\sigma_x = j^*\sigma_x'$.    
\end{lemma}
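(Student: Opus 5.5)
We argue pointwise, since the statement is purely linear-algebraic at the fixed point $x$ and $y=f(x)$. Write $V=T_xM$ (dimension $m$), $W=T_yN$ (dimension $n$), $K=T_x(f^{-1}(y))=\ker(\mathrm{d}f_x)$ (dimension $m-n$, because $f$ is a submersion), and let $L=\mathrm{d}f_x:V\to W$, which is surjective. We have $j^*:\Lambda^{m-n}V^*\to\Lambda^{m-n}K^*$ given by restriction to $K$.

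The first step is to fix a convenient basis. Choose a complement $H\subset V$ of $K$, so that $L|_H:H\to W$ is an isomorphism. Take a basis $k_1,\dots,k_{m-n}$ of $K$ and $h_1,\dots,h_n$ of $H$, and let $\kappa^1,\dots,\kappa^{m-n},\theta^1,\dots,\theta^n$ be the dual basis of $V^*$. The key observation is that $f^*\mu_y=L^*\mu_y$ vanishes on any $n$-tuple containing a vector of $K$. Hence $f^*\mu_y=c\,\theta^1\wedge\cdots\wedge\theta^n$ with $c=\mu_y(Lh_1,\dots,Lh_n)$. This $c$ is nonzero, because $\mu_y\neq 0$ and $L|_H$ is an isomorphism.

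The second step is to show that $\omega_x\mapsto\sigma_x$ is well defined and to produce it. Given any $\sigma'\in\Lambda^{m-n}V^*$, the product $f^*\mu_y\wedge\sigma'$ depends only on $j^*\sigma'$. Indeed, if $j^*\sigma'=0$, then every term of $\sigma'$ in the dual basis contains some $\theta^i$, and each such term dies when wedged with $\theta^1\wedge\cdots\wedge\theta^n$. So the map
\[
\Phi:\Lambda^{m-n}K^*\to\Lambda^mV^*,\qquad \sigma\mapsto f^*\mu_y\wedge\sigma'\ \ (\text{any }\sigma'\text{ with } j^*\sigma'=\sigma)
\]
is well defined and linear. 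It is also surjective, since $j^*$ is surjective by extending forms from $K$ to $V$ by zero on $H$. To see this concretely, take $\sigma=\kappa^1\wedge\cdots\wedge\kappa^{m-n}|_K$. Then $\Phi(\sigma)=c\,\theta^1\wedge\cdots\wedge\theta^n\wedge\kappa^1\wedge\cdots\wedge\kappa^{m-n}$, which is a nonzero top form.

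The final step is a dimension count. Both $\Lambda^{m-n}K^*$ and $\Lambda^mV^*$ are one-dimensional, and $\Phi$ is nonzero, so $\Phi$ is an isomorphism. Existence and uniqueness of $\sigma_x=\Phi^{-1}(\omega_x)$ follow, and the formula $\omega_x=(f^*\mu_y)\wedge\sigma_x'$ holds for every lift $\sigma_x'$ of $\sigma_x$.

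The only point that needs care is the well-definedness of $\Phi$, that is, its independence of the chosen lift $\sigma'$. This is precisely the vanishing argument in the second step, and it relies only on $f^*\mu_y$ being annihilated by $K$.
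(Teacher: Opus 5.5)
Your proof is correct and is essentially the argument the paper relies on: the paper gives no proof of its own but cites Dieudonn\'e \cite[16.21.7]{MR350769}, and that proof is the same adapted-basis computation (write $f^*\mu_y=c\,\theta^1\wedge\cdots\wedge\theta^n$ with $c\neq 0$, and note that only the coefficient of $\kappa^1\wedge\cdots\wedge\kappa^{m-n}$ in a lift $\sigma_x'$ affects $f^*\mu_y\wedge\sigma_x'$). One small wording slip: surjectivity of $j^*$ only guarantees that $\Phi$ is defined on all of $\Lambda^{m-n}K^*$, whereas bijectivity of $\Phi$ comes from your nonzero evaluation together with both spaces being one-dimensional, which your final step does supply.
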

We denote by $\omega_x/\mu_y$ the $(m-n)$-covector $\sigma_x$, whose existence has just been established. Moreover, the above proof shows that, for fixed $y$ and $\mu_y\neq0$, we have:
\begin{lemma}[{\cite[16.21.8]{MR350769}}]
\label{lem:vol-form-division}
If $x\to\omega_x$ is a smooth differential $m$-form on $M$, then $x\to\omega_x/\mu_y$ is a smooth differential $(m-n)$-form on $f^{-1}(y)$.   
\end{lemma}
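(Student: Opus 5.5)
The statement to be proved is the last lemma displayed, \cref{lem:vol-form-division}: if $x\mapsto\omega_x$ is a smooth $m$-form on $M$, then $x\mapsto\omega_x/\mu_y$ is a smooth $(m-n)$-form on the fibre $f^{-1}(y)$. The plan is to work locally in adapted coordinates, where the division $\omega_x/\mu_y$ becomes an explicit formula with smooth coefficients. Uniqueness is supplied by the preceding lemma (\cite[16.21.7]{MR350769}), so it is enough to show that the pointwise-defined covector agrees locally with the restriction of a smooth form.

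\textbf{Step 1: adapted charts.} Fix $x_0\in f^{-1}(y)$. Because $f$ is a submersion, the local submersion theorem gives a chart $(u_1,\dots,u_{m-n},v_1,\dots,v_n)$ on a neighbourhood $U$ of $x_0$ and a chart $(w_1,\dots,w_n)$ near $y$ with $w_i\circ f=v_i$. In these charts, $f^{-1}(y)\cap U=\{v=v(y)\}$, and the $u_j$ restrict to coordinates on the fibre.

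\textbf{Step 2: local formula.} Write $\mu_y=h\,\mathrm{d}w_1\wedge\cdots\wedge \mathrm{d}w_n$ with $h\neq0$ a constant, since $y$ is fixed. Then $f^*\mu_y=h\,\mathrm{d}v_1\wedge\cdots\wedge \mathrm{d}v_n$ at every point of $U$. Write $\omega=g\,\mathrm{d}v_1\wedge\cdots\wedge \mathrm{d}v_n\wedge \mathrm{d}u_1\wedge\cdots\wedge \mathrm{d}u_{m-n}$ with $g$ smooth. Set
\[
\sigma' := (g/h)\,\mathrm{d}u_1\wedge\cdots\wedge \mathrm{d}u_{m-n}.
\]
This is a smooth $(m-n)$-form on $U$, and it satisfies $\omega=(f^*\mu_y)\wedge\sigma'$ pointwise on $U$.

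\textbf{Step 3: identification and gluing.} By the uniqueness in the preceding lemma, $\omega_x/\mu_y=j^*\sigma'_x$ for every $x\in U\cap f^{-1}(y)$. The right-hand side is $(g/h)|_{v=v(y)}\,\mathrm{d}u_1\wedge\cdots\wedge \mathrm{d}u_{m-n}$, which has smooth coefficients in the fibre coordinates $u$. So $\omega/\mu_y$ is smooth near $x_0$. Since $x_0$ was arbitrary and the pointwise definition does not depend on the chart (again by uniqueness), the local forms agree on overlaps and $\omega/\mu_y$ is smooth on all of $f^{-1}(y)$.

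There is no serious obstacle. The only point needing care is in Step 2: $f^*\mu_y$ is a constant-coefficient form in the adapted coordinates only because $\mu_y$ is a single fixed covector, not a form varying with $y$. One also has to note that the existence of $\sigma'$ requires the sub-wedge $\mathrm{d}u_1\wedge\cdots\wedge \mathrm{d}u_{m-n}$ to restrict to a nonvanishing form on the fibre, which holds because the $u_j$ are fibre coordinates.
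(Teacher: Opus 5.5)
Your proposal is correct and is essentially the standard argument the paper delegates to Dieudonn\'e. The paper gives no proof of its own beyond citing \cite[16.21.8]{MR350769} and noting that it follows from the proof of 16.21.7; your computation fills that in: in a chart where $f$ is a projection, $\omega/\mu_y$ is just $(g/h)\,\mathrm{d}u_1\wedge\cdots\wedge\mathrm{d}u_{m-n}$ restricted to the fibre. The one point to make explicit is that Step~3 uses uniqueness in the converse direction, which holds because $f^*\mu_y\wedge(\cdot)$ annihilates the kernel of $j^*$ (the ideal generated by the $\mathrm{d}v_i$), so any $\sigma'$ with $\omega_x=(f^*\mu_y)\wedge\sigma'_x$ restricts to $\omega_x/\mu_y$.
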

We denote by $\omega/\mu_y$ the resulting $(m-n)$-form on $f^{-1}(y)$. It follows that if $M$ is orientable, then for each $y \in N$, the fiber $f^{-1}(y)$ is orientable \cite[(16.21.9)]{MR350769}.

\subsection{Integration along fibers}
In the setting of \cref{subsec:submersions}, there is an analogue of Fubini's theorem as follows. Suppose that $\omega$ is an integrable differential $m$-form $\geqslant 0$ on $M$, and $\mu$ is a (locally) integrable differential $n$-form on $N$ such that $\mu>0$ almost everywhere on $N$. Then
\begin{lemma}[{\cite[16.24.8]{MR350769}}]
\label{lem:Fubini}
For almost all $y\in N$ the $(m-n)$-form $\omega/\mu_y$ is integrable over $f^{-1}(y)$ (endowed with the orientation induced by $f$ from the orientations of $M$ and $N$). The form $y \mapsto  (\int_{f^{-1}(y)} \omega/\mu_y) \mu_y$ is integrable over $N$, and 
\begin{equation}
    \int_M \omega = \int_N \left(\int_{f^{-1}(y)} \omega/\mu_y \right) \mu_y.
\end{equation}    
\end{lemma}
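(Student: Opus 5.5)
The plan is to reduce to the classical Fubini--Tonelli theorem on $\RR^n\times\RR^{m-n}$ by working locally in charts adapted to the submersion, and then globalize with a partition of unity. Since $f$ is a submersion, the constant rank theorem gives, around each $x_0\in M$, coordinates $(u,v)=(u_1,\dots,u_n,v_1,\dots,v_{m-n})$ on an open set $U\ni x_0$ and coordinates $u$ on an open set $W\ni f(x_0)$ in which $f(u,v)=u$. We choose these charts to be positively oriented for the given orientations of $M$ and $N$. This is possible after possibly flipping the sign of one $v$-coordinate, and it is exactly what makes the fiber orientation induced by $f$ agree with the orientation $\mathrm{d}v_1\wedge\cdots\wedge\mathrm{d}v_{m-n}$. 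Countably many such $U$ cover $M$ by second countability, and we fix a smooth (or merely measurable) partition of unity $\{\varphi_k\}$ subordinate to this cover with $\varphi_k\geqslant0$.

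In a single chart we write $\omega=g(u,v)\,\mathrm{d}u\wedge\mathrm{d}v$ with $g\geqslant0$ and $\mu=h(u)\,\mathrm{d}u$ with $h>0$ almost everywhere. By the defining relation $\omega=(f^*\mu)\wedge(\omega/\mu_y)$ from \cref{subsec:submersions} and its uniqueness, on the fiber $\{u=y\}$ we have $\omega/\mu_y=\frac{g(y,v)}{h(y)}\,\mathrm{d}v$ wherever $h(y)\neq0$. The pointwise identity $(f^*\mu)\wedge\frac{g}{h}\mathrm{d}v=\omega$ is immediate, so this only requires checking the formula. Hence, for $\varphi_k\omega$ supported in the chart,
\[
\int_M\varphi_k\omega=\int_{\RR^m}\varphi_k g\,\mathrm{d}u\,\mathrm{d}v
=\int_{\RR^n}\Bigl(\int_{\RR^{m-n}}\tfrac{\varphi_k g(u,v)}{h(u)}\,\mathrm{d}v\Bigr)h(u)\,\mathrm{d}u
=\int_N\Bigl(\int_{f^{-1}(y)}\varphi_k\omega/\mu_y\Bigr)\mu_y .
\]
This is Tonelli's theorem for the nonnegative measurable function $\varphi_kg$. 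Tonelli also yields that $y\mapsto\int\varphi_kg(y,v)\,\mathrm{d}v$ is measurable and finite for almost every $y$. The null set where $h=0$ is harmless because it has $\mu$-measure zero.

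To globalize, we sum over $k$. Since everything is nonnegative, the monotone convergence theorem lets us interchange $\sum_k$ with both the fiber integral and the base integral. This gives $\int_M\omega=\int_N\bigl(\int_{f^{-1}(y)}\omega/\mu_y\bigr)\mu_y$ as an identity in $[0,\infty]$. Finiteness of the left side then forces the inner integral to be finite for almost every $y$, which is the asserted integrability of $\omega/\mu_y$ over almost every fiber. It also gives the integrability of $y\mapsto(\int_{f^{-1}(y)}\omega/\mu_y)\mu_y$.

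I expect the main obstacle to be bookkeeping rather than analysis. First, the orientation convention must be set up so that the induced fiber orientation makes each local fiber integral nonnegative, consistent with $\omega=(f^*\mu)\wedge\sigma$ with $f^*\mu$ written first. Second, the fiber integral must be shown independent of the chart decomposition, which follows from the uniqueness of $\omega/\mu_y$ and change of variables on each fiber.
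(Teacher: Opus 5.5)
Your argument is correct. The paper does not prove this lemma; it quotes it from Dieudonn\'e \cite[16.24.8]{MR350769} and uses it as a black box, so there is no route of the paper's to compare against. What you give is the standard proof of the cited fact: the normal form $f(u,v)=u$ in positively oriented product charts, Tonelli on $\RR^n\times\RR^{m-n}$, then a countable partition of unity and monotone convergence on $M$, on each fibre and on $N$.

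A few details need to be stated correctly.
\begin{itemize}
\item The set $\{h\leqslant 0\}$ must be Lebesgue-null in the chart, because the left side of your chain is integrated against $\mathrm{d}u$, not against $\mu$. That is exactly what the hypothesis ``$\mu>0$ almost everywhere'' provides. The reason you give, ``$\mu$-measure zero'', holds trivially and would not suffice: if $\mu$ vanished on an open set, the identity would in general fail.
\item To make both charts positive you may also have to flip a $u$-coordinate, simultaneously on $W$ and on $U$ so that $f(u,v)=u$ is preserved.
\item When $m=n$ there is no $v$-coordinate to flip at all. The fibres are then discrete, the induced orientation assigns a sign to each point, and that sign is what keeps the fibre integral nonnegative.
\item The independence of the chart decomposition, which you list as an obstacle, is automatic. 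The fibre integral is intrinsic, and since $(\varphi_k\omega)/\mu_y=\varphi_k\,(\omega/\mu_y)$, monotone convergence on the fibre gives $\int_{f^{-1}(y)}\omega/\mu_y=\sum_k\int_{f^{-1}(y)}\varphi_k\,\omega/\mu_y$.
\item If $\omega$ is merely Lebesgue measurable rather than smooth or Borel, its restriction to a fibre is measurable only for almost every $y$. That is all the statement asserts, and Tonelli for the completed product measure supplies it.
\end{itemize}
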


\section{Shearing to Fenchel--Nielsen for $(1,1)$-annuli}
\label{app:shear-FN}

\begin{lemma}\label{lem:conversion}
    Consider $\Sigma=\Sigma_{0,0,(1,1)}$, i.e.: $\Sigma$ is an annulus with a single puncture on each of its two boundaries. Let $\triangle$ denote an ideal triangulation of $\Sigma$, and let $s_0,s_1$ denote the shearing parameters for $\triangle=\{\mu_0,\mu_1\}$ (see \cref{fig:D_fig1}). Further let $\gamma$ denote the unique simple closed curve (up to isotopy) on $\Sigma$, and let $(\ell,\tau)$ denote Fenchel--Nielsen coordinates with respect to $\gamma$, noting that $\tau$ is non-canonical, but well-defined up to a choice of addition by some constant. Then,
    \[
    \mathrm{d}s_0\wedge\mathrm{d}s_1=\pm\mathrm{d}\ell\wedge\mathrm{d}\tau.
    \]
\end{lemma}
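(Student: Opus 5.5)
We will work directly in the upper half-plane with an explicit lift of $\gamma$. Cutting $\Sigma$ along the two ideal arcs $\mu_0,\mu_1$ (each joining the puncture on one boundary to the puncture on the other) yields two ideal triangles. The universal cover is then a strip of ideal triangles arranged along the axis of the hyperbolic element $g$ representing $\gamma$. By the standard shear calculus, the holonomy around $\gamma$ is a product of elementary matrices determined by the shears, and its translation length is
\[
\ell=|s_0+s_1|,
\]
up to a convention on sign. This is the analogue of the identity $d=s_1+s_2$ used in \cref{prop:1-crown-vol-form}. It holds because each boundary component crosses the two arcs alternately from the same side, so the sum of shears along the closed path measures the displacement along the axis. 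I will verify it by normalizing one triangle to have vertices $0,1,\infty$ and composing the shear--rotation matrices.

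The next step is to identify a twist coordinate. The lifts of the two punctures accumulate in the two complementary intervals of $\partial\HH\setminus\{\text{fixed points of } g\}$. Project the ideal vertex $\tilde q_1$ of one boundary and the ideal vertex $\tilde q_2$ of the other onto the axis of $g$, and let $\tau$ be the signed distance along the axis between these projections. This is a Fenchel--Nielsen twist: shifting the base points changes $\tau$ only by an additive constant or by a function of $\ell$, which does not affect $\mathrm{d}\ell\wedge\mathrm{d}\tau$. Modulo such terms, the nearest point projection of the vertices onto the axis is computed from the same matrices as in the proof of \cref{lem:shear-width-quadrilateral}. I expect $\tau = s_0 + h(\ell)$, or $\tau=\tfrac12(s_0-s_1)+h(\ell)$, for some explicit function $h$. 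Indeed, a pure left earthquake along $\gamma$ by $t$ changes the shears as $(s_0,s_1)\mapsto(s_0+t,s_1-t)$, since the two arcs cross $\gamma$ once each in the same pattern. This preserves $s_0+s_1$ and changes $s_0-s_1$ by $2t$.

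Given these two facts the conclusion is immediate. Writing $\ell=s_0+s_1$ and $\tau=\tfrac12(s_0-s_1)+h(\ell)$, we get
\[
\mathrm{d}\ell\wedge\mathrm{d}\tau=\tfrac12(\mathrm{d}s_0+\mathrm{d}s_1)\wedge(\mathrm{d}s_0-\mathrm{d}s_1)=-\mathrm{d}s_0\wedge\mathrm{d}s_1 .
\]
If instead the earthquake acts as $(s_0,s_1)\mapsto(s_0+t,s_1-t)$ with $\tau$ normalized so that $\partial\tau/\partial t=1$, then $\tau=s_0+h(\ell)$ and the wedge is exactly $\pm\,\mathrm{d}s_0\wedge\mathrm{d}s_1$. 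I will fix the normalization $\partial_t\tau=1$, i.e.\ twist by hyperbolic length, which is the Wolpert normalization. This gives $\pm1$ rather than a factor of $1/2$.

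The main obstacle is this normalization check. I must confirm that a length-$t$ Fenchel--Nielsen twist shifts each shear by exactly $\pm t$ and not by $\pm t/2$. To do so, I will compute the effect of the earthquake on the cross-ratio of the quadrilateral around $\mu_0$. The earthquake fault $\gamma$ crosses $\mu_0$ once, so it slides one triangle relative to the other along $\mu_0$'s direction by the amount that the cut's displacement projects onto $\mu_0$. Since the twist translates along the axis while fixing the foot points, I will check in coordinates that the net shear change along $\mu_0$ is exactly $t$.
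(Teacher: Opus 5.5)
Both relations your argument rests on are false for this triangulation, so the conclusion does not follow from them.

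\textbf{The length relation.} You claim $\ell=|s_0+s_1|$, but this fails. The analogy with $d=s_1+s_2$ on $\mathbb{A}_1$ breaks down. There, every arc runs into the interior puncture and the neck is peripheral to it, so it turns the same way in every triangle. Here, $\mu_0$ and $\mu_1$ both join the outer tine to the inner tine. One complementary triangle contains the outer arch, the other contains the inner arch, and $\gamma$ separates the two arches. Hence $\gamma$ turns left in one triangle and right in the other. In terms of edge matrices $X(s)$ and turn matrices $L,R$, the holonomy is of zigzag type $X(s_0)LX(s_1)R$ rather than $X(s_0)LX(s_1)L$. Its trace gives
\[
2\cosh\tfrac{\ell}{2}=2\cosh\tfrac{s_0+s_1}{2}+e^{\pm(s_0-s_1)/2}
\]
(sign depending on conventions), so $|s_0+s_1|<\ell$ always.

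\textbf{The twist relation.} A Fenchel--Nielsen twist does not translate these shears. The lift $\tilde\mu_0$ crosses $\tilde\gamma$ transversally, so the twist moves two of the four vertices of the quadrilateral around $\tilde\mu_0$ by a hyperbolic translation along $\tilde\gamma$, and the cross-ratio changes nonlinearly. Your own normalization shows this at once. Take the axis $\{0,\infty\}$, lifts of the tines at $ae^{k\ell}$ and $-be^{k\ell}$, $\mu_0=(a,-b)$, $\mu_1=(a,-be^{\ell})$, and $\tau=\log(a/b)$; this is your projection displacement, and it satisfies $\partial_t\tau=1$ under twisting. The two cross-ratios are then
\[
e^{\pm s_0}=\frac{\sinh^2(\ell/2)}{\cosh^2((\ell-\tau)/2)},\qquad e^{\pm s_1}=\frac{\cosh^2(\tau/2)}{\sinh^2(\ell/2)},
\]
with the same sign in both. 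With the minus sign and $\tau\mapsto-\tau$, these are exactly the paper's formulas
\[
s_0=2\log\cosh\tfrac{\ell+\tau}{2}-2\log\sinh\tfrac{\ell}{2},\qquad s_1=2\log\sinh\tfrac{\ell}{2}-2\log\cosh\tfrac{\tau}{2}.
\]
In particular $s_0+s_1=2\log\bigl(\cosh\frac{\ell+\tau}{2}/\cosh\frac{\tau}{2}\bigr)$ sweeps out all of $(-\ell,\ell)$. Neither of your candidate forms $\tau=s_0+h(\ell)$ or $\tau=\tfrac12(s_0-s_1)+h(\ell)$ holds; in fact $s_0$ is not even monotone in $\tau$. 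Your relations are only the $\tau\to\pm\infty$ asymptotics, where the arcs approach spiralling into $\gamma$. They would determine the Jacobian only if you already knew it to be constant, which is essentially the content of the lemma.

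\textbf{What is missing.} You need exact expressions for $(s_0,s_1)$ in terms of $(\ell,\tau)$, followed by an honest Jacobian computation. The paper obtains the closed forms above via lambda lengths:
\begin{itemize}
\item the arches have lambda length $2\cosh\frac{\ell}{2}$;
\item $\lambda_0=2\cosh\frac{\tau}{2}/\tanh\frac{\ell}{2}$, from a trirectangle decomposition of the symmetric surface and a right-triangle computation;
\item $\lambda_1$ is obtained by shifting $\tau$ to $\ell+\tau$.
\end{itemize}
It then finds that the coefficient of $\mathrm{d}\ell\wedge\mathrm{d}\tau$ in $\mathrm{d}s_0\wedge\mathrm{d}s_1$ is
\[
\frac{\tanh\frac{\tau}{2}-\tanh\frac{\ell+\tau}{2}-\tanh\frac{\ell}{2}\tanh\frac{\tau}{2}\tanh\frac{\ell+\tau}{2}}{\tanh\frac{\ell}{2}},
\]
which equals $-1$ by the addition formula for $\tanh$. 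Your half-plane set-up is a perfectly good, arguably more direct, way to reach the same formulas through the two cross-ratios instead of lambda lengths. However, the unit Jacobian comes from this nonlinear cancellation, not from a linear change of variables, and your proof has to carry it out.
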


\begin{proof}
Our rough strategy is as follows: we express lambda-length coordinates for $\calT_\Sigma$ (taken with respect to $\triangle$) in terms of some choice of Fenchel--Nielsen coordinates and use this as an intermediary coordinate system to express $s_0,s_1$ as functions of $\ell$ and $\tau$. We then explicitly calculate $\mathrm{d}s_0\wedge\mathrm{d}s_1$ and show that it simplifies to $\mathrm{d}\ell\wedge\mathrm{d}\tau$.

\begin{center}
\begin{figure}[ht!]
    \includegraphics[scale=0.8]{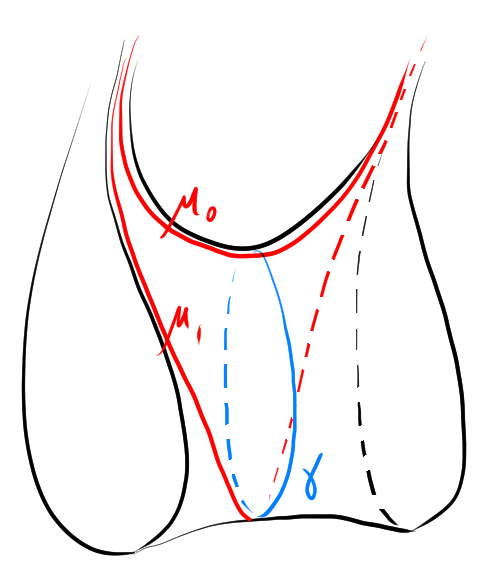}
\caption{An ideal triangulation {\color{red}$\triangle=\{\mu_0,\mu_1\}$} and a simple closed geodesic {\color{blue}$\gamma$} on a $(1,1)$-annulus.}\label{fig:D_fig1}
\end{figure}
\end{center}

Let $\lambda_0$ and $\lambda_1$ denote the lambda-lengths for $\triangle$ with both horocycles set to $1$. Concretely, this means that given a marked hyperbolic structure $X\in\calT_\Sigma$, $\lambda_i$ is defined as $\exp(u_i/2)$, where $u_i$ is the length of the ideal geodesic on $X$ representing $\mu_i$ truncated at the length $1$ horocycles for the two tines. Further let $\lambda_a$ denote the lambda-length of the two boundary arches, noting that they are both equal to $2\cosh(\ell/2)$ (see, e.g.: \cite[Equation~(3.7)]{huang_thesis}). Consider the unique $X_0\in\calT_\Sigma$ where $\mu_0$ is orthogonal to $\gamma$. We define $\tau$ so that $\tau(X_0)=0$.

\begin{center}
\begin{figure}[ht!]
    \includegraphics[scale=0.8]{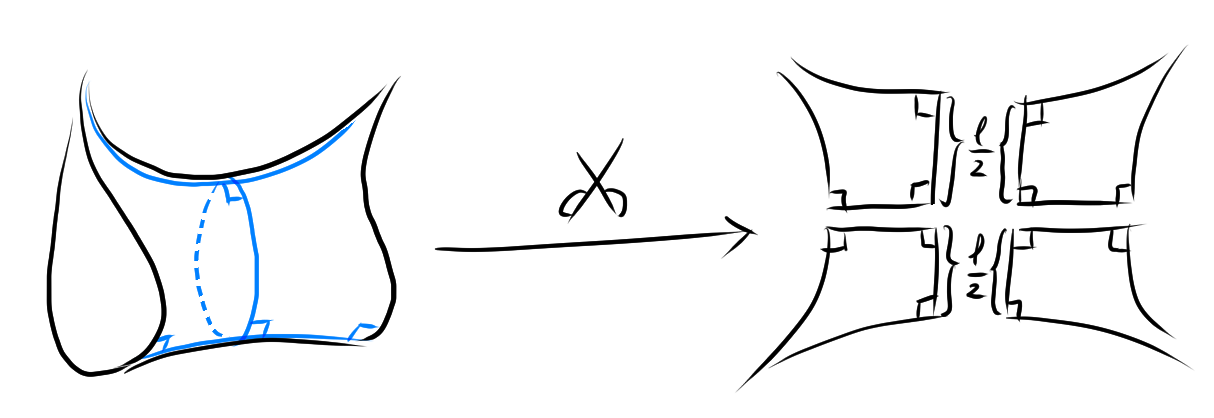}
\caption{Cutting $X_0$ into four isometric trirectangles.}\label{fig:D_fig2}
\end{figure}
\end{center}

Cutting $X_0$ along $\gamma$ and $\mu_0$ and the (unique) shortest geodesic arc joining the two boundary arches of $X_0$ yields four isometric trirectangles (\cref{fig:D_fig2}) with finite sides of lengths $\frac{\ell}{2}$ and $\mathrm{arcsinh}(1/\sinh\frac{\ell}{2})$ (using \cite[2.3.1.(i)]{Buser}), meaning that 
\[
\lambda_0(X_0)
=
2\cosh(\mathrm{arcsinh}(1/\sinh\tfrac{\ell}{2}))
=
2\sqrt{\frac{1+\sinh^2\frac{\ell}{2}}{\sinh^2\frac{\ell}{2}}}
=
2\coth\tfrac{\ell(X_0)}{2}.
\]
Now consider what happens for a general $X\in\calT_\Sigma$. The process of Fenchel--Nielsen twisting by $\tau$ produces a $\pi$-rotation symmetric figure as depicted in \cref{fig:D_fig3} (left figure). 

\begin{center}
\begin{figure}[ht!]
    \includegraphics[width=0.75\textwidth]{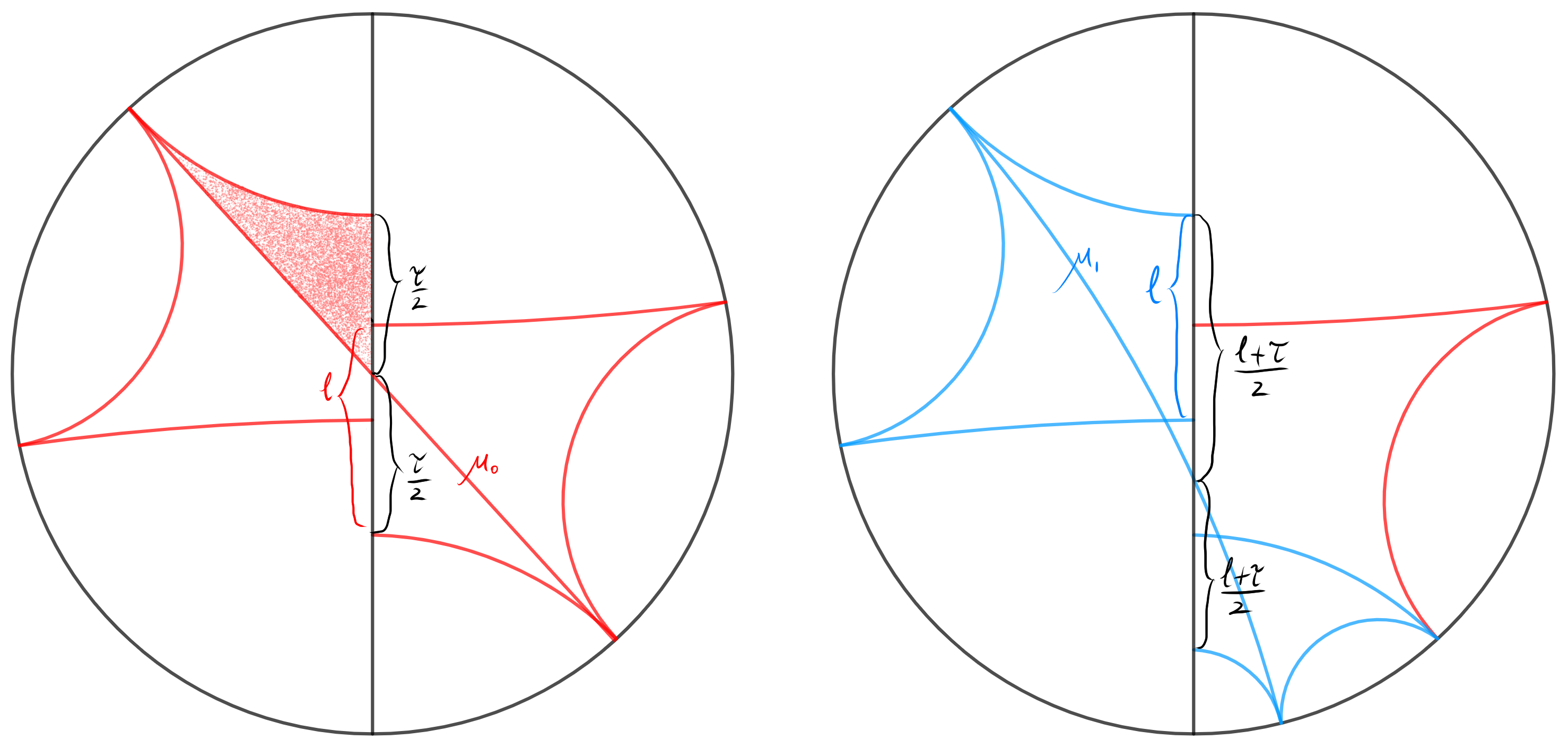}
\caption{Left: a depiction of $\mu_0$ and how it helps to detect the twist parameter $\tau$. Right: similar depiction for $\mu_1$.}\label{fig:D_fig3}
\end{figure}
\end{center}

In particular, the shaded right-angled triangle has one finite side of length $\frac{\tau}{2}$, and two infinite sides whose horocycle-truncated lengths are $\log\lambda_0(X_0)$ and $\log\lambda_0(X)$. Elementary calculations\footnote{See, for example, the proof of \cite[Proposition~3.7]{huang_thesis} with $r=\lambda_0(X_0)$. One can also argue this via more general principles combining \cite[2.3.1.(v)]{Buser} and the fact that (the boundary) of a standard collar neighborhood ``stabilizes'' as geodesic lengths tend to $0$ (to clarify, the $\beta$ in Buser's result is tending to $0$).} show that 
\[
\log\lambda_0(X)-\log\lambda_0(X_0)=\log\cosh\tfrac{\tau}{2},
\]
and hence
\begin{align}
    \lambda_0(X)=\frac{2\cosh\frac{\tau}{2}}{\tanh\frac{\ell}{2}}.\label{eq:lambda0}
\end{align}
To determine $\lambda_1(X)$, we note that the displacement between the perpendiculars between endpoints of $\mu_1$ and $\gamma$ is $\tau+\ell$ (see the right figure in \cref{fig:D_fig3}), and so
\begin{align}
    \lambda_1(X)=\frac{2\cosh\frac{\ell+\tau}{2}}{\tanh\frac{\ell}{2}}.\label{eq:lambda1}
\end{align}
Now, we convert from lambda-length (see, e.g.: \cite[Paragraph before Figure~5]{gekhtman2005cluster}, which cites \cite{pennercoords} and \cite{fock1997dual}) to shearing parameters to get
\begin{align}
s_0&=2\log\lambda_1-2\log(2\cosh\tfrac{\ell}{2})=2\log\cosh\tfrac{\ell+\tau}{2}-2\log\sinh\tfrac{\ell}{2},
\text{ and }
\\
s_1&=2\log(2\cosh\tfrac{\ell}{2})-2\log\lambda_0=2\log\sinh\tfrac{\ell}{2}-2\log\cosh\tfrac{\tau}{2}.
\end{align}
We finish off the proof by computing $\mathrm{d}s_0\wedge\mathrm{d}s_1$. Since the choice of how to assign the ordering for $\triangle$ was arbitrary, we really only consider this $2$-form up to sign:
\begin{align*}
    \mathrm{d}s_0\wedge\mathrm{d}s_1
    &=
    \left(\tanh(\tfrac{\ell+\tau}{2})\mathrm{d}\ell+\tanh(\tfrac{\ell+\tau}{2})\mathrm{d}\tau-\coth\tfrac{\ell}{2}\mathrm{d}\ell\right)
    \wedge
    \left(\coth\tfrac{\ell}{2}\mathrm{d}\ell-\tanh\tfrac{\tau}{2}\mathrm{d}\tau\right)\\
    &=\frac{\tanh\tfrac{\tau}{2}-\tanh\tfrac{\ell+\tau}{2}-\tanh\tfrac{\ell}{2}\tanh\tfrac{\tau}{2}\tanh\tfrac{\ell+\tau}{2}}{\tanh\tfrac{\ell}{2}}
    \mathrm{d}\ell\wedge\mathrm{d}\tau\\
    &=-\mathrm{d}\ell\wedge\mathrm{d}\tau,
\end{align*}
where the last equality is obtained via the compound angle formula for the hyperbolic tangent function. Finally, we note that the equality holds for every choice of Fenchel--Nielsen coordinate, since $\tau$ is well-defined up to adding a constant and hence $\mathrm{d}\tau$ is well-defined.
\end{proof}

Using the above result, we prove a formula for the Weil--Petersson volume form that ensures that our shearing-coordinates-based definition of the Weil--Petersson volume for Teichm\"uller spaces of crowned hyperbolic surfaces satisfies the neck recursion formula. This is used in the proof of \cref{prop:WP-vol-form-scc}.

\begin{theorem}
\label{app:conversion}
Let $\gamma$ be an essential simple closed curve on $\Sigma$, recall that $\Sigma\setminus\gamma$ is a surface obtained from cutting along $\gamma$. This process produces two new interior punctures of $\Sigma$, which we previously labeled as $p_{m+1}$ and $p_{m+2}$, and for the upcoming result, we denote the corresponding cuffs for these two new punctures on $\Sigma\setminus\gamma$ by $\gamma'$ and $\gamma''$. Then,

\begin{align*}
\Omega^{\mathrm{WP},\varnothing}_{\Sigma}
= 
\Omega \wedge \mathrm{d}\ell_\gamma\wedge \mathrm{d}\tau_\gamma,
\end{align*}
where $\Omega$ is a differential whose restriction to 
\[
\{X\in\calT^{\varnothing}_\Sigma\mid \ell_\gamma(X)=x,\ \tau_\gamma(X)=y\}
\cong
\{X\in\calT^{\varnothing}_{\Sigma\setminus\gamma}\mid \ell_{\gamma'}(X)=\ell_{\gamma''}(X)=x\}
\]
is equal to $\Omega^{\mathrm{WP},\varnothing}_{\Sigma\setminus\gamma}/(\mathrm{d}\ell_{\gamma'}\wedge\mathrm{d}\ell_{\gamma''})$. 
\end{theorem}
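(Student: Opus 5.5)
The plan is to reduce the claim to the $(1,1)$-annulus computation of \cref{lem:conversion} by choosing an ideal triangulation of $\Sigma$ adapted to $\gamma$. Since $\Sigma\setminus\gamma$ has two new interior punctures $p_{m+1},p_{m+2}$, the surface $\Sigma\setminus\gamma$ is triangulated by arcs some of which spiral into $\gamma'$ and $\gamma''$. The aim is to build a triangulation $\triangle$ of $\Sigma$ from a triangulation $\triangle'$ of $\Sigma\setminus\gamma$ together with two additional arcs crossing $\gamma$. In the resulting shear coordinates, the $\triangle'$-shears should give coordinates on the level set $\{\ell_\gamma=x,\tau_\gamma=y\}$, and the two crossing shears should play the role of $(s_0,s_1)$ in \cref{lem:conversion}.

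\textbf{Step 1 (adapted triangulation).} Take a tubular annulus $N$ around $\gamma$ and replace it by a $(1,1)$-annulus-type piece. Concretely, choose arcs $\mu_0,\mu_1$ crossing $\gamma$ once each, joining punctures on the two sides, and complete them to an ideal triangulation of $\Sigma$. After Nielsen-type surgery this should correspond to a triangulation of $\Sigma\setminus\gamma$ in which the arcs entering $N$ now spiral into $\gamma',\gamma''$.

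Since the volume form is independent of the triangulation (flip invariance, \cref{prop:flips} and the remark on mapping class group invariance), this choice is free.

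\textbf{Step 2 (coordinate change).} Write the shears of $\triangle$ as $(s_1,\dots,s_k,s_0^\gamma,s_1^\gamma)$. Let $(s_1',\dots,s_k')$ be the shears of the spiraling triangulation of $\Sigma\setminus\gamma$.

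Expect the $s_i'$ to differ from the $s_i$ only by functions of $(\ell_\gamma,\tau_\gamma)$. This uses the flip formulas of \cref{prop:flips} applied to the finitely many arcs adjacent to $N$. It also uses the fact that the sum of the shears around $\gamma'$ (respectively $\gamma''$) equals $\ell_{\gamma'}$ (respectively $\ell_{\gamma''}$), so that $\ell_{\gamma'}=\ell_{\gamma''}=\ell_\gamma$. Meanwhile $(s_0^\gamma,s_1^\gamma)$, computed locally in $N$, are functions of $(\ell_\gamma,\tau_\gamma)$ and the neighbouring shears.

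The wedge product then becomes block triangular. The determinant of the $(s_0^\gamma,s_1^\gamma)\mapsto(\ell,\tau)$ block is $\pm1$ by \cref{lem:conversion}, giving
\[
\Omega^{\mathrm{WP},\varnothing}_\Sigma=\pm\Bigl(\bigwedge_i\mathrm{d}s_i'\Bigr)\wedge\mathrm{d}\ell_\gamma\wedge\mathrm{d}\tau_\gamma .
\]

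\textbf{Step 3 (identify $\Omega$).} On $\calT^{\varnothing}_{\Sigma\setminus\gamma}$, the form $\bigwedge\mathrm{d}s'$ over all arcs, divided by $\mathrm{d}\ell_{\gamma'}\wedge\mathrm{d}\ell_{\gamma''}$, restricts on the level set to $\bigwedge\mathrm{d}s_i'$ over a complementary set of shears. This holds because each of $\ell_{\gamma'}$ and $\ell_{\gamma''}$ is a linear sum of shears with a coefficient $1$ on some chosen arc. Take $\Omega$ to be that form; \cref{lem:vol-form-division} then gives the statement.

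\textbf{Main obstacle.} The hardest part is Step 2: showing that the shears of the two sides really decouple from the local $(1,1)$-annulus block, i.e.\ that the correction terms depend only on $(\ell,\tau)$ and already-present shears, so the Jacobian is triangular. My first attempt would be to reduce, via flips away from $N$ (which preserve the form), to the case where each side of $\gamma$ meets $N$ through a single triangle. In that case the local picture is literally the lambda-length computation of \cref{lem:conversion}, with the boundary arches replaced by interior arcs.
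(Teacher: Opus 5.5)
Your outline has the same architecture as the paper's proof. You take an ideal triangulation $\triangle$ in which exactly two ideal triangles meet $\gamma$; these form an embedded $\mathbb{A}_{1,1}$ with crossing arcs $\mu_0,\mu_1$ and boundary edges $\mu_L,\mu_R$. You use \cref{lem:conversion} to replace $\mathrm{d}s_{\mu_0}\wedge\mathrm{d}s_{\mu_1}$ by $\pm\mathrm{d}\ell_\gamma\wedge\mathrm{d}\tau_\gamma$, and then compare with the induced spiralling triangulation of $\Sigma\setminus\gamma$. However, Step~2, which you rightly call the crux, is not established, and the tool you cite cannot establish it. The flip formulas of \cref{prop:flips} relate shears of two triangulations of the \emph{same} surface, whereas you must compare functions on $\calT^{\varnothing}_\Sigma$ with functions on $\calT^{\varnothing}_{\Sigma\setminus\gamma}$. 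Your fallback of flipping ``away from $N$'' does not help either: such flips leave the set of arcs crossing $\gamma$ unchanged, and Step~1 already fixed the adapted triangulation.

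The missing idea is that shears are local: the shear of an edge is a log-cross-ratio of the four ideal vertices of its two adjacent triangles.

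First, identify the induced triangulation. After cutting, the left halves of $\mu_0$ and $\mu_1$ become isotopic in the punctured monogon bounded by $\mu_L$, and likewise on the right. So $\hat\triangle=(\triangle\setminus\{\mu_0,\mu_1\})\cup\{\mu_{0,L},\mu_{0,R}\}$, with $\hat s_{\mu_{0,L}}=\ell_{\gamma'}$ and $\hat s_{\mu_{0,R}}=\ell_{\gamma''}$ exactly. This also fixes your indexing: $\hat\triangle$ has as many arcs as $\triangle$, not two fewer.

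Second, compare shears arc by arc. Every arc of $\triangle\setminus\{\mu_0,\mu_1,\mu_L,\mu_R\}$ has both adjacent triangles disjoint from $\gamma$, so its shear is literally unchanged. For $\mu_L$ (and similarly $\mu_R$) only the annulus-side triangle changes. Its third vertex, a lift of the puncture across $\gamma$, is replaced by a fixed point of the holonomy of $\gamma$; this is the $\tau_\gamma\to\infty$ limit. By the cocycle identity for cross-ratios, $s_{\mu_L}-\hat s_{\mu_L}$ is then a log-cross-ratio of four points determined by the annulus alone. It is therefore a function of $(s_{\mu_0},s_{\mu_1})$, equivalently of $(\ell_\gamma,\tau_\gamma)$.

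The paper makes this explicit through the lambda-lengths \cref{eq:lambda0} and \cref{eq:lambda1}. It writes $s_{\mu_L}=\log\frac{\lambda_2}{\lambda_3}+\log\frac{\cosh(\tau_\gamma/2)}{\cosh((\ell_\gamma+\tau_\gamma)/2)}$, with $\lambda_2/\lambda_3$ untouched by the cut. Once this is in place, your block-triangularity argument and Step~3 go through as you describe. Note also that in the adapted triangulation $(s_{\mu_0},s_{\mu_1})$ depend on $(\ell_\gamma,\tau_\gamma)$ alone, not on neighbouring shears, which is why \cref{lem:conversion} applies verbatim.
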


\begin{proof}
Given an arbitrary ideal triangulation $\triangle$ of $\Sigma$, the union of the geodesic ideal triangles in $X\in\calT_\Sigma(\vec{b};\vec{0})$ which intersect the geodesic representative of $\gamma$ is the immersed image of an annulus $\mathbb{A}_{a_1,a_2}$, where injectivity fails at most on the boundary arches. Since this is an embedding on the interior (and also since $\Sigma$ is not a $1$-punctured torus), we may replace $\triangle$ with an ideal triangulation where precisely two ideal triangles intersect $\gamma$. Replace $\triangle$ with the new triangulation. This means that the union of all ideal triangles in (the new) $\triangle$ which intersect $\gamma$ is an embedded $\mathbb{A}_{1,1}$ in $\Sigma$. Label the edges in $\triangle$ which lie on $\mathbb{A}_{1,1}$ by
\begin{itemize}
    \item $\mu_0,\mu_1$ for the ideal arcs which intersect $\gamma$, and 
    \item $\mu_L,\mu_R$ for the boundary arches. 
\end{itemize}
Thanks to \cref{lem:conversion}, we first obtain that $\Omega_\Sigma^{\mathrm{WP},\varnothing}=\pm\Omega\wedge\mathrm{d}\ell_\gamma\wedge\mathrm{d}\tau_\gamma$, where
\begin{align}
\Omega
=
\bigwedge_{\mu\in\triangle\setminus\{\mu_0,\mu_1\}} \mathrm{d}s_\mu
\end{align}

The ideal triangulation $\triangle$ induces an ideal triangulation $\hat{\triangle}$ on $\Sigma\setminus\gamma$ in the following way: the curve $\gamma$ cuts $\mu_0$ into $\mu_{0,L},\mu_{0,R}$, and we define
\begin{align}
\hat{\triangle}:=
\hat{\triangle}'\cup
\{\mu_L,\mu_R,\mu_{0,L},\mu_{0,R}\}
,\quad\text{where}\quad
\hat{\triangle}':=\triangle\setminus\{\mu_L,\mu_R,\mu_0,\mu_1\}.
\end{align}
Note that the choice to use $\mu_0$ was arbitrary, and using $\mu_1$ instead would have yielded ideal arcs $\mu_{1,L},\mu_{1,R}$ respectively isotopic to $\mu_{0,L},\mu_{0,R}$. We set shearing coordinates $\{\hat{s}_\mu\}_{\mu\in\hat{\triangle}}$ on $\calT_{\Sigma\setminus\gamma}$ with the spiraling direction for $\{\mu_{0,L},\mu_{0,R}\}$ chosen to yield non-negative shearing coordinates $\hat{s}_{\mu_{0,L}}$ and $\hat{s}_{\mu_{0,R}}$. Firstly, standard results about the length of a boundary component equality the absolute value of the sum of the shearing parameters for ideal arcs (counted with multiplicity) incident to the relevant boundary means that
\begin{align}
\hat{s}_{\mu_{0,R}}=\ell_{\gamma'}
\quad\text{and}\quad
\hat{s}_{\mu_{0,R}}=\ell_{\gamma''}.\label{eq:conversion1}
\end{align}
Next, we observe that the geometries of most ideal triangles (and how they're attached) remain unchanged when passing from $\calT^{\varnothing}_\Sigma$ to $\calT^{\varnothing}_{\Sigma\setminus\gamma}$. Specifically, 
\begin{align}
\text{for all $\mu\in\triangle'$,}\quad s_\mu=\hat{s}_\mu. 
\end{align}
We next examine $\hat{s}_{\mu_L}$ and $\hat{s}_{\mu_R}$. Given $X\in\calT_\Sigma$, let the lambda-lengths of the cyclic quadrilateral (taken with respect to $\triangle$) surrounding $\mu_L$ be given by $\lambda_0,\lambda_1,\lambda_2,\lambda_3$, and recall that
\[
s_{\mu,L}
=
\log\frac{\lambda_2}{\lambda_3}+\log\frac{\lambda_0}{\lambda_1}
=
\log\frac{\lambda_2}{\lambda_3}+\log\frac{\cosh\frac{\tau_\gamma}{2}}{\cosh\frac{\ell_\gamma+\tau_\gamma}{2}}.
\]
By the continuity of cross-ratios, \cref{eq:lambda0} and \cref{eq:lambda1} from the previous lemma, we have
\begin{align}
\hat{s}_{\mu_L}
=
\lim_{\tau\to\infty}
s_{\mu_L}
=
\log\frac{\lambda_2}{\lambda_3}+\tfrac{1}{2}\mathrm{d}\ell_\gamma
=
s_{\mu_L}-\log\frac{\cosh\frac{\tau_\gamma}{2}}{\cosh\frac{\ell_\gamma+\tau_\gamma}{2}}+\tfrac{1}{2}\mathrm{d}\ell_\gamma.
\end{align}
By symmetry (or using the same arguments), 
\[
s_{\mu_R}
=
s_{\mu_R}-\log\frac{\cosh\frac{\tau_\gamma}{2}}{\cosh\frac{\ell_\gamma+\tau_\gamma}{2}}+\tfrac{1}{2}\mathrm{d}\ell_\gamma.
\]

We use the above formulae to show that $\Omega$ is equal to $\Omega^{\mathrm{WP},\varnothing}_{\Sigma\setminus\gamma}/(\mathrm{d}\ell_{\gamma'}\wedge\mathrm{d}\ell_{\gamma''})$ when restricted to $\{X\in\calT^{\varnothing}_{\Sigma\setminus\gamma}\mid \ell_{\gamma'}(X)=\ell_{\gamma''}(X)=x\}$. \cref{eq:conversion1} tells us that 
\[
\mathrm{d}\ell_{\gamma'}\wedge\mathrm{d}\ell_{\gamma''}
=
\mathrm{d}\hat{s}_{\mu_{0,L}}\wedge\mathrm{d}\hat{s}_{\mu_{0,R}},
\]
and so
\[
\Omega^{\mathrm{WP},\varnothing}_{\Sigma\setminus\gamma}/(\mathrm{d}\ell_{\gamma'}\wedge\mathrm{d}\ell_{\gamma''})
=
\pm
\left(
\bigwedge_{\mu\in\triangle'}\mathrm{d}\hat{s}_\mu
\right)
\wedge
\mathrm{d}\hat{s}_{\mu_L}\wedge\mathrm{d}\hat{s}_{\mu_R}.
\]
When restricted to 
\[
\{X\in\calT^{\varnothing}_\Sigma\mid \ell_\gamma(X)=x,\ \tau_\gamma(X)=y\}
\cong
\{X\in\calT^{\varnothing}_{\Sigma\setminus\gamma}\mid \ell_{\gamma'}(X)=\ell_{\gamma''}(X)=x\},
\]
the parameters $\ell_\gamma$ and $\tau_\gamma$ are constant and hence $\mathrm{d}\hat{s}_{\mu_L}=\mathrm{d}s_{\mu_L}$ and $\mathrm{d}\hat{s}_{\mu_R}=\mathrm{d}s_{\mu_R}$. Therefore,
\[
\Omega^{\mathrm{WP},\varnothing}_{\Sigma\setminus\gamma}/(\mathrm{d}\ell_{\gamma'}\wedge\mathrm{d}\ell_{\gamma''})
=
\left(
\bigwedge_{\mu\in\triangle'}\mathrm{d}s_\mu
\right)
\wedge
\mathrm{d}s_{\mu_L}\wedge\mathrm{d}s_{\mu_R}
=
\Omega,
\]
as required.
\end{proof}

\begin{remark}
A more-or-less equivalent way of thinking about the above result is that we have obtained an expression for the Weil--Petersson volume $\Omega^{\mathrm{WP},\varnothing}_{\Sigma}$ in terms of ``mixed coordinates'' which combine Fenchel--Nielsen and shearing coordinates. Specifically,
\begin{align*}
\Omega^{\mathrm{WP},\varnothing}_{\Sigma}
= 
\pm
\left(
\bigwedge_{\mu\in\triangle'}\mathrm{d}s_\mu
\right)
\wedge
\mathrm{d}\hat{s}_{\mu_L}\wedge\mathrm{d}\hat{s}_{\mu_R}
\wedge \mathrm{d}\ell_\gamma\wedge \mathrm{d}\tau_\gamma,
\end{align*}
with if we allow for a little notation abuse.
\end{remark}
\medskip

\section{Another proof of \cref{prop:prod-formula-con-func}}
\label{app:another-proof}
Let $x=\cosh \alpha, y=\cosh \beta, z= \cosh \gamma$. Observe the following identity:
\begin{equation}
\label{eq:alg-id-cosh}
x^2+y^2+z^2+2xyz-1 = (\cosh \gamma + \cosh(\alpha+\beta))(\cosh \gamma + \cosh (\alpha-\beta)).
\end{equation}
Thus the left-hand side of \cref{eq:prod-form-con-func} equals 
\[\int_0^\infty \frac{P_{-1/2+\sqrt{-1}\xi}(\cosh \gamma)\sinh \gamma\,\mathrm{d}\gamma}{\sqrt{(\cosh \gamma+p)(\cosh \gamma+q)}}\] for $p=\cosh(\alpha+\beta),q=\cosh(\alpha-\beta)$. Using the identity
\begin{equation}
\frac{1}{\sqrt{AB}} = \frac{2}{\pi} \int_0^{\pi/2}\frac{\mathrm{d}\phi}{A\sin^2\phi+B\cos^2\phi}, \quad A,B>0
\end{equation}
with $A=\cosh \gamma+p, B=\cosh \gamma+q$, the left hand side of \cref{eq:prod-form-con-func}  equals 
\begin{equation}
\label{eq:prod-form-con-fun-step}
\frac{2}{\pi}\int_0^\infty\int_0^{\pi/2} \frac{P_{-1/2+\sqrt{-1}\xi}(\cosh \gamma)\sinh \gamma}{\cosh \gamma + r(\phi)}\,\mathrm{d}\phi\,\mathrm{d}\gamma,
\end{equation}
where $r(\phi) = p\sin^2\phi+q\cos^2\phi$. Since $r(\phi)\geqslant 0$ and $P_{-1/2+\sqrt{-1}\xi}(z)=O(z^{-1/2})$ as $z\to \infty$ \cite[Equation~1.9.7]{MR2254107}, by Fubini--Tonelli theorem, we can switch the order of integration in \cref{eq:prod-form-con-fun-step}. Next, by Mehler's identity \cite[Equation~44]{HT25}, \[\int_1^\infty \frac{P_{-1/2+\sqrt{-1}\xi}(z)\mathrm{d}z}{z+r(\phi)}= \frac{\pi}{\cosh \pi\xi}\cdot P_{-1/2+\sqrt{-1}\xi}(r(\phi)).\]
Let's unpack $r(\phi)$:
\begin{equation*}
r(\phi) = \cosh\alpha\cosh \beta-\sinh\alpha\sinh\beta\cos2\phi.
\end{equation*}
Let $\psi=\pi-2\phi$. We invoke the following product formula for conical functions (see \cite[Page~325]{MR229863}, where the geometric meaning of the formula is also given):
\begin{equation*}
\begin{split}
&\frac{1}{\pi} \int_0^\pi P_{-1/2+\sqrt{-1}\xi}(\cosh \alpha\cosh\beta+\sinh\alpha\sinh\beta\cos\psi) \mathrm{d}\psi \\\smash{\mathllap{=}}\,& P_{-1/2+\sqrt{-1}\xi}(\cosh\alpha) P_{-1/2+\sqrt{-1}\xi}(\cosh \beta).
\end{split}
\end{equation*}
Putting it together, we obtain
\begin{equation*}
\begin{split}
\int_1^\infty \frac{P_{-1/2+\sqrt{-1}\xi}(z)\mathrm{d}z}{\sqrt{x^2+y^2+z^2+2xyz-1}} & = \frac{2}{\cosh\pi\xi}\cdot \frac{\pi}{2} P_{-1/2+\sqrt{-1}\xi}(x)P_{-1/2+\sqrt{-1}\xi}(y) \\&=
\frac{\pi}{\cosh \pi\xi}\cdot P_{-1/2+\sqrt{-1}\xi}(x) P_{-1/2+\sqrt{-1}\xi}(y).
\end{split}
\end{equation*}
\medskip

\section{Abel regularization of $\mu_{k,n}$}
\label{app:abel}

Set $m=2k+2$ and define

\[
\mu_{k,n}^{(\varepsilon)}(\vec{c}) = \pi^{n-1}\int_0^\infty e^{-\varepsilon d} d^{m-1 }\left(\int_0^\infty \cos (\omega d) \Phi_n(\omega,\vec{c})\,\mathrm{d}\omega\right)\,\mathrm{d}d,  \quad \varepsilon>0.
\]
Then the Fubini-Tonelli theorem applies, because

\begin{equation*}
\begin{split}
&\int_0^\infty \int_0^\infty e^{-\varepsilon d} d^{m-1 }|\cos(\omega d)|\cdot|\Phi_{n}(\omega,\vec{c})|\,\mathrm{d}\omega\,\mathrm{d}d
\\\smash{\mathllap{\leqslant}}\,&\int_0^\infty \int_0^\infty e^{-\varepsilon d} d^{m-1 }|\Phi_{n}(\omega,\vec{c})|\,\mathrm{d}\omega\,\mathrm{d}d
\\\smash{\mathllap{=}}\,& \frac{\Gamma(m)}{\varepsilon^{m}} \|\Phi_n(\cdot,\vec{c})\|_{L^1}<\infty.
\end{split}
\end{equation*}
Therefore,

\begin{equation*}
\begin{split}
\mu_{k,n}^{(\varepsilon)}(\vec{c})& = \pi^{n-1}\int_0^\infty \Phi_{n}(\omega,\vec{c}) \left( \int_0^\infty e^{-\varepsilon d}d^{m-1}\cos(\omega d)\,\mathrm{d}d \right) \,\mathrm{d}\omega
\\& = \pi^{n-1}\Gamma(m)\int_0^\infty \Phi(\omega,\vec{c})\cdot\text{Re}\left(\varepsilon-\sqrt{-1}\omega\right)^{-m}\,\mathrm{d}\omega,
\end{split}
\end{equation*}
since $e^{-\varepsilon d}\cos(\omega d) = \text{Re}\Bigl(e^{-\left(\varepsilon-\sqrt{-1}\omega\right)d}\Bigr).$

Since $\Phi_n(\omega,\vec{c})$ is an even function of $\omega$, let 
\[
T_{2k}\Phi(\omega,\vec{c}) = \sum_{j=0}^k \frac{\partial^{2j}_\omega \Phi({0,\vec{c})}}{(2j)!}\omega^{2j}
\]
be the $2k$-th order Taylor polynomial of $\Phi(\omega,\vec{c})$ at $0$.

Note that for $0\leqslant j\leqslant k$, the beta-integral gives
\[
\int_0^\infty \omega^{2j}(\varepsilon-\sqrt{-1}\omega)^{-m} = \varepsilon^{2j+1-m} (-\sqrt{-1})^{-(2j+1)}B(2j+1,m-2j-1),
\] 
which is purely imaginary. 
Hence the Taylor polynomial contributes trivially to the real part, and 
\[
\mu_{k,n}^{(\varepsilon)}(\vec{c}) = \pi^{n-1}\Gamma(m)\int_0^\infty(\Phi_n-T_{2k}\Phi_n)(\omega,\vec{c} )\cdot\text{Re}\left(\varepsilon-\sqrt{-1}\omega\right)^{-m}\,\mathrm{d}\omega.
\]
Since for fixed $\vec{c}$,  
\begin{equation*}
\label{eq:Phi_n-minus-taylor}
\Phi_n(\omega,\vec{c} )-T_{2k}\Phi_n(\omega,\vec{c} ) = O(\omega^{2k+2})
\end{equation*}
as $\omega\to0$, dominated convergence applies: indeed, for small $\omega$, the integrand is $O(\omega^{2k+2})\cdot\omega^{-m}=O(1)$, and for large $\omega$, we get $O(\omega^{-2})$ since $\Phi_n$ decays rapidly. 

Moreover, $\text{Re}\left(-\sqrt{-1}\omega\right)^{-m}=(-1)^{k+1}\omega^{-2k-2}.$ Letting $\varepsilon\to0^+,$ we obtain
\[
\mu_{k,n}(\vec{c}) = (-1)^{k+1}(2k+1)!\pi^{n-1}\int_0^\infty\frac{\Phi_n(\omega,\vec{c} )-T_{2k}\Phi_n(\omega,\vec{c} )}{\omega^{2k+2}}\,\mathrm{d}\omega.
\]
This last integral agrees with  the analytically continued Mellin transform $\mathcal{M}\{\Phi_n\}(s)$ at $s=-2k-1$. Indeed, if we split the integral $\int_0^\infty \omega^{s-1}\Phi_n\,\mathrm{d}\omega$ into $\int_0^1+\int_1^\infty$, substitute Taylor truncation, and evaluate meromorphic pieces, we obtain:
\begin{equation*}
\begin{split}
 \mathcal{M}\{f\}(s) &= \int_0^1 \omega^{s-1}(f(\omega)-T_{2k}f(\omega))\,\mathrm{d}\omega + \sum_{j=0}^k a_{2j}\int_0^1\omega^{s+2j-1}\mathrm{d}\omega 
 \\&\phantom{=\int_0^1 \omega^{s-1}(f(\omega)-T_{2k}f(\omega))\,\mathrm{d}\omega}+\int_1^\infty \omega^{s-1}f(\omega)\mathrm{d}\omega
 \\& = \int_0^1 \omega^{s-1}(f(\omega)-T_{2k}f(\omega))\,\mathrm{d}\omega + \sum_{j=0}^k \frac{a_{2j}}{s+2j}+\int_1^\infty \omega^{s-1}f(\omega)\mathrm{d}\omega,
\end{split}
\end{equation*}
where $f=\Phi_n, T_{2k}f(\omega)=\sum_{j=0}^k a_{2j}\omega^{2j}.$ This defines a meromorphic continuation of $\mathcal{M}\{f\}(s)$ on $s>-2k-2$, with possible poles at $s=0,-2,\dotsc,-2k$. Substituting $s=-2k-1$:
\begin{equation*}
\begin{split}
\mathcal{M}\{f\}(-2k-1) = \int_0^1 \frac{f(\omega)-T_{2k}f(\omega)}{\omega^{2k+2}}\mathrm{d}\omega - \sum_{j=0}^k \frac{a_{2j}}{2k+1-2j}+\int_0^\infty \frac{f(\omega)}{\omega^{2k+2}}\mathrm{d}\omega
\end{split}
\end{equation*}
But 
\[
\int_1^\infty \frac{T_{2k}f(\omega)}{\omega^{2k+2}}\mathrm{d}\omega = \sum_{j=0}^k a_{2k}\int_1^\infty \omega^{2j-m}\mathrm{d}\omega = \sum_{j=0}^k \frac{a_{2j}}{2k+1-2j},
\]
Hence we conclude that
\[\mathcal{M}\{f\}(-2k-1)  = \int_0^\infty \frac{f(\omega)-T_{2k}f(\omega)}{\omega^{2k+2}}\mathrm{d}\omega.\]
\medskip

\section{Mellin transform and the hyperbolic secant function}
The following lemma describes the structure of the Mellin transform values of $\text{sech}^n\pi\omega$ at odd integers.

\begin{lemma}
\label{lem:mellin-sechn-values}
If $2p+1>0$ and $n>0$, then
\begin{equation}
\pi^{n-1}\mathcal{M}\{\text{sech}^{n}\pi\omega\}(2p+1) \in \sum_{i=0}^{\min\{p,\lfloor\frac{n-1}{2}\rfloor\}} \QQ\pi^{2(\lfloor\frac{n-1}{2}\rfloor-i)}.
\end{equation}
If $2p+1<0$ and $n=2h+1>0$ is odd, then
\begin{equation}
\pi^{n-1}\mathcal{M}\{\text{sech}^{n}\pi\omega\}(2p+1) \in \sum_{i=0}^{h} \QQ\pi^{2(h-i)}\beta(2i-2p).
\end{equation}
If $2p+1<0$ and $n=2h+2>0$ is even, then
\begin{equation}
\pi^{n-1}\mathcal{M}\{\text{sech}^{n}\pi\omega\}(2p+1) \in \sum_{i=0}^{h} \QQ\pi^{2(h-i)}\zeta(2i-2p+1).
\end{equation}
\end{lemma}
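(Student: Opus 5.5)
We need to prove the statement about the Mellin transform of $\mathrm{sech}^n\pi\omega$ at odd integers $s=2p+1$: for $s>0$ by direct evaluation, and for $s<0$ via analytic continuation (as in \cref{app:abel}).

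\textbf{Positive case $2p+1>0$.} Here $\mathcal{M}\{\mathrm{sech}^n\pi\omega\}(2p+1)=\int_0^\infty \omega^{2p}\,\mathrm{sech}^n(\pi\omega)\,\mathrm{d}\omega$ is an honest convergent integral. Substitute $x=\pi\omega$ to get $\pi^{-2p-1}I_{n,2p}$, where $I_{n,j}=\int_0^\infty x^{j}\,\mathrm{sech}^n x\,\mathrm{d}x$. Then reduce $n$ in steps of two. The identity $(\mathrm{sech}^{n}x)''=n^2\,\mathrm{sech}^n x-n(n+1)\,\mathrm{sech}^{n+2}x$, combined with integration by parts twice against $x^{j}$ (boundary terms vanish because $j$ is even or the $x=0$ terms are killed by powers of $x$), expresses $I_{n+2,j}$ as a $\QQ$-combination of $I_{n,j}$ and $j(j-1)I_{n,j-2}$. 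Iterating reduces everything to the base cases.
\begin{itemize}
\item $n=1$: $I_{1,2q}$ is a rational multiple of $\pi^{2q+1}$, via Euler numbers and $\beta(2q+1)\in\QQ\pi^{2q+1}$.
\item $n=2$: $I_{2,0}=1$, and for $q\geqslant1$, $I_{2,2q}$ is a rational multiple of $\pi^{2q}$, via $\eta(2q)$.
\end{itemize}
Tracking the powers of $\pi$, including the prefactor $\pi^{n-1}\cdot\pi^{-2p-1}$, should give exactly the claimed span $\sum_i\QQ\pi^{2(\lfloor (n-1)/2\rfloor-i)}$. The truncation at $i\leqslant\min\{p,\lfloor (n-1)/2\rfloor\}$ comes from the factor $j(j-1)$ killing the chain once $j$ reaches $0$.

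\textbf{Negative case $2p+1<0$.} Here I would use the continuation formula from \cref{app:abel}, $\int_0^\infty(f-T_{2k}f)\,\omega^{2p}\,\mathrm{d}\omega$, or equivalently the standard fact that for $\Re s>0$ one has $\mathcal{M}\{\mathrm{sech}^n\pi\omega\}(s)=\pi^{-s}\Gamma(s)\,D_n(s)$, where $D_n$ is an explicit Dirichlet-type series. Concretely, expand
\[
\mathrm{sech}^n x=2^n e^{-nx}(1+e^{-2x})^{-n}=2^n\sum_{r\geqslant0}\binom{-n}{r}e^{-(n+2r)x}.
\]
This gives
\[
\mathcal{M}\{\mathrm{sech}^n x\}(s)=2^n\Gamma(s)\sum_{r}\binom{-n}{r}(n+2r)^{-s}.
\]
Since $\binom{-n}{r}$ is $(-1)^r$ times a polynomial in $r$ of degree $n-1$, rewriting that polynomial in powers of $m=n+2r$ expresses the sum as a $\QQ$-combination of the series $\sum_{m\equiv n\ (2)}(\pm1)^{\cdot}m^{-s+i}$.
\begin{itemize}
\item For $n$ odd, these are $\beta(s-i)$ series.
\item For $n$ even, they are alternating sums over even $m$, i.e. $\eta(s-i)$ series.
\end{itemize}
By parity, only $i$ with $n-1-i$ even survive, since the polynomial has definite parity in $m$. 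Then continue analytically to $s=2p+1<0$. There $\Gamma(s)$ has no pole (the argument is odd and negative), and the Dirichlet values at negative integers are rational. That alone would give rational numbers, not $\beta,\zeta$ values, which contradicts the statement. So instead I would apply the functional equation for $\beta$ and $\eta/\zeta$. Under $s\mapsto 1-s$, the value $\beta(2p+1-i)$ becomes $\beta(i-2p)$, and $\eta(2p+1-i)$ becomes $\zeta(i-2p+1)$-type values, times $\pi$-powers and $\Gamma$ factors that cancel against $\Gamma(2p+1)^{\mathrm{reg}}\pi^{-s}$. This produces the claimed $\beta(2i-2p)$ for odd $n$ and $\zeta(2i-2p+1)$ for even $n$ after reindexing the surviving $i$. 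The $\pi^{n-1}$ prefactor supplies the $\pi^{2(h-i)}$ bookkeeping.

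\textbf{Main obstacle.} I expect the main obstacle to be precisely this last step. The $\Gamma$ factor and the functional-equation factors must combine so that the surviving terms are genuinely rational multiples of $\pi^{2(h-i)}\beta(\cdot)$ or $\pi^{2(h-i)}\zeta(\cdot)$, with no stray $\pi$'s and no vanishing trivial-zero contributions misassigned. I would verify this on $n=1,2$ first, where $\mathcal{M}\{\mathrm{sech}\,\pi\omega\}(s)=2\pi^{-s}\Gamma(s)\beta(s)$ and $\mathcal{M}\{\mathrm{sech}^2\pi\omega\}(s)$ is an $\eta(s-1)$-type expression, and then propagate through the parity-filtered polynomial rewriting.
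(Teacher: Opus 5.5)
Your treatment of $2p+1>0$ is sound, and it is the paper's argument written in integral form. Integrating $x^{j}(\mathrm{sech}^n x)''$ by parts is the identity $\mathcal{M}\{f''\}(s)=(s-1)(s-2)\mathcal{M}\{f\}(s-2)$, and your base cases and $\pi$-bookkeeping are correct.

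The negative case, however, breaks at its crux. It is false that $\Gamma(s)$ is regular at $s=2p+1<0$. $\Gamma$ has a simple pole at every non-positive integer, with residue $-1/N!$ at $s=-N$, where $N=-2p-1$. At the same time, your parity filtering puts every surviving Dirichlet factor exactly on a trivial zero:
\begin{itemize}
\item for $n=2h+1$ the factors are $\beta(2p+1-2h+2i)$, all at negative odd arguments;
\item for $n=2h+2$ they are $\eta(s')=(1-2^{1-s'})\zeta(s')$ with $s'=2p-2h+2i$ a negative even integer.
\end{itemize}
So your expansion at $s=2p+1$ has the form $\infty\cdot 0$, not ``finite times rational''. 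Had $\Gamma$ been finite, you would simply get $0$. Your proposed remedy does not fix this: the functional equation is an identity and cannot replace one evaluation by another. It only helps once you recognise the pole--zero cancellation, and that cancellation is exactly where the $\beta$ and $\zeta$ values come from.

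To repair it, evaluate each term as $\operatorname{Res}_{s=2p+1}\Gamma(s)$ times the derivative of the Dirichlet factor at its trivial zero. The functional equations give
\[
\beta'\bigl(-(2a+1)\bigr)=(-1)^a(2a+1)!\,\bigl(\tfrac{2}{\pi}\bigr)^{2a+1}\beta(2a+2),\qquad \zeta'(-2a)=(-1)^a\frac{(2a)!}{2(2\pi)^{2a}}\,\zeta(2a+1).
\]
Combined with the prefactor $\pi^{n-1}\pi^{-(2p+1)}$, this yields $\pi^{2i}\beta(2h-2i-2p)$, respectively $\pi^{2i}\zeta(2h-2i-2p+1)$. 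After $i\mapsto h-i$ this is exactly the claim.

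You also need one more check. The degree-$(n-1)$ polynomial in $m$ vanishes at $m=n-2,n-4,\dots$, so your sums over $m\geqslant n$ are genuinely full $\beta$/$\eta$ series. Otherwise the leftover finite sums, multiplied by the pole of $\Gamma$, would produce actual poles.

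Alternatively, follow the paper. Your own $n\mapsto n+2$ recursion remains valid for the meromorphically continued transform at every $s$. So the pole--zero computation is needed only once, for $n=1,2$, where it gives $\mathcal{M}\{\mathrm{sech}\,\pi\omega\}(-2k-1)=(-1)^{k+1}2^{2k+2}\beta(2k+2)$ and the analogous $\zeta(2k+3)$ formula. Your direct general-$n$ expansion, once repaired, has the merit of an explicit closed form for each $n$, at the cost of handling the cancellation term by term.
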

\begin{proof}
For $n\geqslant1$, an explicit computation shows that
\begin{equation}
\left(\text{sech}^n(\pi\omega)\right)'' = n^2\pi^2\text{sech}^n(\pi\omega)-n(n+1)\pi^2\text{sech}^{n+2}(\pi\omega).
\end{equation}
Since $\mathcal{M}\{f''\}(s)=(s-1)(s-2)\mathcal{M}\{f\}(s-2)$ \cite[Equation~17.42.2(i)]{MR2360010}, we have the following recursion:
\begin{equation}
\label{eq:mellin-sech-rec}
\mathcal{M}\{\text{sech}^{n+2}\pi\omega\}(s) = \frac{n}{n+1}\mathcal{M}\{\text{sech}^{n}\pi\omega\}(s)-\frac{(s-1)(s-2)}{n(n+1)\pi^2}\mathcal{M}\{\text{sech}^{n}\pi\omega\}(s-2).
\end{equation}
Hence it is sufficient to compute for $n=1,2.$  For $n=1$, assuming $\text{Re}\,s>0$:
\begin{equation}
\label{eq:mellin-sech}
\begin{split}
\mathcal{M}\{\text{sech}\pi\omega\}(s) &= \mathcal{M}\left\{\frac{2e^{-\pi\omega}}{1+e^{-2\pi\omega}}\right\}(s)  
\\& = \mathcal{M}\left\{2\sum_{n=0}^\infty (-1)^n e^{-(2n+1)\pi\omega} \right\}(s)
\\& = 2\sum_{n=0}^\infty (-1)^n\int_0^\infty \omega^{s-1}  e^{-(2n+1)\pi\omega}  \,\mathrm{d}\omega 
\\& = \frac{2\Gamma(s)}{\pi^s}\sum_{n=0}^\infty \frac{(-1)^n}{(2n+1)^s} 
\\& = \frac{2\Gamma(s)}{\pi^s}\beta(s).
\end{split}
\end{equation}
Since $\beta(2k+1)\in\QQ\pi^{2k+1}$ \cite{wikipedia:dirichlet_beta}, then by \cref{eq:mellin-sech}, $\mathcal{M}\{\text{sech}\pi\omega\}(2k+1)\in\QQ$. Furthermore, from the  functional equation for $\beta(s)$ \cite{wikipedia:dirichlet_beta}, the meromorphic continuation of $\mathcal{M}\{\text{sech}\pi\omega\}(s)$ at $s=-2k-1$ equals
\begin{equation}
\label{eq:mellin-sech-values}
\mathcal{M}\{\text{sech}\pi\omega\}(-2k-1) = (-1)^{k+1}2^{2k+2}\beta(2k+2).
\end{equation}
Suppose  $n=2h+1$ is odd and  $2p+1>0$. Then from the recursion \cref{eq:mellin-sech-rec}, $\mathcal{M}\{\text{sech}^n\pi\omega\}(2p+1)$
is a rational linear combination of 
\begin{equation}
\begin{split}
&\mathcal{M}\{\text{sech}\pi\omega\}(2p+1),\pi^{-2}
\mathcal{M}\{\text{sech}\pi\omega\}(2p-1),
\\&\dotsc,\pi^{-2\min\{p,h\}}\mathcal{M}\{\text{sech}\pi\omega\}(\max\{1,2p-2h+1\}),
\end{split}
\end{equation}
and there are no smaller values of Mellin transform since the factor $(s-1)(s-2)$ in \cref{eq:mellin-sech-rec} equals zero for $s=1$. Therefore,
\[
\pi^{n-1}\mathcal{M}\{\text{sech}^{n}\pi\omega\}(2p+1) \in \sum_{i=0}^{\min\{p,h\}} \QQ\pi^{2(h-i)}.
\]
Suppose $2p+1<0$. Then from the recursion \cref{eq:mellin-sech-rec}, $\mathcal{M}\{\text{sech}^n\pi\omega\}(2p+1)$
is a rational linear combination of 
\begin{equation*}
\begin{split}
&\mathcal{M}\{\text{sech}\pi\omega\}(2p+1),\pi^{-2}
\mathcal{M}\{\text{sech}\pi\omega\}(2p-1),
\dotsc,\pi^{-2h}\mathcal{M}\{\text{sech}\pi\omega\}(2p-2h+1),
\end{split}
\end{equation*}
and by \cref{eq:mellin-sech-values}
\[
\pi^{n-1}\mathcal{M}\{\text{sech}^{n}\pi\omega\}(2p+1) \in \sum_{i=0}^{h} \QQ\pi^{2(h-i)}\beta(2i-2p).
\]
Next, for $n=2$, assuming $\text{Re}\,s>1$:
\begin{equation}
\label{eq:mellin-sech2}
\begin{split}
\mathcal{M}\{\text{sech}^2\pi\omega\}(s) &= \mathcal{M}\left\{\frac{4e^{-2\pi\omega}}{(1+e^{-2\pi\omega})^2}\right\}(s)  
\\& = \mathcal{M}\left\{4\sum_{n=1}^\infty (-1)^{n-1}n e^{-2n\pi\omega} \right\}(s)
\\& = 4\sum_{n=1}^\infty (-1)^{n-1}n\int_0^\infty \omega^{s-1}  e^{-2n\pi\omega}  \,\mathrm{d}\omega 
\\& = \frac{4\Gamma(s)}{(2\pi)^s}\sum_{n=1}^\infty \frac{(-1)^{n-1}}{n^{s-1}} 
\\& = \frac{2^{2-s}(1-2^{2-s})\Gamma(s)}{\pi^s}\zeta(s-1).
\end{split}
\end{equation}
Since $\zeta(2k)\in\QQ\pi^{2k}$ \cite[Equation~9.542(1)]{MR2360010}, then $\mathcal{M}\{\text{sech}^2\pi\omega\}(2k+1) \in \QQ\frac{1}{\pi}$ by \cref{eq:mellin-sech-values}. Furthermore, from the  functional equation for $\zeta(s)$ \cite[Equation~9.535(3)]{MR2360010}, the meromorphic continuation of $\mathcal{M}\{\text{sech}^2\pi\omega\}(s)$ at $s=-2k-1$ equals
\begin{equation}
\label{eq:mellin-sech2-values}
\mathcal{M}\{\text{sech}^2\pi\omega\}(-2k-1) = (-1)^{k+1}\frac{(2k+2)(2^{2k+3}-1)}{\pi}\zeta(2k+3).
\end{equation}
Suppose  $n=2h+2$ is even and  $2p+1>0$. Then similarly, $\mathcal{M}\{\text{sech}^n\pi\omega\}(2p+1)$
is a rational linear combination of 
\begin{equation}
\begin{split}
&\mathcal{M}\{\text{sech}^2\pi\omega\}(2p+1),\pi^{-2}
\mathcal{M}\{\text{sech}^2\pi\omega\}(2p-1),
\\&\dotsc,\pi^{-2\min\{p,h\}}\mathcal{M}\{\text{sech}^2\pi\omega\}(\max\{1,2p-2h+1\}),
\end{split}
\end{equation}
and then
\[
\pi^{n-1}\mathcal{M}\{\text{sech}^{n}\pi\omega\}(2p+1) \in \sum_{i=0}^{\min\{p,h\}} \QQ\pi^{2(h-i)}.
\]
If $2p+1<0$, then by \cref{eq:mellin-sech2-values}, we similarly have
\[
\pi^{n-1}\mathcal{M}\{\text{sech}^{n}\pi\omega\}(2p+1) \in \sum_{i=0}^{h} \QQ\pi^{2(h-i)}\zeta(2i-2p+1).
\]
\end{proof}

\section{An addition formula for elliptic integrals}
\label{app:add-formula}
Let 
\begin{equation}
F(x,k) = \int_0^x \frac{\mathrm{d}t}{\sqrt{(1-t^2)(1-k^2t^2)}}
\end{equation}
denote the incomplete elliptic integral of the first kind.

The following \textit{addition formula} holds (\cite[Equation~116.01]{EllipticIntegrals}):
\begin{equation}
F(x,k)\pm F(y,k) = F(z,k),    
\end{equation}
where
\begin{equation}
\label{eq:ellip-int-add-formula}
z = \arccos \left(\frac{\cos x\cos y \mp  \sin x\sin y \sqrt{(1-k^2\sin^2x)(1-k^2\sin^2y)}}{1-k^2\sin^2x\sin^2y} \right).    
\end{equation}
We apply this formula to the elliptic integrals that appear in \cref{thm:vol-flip-2-crown} (flippered 2-crowns) and \cref{thm:vol-flip-quadr} (flippered quadrilaterals). 

\subsection{Flippered 2-crowns}
In \cref{eq:vol-2-crown-gen-case}, we find
\begin{equation}
\begin{split}
&F\left(\arcsin\sqrt{\frac{t+\xi_k}{t+\xi_j}},\sqrt{\frac{\xi_i-\xi_j}{\xi_i-\xi_k}}
\right)\Bigg|_1^\infty  \\\smash{\mathllap{=}}\,& F\left(\frac{\pi}{2},\sqrt{\frac{\xi_i-\xi_j}{\xi_i-\xi_k}}
\right)-F\left(\arcsin\sqrt{\frac{1+\xi_k}{1+\xi_j}},\sqrt{\frac{\xi_i-\xi_j}{\xi_i-\xi_k}}
\right).  
\end{split}
\end{equation}
Then by \cref{eq:ellip-int-add-formula} with $x=\frac{\pi}{2}, y = \arcsin \sqrt{\frac{1+\xi_k}{1+\xi_j}}$, we compute
\begin{equation}
\begin{split}
z = \arccos\left( \frac{0+\sin y \sqrt{(1-k^2)(1-k^2\sin^2y)}}{1-k^2\sin^2y} \right) &= \arccos\frac{\sin y\sqrt{1-k^2}}{\sqrt{1-k^2\sin^2y}}
\\& = \arccos \frac{\sqrt{\frac{1+\xi_k}{1+\xi_j}}\cdot\sqrt{\frac{\xi_j-\xi_k}{\xi_i-\xi_k}}}{\sqrt{\frac{(1+\xi_i)(\xi_j-\xi_k)}{(1+\xi_j)(\xi_i-\xi_k)}}}
\\& = \arccos \sqrt{\frac{1+\xi_k}{1+\xi_i}}.
\end{split}
\end{equation}
Hence $z = \arcsin \sqrt{\frac{\xi_i-\xi_k}{\xi_i+1}}$ as in \cref{eq:vol-2-crown-gen-case}.

\subsection{Flippered quadrilaterals}
In \cref{eq:vol-4-gon-gen-case}, we find
\begin{equation}
\begin{split}
&F\left(\arcsin\sqrt{\frac{(t+\xi_\ell)(\xi_i-\xi_k)}{(t+\xi_k)(\xi_i-\xi_\ell)}},\sqrt{\frac{(\xi_i-\xi_\ell)(\xi_j-\xi_k)}{(\xi_i-\xi_k)(\xi_j-\xi_\ell)}}\right) \Bigg|_1^\infty 
\\\smash{\mathllap{=}}\,& F\left(\arcsin\sqrt{\frac{\xi_i-\xi_k}{\xi_i-\xi_\ell}},\sqrt{\frac{(\xi_i-\xi_\ell)(\xi_j-\xi_k)}{(\xi_i-\xi_k)(\xi_j-\xi_\ell)}}\right) 
\\&  - F\left(\arcsin\sqrt{\frac{(1+\xi_\ell)(\xi_i-\xi_k)}{(1+\xi_k)(\xi_i-\xi_\ell)}},\sqrt{\frac{(\xi_i-\xi_\ell)(\xi_j-\xi_k)}{(\xi_i-\xi_k)(\xi_j-\xi_\ell)}}\right).
\end{split}
\end{equation}
Then by \cref{eq:ellip-int-add-formula} with $x=\arcsin\sqrt{\frac{\xi_i-\xi_k}{\xi_i-\xi_\ell}}, y = \arcsin\sqrt{\frac{(1+\xi_\ell)(\xi_i-\xi_k)}{(1+\xi_k)(\xi_i-\xi_\ell)}}$, we compute
\begin{equation}
\begin{split}
z &= \arccos \left(
\frac{\splitfrac{\sqrt{\frac{\xi_k-\xi_\ell}{\xi_i-\xi_\ell}}\sqrt{\frac{(1+\xi_i)(\xi_k-\xi_\ell)}{(1+\xi_k)(\xi_i-\xi_\ell)}}}
{+\sqrt{\frac{\xi_i-\xi_k}{\xi_i-\xi_\ell}}\sqrt{\frac{(1+\xi_\ell)(\xi_i-\xi_k)}{(1+\xi_k)(\xi_i-\xi_\ell)}}\sqrt{\left(1-\frac{\xi_j-\xi_k}{\xi_j-\xi_\ell}\right)\left(1-\frac{(1+\xi_\ell)(\xi_j-\xi_k)}{(1+\xi_k)(\xi_j-\xi_\ell)}\right)}}}{1-\frac{(1+\xi_\ell)(\xi_i-\xi_k)(\xi_j-\xi_k)}{(1+\xi_k)(\xi_i-\xi_\ell)(\xi_j-\xi_\ell)}}
\right)
\\& = \arccos
\left(\frac{\frac{\xi_k-\xi_\ell}{\xi_i-\xi_{\ell}}\sqrt{\frac{1+\xi_i}{1+\xi_k}}+\frac{(\xi_i-\xi_k)(\xi_k-\xi_\ell)\sqrt{(1+\xi_j)(1+\xi_\ell)}}{(\xi_i-\xi_\ell)(\xi_j-\xi_\ell)(1+\xi_k)}}{1-\frac{(1+\xi_\ell)(\xi_i-\xi_k)(\xi_j-\xi_k)}{(1+\xi_k)(\xi_i-\xi_\ell)(\xi_j-\xi_\ell)}}\right)
\\& = \arccos\left(\frac{(\xi_j-\xi_\ell)\sqrt{(1+\xi_i)(1+\xi_k)}+(\xi_i-\xi_k)\sqrt{(1+\xi_j)(1+\xi_\ell)}}{(1+\xi_i)(1+\xi_j)-(1+\xi_k)(1+\xi_\ell)}\right)
\\& = \arccos\left(\frac{\sqrt{(1+\xi_i)(1+\xi_\ell)}+\sqrt{(1+\xi_j)(1+\xi_k)}}{\sqrt{(1+\xi_i)(1+\xi_j)}+\sqrt{(1+\xi_k)(1+\xi_\ell)}}\right).
\end{split}
\end{equation}
Hence $z=\arcsin\frac{\sqrt{(\xi_i-\xi_k)(\xi_j-\xi_\ell)}}{\sqrt{(1+\xi_i)(1+\xi_j)}+\sqrt{(1+\xi_k)(1+\xi_\ell)}}$ as in \cref{eq:vol-4-gon-gen-case}.

\newpage

\bibliographystyle{plain}
\bibliography{bibliography}
\end{document}